\newtheorem{theorem}{Theorem}[section]
\newtheorem{proposition}[theorem]{Proposition}
\newtheorem{lemma}[theorem]{Lemma}
\newtheorem{corollary}[theorem]{Corollary}
\newtheorem{claim}[]{Claim}
\theoremstyle{definition}
\newtheorem{definition}[theorem]{Definition}
\theoremstyle{remark}
\newtheorem{remark}[theorem]{Remark}
\numberwithin{equation}{section}
\newcommand{\dv}{\mathrm{div}}
\newcommand{\mf}{\mathbf}
\newcommand{\mb}{\mathbb}
\newcommand{\mc}{\mathcal}
\newcommand{\ms}{\mathscr}
\newcommand{\mk}{\mathfrak}
\newcommand{\wti}{\widetilde}
\newcommand{\M}{\mathbf M}
\newcommand{\F}{\mathbf F}
\newcommand{\B}{\overline{\mathbf B}}
\newcommand{\oB}{\overline{B}}
\newcommand{\A}{\mathcal A}
\newcommand{\C}{\mathcal C}
\newcommand{\Vol}{\mathrm{Vol}}
\newcommand{\Area}{\mathrm{Area}}
\newcommand{\Id}{\mathrm{Id}}
\newcommand{\R}{\mathbb{R}}
\newcommand{\dist}{\operatorname{dist}}
\newcommand{\n}{\mathbf n}
\DeclareMathOperator{\Index}{index}
\DeclareMathOperator{\Ric}{Ric}
\DeclareMathOperator{\spt}{spt}
\title[Multiplicity one in compact manifolds with boundary]{Multiplicity one for min-max theory in compact manifolds with boundary and its applications}
\date{\today}
\author{Ao Sun}
\address{Department of Mathematics,
	University of Chicago,
	5734 S. University Avenue,
	Chicago, IL, 60637}
\email{aosun@uchicago.edu}
\author{Zhichao Wang}
\address{Department of Mathematics, University of Toronto, Toronto, ON M5S2E4, Canada}
\email{zhichao.wang@utoronto.ca}
\author{Xin Zhou}
\address{Department of Mathematics, Cornell University, Ithaca, NY 14853, and Department of Mathematics, University of California Santa Barbara, Santa Barbara, CA 93106, USA}
\email{xinzhou@cornell.edu}
\begin{document}

	\begin{abstract}
		We prove the multiplicity one theorem for min-max free boundary minimal hypersurfaces in compact manifolds with boundary of dimension between 3 and 7 for generic metrics. To approach this, we develop existence and regularity theory for free boundary hypersurface with prescribed mean curvature, which includes the regularity theory for minimizers, compactness theory, and a generic min-max theory with Morse index bounds. As applications, we construct new free boundary minimal hypersurfaces in the unit balls in Euclidean spaces and self-shrinkers of the mean curvature flows with arbitrarily large entropy.
	\end{abstract}

	\maketitle
	
	\section{Introduction}
	
	The main motivations of this article are two-folded. On one hand, %inspired by the recent exciting developments for the existence theory of closed minimal hypersurfaces, 
	we further advance the existence theory for minimal hypersurfaces with free boundary in compact manifolds with boundary, inspired by the recent exciting developments for closed minimal hypersurfaces. We have seen immense existence results concerning free boundary minimal hypersurfaces in recent years, c.f. \cite{Fra00}, \cite{FrSch16}, \cite{MaNS17}, \cite{LZ16}, \cite{KL17}, \cite{Lin-Sun-Zhou20}, \cite{GLWZ19}, \cite{Wang20}, \cite{Pigati20}, \cite{CFS20}. In this article, we prove the free boundary version of the Multiplicity One Conjecture in min-max theory. This directly gives the existence of new free boundary minimal hypersurfaces even in the unit balls of the Euclidean spaces.  On the other hand, we propose to use the results developed here and before by the authors to construct new minimal hypersurfaces in singular spaces. The idea is to use compact manifolds with boundary to approximate a given singular space. In fact, consider the compliments of tubular neighborhoods of the singular set, which are compact manifolds with nontrivial boundary; one can apply the min-max theory to obtain free boundary minimal hypersurfaces and take the limits.  We apply this strategy to the singular Gaussian space, and construct connected minimal hypersurfaces therein with arbitrarily large Gaussian area. These minimal hypersurfaces are self-shrinkers of the mean curvature flow with arbitrarily large entropy, c.f. \cite{CM12_1}. %(see \cite{CM12_1} for backgrounds related to self-shrinkers and entropy). %As an application of this strategy, we show the existence of connected self-shrinkers of the mean curvature flow with arbitrarily large entropy.
	
	%In the following, we elaborate on our results in ** parts.

	\subsection{Multiplicity one theorem for the free boundary min-max theory} 
	
	Denote by $(M^{n+1},\partial M, g)$ a compact Riemannian manifold with smooth boundary of dimension $n+1$. In \cite{Alm62}, Almgren proved that the space of mod 2 relative cycles $\mc Z_n(M,\partial M;\mb Z_2)$ is weakly homotopic to $\mb {RP}^\infty$; see also \cite{LMN16}*{\S 2.5}. By performing a min-max procedure, Gromov \cite{Gro88} defined the {\em volume spectrum of $M$}, which is a sequence of non-decreasing positive numbers %$\{\omega_k(M;g)\}$ 
	\[ 0<\omega_1(M, g)\leq\omega_2(M, g)\leq\cdots \leq\omega_k(M, g) \to \infty, \]
	depending only on $M$ and $g$. Moreover, Gromov \cite{Gro88} also showed that for each $g$, $\omega_k$ grows like $k^{\frac{1}{n+1}}$; %has a sublinear growth of $k$; 
	see also Guth \cite{Guth09}. Recently, Liokumovich-Marques-Neves \cite{LMN16} proved that the volume spectrum obey a uniform Weyl Law.

	%A compact manifold is {\em closed} if it has empty boundary. 
	When $M$ is closed with no boundary, by adapting the regularity theory given by Pitts \cite{Pi} and Schoen-Simon \cite{SS}, and the index bounds given by Marques-Neves \cite{MN16}, it is known that, when $3\leq (n+1)\leq 7$,  each $k$-width $\omega_k$ is realized by a disjoint collection of closed, embedded, minimal hypersurfaces with integer multiplicities, i.e. there exist integers $\{m_j^k\}$ and minimal hypersurfaces $\{\Sigma_j^k\}$ so that 
	\begin{equation}\label{eq:min-max for closed}
		\omega_{k}(M;g)=\sum_{j}^{l_k} m_j^k\cdot\Area (\Sigma_j^k) \ \ \ \text{ and } \ \  \ \sum_j^{l_k}\Index(\Sigma_j^k)\leq k; 
	\end{equation}
	see \cite{IMN17}*{Proposition 2.2}. In \cite{MN16, MN18}, Marques-Neves raised the Multiplicity One Conjecture, which asserts that for bumpy metrics\footnote{Here a metric is said to be {\em bumpy} if each closed immersed minimal hypersurfaces has no Jacobi field, and the set of bumpy metrics in a closed manifold is generic in the Baire sense by White \citelist{\cite{Whi91}\cite{Whi17}}.}, $m_j^k=1$ and $\Sigma_j^k$ is two-sided for all $k \in \mathbb N$ and $1\leq j\leq l_k$. %Recently, Chodosh-Mantoulidis \cite{CM20} proved this kind of multiplicity one for three-manifolds of the Allen-Cahn version. 
	This conjecture was later confirmed by the last-named author \cite{Zhou19}; (see also Chodosh-Mantoulidis \cite{CM20}). %confirmed this conjecture for $3\leq (n+1)\leq 7$ and proved that, for a bumpy metric on $M$, each $m_j^k=1$ and $\Sigma_j^k$ is two-sided for each $j$. 
	%B. White \citelist{\cite{Whi91}\cite{Whi17}} proved that bumpy metrics are generic in the Baire sense.
	This, together with \cite{MN18}, implies the existence of a closed, embedded minimal hypersurface of Morse index $k$ for each $k\in \mathbb{N}$ under a bumpy metric. %Recently, the Morse inequalities for the area functional were proved for bumpy metrics by Marques-Montezuma-Neves \cite{Marques-Montezuma-Neves20}. 
	This body of works established the Morse theory for the area functional. We also refer to \cite{MN17, IMN17, MNS17, Song18, YY.Li19, Marques-Montezuma-Neves20, Song-Zhou20} for recent developments on the existence theory of closed minimal hypersurfaces, particularly on the resolution of Yau's conjecture.

	%\subsection{Main results} 
	
	In this paper, we focus on compact manifolds with non-empty boundary. 
	%and we will prove the corresponding Multiplicity One Theorem in the free boundary setting. 
	In \cite{LZ16}, Li-Zhou proved the general existence of free boundary minimal hypersurfaces by adapting the Almgren-Pitts theory. Inspired by \citelist{\cite{MN16}\cite{IMN17}}, the last two authors together with Q. Guang and M. Li generalized \eqref{eq:min-max for closed} to compact manifolds in \cite{GLWZ19}. Precisely, for each positive integer $k$, there exist integers $\{m_j^k\}$ and disjoint, embedded, free boundary minimal hypersurfaces $\{\Sigma_j^k\}$ so that 
	\begin{equation}\label{eq:min-max for cmpt with bdry}
		\omega_{k}(M;g)=\sum_{j}^{l_k} m_j^k\cdot\Area (\Sigma_j^k) \ \ \ \text{ and } \ \  \ \sum_j^{l_k} \Index(\Sigma_j^k)\leq k.
	\end{equation}

	Inspired by \cite{Zhou19}, it is natural to conjecture that $m_j^k=1$ for generic metrics on compact manifolds with boundary. In this article we confirm this conjecture. %and hence obtain an improvement of \eqref{eq:min-max for cmpt with bdry}.
	\begin{theorem}\label{thm:intro:multi 1}
		Let $(M^{n+1},\partial M)$ be a compact manifold with boundary of dimension $3\leq (n+1)\leq 7$. For generic metrics $g$ on $(M,\partial M)$ and every integer $k>0$, there exist a sequence of disjoint, two-sided, connected, embedded free boundary minimal hypersurfaces $\{\Sigma_j^k\}$ satisfying
		\[   \omega_{k}(M;g)=\sum_{j} \Area (\Sigma_j^k) \ \ \ \text{ and } \ \  \ \sum_j\Index(\Sigma_j^k)\leq k. \] 	
	\end{theorem}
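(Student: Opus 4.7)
The plan is to adapt Zhou's proof \cite{Zhou19} of the closed-manifold multiplicity one conjecture to the free boundary setting, via a prescribed mean curvature (PMC) perturbation of the area functional. For a smooth function $h$ on $M$, I would consider
\[ \mc A^h(\Omega) = \mc H^n\bigl(\partial^* \Omega \cap (M\setminus \partial M)\bigr) - \int_\Omega h\, d\mathrm{vol}_g \]
on relative Caccioppoli sets $\Omega\subset M$. Critical points of $\mc A^h$ are embedded hypersurfaces whose mean curvature equals $h$ and which meet $\partial M$ orthogonally, i.e.\ free boundary PMC hypersurfaces.

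The first task is the foundational free boundary PMC theory announced in the abstract: interior and up-to-$\partial M$ regularity for $\mc A^h$-minimizers, a varifold compactness theorem for free boundary PMC hypersurfaces with uniformly bounded area and Morse index (smooth graphical convergence away from a locally finite singular set in dimension $\leq 7$), and a generic bumpiness theorem asserting that for generic $(g,h)$ no free boundary PMC hypersurface admits a nontrivial Jacobi field compatible with the free boundary condition. With this in hand I would then develop an Almgren--Pitts min-max theory for $\mc A^h$ on $\mc Z_n(M,\partial M;\mb Z_2)$ and, for generic $h$, produce PMC critical points realizing the $k$-width of $\mc A^h$ with index bound $\leq k$. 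A decisive structural feature of $\mc A^h$ with $h\not\equiv 0$ is that, whenever $h$ does not change sign across the critical hypersurface, no multi-sheet configuration can satisfy the PMC equation simultaneously on both sides: the two sides would demand opposite signs of $h$. Hence the PMC min-max hypersurfaces produced are automatically two-sided and have multiplicity one. Letting $h=h_i\to 0$ along a generic sequence and invoking the compactness theorem extracts embedded free boundary minimal hypersurfaces as varifold limits, with some integer multiplicity.

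The crux of the argument, and the step I expect to be the main obstacle, is upgrading the limit multiplicity back to one for generic metrics $g$. This is the free-boundary analogue of Zhou's spectral-deformation argument: if the sheeting has multiplicity $\geq 2$, the differences of the normal displacements between the converging sheets, suitably renormalized, converge to a positive first eigenfunction $\varphi$ of the free boundary Jacobi operator on the limit $\Si$. Using $\varphi$ as an infinitesimal deformation direction, one constructs for each $h_i$ an improved $(k+1)$-parameter sweepout whose PMC $k$-width is strictly below the candidate min-max level, contradicting the construction in the previous step. The delicate point is carrying out this contradiction cleanly at $\partial M$: the first eigenfunction $\varphi$ satisfies a Robin-type condition involving the second fundamental form of $\partial M$, and the normal deformation must be reconciled with both this boundary condition and the sign of $h_i$ along $\partial \Si\subset \partial M$; moreover, the generic bumpiness for free boundary PMC hypersurfaces must be strong enough to rule out the degeneracies that can occur near the boundary. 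Once the multiplicity is pinned to one, the width identity and index bound in the statement follow by combining \eqref{eq:min-max for cmpt with bdry} with the two-sidedness, multiplicity one, and the constancy theorem applied component by component.
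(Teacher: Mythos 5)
Your overall strategy is indeed the one the paper follows (perturb the area functional to $\mc A^{\epsilon_j h}$, run a free boundary PMC min-max, pass to the limit $\epsilon_j\to 0$ with compactness, and kill multiplicity via a Jacobi-field argument), but two of your key steps are either missing or would fail as stated. First, $\mc A^h$ cannot be defined on $\mc Z_n(M,\partial M;\mb Z_2)$: the bulk term $\int_\Omega h$ requires an enclosed region, so the PMC min-max must be run on $\mc C(M)$ with relative $(X,Z)$-homotopy classes. Since $\omega_k(M;g)$ is defined by $k$-sweepouts of mod-2 relative cycles and the boundary map $\partial:\mc C(M)\to\mc Z_n(M,\partial M;\mb Z_2)$ is a nontrivial double cover, a $k$-sweepout does not lift globally, and bridging the cycle-valued widths with the Caccioppoli-set PMC widths is a substantial part of the proof: the paper decomposes the parameter space into the region $Y_i$ where $|\Phi_i(x)|$ is $\mf F$-close to the finitely many candidate minimal hypersurfaces and its complement $Z_i$, deforms the sweepout (via the $\mb Z_2$ almost-minimizing dichotomy and a deformation theorem) so that the sup of mass over $Z_i$ is strictly below $\omega_k$, lifts $\Phi_i$ only over a thickening of $Y_i$ using the homotopy-triviality criterion for loops $\mc F$-close to a fixed minimal cycle, and then checks $\mf L(\wti\Pi_i)\geq\mf L(\Pi)$ before applying the relative multiplicity one theorem. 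Your proposal skips this bridge entirely.

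Second, the multiplicity-one contradiction at the limit is not obtained by building an improved $(k+1)$-parameter sweepout that lowers the min-max level; no such construction appears in the paper, and as described it is unsubstantiated. Instead, the renormalized sheet differences produce a nonnegative nontrivial $\varphi$ on $\Sigma\setminus\mc Y$ satisfying either $L_\Sigma\varphi=0$ (odd multiplicity, or even multiplicity when $\varphi_j(q)/\epsilon_j\to\infty$) or $L_\Sigma\varphi=2h|_\Sigma$ (even multiplicity, bounded ratio), in both cases with the Robin condition $\partial\varphi/\partial\bm\eta=h^{\partial M}(\nu,\nu)\varphi$, and $\varphi$ is extended across the bad set $\mc Y$ by a local free boundary $h$-foliation argument; it is a Jacobi-type solution, not a ``positive first eigenfunction.'' Bumpiness kills only the homogeneous case; the inhomogeneous case is excluded only by a careful generic choice of $h$ (close to an extension of $L_\Sigma u/2$ for a sign-changing $u$ supported in interior neighborhoods of each of the finitely many free boundary minimal hypersurfaces with area $\leq\omega_k$ and index $\leq k$, finiteness itself resting on compactness plus bumpiness), which forces any nonnegative solution of $L_\Sigma\varphi=2h$ to change sign. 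Your ``$h$ does not change sign'' heuristic does not provide this. Finally, the paper needs more than nondegeneracy of $(g,h)$: it proves and uses generic \emph{strong} bumpiness (Theorem \ref{thm:intro:generic strongly bumpy}), i.e.\ properness of every embedded free boundary minimal hypersurface, so that touching with $\partial M$ is ruled out, the limit is properly embedded, and the boundary foliation and index arguments apply; this properness statement is a new ingredient that does not follow from the closed-manifold bumpy metric theorem and is absent from your outline.
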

	
	\begin{remark}
		Our proof follows generally the approach in \cite{Zhou19}, but there are several essential new challenges which stimulate novel ideas (we refer to Section \ref{SS:challenges} for  elucidations). Recall that a hypersurface with mean curvature prescribed by an ambient function $h$ will be called an $h$-hypersurface.
		\begin{enumerate}[label=(\alph*)]
			\item\label{challenge:1} Generic properness for free boundary minimal hypersurfaces;
			
			\item\label{challenge:2} Existence and regularity for free boundary $h$-hypersurfaces, including: the regularity for minimizers, compactness, and a generic min-max theory;
			%Existence and regularity for {\em free boundary hypersurface with prescribed mean curvature} (or free boundary $h$-hypersurface) theory including the regularity for minimizers, compactness, and min-max theory;
			%the regularity and compactness for $\mc A^h$-minimizers as well as the min-max theory for FPMC;
			
			\item\label{challenge:3} Genericity of good mean curvature prescribing functions $h$ for which the min-max theory for free boundary $h$-hpersurfaces can be applied;
			%generic existence for good mean curvature prescribing functions $h$ such that the min-max theory for FPMC can be applied;
			
			\item\label{challenge:4} Generic countability of free boundary $h$-hypersurface -- an essential ingredient in the proof of Morse index estimates in the free boundary $h$-hypersurface min-max theory.
			%the generic countability of free boundary $h$-hypersurface, which is used to prove the index estimates in the min-max theory.
		\end{enumerate}
	\end{remark}

	 %For closed manifolds, the genericity of bumpy metrics was proved by White  \citelist{\cite{Whi91}\cite{Whi17}}. 
	 %As a byproduct of this paper, 
	 To solve the above challenge \ref{challenge:1}, we prove the following strong bumpy metric theorem for compact manifolds with boundary.
	\begin{theorem}\label{thm:intro:generic strongly bumpy}
		For a generic metric on $(M,\partial M)$, every embedded free boundary minimal hypersurface is both non-degenerate and proper.
	\end{theorem}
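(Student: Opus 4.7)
The approach is to establish two residual conditions on the space of smooth metrics separately --- generic non-degeneracy, and generic properness --- and then intersect the two residual sets. To deal with the a priori infinite collection of embedded free boundary minimal hypersurfaces, I would filter by an area bound $\Lambda$ and take a Baire intersection over $\Lambda\in\mathbb N$.

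\emph{Non-degeneracy.} This is the free boundary analogue of White's bumpy metric theorem. Fix a large integer $q$, and let $\mathcal{G}^q$ denote the Banach manifold of $C^q$ metrics on $M$. Consider the universal moduli space $\mathcal{U}^q_\Lambda$ of pairs $(g,\Sigma)$ in which $\Sigma$ is a connected, embedded free boundary minimal hypersurface in $(M,g)$ with area at most $\Lambda$. A standard transversality calculation (varying $g$ in a two-parameter family and solving the linearized free boundary problem) shows that $\mathcal{U}^q_\Lambda$ is a separable Banach submanifold near every non-degenerate pair, and that the projection $\pi\colon\mathcal{U}^q_\Lambda\to\mathcal{G}^q$ is $C^1$ Fredholm of index $0$, with kernel and cokernel at $(g,\Sigma)$ canonically identified with the space of free boundary Jacobi fields of $\Sigma$. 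Sard--Smale then produces a residual set of regular values in $\mathcal{G}^q$, and a standard approximation argument upgrades this to a residual subset in the $C^\infty$ topology; intersecting over $\Lambda\in\mathbb N$ yields generic non-degeneracy.

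\emph{Properness.} Non-properness of an embedded free boundary minimal hypersurface $\Sigma$ means the existence of a point $p\in(\Sigma\setminus\partial\Sigma)\cap\partial M$. Because $\Sigma\subset M$, the hypersurface lies locally on one side of $\partial M$ near such $p$, which forces $T_p\Sigma=T_p\partial M$. Writing both $\Sigma$ and $\partial M$ as graphs over this common tangent plane, the difference $w$ of the graphing functions is nonnegative near the origin, vanishes at the origin, and satisfies a linear elliptic equation of the form $\mathcal{L} w=-H_{\partial M}$. The strong maximum principle applied to supersolutions then forces a one-sided constraint on $H_{\partial M}(p)$, with sign determined by the side on which $\Sigma$ lies; a localized perturbation of $g$ near $p$ that strictly violates this sign makes tangential contact at $p$ impossible for any free boundary minimal hypersurface close to $\Sigma$. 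Assuming the metric $g_0$ already satisfies the non-degeneracy conclusion, compactness together with non-degeneracy yields a finite list of embedded free boundary minimal hypersurfaces of area at most $\Lambda$, each of which is tracked smoothly under small metric perturbations by the implicit function theorem. Iterating the sign-flipping perturbation over the contact sets of these finitely many hypersurfaces, and then taking a Baire intersection over $\Lambda$, produces the desired residual set of metrics.

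\emph{Main obstacle.} The heart of the proof lies in the properness step. The contact set $(\Sigma\setminus\partial\Sigma)\cap\partial M$ need not a priori be discrete, and a perturbation that eliminates one non-proper contact must not create a new one elsewhere or destroy the non-degeneracy of another hypersurface. Overcoming this requires: (i) the compactness theorem for embedded free boundary minimal hypersurfaces with bounded area, ensuring finiteness under non-degeneracy; (ii) quantitative $C^{1,\alpha}$ estimates on the deformation of each $\Sigma$ under small metric perturbations, so that new non-proper contacts cannot appear far from the old ones; (iii) a careful induction with perturbations supported in strictly shrinking neighborhoods, so that earlier sign-flips are preserved at each step.
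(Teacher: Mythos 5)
Your overall strategy for the properness step (maximum principle at an interior tangential contact point, then a local perturbation of the boundary mean curvature, combined with non-degeneracy and implicit-function-theorem tracking of a finite list of hypersurfaces) is genuinely different from the paper's route, which instead embeds $(M,\partial M)$ in a closed manifold $\widetilde M$, uses the White/Ambrozio--Carlotto--Sharp structure of the moduli space of free boundary minimal hypersurfaces as a countable union of charts over the space of metrics, and kills tangential contact by flowing $\partial M$ itself via a diffeomorphism and pulling back the metric (a second-order computation shows the contact function grows like $t^2/2$, so transversality to $\partial M$ holds for small $t>0$); this handles all hypersurfaces at once and avoids any finiteness or iteration. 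However, as written your argument has two genuine gaps. First, the finiteness claim ``compactness together with non-degeneracy yields a finite list of embedded free boundary minimal hypersurfaces of area at most $\Lambda$'' is unjustified: all known compactness/finiteness theorems in this setting (Sharp-type, Ambrozio--Carlotto--Sharp, Guang--Wang--Zhou) require a Morse index bound in addition to the area bound, and without it the sequence of ``bad'' hypersurfaces need not subconverge. This is exactly why the paper filters by the sets $\mathbf\Gamma(C)$ of metrics for which every free boundary minimal hypersurface with \emph{both} area and index at most $C$ is non-degenerate and proper. Your filtration by area alone cannot start the iteration, and it also cannot be used in the openness step below; it is fixable by adding the index filter, but the proof as proposed does not go through.

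Second, and more structurally, your final step ``taking a Baire intersection over $\Lambda$ produces the desired residual set'' is not supported by what precedes it: your perturbation scheme (even granting the finiteness) only produces, for each $\Lambda$, a \emph{dense} set of metrics with the desired property for hypersurfaces of area at most $\Lambda$. A countable intersection of dense sets need not be residual; you must also show each filtered set is open (or at least contains an open dense set). Openness is again a compactness statement -- if $\gamma_j\to\gamma$ and $\Sigma_j$ are degenerate or improper hypersurfaces for $\gamma_j$ with bounded area and index, they subconverge to a hypersurface for $\gamma$, which by hypothesis is non-degenerate and proper, forcing smooth convergence and a contradiction -- and this is precisely the paper's Claim that $\mathbf\Gamma(C)$ is open, which once more requires the index bound. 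So the missing openness argument and the missing index bound are really the same omission; without them your construction yields only a dense, not a generic, set of metrics.
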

	
	Here a %an embedded 
	hypersurface $\Sigma$ is {\em proper} if it has empty touching set, i.e. $\mathrm{Int}(\Sigma) \cap \partial M=\emptyset$.
	
	\begin{remark}
		In \cite{ACS17}*{Theorem 9}, Ambrozio-Carlotto-Sharp proved a similar bumpy metric theorem, which says that for generic metrics, each properly embedded free boundary minimal hypersurface is non-degenerate. Our result is a strengthened version in the sense that each embedded free boundary minimal hypersurface is automatically proper in our generic metrics.
		%Our Theorem \ref{thm:intro:generic strongly bumpy} can be viewed as an improvement of their result.
	\end{remark}
	
	As a direct corollary of Theorem \ref{thm:intro:multi 1}, we construct free boundary minimal hypersurfaces with arbitrarily large area in the unit ball $\mb B^{n+1}$, and more generally in any strictly convex domain of $\mb R^{n+1}$. Recall that a compact Riemannian manifold with non-negative Ricci curvature and strictly convex boundary does not contain any stable free boundary minimal hypersurfaces. Moreover, by Fraser-Li \cite{FrLi}*{Lemma 2.4}, any two free boundary minimal hypersurfaces in this type of manifolds intersect with each other. Making use of the compactness \citelist{\cite{ACS17}\cite{GWZ18}}, we have the following result.
	\begin{corollary}
		Let $(M^{n+1},\partial M,g)$ be a compact Riemannian manifold with non-negative Ricci curvature and strictly convex boundary of dimension $3\leq (n+1)\leq 7$. Then there exist a sequence of two-sided, connected, embedded, free boundary minimal hypersurfaces $\{\Sigma_k\}$ satisfying
		\[  \Area(\Sigma_k)\sim k^{\frac{1}{n+1}} \ \ \ \ \text{ and }\ \ \ \ \Index(\Sigma_k)\leq k. \]
	\end{corollary}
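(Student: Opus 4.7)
The plan is to combine Theorem \ref{thm:intro:multi 1} with Fraser-Li's intersection lemma, the compactness theorems of \cite{ACS17} and \cite{GWZ18}, and Gromov's Weyl law $\omega_k(M,g)\sim k^{1/(n+1)}$.

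Since non-negative Ricci is only a closed condition while strict convexity of $\partial M$ is an open condition, I would first approximate the given metric $g$ by a sequence $g_i\to g$ of generic metrics (in the sense of Theorem \ref{thm:intro:multi 1}) each retaining strictly convex boundary. For every $i$ and every positive integer $k$, Theorem \ref{thm:intro:multi 1} applied to $(M,g_i)$ supplies disjoint, two-sided, connected, embedded free boundary minimal hypersurfaces $\{\Sigma_j^{k,i}\}_j$ realizing $\omega_k(M,g_i)=\sum_j\Area_{g_i}(\Sigma_j^{k,i})$ with total Morse index at most $k$. Because $\omega_k$ is continuous in the metric, the areas are uniformly bounded in $i$ and the indices are bounded by $k$; the compactness theory of \cite{ACS17} and \cite{GWZ18} then yields, along a subsequence, a smooth subsequential limit in $(M,g)$ (allowing finitely many singular points in the borderline dimension) carrying positive integer multiplicities.

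Next, I would invoke Fraser-Li \cite{FrLi}*{Lemma 2.4}: in $(M,g)$ any two distinct free boundary minimal hypersurfaces must intersect. Combined with the disjointness of the approximating sheets, this forces all $\Sigma_j^{k,i}$ to converge to a common support $\Sigma_k$ in $(M,g)$, for otherwise two distinct smooth limits would meet at an interior or boundary point, and the strong maximum principle (in its free boundary form if the contact occurs on $\partial M$) would yield an intersection of the approximating sheets for large $i$, contradicting disjointness. Hence $\Sigma_k$ is a single two-sided, connected, embedded free boundary minimal hypersurface of $(M,g)$ with $\omega_k(M,g)=m_k\,\Area_g(\Sigma_k)$ and $\Index(\Sigma_k)\le k$ by lower semicontinuity.

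The upper bound $\Area(\Sigma_k)\le\omega_k(M,g)\lesssim k^{1/(n+1)}$ is immediate from the Weyl law. The matching lower bound demands that the multiplicity $m_k$ be bounded, and this is the step I expect to be the main obstacle. I would handle it in the same spirit as Theorem \ref{thm:intro:multi 1}: if $\Sigma_k$ were realized with multiplicity $\ge 2$ as a two-sided smooth limit of multiplicity-one min-max sheets, the Zhou-type analysis would produce a positive Jacobi function on $\Sigma_k$, equivalent to stability, contradicting the absence of stable free boundary minimal hypersurfaces in $(M,g)$ imposed by non-negative Ricci and strict convexity. Consequently $m_k=1$, and $\Area(\Sigma_k)=\omega_k(M,g)\sim k^{1/(n+1)}$ concludes the argument.
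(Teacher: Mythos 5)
Your proposal is essentially the argument the paper has in mind (the corollary is stated only with the sketch preceding it): approximate $g$ by generic metrics, apply Theorem \ref{thm:intro:multi 1} to each, pass to a limit using the compactness of \cite{ACS17} and \cite{GWZ18} with the continuity of $\omega_k$ in the metric, use the Fraser--Li Frankel property for connectedness, rule out higher multiplicity via a positive Jacobi field contradicting the non-existence of stable free boundary minimal hypersurfaces, and conclude with the Weyl law. Two routine points to tighten: a positive Jacobi field yields weak stability (zero first eigenvalue of the Robin problem), not something ``equivalent to stability'', but this still contradicts $\Ric\ge 0$ plus strict convexity by testing with $u\equiv 1$; and a possible one-sided limit must be excluded by the same sheet-difference argument on the double cover --- note that this multiplicity step is in fact much easier than in Theorem \ref{thm:intro:multi 1}, since here one has genuine minimal hypersurfaces for metrics $g_i\to g$, so the classical sheet-separation argument produces the Jacobi field directly, without any $h$-approximation.
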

	
 In particular, the unit ball $\mb B^{n+1}$ with $3\leq (n+1)\leq 7$ contains a sequence of free boundary minimal hypersurfaces with areas going to $+\infty$. These are different from the hypersurfaces constructed by Fraser-Schoen \cite{FrSch16}, Kapouleas-Li \cite{KL17} and Carlotto-Franz-Schulz \cite{CFS20}.

	\subsection{Application to Gaussian spaces}
	
	Besides the independent interest, the multiplicity one result for free boundary minimal hypersurfaces may also have many other applications. Here we present an application to construct new minimal hypersurfaces in certain non-compact and singular manifolds.
	
	An embedded hypersurface in $\mb R^{n+1}$ is called a {\em self-shrinker} if 
	\[ H=\frac{\langle x,\nu\rangle}{2}, \]
	where $H$ is the mean curvature with respect to the normal vector field $\nu$. Equivalently, a self-shrinker is a minimal hypersurface in $\mb R^{n+1}$ under the Gaussian metric $\mc G=(4\pi)^{-1}e^{-\frac{|x|^2}{2n}}\delta_{ij}$; see \cite{CM12_1} for more details. The area of such a hypersurface under Gaussian metric is called the {\em entropy}, defined by Colding-Minicozzi in \cite{CM12_1}.
	
	Self-shrinkers are models of the singularities of mean curvature flows, and the index of a self-shrinker (as a minimal hypersurface in $(\mb R^{n+1},\mc G)$) characterizes the instability of the mean curvature flow at the singularity; see the discussions in \cite{CM12_1}. Understanding self-shrinkers is very important to the study of singular behaviors of mean curvature flows.	
	
	The main challenge of using min-max method to construct self-shrinkers is that the Gaussian metric space $(\R^{n+1},\mc G)$ is non-compact, and the space becomes singular at infinity -- the curvature blows up at infinity, meanwhile the spheres with increasing radii have exponentially decreasing areas. Therefore, the classical min-max method can not be used directly. We note that Ketover-Zhou \cite{Ketover-Zhou18} have established a version of the min-max theory in $(\mb R^{3}, \mc G)$ using a special flow near the singularity.
	
	Our approximation method provides the following new constructions of embedded self-shrinkers. We believe that our method can also be applied to other non-compact singular spaces.
	
	\begin{theorem}\label{thm:intro:min-max in gaussian space}
		Given $3\leq (n+1)\leq 7$, there exists a sequence of embedded self-shrinkers $\{\Sigma_k\}$ with entropy growth $k^{\frac{1}{n+1}}$ and $\mathrm{index}(\Sigma_k)\leq k$.	
	\end{theorem}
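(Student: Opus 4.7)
The plan is to exhaust the singular Gaussian space $(\R^{n+1},\mc G)$ by the family of compact manifolds with boundary $(\oB_R,\mc G)$, apply the free boundary multiplicity one min-max theory (Theorem \ref{thm:intro:multi 1}) on each, and pass to the limit $R\to\infty$. More precisely, for each $R>0$ I would first slightly perturb $\mc G$ near $\partial B_R$ to a generic metric $g_R$ on $\oB_R$ with $g_R\to\mc G$ in $C^{\infty}_{\mathrm{loc}}(\R^{n+1})$, so that Theorem \ref{thm:intro:multi 1} and Theorem \ref{thm:intro:generic strongly bumpy} apply. This produces, for each $k\in\mb N$, a disjoint collection $\{\Si^{k,R}_j\}_j$ of two-sided, connected, embedded free boundary minimal hypersurfaces in $(\oB_R,g_R)$ satisfying $\omega_k(\oB_R,g_R)=\sum_j\Area_{g_R}(\Si^{k,R}_j)$ and $\sum_j\Index(\Si^{k,R}_j)\le k$.

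Next I would control the widths uniformly in $R$. Since $\Vol_{\mc G}(\oB_R)$ is bounded uniformly in $R$, the Weyl law of Liokumovich-Marques-Neves yields $\omega_k(\oB_R,\mc G)\le C(n)\,k^{\frac{1}{n+1}}$ uniformly in $R$; a matching lower bound $\omega_k(\oB_R,\mc G)\ge c(n)\,k^{\frac{1}{n+1}}$ follows by comparison to widths of a fixed large ball $\oB_{R_0}$ once $R\ge R_0(k)$, and continuity in the metric transfers these to $g_R$. The key geometric step is a uniform interior confinement: using the exponential decay of the Gaussian weight $e^{-|x|^2/(2n)}$ together with the strict mean convexity of $\partial B_r$ in $\mc G$ for large $r$ (so that $\{\partial B_r\}_{r\ge r_0}$ is a foliation of barriers), one shows that any free boundary minimal hypersurface in $(\oB_R,\mc G)$ with Gaussian area at most $Ck^{\frac{1}{n+1}}$ is contained in some $\oB_{R_1(k)}$ independent of $R$. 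For $R>R_1(k)$ each $\Si^{k,R}_j$ is then boundaryless, i.e.\ a closed embedded self-shrinker inside $\oB_{R_1(k)}$.

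Once confinement is in place, the $\Si^{k,R}_j$ are closed embedded self-shrinkers of uniformly bounded index and entropy lying in a fixed compact set, so the smooth compactness theory for self-shrinkers (the compactness statements underlying \cite{CM12_1} refined by Sharp-type index-bounded sheeting, with the multiplicity one conclusion of Theorem \ref{thm:intro:multi 1} surviving the smooth limit) extracts, along a subsequence $R\to\infty$, a smooth embedded self-shrinker $\Si_k:=\sqcup_j\Si^k_j$ with $\Index(\Si_k)=\sum_j\Index(\Si^k_j)\le k$ and entropy equal to $\lim_R\omega_k(\oB_R,\mc G)\sim k^{\frac{1}{n+1}}$. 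This yields the desired family.

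The principal obstacle is the uniform interior confinement: I must rule out a scenario in which a positive Gaussian-area piece of $\Si^{k,R}_j$ persists in a thin shell near $\partial B_R$ as $R\to\infty$, which would prevent the limit from being a closed self-shrinker. This requires a quantitative barrier argument tuned to the exponential Gaussian weight decay and matched precisely to the entropy bound from the width estimate. A secondary difficulty is coordinating the generic perturbations $g_R\to\mc G$ with the min-max construction so that neither multiplicity one nor the index bound is lost in the double limit; this is naturally handled by arranging $g_R\equiv\mc G$ on $\oB_{R_1(k)+1}$, so that each $\Si^{k,R}_j$ confined to $\oB_{R_1(k)}$ is already a genuine self-shrinker at finite $R$ and the smooth limit remains a self-shrinker.
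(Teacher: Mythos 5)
Your overall scheme (exhaust by balls, perturb to generic metrics, run Theorem \ref{thm:intro:multi 1}, pass to the limit) matches the paper, but the step you yourself flag as the ``principal obstacle'' --- uniform interior confinement --- is not just difficult, it is false, and the barrier argument you propose rests on a sign error. In the Gaussian metric $\mc G$ the sphere $\partial B_r$ for $r>\sqrt{2n}$ has mean curvature proportional to $\tfrac{n}{r}-\tfrac{r}{2}<0$ with respect to the outward normal, i.e.\ the large spheres are mean \emph{concave}, not mean convex; indeed the paper's own construction of $g_j$ requires the balls $B_R(0)$, $3<R\le R_j$, to have mean concave boundary. So $\{\partial B_r\}_{r\ge r_0}$ is not a foliation of barriers confining minimal hypersurfaces. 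More decisively, confinement cannot hold for any entropy threshold: a hyperplane through the origin intersected with $\oB_R$ is a free boundary minimal hypersurface of $(\oB_R,\mc G)$ with entropy $1$ and nonempty free boundary for every $R$, and the expected (and allowed) min-max limits include non-compact shrinkers such as planes and cylinders. Consequently your conclusion that for large $R$ the min-max hypersurfaces are boundaryless closed shrinkers in a fixed ball, and the subsequent appeal to smooth compactness for \emph{closed} shrinkers, cannot be carried out; note also that the theorem does not claim compactness of $\Sigma_k$.

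What the paper does instead is to accept possibly non-compact limits: the free boundary minimal hypersurfaces $\Sigma_j$ converge locally smoothly (by Sharp-type compactness with bounded area and index) to a complete minimal hypersurface $\Sigma$ in $(\R^{n+1},\mc G)$; multiplicity one follows from the non-existence of stable minimal hypersurfaces in the Gaussian space (not from Theorem \ref{thm:intro:multi 1} ``surviving the limit''), and connectedness from the Frankel property. The role your confinement step was meant to play is filled by the quantitative no-mass-loss estimate (Claim \ref{claim:no mass loss}): a monotonicity-type computation in the almost-Gaussian metrics $g_j$ giving
\[
\Area\big(\Sigma_j\setminus B_R(0);g_j\big)\le C\,\Lambda_k\,R^{\frac12-n}e^{-\frac{R^2}{4}\left(1-\frac{1}{2n}\right)}
\]
uniformly in $j$, which ensures the entropy of the limit equals $\lim_j\omega_k(\Omega_j;g_j)=\omega_k(\R^{n+1};\mc G)\sim k^{\frac{1}{n+1}}$. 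Without either a (false) confinement statement or such a uniform decay estimate, your proposal cannot rule out mass escaping into the shell near $\partial B_R$, so the entropy identity --- and hence the growth rate --- is not established. The width upper and lower bounds and the index bound in your outline are fine, but this missing ingredient is exactly the new analytic content of the paper's proof.
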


 A direct corollary is the existence of self-shrinkers with arbitrary large entropy.
	
		\begin{corollary}\label{cor:intro:large entropy and finite index}
		For any  $\mb R^{n+1}$ ($n\geq 2$) and $\Lambda>0$, there exists an embedded self-shrinker with $\lambda(\Sigma)\in (\Lambda,\infty)$ and $\mathrm{index}<\infty$.
	\end{corollary}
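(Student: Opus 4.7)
The plan is to read off the corollary from Theorem \ref{thm:intro:min-max in gaussian space} in the dimensions it covers, and to bootstrap to arbitrary $n \geq 2$ via a cylindrical product construction. If $3 \leq n+1 \leq 7$, the statement is immediate: Theorem \ref{thm:intro:min-max in gaussian space} supplies embedded self-shrinkers $\{\Sigma_k\} \subset \R^{n+1}$ with $\lambda(\Sigma_k) \sim k^{1/(n+1)} \to \infty$ and $\Index(\Sigma_k) \leq k$, and any $k$ with $\lambda(\Sigma_k) > \Lambda$ yields the required example.

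For the remaining range $n+1 \geq 8$, I would fix an auxiliary dimension $n'$ with $3 \leq n'+1 \leq 7$ (say $n'=2$), apply the preceding case to obtain an embedded self-shrinker $\Sigma' \subset \R^{n'+1}$ with $\lambda(\Sigma') > \Lambda$ and finite Morse index, and then form the cylinder $\Sigma := \Sigma' \times \R^{n-n'} \subset \R^{n+1}$. This $\Sigma$ is an embedded self-shrinker, since flat Euclidean factors are themselves self-shrinkers and the equation $H = \tfrac{1}{2}\langle x, \nu\rangle$ splits additively under Cartesian products. Its entropy factors as $\lambda(\Sigma) = \lambda(\Sigma') \cdot \lambda(\R^{n-n'}) = \lambda(\Sigma') > \Lambda$, using the standard facts that a linear subspace through the origin has entropy $1$ and that the Colding--Minicozzi $F$-functional is multiplicative over Gaussian products.

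The only step requiring real verification is finiteness of $\Index(\Sigma)$. The Colding--Minicozzi Jacobi operator on $\Sigma$ splits under separation of variables as $L_\Sigma = L_{\Sigma'} + \mc L_0$, where $\mc L_0 = \Delta - \tfrac{1}{2}\langle y, \nabla\rangle$ is the drift Laplacian on the flat factor $\R^{n-n'}$ acting on Gaussian-weighted $L^2$. Writing $\{\mu_i\}$ for the eigenvalues of $L_{\Sigma'}$ (only finitely many of which are positive, by the bounded index of $\Sigma'$) and $\{\nu_j\}$ for those of $\mc L_0$ (nonpositive Hermite eigenvalues tending to $-\infty$), the spectrum of $L_\Sigma$ consists of the sums $\mu_i + \nu_j$, and the index of $\Sigma$ equals the number of pairs $(i,j)$ with $\mu_i + \nu_j > 0$. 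That set is manifestly finite, which closes the argument. I view this splitting as the only genuine technical point; it is standard in the Colding--Minicozzi framework, but I would write it out carefully for completeness.
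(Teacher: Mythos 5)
Your overall strategy coincides with the paper's: reduce to Theorem \ref{thm:intro:min-max in gaussian space} when $n+1\leq 7$, then pass to higher dimensions by multiplying with Euclidean factors, with the entropy bookkeeping $\lambda(\Sigma'\times\R^{m})=\lambda(\Sigma')$ and the self-shrinker equation splitting over products. The only cosmetic difference is that the paper iterates $\Sigma\mapsto\Sigma\times\R$ one dimension at a time, citing Theorem \ref{thm:shrinker finite index} at each step, whereas you take a single product $\Sigma'\times\R^{n-n'}$.

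The genuine gap is in your last paragraph, which is exactly the content the paper delegates to Theorem \ref{thm:shrinker finite index} rather than proving in-line. You treat $L_{\Sigma'}$ on the full non-compact shrinker as though it carried a bona fide discrete spectrum $\{\mu_i\}$ with ``only finitely many positive,'' and assert that the spectrum of $L_{\Sigma}$ is the sumset $\{\mu_i+\nu_j\}$ with index counted by positive sums. But the index in Definition \ref{def:index of self-shrinkers} is defined variationally via compactly supported test functions, and it is \emph{not} automatic that $L_{\Sigma'}$ -- a drift Schr\"odinger operator whose potential $|A|^2$ need not be bounded on the complete shrinker -- admits a discrete spectral decomposition matching this count. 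A priori the bottom of the spectrum $\mu_1(\Sigma')$ could be $-\infty$, in which case there is no enumeration $\{\mu_i\}$ to quantify over. Part I of the proof of Theorem \ref{thm:shrinker finite index} is precisely what rules this out: a logarithmic cut-off argument shows that finite index forces $\mu_1(\Sigma')>-\infty$. Part II then carries out your separation-of-variables count, but rigorously, on compact exhaustions $\Omega\times[-T,T]$ via Proposition \ref{prop:eigen-product}, and it is the lower bound $\mu_1>-\infty$ that makes the resulting index bound uniform in $\Omega$ and $T$ as one exhausts $\Sigma'\times\R^m$. Your closing remark that the splitting ``is standard in the Colding--Minicozzi framework'' passes over this step as though it were a routine verification; it is the nontrivial ingredient, and the paper's citation of Theorem \ref{thm:shrinker finite index} is the proof, not a courtesy reference.
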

	\begin{proof}[Proof of Corollary \ref{cor:intro:large entropy and finite index}]
		The corollary follows from Theorem \ref{thm:intro:min-max in gaussian space} if $(n+1)\leq 7$. Now we assume that $n\geq 7$. Note that $\lambda(\Sigma\times\mb R)=\lambda(\Sigma)$ and $\Sigma\times \mb R$ is a self-shrinker if and only if $\Sigma$ is a self-shrinker. Moreover, $\Sigma$ has finite index if and only if $\Sigma\times \mb R$ has finite index by Theorem \ref{thm:shrinker finite index}. 
	\end{proof}
	
	There are several pioneer works which provided constructions of embedded self-shrinkers in $\R^3$ by other techniques, see \citelist{\cite{An92}\cite{Ng14}\cite{KKM18} \cite{Mo11}}. However, the entropy of all the self-shrinkers constructed in these works is bounded from above by a universal constant. In contrast, our min-max method constructs self-shrinkers with arbitrarily large entropy. 
	
	The entropy of a mean curvature flow is monotone non-increasing. As a consequence, any tangent flow of a mean curvature flow has entropy bounded from above by the entropy of the initial hypersurface. Based on this fact, there has been much research on low entropy mean curvature flow; cf. \citelist{\cite{Bernstein-Wang} \cite{mramor2018low} \cite{bernstein2020closed}}. In contrast, our theorem implies that large entropy mean curvature flow may have very complicated tangent flows, which addresses the complication of mean curvature flow singularities.

	\subsection{Challenges and ideas}\label{SS:challenges}
	Inspired by the proof of the Multiplicity One Theorem in closed manifolds \cite{Zhou19}, we prove Theorem \ref{thm:intro:multi 1} by establishing the min-max theory for free boundary $h$-hypersurfaces. Let $(M,\partial M,g)$ be a compact Riemannian manifold with boundary of dimension $3\leq (n+1)\leq 7$. Moreover, we assume that every embedded free boundary minimal hypersurface in $(M,\partial M,g)$ is proper and does not have non-trivial Jacobi fields. Then the $k$-the volume spectrum $\omega_k(M;g)$ is realized by the min-max value of a homotopy class of $k$-sweepouts. Note that the sweepouts are maps into $\mc Z_n(M,\partial M;\mb Z_2)$. %By the lifting criterion \cite{Hat}*{Proposition 1.33}, we have the sweepouts mapping into the Cappccioppoli set $\mc C(M)$. Denote by $\wti \Pi$ the homotopy class associated with the new sweepouts. 
	We can lift the $k$-sweepouts to the space of Cappccioppoli set $\mc C(M)$, and consider the associated relative homotopy class, denoted by $\wti \Pi$. We will show that the relative min-max width is equal to $\omega_k(M;g)$.
	
	Take $h\in C^\infty(M)$ to be defined later. Let $\epsilon_j\rightarrow 0$ and denote by $\Sigma_j=\partial \Omega_j$ the free boundary $\epsilon_jh$-hypersurface, produced by relative min-max theory (associated with $\wti \Pi$). Letting $j\rightarrow\infty$, and using compactness results, we get a free boundary minimal hypersurface $\Sigma$ with multiplicity $m$. The aim is to prove $m=1$ by taking suitable $h$. Indeed, if $m>1$, we can construct a non-trivial positive Jacobi field $u$ so that $Lu=\pm 2h$ on $\Sigma$ and $\frac{\partial u}{\partial \bm\eta}=h^{\partial M}(\nu,\nu)u$, where $L$ is the Jacobi operator, $\nu$ and $\bm\eta$ are respectively the unit normal and co-normal vector of $\Sigma$, and $h^{\partial M}(\cdot,\cdot)$ is the second fundamental form of $\partial M$. By choosing a suitable $h$, we prove that this kind of $u$ does not exist.

	%\bigskip
	\vspace{1em}
	However, there are several new challenges in this process. First, we need to show the genericity of the metrics under which every embedded free boundary minimal hypersurface is proper and has no non-trivial Jacobi field. Such a metric is called {\em strongly bumpy}. The denseness of such metrics can be obtained by perturbing any given metric on a closed manifold $\wti M$ (which contains $M$ as a domain) so that each minimal hypersurface in $\wti M$ with free boundary on $\partial M$ is non-degenerate and transverse to $\partial M$. To obtain openness, we use the compactness to show that given a strongly bumpy metric $g$ and a constant $C$, there is an open neighborhood of $g$ so that for each metric $g'$ in that neighborhood, any free boundary minimal hypersurface in $(M,\partial M,g')$ with weak Morse index and area no more than $C$ is non-degenerate and proper. By taking intersections of such open and dense sets of metrics for a sequence $C_n\rightarrow\infty$, we obtain the desired genericity. 
	%{\color{blue}Such a metric is called {\em strongly bumpy}. To do this, in the first step, we perturb the metric on a closed manifold $\wti M$ (which contains $M$ as a domain) so that each minimal hypersurface in $\wti M$ with free boundary on $\partial M$ is non-degenerate and transverse to $\partial M$, and this implies denseness of strongly bumpy metrics by restricting metrics to $M$. In the next step, we use the compactness to show that given a strongly bumpy metric $g$ and a constant $C$, there is an open neighborhood of $g$ so that for each metric $g'$ in that neighborhood, each free boundary minimal hypersurface in $(M,\partial M,g')$ with weak Morse index and area no more than $C$ is non-degenerate and proper. By taking intersections for a sequence $C_n\rightarrow\infty$, we obtains the desired genericity.} 
	
	Second, to establish the min-max theory for free boundary $h$-hypersurfaces, we need to generalize a number of regularity results. In Theorem \ref{thm:regularity for minimizer}, we prove the regularity for minimizing free boundary $h$-hypersurfaces by using the reflecting method in \cite{Gr87} and the regularity results in \cite{Mor03}. The major step for this min-max theory is to establish the regularity for $h$-almost minimizing varifolds. To finish the key gluing step therein, we apply the interior gluing procedure by by Zhou-Zhu \cite{ZZ18}, and then take the advantage of unique continuation to extend the gluing all the way to boundary. Comparing with the proof of the corresponding regularity results for free boundary minimal hypersurfaces in \cite{LZ16}, our arguments for free boundary $h$-hypersurfaces are much simpler since we only need to consider the case where the touching sets are contained in some $(n-1)$-dimensional subsets.	
	%\textcolor{magenta}{The major step for FPMC min-max theory is to establish the regularity for $h$-almost minimizing varifolds. To achieve the key gluing step, we take the advantage of unique continuation and the previous results in closed manifolds given by Zhou-Zhu \cite{ZZ18}.} Comparing with the proof of the corresponding regularity results for free boundary minimal hypersurfaces in \cite{LZ16}, our arguments here for free boundary prescribed mean curvature hypersurfaces are much easier since we only need to consider the generic case and the touching set must be contained in an $(n-1)$-dimensional subset.

	%As required above, we need to prove the genericity of good mean curvature prescribing functions $h$ so that each free boundary $h$-hypersurface has at most $(n-1)$-dimensional touching set (including self-touching and touching with $\partial M$). %and then the min-max theory for free boundary $h$-hypersurfaces can be applied. 
   To obtain the aforementioned control on the touching sets, we need to restrict to ``{\em good}" prescribing functions $h$, where each free boundary $h$-hypersurface has at most $(n-1)$-dimensional touching set (including self-touching and touching with $\partial M$). To show the genericity of such good functions, we extend a given smooth function $h$ on $M$ to $\wti h$ on $\wti M$ and perturb it to $\wti h_1$ by \cite{ZZ18}*{Proposition 3.8} so that each $\wti h_1$-hypersurface has small self-touching set. By genericity of Morse functions we can find another perturbed function $\wti h_2$ so that each $\wti h_2$-hypersurface touches $\partial M$ in a small subset as desired. This implies $\wti h_2|_M$ is a good choice. Moreover,  each function in a neighborhood of $\wti h_2|_M$ is also a good choice. This gives the genericity of good prescribing functions.
	
	Finally, we have to prove the index upper bounds in the min-max procedure, and a crucial ingredient is the countability of free boundary $h$-hypersurfaces for good pairs (see Subsection \ref{subsec:good pairs}). Our proof of the countability follows from the following observation of the compactness of free boundary $h$-hypersurfaces. Let $\Sigma_j$ be a sequence of free boundary $h$-hypersurfaces with bounded index and area. Then it converges as varifolds %(in some sense) 
	to some limit hypersurface $\Sigma$; (see Theorem \ref{thm:compactness for FPMC}). If the convergence is smooth, then $\Sigma$ has a non-trivial Jacobi field related to $\mc A^h$; on the contray, if the convergence is not smooth, $\Sigma$ has strictly less index than that of $\Sigma_j$ for sufficiently large $j$. We refer to Lemma \ref{lem:countable} for more details.
	%Existence and regularity for {\em free boundary hypersurface with prescribed mean curvature} (or free boundary $h$-hypersurface) theory including the regularity for minimizers, compactness, and min-max theory;
	
	%\bigskip
	\vspace{1em}
        %\textcolor{magenta}{To construct self-shrinkers of arbitrarily large entropy using our approximation scheme, the main difficulty is: } 
         To construct self-shrinkers of large entropy, we use approximation methods. More precisely, we focus on balls with larger and larger radius in $\R^{n+1}$, perturb the Gaussian metric on the balls slightly to be generic, and use the theory developed in this article to construct free boundary minimal hypersurfaces of large area. By passing to a limit, these free boundary minimal hypersurfaces will converge to minimal hypersurface in the Gaussian space, or equivalently self-shrinkers. When passing to limits, there may be mass drop, and the limit shrinkers would not have large entropy. To overcome this issue, we prove a new monotonicity formula of minimal surfaces in almost Gaussian spaces, which is more close related to the monotonicity formula of minimal hypersurfaces in the Euclidean spaces compared with that for shrinkers. We also study the spectrum of the Jacobi operator for shrinkers of the form $\Sigma\times\R$ whenever $\Sigma$ is a lower dimensional shrinker, and we prove in Theorem \ref{thm:shrinker finite index} that if $\Sigma$ has finite index, so does $\Sigma\times\R$. This allows us to construct smooth self-shrinkers with finite index and arbitrarily large entropy in high dimensional spaces.

	\subsection*{Outline}
	This paper is organized as follows. In Section \ref{sec:pre}, we first collect some notations and then provide some technical results including the regularity for $\mc A^h$-minimizing hypersurfaces and the compactness of free boundary $h$-hypersurfaces. We mention that Theorem \ref{thm:intro:generic strongly bumpy} is proven in Subsection \ref{subsec:generic properness}. Then in Section \ref{sec:min-max for FPMC}, we prove the relative min-max theory for free boundary $h$-hypersurfaces. The regularity of $h$-almost minimizing varifold with free boundary is proved in Theorem \ref{thm:regularity of h minmax}. Section \ref{sec:multi 1 for fbmh} is devoted to the proof of Theorem \ref{thm:intro:multi 1}. Finally, by applying our main result, we prove the existence of self-shrinkers with arbitrarily large entropy in Section \ref{sec:min-max in Gaussian space}. We also provide some well-known results and technical computations in Appendix \ref{app:sec:removable singularity}--\ref{app:sec:local foliation}.

	\subsection*{Acknowledgments}
	We would like to thank Professor Bill Minicozzi for his interest. Part of this work was carried out when Z.W. was a postdoc at Max-Planck Institute for Mathematics in Bonn and he thanks the institute for its hospitality and financial support. X. Z. is partially supported by NSF grant DMS-1811293, DMS-1945178 and an Alfred P. Sloan Research Fellowship. 
	
	\section{Preliminaries}\label{sec:pre}
	\subsection*{Notations} We collect some notions here.
	
	Let $(M^{n+1},\partial M^{n+1},g)$ be a compact, oriented, smooth Riemannian manifold with smooth boundary, of dimension $3\leq n+1\leq 7$. In this paper, we always isometrically embed $(M,\partial M,g)$ into a closed manifold $(\widetilde M,\widetilde g)$ with dimension $n+1$. Assume that $(\widetilde M,\widetilde g)$ is embedded in some $\mathbb R^L, L\in \mathbb N$.
	\begin{basedescript}{\desclabelstyle{\multilinelabel}\desclabelwidth{10em}}
		\item[$B_r(p)$]  the geodesic ball of $(\wti M,g)$;
		\item[$\mathfrak X(\wti M)$] the space of smooth vector fields on $\wti M$;
		\item[$\mathfrak {\wti X}(M,\Sigma)$] the collection of $X\in\mathfrak X(\wti M)$ with $X(p)\in T_p(\partial M)$ for $p$ in a neighborhood of $\partial \Sigma$ in $\partial M$;
		\item[$\mc H^k$]  $k$-dimensional Hausdorff measure;
		\item[$\Gamma(M)$] the space of smooth Riemannian metrics on $M$;
		\item[$\mathcal R_k(M;\mathbb Z)$]	 the space of integer rectifiable $k$-currents with support in $M$;
		\item [$\mathcal R_k(M;\mathbb Z_2)$] the space of mod 2 rectifiable $k$-currents with support in $M$;
		\item [$G$] the group $\mathbb Z$ or $\mathbb Z_2$;
		\item [$Z_k(M,\partial M;G)$] the space of $T\in\mathcal R_k(M;G)$ and $\mathrm{spt}(\partial T)\subset\partial M$;
		\item [$\mathbf I_k(M;G)$] the space of $T\in\mathcal R_k(M;G)$ and $\partial T\in\mathcal R_{k-1}(M;G)$;
		\item [$\tau$] equivalent class of $Z_k(M,\partial M;G)$, i.e. $T_1\in\tau$ if and only if $T-T_1\in\mc R_k(\partial M;G)$ for any $T\in \tau$;
		\item [$\mathcal Z_k(M,\partial M;G)$] the space of equivalent classes in $Z_k(M,\partial M;G)$;
		\item [$|T|$] the integer rectifiable varifold associated with $T\in \mc R_k(M;\mb Z)$;
		\item [$\|T\|$] the Radon measure associated with $T\in \mc R_k(M;\mb Z)$;
		\item [$\mf M$ on $\mc R_k(M;G)$] the mass norm on $\mathcal R_k(M;G)$;
		\item [$\mf M$ on $\mc Z_k(M,\partial M;G)$] $\mathbf M(\tau)=\inf\{\mathbf M(T):T\in\tau\}$.
		\item [$\mc F$ on  $\mc R_k(M;G)$]  the flat metric on $\mc R_k(M;G)$;
		\item [$\mc F$ on $\mathcal Z_k(M,\partial M;G)$] $\mathcal F(\tau_1,\tau_2)=\inf\{\mathcal F(T_1,T_2):T_1\in\tau_1,T_2\in \tau_2\}$ for $\tau_1,\tau_2\in \mathcal Z_k(M,\partial M;G)$;
		\item [$\mc C(M)$] the space of sets $\Omega\subset M$ with finite perimeter (Cappccioppoli set);
		\item [$\mathfrak X(M,\partial M)$] the space of smooth vector field in $M$ so that $X(p)\in T_p(\partial M)$ for $p\in \partial M$;
	%	\item [\textcolor{magenta}{$\mathfrak X(U, U\cap \partial M)$}] \textcolor{magenta}{the set of $X\in \mathfrak X(M,\partial M)$ compactly supported in $U$;}
		\item [$\partial \Omega$] the (reduced) boundary of $\llbracket \Omega\rrbracket$ restricted in the interior of $M$, (thus, as an integer rectifiable current, $\partial \Omega\llcorner \partial M=0$);
	%	\item [$\nu_{\partial \Omega}$] the outward pointing unit normal of $\partial \Omega$;
		\item [$\mathcal V_k(M)$] the space of $k$-dimensional integral varifolds;
		\item [$\mathrm{VarTan}(V,p)$] the tangent space of $V$ at $p$;
		\item [$G_k(M)$] denotes the $k$-dimensional Grassmannian bundle over $M$;
		\item [$\mathbf F$ on $\mc V_k(M)$] $\mathbf F(V, W ) = \sup\{V (f)- W (f ) : f\in C_c(G_k(\mathbb R^L)),
		|f |\leq 1, \mathrm{Lip}(f ) \leq 1\},$
		for $V, W\in\mathcal V_k(M)$;
		\item [$\mf F$ on $\mc C(M)$]  $\mathbf F(\Omega_1,\Omega_2)=\mathcal F(\Omega_1-\Omega_2)+\mathbf F(|\partial\Omega_1|,|\partial \Omega_2| )$ for $\Omega_1,\Omega_2\in \mathcal C(M)$;
		\item [$\llbracket M\rrbracket$] the integral current associated with $M$;
		\item [${[M]}$] the integral varifold associated with $M$;
		\item [$\overline{\mathbf B}^{\mathbf F}_\epsilon(\Omega)$] the collection of $\Omega'\in \mathcal C(M)$ so that $\mathbf F(\Omega,\Omega')\leq \epsilon$, where $\Omega\in\mc C(M)$.
	\end{basedescript}

	For any $\tau\in \mathcal Z_k(M,\partial M;G)$, there is a \emph{canonical representative} $T\in \tau$ where $T\in Z_k(M,\partial M;G)$ and $T\llcorner\partial M=0$.
	
	Given $c > 0$, a varifold $V\in\mc V_k(M)$ is said to have {\em $c$-bounded first variation in an open subset $U\subset M$}, if
	\[|\delta V (X)|\leq c\,\int_M|X|d\mu_V, \text{ for any } X\in\mathfrak X(U,U\cap \partial M),
	\]
where $\mathfrak X(U,U\cap \partial M)$ is defined to be the subset of $\mk X(M,\partial M)$ compactly supported in $U$.
	
	Let $\Sigma$ be an $n$-dimensional manifold with smooth boundary. Recall that an {\em immersion} is a map $\phi:(\Sigma,\partial \Sigma)\hookrightarrow(M,\partial M)$. In this paper, we often use $\Sigma$ to denote both the immersion and the hypersurface when there is no ambiguity.
	
	We are interested in the following weighted area functional defined on $\mathcal C(M)$. Given $h\in C^{\infty}(M)$, define the {\em $\mathcal A^{h}$-functional} on $\mathcal C(M)$ by
	\begin{equation}\label{eq:def of Ah}
		\mathcal A^h(\Omega)=\mathbf M(\partial \Omega)-\int_{\Omega}h\,d\mathcal H^{n+1}.
	\end{equation}
	The {\em first variation formula} for $\mathcal A^{h}$ along $X\in\mathfrak X(M,\partial M)$ is 
	\begin{equation*}
		\delta \A^h\big|_{\Omega}(X)=\int_{\partial \Omega}\mathrm{div}_{\partial \Omega}X\,d\mu_{\partial \Omega}-\int_{\partial\Omega}h\langle X,\nu\rangle\,d\mu_{\partial\Omega},
	\end{equation*}
	where $\nu$ is the outward unit normal vector field of $\partial\Omega$.
	
	When the boundary $\partial\Omega=\Sigma$ has support on a smooth immersed hypersurface, by virtue of the divergence theorem, we have
	\begin{equation}\label{eq:1st variation}
		\delta \A^h\big|_{\Omega}(X)=\int_{\partial\Omega}(H-h)\langle X,\nu\rangle\,d \mu_{\partial\Omega}+\int_{\partial\Sigma}\langle X,\bm\eta\rangle\,d\mu_{\partial\Sigma},
	\end{equation}
	where $\bm\eta$ is the unit outer co-normal vector field of $\partial \Sigma$. If $\Omega$ is a critical point of $\A^h$, then \eqref{eq:1st variation} directly implies that $\Sigma$ must have mean curvature $H=h|_\Sigma$ and $\nu\perp \nu_{\partial M}$ along $\partial \Sigma$, where $\nu_{\partial M}$ is the outward unit normal vector field of $\partial M$.
	
	Recall that an immersed hypersurface is called an {\em $h$-hypersurface} if its mean curvature $H(x)=h(x)$ everywhere.
	
	\begin{comment}
	In this case, we can calculate the {\em second variation formula} for $\A^h$ along normal vector fields $X\in \mathfrak X(M)$ such that $X=\varphi\nu$ along $\partial\Omega=\llbracket\Sigma\rrbracket$ for some $\varphi\in C^\infty(\Sigma)$,
	\begin{align*}
	&\delta^2\A^h|_{\Omega}(X,X)=\mathrm{II}_{\Sigma}(\varphi,\varphi)\\
	=&\int_{\Sigma}\Big(|\nabla\varphi|^2-(|A|^2+\mathrm{Ric}(\nu,\nu)+\partial_\nu h)\varphi^2\Big)\,d\mu_{\Sigma}-\int_{\partial \Sigma}h^{\partial M}(\nu,\nu)\varphi^2\, d\mu_{\partial\Sigma}.
	\end{align*}
	\end{comment}

	\subsection{Regularity for boundaries minimizing the $\mc A^h$-functional}
	In \cite{Mor03}, F. Morgan gives a general way to prove the regularity of isoperimetric hypersurfaces in Riemannian manifolds. His methods can be applied to $\mc A^h$-functional to prove the regularity of $h$-hypersurfaces bounding a domain which minimizes the $\mc A^h$-functional; (see  \cite{ZZ18}*{Theorem 2.2}). In this part, we provide the regularity for $\mc A^h$-minimizers with free boundaries.
	
	Let $B$ be an $(n+1)$-ball in $\mb R^{n+1}$, and $S\subset B$ a compact embedded $n$-ball such that $\partial S\cap B=\emptyset$. Denote by $B^+$ and $B^-$ the two components of $B\setminus S$. %Let $\Omega\in \mc C(B^+)$. Although $B^+$ does not have smooth boundary, 
	Note that the $\mc A^h$-functional in \eqref{eq:def of Ah} is well-defined for $\Omega\in \mc C(B^+)$.

	\begin{theorem}\label{thm:regularity for minimizer}
		Suppose that $\Omega\in\mc C(B^+)$ minimizes the $\mc A^h$-functional with free boundary on $S$: that is, for any other $\Lambda\in\mc C(B^+)$ with $\spt\Vert\Lambda-\Omega\Vert \subset B$, we have $\mc A^h(\Lambda)\geq \mc A^h(\Omega)$. Then except for a set of Hausdorff dimension at most $(n-7)$, $\partial\Omega$ is smooth hypersurface embedded properly (under relative topology) in $B^+$ and meets $S\cap B^+$ orthogonally.
	\end{theorem}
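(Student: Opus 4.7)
The plan is to adopt the Grüter-style reflection construction of \cite{Gr87} to reduce the problem to the interior regularity theorem for $\mc A^h$-minimizers of Morgan \cite{Mor03}, as recorded in \cite{ZZ18}*{Theorem 2.2}. Since the statement is local, I would work in a small ball centered at an arbitrary point of $S$, and use Fermi coordinates along $S$ so that locally $S$ becomes the hyperplane $\{x^{n+1}=0\}$ inside $B$ and the reflection $\sigma(x^1,\ldots,x^n,x^{n+1}):=(x^1,\ldots,x^n,-x^{n+1})$ interchanges $B^+$ with $B^-$. The interior regularity off $S$ is handled directly by Morgan's theorem, so the content is at points of $S$.

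I would then form the symmetric doubled set $\tilde\Omega:=\Omega\cup\sigma(\Omega)$ in $B$ and the even extension $\tilde h(x):=h(\sigma(x))$ for $x^{n+1}<0$. Since a symmetric indicator function cannot jump across $S$, the boundary current $\partial\tilde\Omega$ receives no contribution from $S$; combined with the $\sigma$-invariance of the volume form this gives $\mc A^{\tilde h}(\tilde\Omega)=2\mc A^h(\Omega)$. For any competitor $\tilde\Omega'\subset B$ with $\spt\|\tilde\Omega'-\tilde\Omega\|\subset B$, I would decompose $\tilde\Omega'=\Omega'_+\cup\Omega'_-$ with $\Omega'_\pm:=\tilde\Omega'\cap B^\pm$, apply the minimizing hypothesis of $\Omega$ separately to the competitors $\Omega'_+$ and $\sigma(\Omega'_-)$ in $B^+$, and add the two inequalities. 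Together with the additivity $\M(\partial\tilde\Omega')\geq\M(\partial\tilde\Omega',B^+)+\M(\partial\tilde\Omega',B^-)$ and the $\sigma$-invariance of the volume term, this yields $\mc A^{\tilde h}(\tilde\Omega)\leq\mc A^{\tilde h}(\tilde\Omega')$, so $\tilde\Omega$ is an $\mc A^{\tilde h}$-minimizer in $B$.

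I would then invoke the interior regularity theorem to conclude that $\partial\tilde\Omega$ is a smoothly embedded hypersurface in $B$ outside a closed singular set of Hausdorff dimension at most $n-7$. Restricting to $B^+$ gives the claimed regularity of $\partial\Omega$ off a singular set of the same dimension. At any regular free-boundary point $q\in\partial\tilde\Omega\cap S$, the tangent hyperplane must be $\sigma$-invariant, so it is either $T_qS$ itself or contains the unit normal $\nu_S(q)$. The first possibility is excluded because $\chi_{\tilde\Omega}$ cannot jump across $S$ by the symmetry, so the second must hold, which is exactly the statement that $\partial\Omega$ meets $S\cap B^+$ orthogonally at $q$. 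Orthogonal boundary contact on the regular set, together with the dimension bound for the singular set, then gives the proper embedding in the relative topology on $B^+$.

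The main technical obstacle is that the Euclidean metric transported into Fermi coordinates along a curved $S$ is not exactly $\sigma$-invariant: its expansion in the normal direction carries odd-order terms driven by the second fundamental form of $S$. This is where Grüter's method is indispensable, since the asymmetric errors and the merely Lipschitz regularity of $\tilde h$ across $S$ must be absorbed as lower-order perturbations in the minimization without spoiling the Morgan-type regularity. A secondary point is that the singular set of $\partial\Omega$ could \emph{a priori} accumulate on $S$; but by applying the interior theorem to the doubled minimizer $\tilde\Omega$ we obtain a single uniform dimension bound that automatically controls both the interior and the free-boundary singular loci.
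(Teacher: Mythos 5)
Your strategy — Grüter reflection to reduce to Morgan's interior regularity — is the same as the paper's, but the central claim in your second paragraph, that the doubled set $\tilde\Omega$ is an \emph{exact} $\mc A^{\tilde h}$-minimizer in $B$, is false once $S$ has nonzero second fundamental form. The inequality $\M(\partial\tilde\Omega')\geq\M(\partial\tilde\Omega',B^+)+\M(\partial\tilde\Omega',B^-)$ combined with applying the hypothesis to $\Omega'_+$ and $\sigma(\Omega'_-)$ does not yield $\mc A^{\tilde h}(\tilde\Omega)\le\mc A^{\tilde h}(\tilde\Omega')$, because $\sigma$ is only a diffeomorphism, not an isometry, away from $S$: $\M(\partial(\sigma(\Omega'_-)))$ and $\M(\partial\Omega'_-)$ differ by a factor controlled by $\mathrm{Lip}(\sigma)$, and the volume term is likewise distorted. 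You do flag this in your last paragraph as a ``technical obstacle to be absorbed as lower-order perturbations,'' but that absorption \emph{is} the proof, not a remark: without a precise quantitative almost-minimizing statement, the appeal to ``the interior regularity theorem'' is not justified, because the exact-minimizer version of Morgan's theorem does not apply.

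The paper closes this gap by proving that the reflected current $\Omega'=\widetilde\Omega\llcorner B_R$ satisfies, for all competitors $\Lambda$ supported in small balls $B_r(p)$,
\[
\mu_{\partial \Lambda}(B_r(p))-\mu_{\partial\Omega'}(B_r(p))\geq -\beta'(n,c)\,r\,\bigl[\mu_{\partial \Lambda}(B_r(p))+\mu_{\partial\Omega'}(B_r(p))\bigr],
\]
obtained by estimating the Lipschitz distortion of $\sigma$ as $\mathrm{Lip}(\sigma|_{\widetilde B_{2r}(p)})\leq 1+\kappa r$, and by bounding the volume contribution of $\mc A^h$ by $c\,\mc H^{n+1}(\Lambda\triangle\Omega')$. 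This is exactly the $(\mathbf M,\xi,\delta)$-type almost-minimizing hypothesis in \cite{Mor03}*{Corollary 3.7, 3.8}, which is what actually delivers $C^{1,1/2}$ regularity. Note also that the paper never extends $h$ across $S$ at all: the prescribing function enters only through the lower-order volume error, so the Lipschitz-vs-smooth question for $\tilde h$ that you raise is sidestepped entirely. Your argument for orthogonality at regular free-boundary points, via $d\sigma$-invariance of the tangent plane, is sound and matches the standard reflection argument.
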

	
	We recall a few notations. Without loss of generality, we assume that $S\cap \spt\Vert \partial \Omega\Vert\neq \emptyset$. For $x\in B$, let $\xi(x)$ be the closest point to $x$ among $S$. Clearly, $\xi$ is well-defined in a small neighborhood of $S$. By possible rescaling, we assume that $B$ is contained in such a neighborhood. Then we define the reflection map $\sigma$ across $S$ by
	\[\sigma(x)=2\xi(x)-x.\]
	We have $\sigma^2=\mathrm{id}$ and define
	\[\wti\Omega=\Omega-\sigma_\#\Omega.\]
	Thus $\wti \Omega\in\mc C(V)$, where $V=B^+\cup(S\cap B)\cap \sigma(B^+)$. By possible rescaling, we may assume that $B_{3R}\subset B$. Set
	\[\Omega'=\wti\Omega\llcorner B_{R}.\]
	
	We also denote by $\wti B_r(p)=\sigma(B_r(p))$. By shrinking $r$ if it is necesssary, we can assume that there exists $\kappa>1$ so that
	\begin{equation}\label{eq:lip sigma}
		\mathrm{Lip}(\sigma|_{\wti B_{2r}(p)})\leq 1+\kappa r.
	\end{equation}
	
	\begin{proof}[Proof of Theorem \ref{thm:regularity for minimizer}]
		
		Take $p\in \spt(\partial\Omega')$. Then for $r<(R-|p|)/3$ and $\Lambda\in \mc C(V)$ so that $\spt\Vert \Lambda-\Omega'\Vert\subset B_r(p)$, we have
		\begin{align}
			\label{eq:partial Omega'}\mu_{\partial \Omega'}(B_r(p))
			&=\mu_{\partial\Omega'}(B_r(p)\cap B^+_R)+\mu_{\partial \Omega'}(B_r(p)\cap \sigma(B^+_R))\\
			%&=\mu_{\partial \Omega}(B_r(p))+\mu_{\partial\Omega'}\big(\sigma(\wti B_r(p)\cap B_R^+)\big)\nonumber\\
			&=\mu_{\partial \Omega}(B_r(p))+\mu_{\sigma_{\#}(\partial\Omega)}\big(\sigma(\wti B_r(p)\cap B_R^+)\big)\nonumber\nonumber\\
			&\leq\mu_{\partial \Omega}(B_r(p))+(1+\kappa r)^n\mu_{\partial\Omega}(\wti B_r(p))\nonumber\\
			&\leq\mu_{\partial \Omega}(B_r(p))+\mu_{\partial\Omega}(\wti B_r(p))+\beta(n)\kappa r \mu_{\partial\Omega}(\wti B_r(p))\nonumber\\
			&\leq\mu_{\partial \Omega}(B_r(p))+\mu_{\partial\Omega}(\wti B_r(p))+(1+\kappa r)^n\beta(n)\kappa r \mu_{\sigma_\#(\partial\Omega)}( B_r(p)).\nonumber
		\end{align}
		Here \eqref{eq:lip sigma} is used in the first and the last inequalities. The constant $\beta(n)$ is allowed to  change from line to line. On the other hand,
		\begin{align}
			\label{eq:partial Lambda}	\mu_{\partial\Lambda}(B_r(p)) 
			&\geq\mu_{\partial \Lambda}(B_r(p)\cap B_R^+)+\mu_{\partial \Lambda}(B_r(p)\cap\sigma(B_R^+))\\
			%&\geq\mu_{\partial \Lambda}(B_r(p)\cap B_R^+)+\mu_{\partial \Lambda}\big(\sigma(\wti B_r(p)\cap B_R^+)\big)\nonumber\\
			&\geq\mu_{\partial \Lambda}(B_r(p)\cap B_R^+)+(1+\kappa r)^{-n}\mu_{\sigma_\#(\partial \Lambda)}\big(\wti B_r(p)\cap B_R^+\big)\nonumber\\
			&\geq\mu_{\partial \Lambda}(B_r(p)\cap B_R^+)+\mu_{\sigma_\#(\partial \Lambda)}(\wti B_r(p)\cap B_R^+)-\beta(n)\kappa r\mu_{\sigma_\#(\partial \Lambda)}(\wti B_r(p)\cap B_R^+)\nonumber\\
			&\geq\mu_{\partial \Lambda}(B_r(p)\cap B_R^+)+\mu_{\sigma_\#(\partial \Lambda)}(\wti B_r(p)\cap B_R^+)-\beta(n)\kappa r\mu_{\partial \Lambda}(B_r(p)).\nonumber
		\end{align}
		Here the third inequality is from \eqref{eq:lip sigma}. 
		
		\begin{comment}
		Now let 
		\[  \Omega_1=(\Omega\setminus V)\cup \overline{(\Lambda\cap B^+)} \text{ and } \Omega_2= (\Omega\setminus V)\cup \overline{(\sigma_\#\Lambda\cap B^+)}.  \]
		Then $\Omega_i\in \mc C(B^+)$ with $\spt \|\Omega_i-\Omega\|\subset B$ for $i=1,2$. Recall that $\Omega$ is $\mc A^h$-minimizing. Hence we have that for $i=1,2$,
		\[\mc A^h(\Omega_i)\geq \mc A^h(\Omega).\]
		Assume that $|h|\leq c$. Hence we have
		\begin{align*}
		\mu_{\partial\Lambda}(B_r(p)\cap B_R^+)-\mu_{\partial\Omega}(B_r(p))&=  \mu_{\partial\Omega_1}(B^+)-\mu_{\partial\Omega}(B^+)\\
		&= \mc A^h(\Omega_1)-\mc A^h(\Omega)+\int_{\Omega_1}h-\int_\Omega h\\
		&\geq-c\mc H^{n+1}((\Lambda\triangle\Omega)\cap B_R^+)\\
		&\geq -c\mc H^{n+1}(\Lambda\triangle\Omega'),
		\end{align*}
		and
		\begin{align*}
		\mu_{\sigma_\#(\partial\Lambda)}(\wti B_r(p)\cap B_R^+)-\mu_{\partial\Omega}(\wti B_r(p)) &=  \mu_{\partial\Omega_2}(B^+)-\mu_{\partial\Omega}(B^+)\\
		&= \mc A^h(\Omega_2)-\mc A^h(\Omega)+\int_{\Omega_2}h-\int_\Omega h\\
		& \geq-c\mc H^{n+1}((\sigma_\#\Lambda\triangle\Omega)\cap \wti B_r(p)\cap  B_R^+) \\
		&\geq -c\mc H^{n+1}(\Lambda\triangle\Omega'),
		\end{align*}
		\end{comment}
		Recall that $\Omega$ is $\mc A^h$-minimizing. Assume that $|h|\leq c$. Hence we have
		\begin{align*}
			\mu_{\partial\Lambda}(B_r(p)\cap B_R^+)-\mu_{\partial\Omega}(B_r(p))\geq-c\mc H^{n+1}((\Lambda\triangle\Omega)\cap B_R^+)\geq -c\mc H^{n+1}(\Lambda\triangle\Omega'),
		\end{align*}
		and
		\begin{align*}
			\mu_{\sigma_\#(\partial\Lambda)}(\wti B_r(p)\cap B_R^+)-\mu_{\partial\Omega}(\wti B_r(p)) & \geq-c\mc H^{n+1}((\sigma_\#\Lambda\triangle\Omega)\cap \wti B_r(p)\cap  B_R^+) \\
			&\geq -c\mc H^{n+1}(\Lambda\triangle\Omega'),
		\end{align*}
		where $\triangle$ is the symmetric difference of two Cappccioppoli sets. The above two inequalities, together with \eqref{eq:partial Omega'} and \eqref{eq:partial Lambda} and the isoperimetric inequalities, imply that
		\begin{equation}\label{eq:estimate of minor perturbation}
			\mu_{\partial \Lambda}(B_r(p))-\mu_{\partial\Omega'}(B_r(p))\geq -\beta'(n,c)r[\mu_{\partial \Lambda}(B_r(p))+\mu_{\partial\Omega'}(B_r(p))].
		\end{equation}
		Here $\beta'(n,c)$ is a constant depends only on $c$ and $n$.
		%\textcolor{magenta}{(How to estimate $\mu_{\partial\Omega}(\wti B_r(p))$?)($\leq 2\mu_{\partial \Omega'}(B_r(p))$)}
		
		%\textcolor{magenta}{Was $c$ used to denote both a constant and the upper bound for $h$?}{\color{blue} We have used $\beta(n)$ to replace the constant $c(n)$.}
		
		Applying (\ref{eq:estimate of minor perturbation}) to \cite{Mor03}*{Corollary 3.7, 3.8}, $\partial \Omega'\llcorner B_{r}(p)$ is $C^{1,1/2}$. In particular, $\partial \Omega$ is a properly embedded free boundary $C^{1,1/2}$ hypersurface. Since $\partial \Omega$ is an $h$-hypersurface, the classical PDE argument implies the higher regularity.
	\end{proof}
	
	Note that such process works for any Riemannian manifold.  Let $(M^{n+1}, \partial M, g)$ be compact Riemannian manifold with boundary which is isometrically embedded into a closed manifold $(\wti M, g)$. Recall that $B_r(p)$ is denoted by the geodesic ball of $\wti M$ with radius $r$ and center at $p$.  We then have the following regularity theorem:
	\begin{theorem}\label{thm:regularity of h-minimizer}
		Given $\Omega\in\mc C(M)$, $p\in\spt\Vert\partial \Omega\Vert$, and some small $s > 0$, suppose that $\Omega\llcorner B_s(p)$ minimizes the $\mc A^h$-functional: that is, for any other $\Lambda\in\mc C(M)$ with $\spt\Vert\Lambda - \Omega\Vert\subset B_s(p)\cap M$, we have $\mc A^h(\Lambda)\geq \mc A^h(\Omega)$. Then except for a set of Hausdorff dimension at most $(n-7)$, $\partial\Omega\llcorner B_s (p)$ is a properly embedded hypersurface with free boundary on $\partial M$, and is real analytic if the ambient metric on $M$ is real analytic.
	\end{theorem}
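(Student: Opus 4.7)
The strategy is to reduce to the local Euclidean model treated in Theorem \ref{thm:regularity for minimizer}, splitting into an interior case and a boundary case. Fix $p\in\spt\Vert\partial\Omega\Vert$ and choose $s>0$ so small that either $B_s(p)\cap\partial M=\emptyset$ or $B_s(p)$ is contained in a Fermi coordinate neighborhood of $\partial M$. In the first case there is no boundary condition to worry about: $\Omega\llcorner B_s(p)$ is an interior $\mc A^h$-minimizer, so the Morgan-type regularity used in \cite{ZZ18}*{Theorem 2.2} directly yields a smooth embedded $h$-hypersurface away from a set of Hausdorff dimension at most $n-7$.

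The main work is the boundary case $p\in\partial M$ (or close to it). Here I would pick Fermi coordinates $(y,t)\in\R^n\times[0,\infty)$ around $p$ in which $\partial M=\{t=0\}$ and $M=\{t\ge 0\}$, and embed this into a slightly larger chart in $\wti M$ by allowing $t\in(-\delta,\delta)$. The geodesic reflection $\sigma:(y,t)\mapsto(y,-t)$ plays the role of the reflection map used in Theorem \ref{thm:regularity for minimizer}; since it fixes $\partial M$ pointwise and its differential at $\partial M$ is an orthogonal reflection in the metric $g$, the same Lipschitz estimate
\[
\mathrm{Lip}\bigl(\sigma|_{\wti B_{2r}(p)}\bigr)\le 1+\kappa r
\]
holds on the chart for some $\kappa$ depending on the ambient curvature and the second fundamental form of $\partial M$. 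Setting $\wti\Omega=\Omega-\sigma_\#\Omega$ and $\Omega'=\wti\Omega\llcorner B_R(p)$, the chain of inequalities \eqref{eq:partial Omega'}--\eqref{eq:partial Lambda} goes through verbatim in the Riemannian setting, because they rely only on the Lipschitz bound on $\sigma$ and on comparing competitors via the minimizing property of $\Omega$ in $M$; the bound $|h|\le c$ on a small neighborhood again produces the volume error controlled by $c\mc H^{n+1}(\Lambda\triangle\Omega')$. Combining these with the Riemannian isoperimetric inequality on $B_r(p)$ yields the almost-minimizing inequality
\[
\mu_{\partial\Lambda}(B_r(p))-\mu_{\partial\Omega'}(B_r(p))\ge -\beta'(n,c,g)\,r\bigl[\mu_{\partial\Lambda}(B_r(p))+\mu_{\partial\Omega'}(B_r(p))\bigr]
\]
for any competitor $\Lambda\in\mc C(V)$ with $\spt\Vert\Lambda-\Omega'\Vert\subset B_r(p)$. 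Morgan's regularity (\cite{Mor03}*{Corollary 3.7, 3.8}) now gives that $\partial\Omega'$ is a $C^{1,1/2}$ embedded hypersurface outside a set of Hausdorff dimension at most $n-7$. Because $\wti\Omega$ is antisymmetric under $\sigma$, the regular part of $\partial\Omega'$ is invariant under the reflection, which forces it to meet $\{t=0\}=\partial M$ orthogonally; restricting back to $\{t\ge 0\}$ we obtain that $\partial\Omega\llcorner B_s(p)$ is a $C^{1,1/2}$ properly embedded free boundary hypersurface.

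Finally, higher regularity is classical: the prescribed mean curvature condition $H=h$ together with the orthogonality $\nu\perp\nu_{\partial M}$ along $\partial\Sigma$ is a quasilinear elliptic boundary value problem of Neumann type. Standard Schauder theory upgrades $C^{1,1/2}$ to $C^\infty$, and if the ambient metric is real analytic the corresponding analytic regularity theorem (applied both in the interior and up to $\partial M$ via the analytic Neumann theory) gives real analyticity. The main obstacle in this proof is the boundary case: one must ensure that the Riemannian reflection across $\partial M$ produces the same type of almost-minimality and curvature estimates as the Euclidean reflection across a flat $S$; this is handled by working in Fermi coordinates where the metric differs from the product metric only by curvature terms, which are absorbed into the constant $\kappa$.
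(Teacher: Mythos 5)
Your proposal is correct and takes essentially the same route as the paper: the paper proves the Euclidean local model (Theorem \ref{thm:regularity for minimizer}) via reflection across the boundary plus Morgan's almost-minimizing regularity, and then simply notes that the same process works in any Riemannian manifold, which is exactly what you carry out. Your Fermi-coordinate geodesic reflection is only a cosmetic variant of the paper's nearest-point reflection $\sigma(x)=2\xi(x)-x$; both give the Lipschitz bound $1+\kappa r$, the same comparison inequalities, and the same conclusion including orthogonality at the free boundary and higher regularity by elliptic theory.
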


	\subsection{Estimates of touching sets}\label{subsec:estimates of touching sets}
	An immersed hypersurface is {\em almost embedded} if it locally decomposes into smooth embedded sheets that touch other sheets or $\partial M$ but do not cross.  A hypersurface is {\em properly embedded} if it is almost embedded and no sheets touch other sheets or $\partial M$. 
	
	Let $\mc S =\mc S(g)$ be the collection of all Morse functions $h$ such that the zero set $\Sigma_0 = \{h = 0\}$ is a compact, smoothly embedded hypersurface so that 
	\begin{itemize}
		\item $\Sigma_0$ is transverse to $\partial M$ and the mean curvature of $\Sigma_0$ vanishes to at most finite order;
		\item $\{x\in\partial M:H_{\partial M}(x)=h(x)\text{ or } H_{\partial M}(x)=-h(x)\}$ is contained in an $(n-1)$-dimensional submanifold of $\partial M$. 
	\end{itemize}

	\begin{lemma}\label{lem:Sg contains open dense}
		$\mc S(g)$ contains an open and dense subset in $C^\infty(M)$.
	\end{lemma}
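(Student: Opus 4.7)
The plan is to show that $\mc S(g)$ itself is both dense and open in $C^\infty(M)$, which certainly implies the claim. I would break the definition of $\mc S(g)$ into four independent generic conditions and verify density and openness for each. For density, starting from an arbitrary $h_0\in C^\infty(M)$, I would first replace $h_0$ by a nearby Morse function (classical density of Morse functions). Next, applying Sard's theorem simultaneously to $h$ on $M$ and to $h|_{\partial M}$ on $\partial M$, shift $h$ by a small constant $c$ that is a regular value of both; this renders $\Sigma_0=\{h=0\}$ a compact smoothly embedded hypersurface meeting $\partial M$ transversely. Finally, perturb $h$ in a collar of $\partial M$ so that $0$ is a regular value of each of the two smooth functions $h|_{\partial M}\pm H_{\partial M}$ on $\partial M$, again by Sard; the corresponding coincidence sets are then smooth $(n-1)$-submanifolds of $\partial M$, and their union lies in an $(n-1)$-submanifold, realizing the second bullet.

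The one delicate density step is arranging that the mean curvature of $\Sigma_0$ has no infinite-order zero. Since this is not a finite-codimension condition on $h$, a direct application of Sard or the transversality theorem fails. The plan is to follow \cite{ZZ18}*{Proposition 3.8}: introduce a finite-dimensional family $h_s=h+\sum_i s_i\phi_i$ of perturbations with $\phi_i$ supported in charts covering $\Sigma_0$, write out the $k$-jet of $H_{\Sigma_0(h_s)}$ in terms of the jets of $h_s$, and run a transversality argument in jet bundles at each fixed finite order $k$ to find parameters $s$ for which $H_{\Sigma_0(h_s)}$ has no $k$-th order zero on $\Sigma_0(h_s)$. A Baire-category/diagonal argument across $k$, made possible by the compactness of $\Sigma_0$, then produces a perturbation with no infinite-order zero that is arbitrarily $C^\infty$-small. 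Because each of the four density steps preserves the open conditions secured in the previous steps, they compose to yield an element of $\mc S(g)$ arbitrarily close to $h_0$. This third step is the main obstacle.

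For openness, each condition in the definition of $\mc S(g)$ is stable under small $C^\infty$-perturbation. Being Morse, having $0$ as a regular value of $h$, $h|_{\partial M}$, and $h|_{\partial M}\pm H_{\partial M}$, and the transversality $\Sigma_0\pitchfork\partial M$ are all $C^1$-open conditions on the compact manifold $M$. The only nonstandard condition is the absence of infinite-order zeros of $H_{\Sigma_0}$, whose openness I would verify by contradiction: if $h_n\to h$ in $C^\infty$ while $H_{\Sigma_0(h_n)}$ vanishes to infinite order at some point $p_n\in\Sigma_0(h_n)$, then, passing to a subsequence, $p_n\to p\in\Sigma_0(h)$, and $C^\infty$-convergence of $\Sigma_0(h_n)\to\Sigma_0(h)$ (hence of their induced metrics and mean curvatures) forces every derivative of $H_{\Sigma_0(h)}$ to vanish at $p$, contradicting $h\in\mc S(g)$. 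Density and openness together give the lemma.
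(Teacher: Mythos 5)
Your proposal is correct in substance and, at the level of its key inputs, follows the same route as the paper: Morse/Sard-type perturbations for the transversality and coincidence-set conditions, an appeal to \cite{ZZ18}*{Proposition 3.8} for the finite-order vanishing of the mean curvature of the zero set, and openness of the strengthened regular-value conditions. The organizational difference is that the paper extends $h_0$ and $H_{\partial M}$ to the closed manifold $\wti M\supset M$, performs all perturbations there (where \cite{ZZ18}*{Proposition 3.8} and genericity of Morse functions apply directly, and the zero set is a closed hypersurface), and only then restricts to $M$; you propose to run everything on $M$ itself. If you do the jet-transversality step directly on $M$, note that $\Sigma_0$ is a hypersurface with boundary on $\partial M$, so you either need a boundary-adapted version of that argument or should use the same extension trick as the paper; this is the one place your sketch needs extra care. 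On the plus side, your limiting argument for the openness of the ``no infinite-order zero of $H_{\Sigma_0}$'' condition is more explicit than what the paper writes: the paper's proof only verifies the Morse/regular-value conditions on its open set $\mc U_1$ and leaves the persistence of the finite-order condition implicit.

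One caveat about your framing: $\mc S(g)$ itself need not be open, so the opening claim overshoots. The second condition in its definition only asks the coincidence set $\{x\in\partial M: H_{\partial M}(x)=\pm h(x)\}$ to be \emph{contained} in an $(n-1)$-dimensional submanifold; if, say, $H_{\partial M}-h$ vanishes to infinite order along an $(n-1)$-submanifold of $\partial M$, an arbitrarily $C^\infty$-small (flat) perturbation can make the coincidence set contain an open subset of $\partial M$. What your openness argument actually treats is the stronger condition that $0$ is a regular value of $h|_{\partial M}\pm H_{\partial M}$, which is open; together with the other strengthened conditions this cuts out an open, dense subset contained in $\mc S(g)$ --- exactly what the lemma asserts, and exactly the set $\mc U_1$ the paper exhibits. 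So the overstatement is harmless for the lemma, but you should phrase the conclusion as openness and density of this strengthened subset rather than of $\mc S(g)$ itself.
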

	\begin{proof}
		For any $h_0\in C^\infty(M)$ and a neighborhood $\mc U_0$ of $h_0$ in $C^\infty(M)$, we are going to find an open subset of $\mc U_1\subset \mc U_0$ so that $\mc U_1\subset \mc S(g)$. First, $h_0$ can be extended to a function $\wti h_0\in C^\infty(\wti M)$. We can also extend $H_{\partial M}(x)$ to a smooth function $\wti H\in C^{\infty}(\wti M)$.  Second, we take a neighborhood $\wti {\mc U}$ of $\wti h_0$ in $C^\infty(\wti M)$ so that $\wti h|_M\in \mc U_0$ if $\wti h\in \wti{\mc U}$.

		By \cite{ZZ18}*{Proposition 3.8}, there exists a Morse function $\wti h_1\in\wti{\mc U}$ so that $\{\wti h_1=0\}$ is a closed, embedded hypersurface with mean curvature vanishing at most finite order. By perturbing $\wti h_1$ slightly, according to the generic existence of Morse functions, we can find $\wti h_2\in \wti{\mc U}$ so that
		\begin{itemize}
			\item $\wti h_2$, $\wti H\pm \wti h_2$ are all Morse functions on $\wti M$;
			\item $\{\wti h_2=0\}$ is a closed, embedded hypersurface which is transverse to $\partial M$ and has mean curvature vanishing at most finite order;
			\item $\{\wti H(x)=\wti h_2(x)\}$ and $\{\wti H(x)=-\wti h_2(x)\}$ are smooth, embedded hypersurfaces and transverse to $\partial M$. %\textcolor{magenta}{(quote the Morse lemma?)}
		\end{itemize}
		Denote by $h=\wti h_2|_M$. Then the last item implies that $\{x\in\partial M:H_{\partial M}(x)=h(x)\text{ or }-h(x)\}$ is contained in an $(n-1)$-dimensional submanifold of $\partial M$. Thus $h\in\mc S(g)$. Moreover, by the choice of $\wti h_2$, we have that $h$ and $\wti H|_M\pm h$ are Morse functions on $M$ with $0$ as regular value. Thus we conclude that for any $h'$ in a $C^\infty$ neighborhood $\mc U_1$ of $h$ in $C^\infty(M)$, $h'$ and $\wti H|_M\pm h'$ are still Morse functions with $0$ as regular value. $\mc U_1$ is the desired open set and Lemma \ref{lem:Sg contains open dense} is proved.
	\end{proof}

	\medskip
	In the next, we prove that for each $h\in \mc S(g)$, the touching set of an almost embedded free boundary $h$-hypersurface has dimension less than or equal to $(n-1)$.
	\begin{proposition}\label{prop:estimate of touching set}
		Let $h\in \mc S(g)$ and $\Sigma_1,\Sigma_2$ are two different connected, embedded, free boundary $h$-hypersurfaces in a connected open set $U$. Then $\Sigma_1\cap \Sigma_2$ and $\Sigma_1\cap\partial M$ are contained in a countable union of connected, smoothly embedded, $(n-1)$-dimensional submanifolds.
	\end{proposition}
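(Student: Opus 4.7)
The plan is to analyze $\Sigma_1\cap\partial M$ and $\Sigma_1\cap\Sigma_2$ separately, reducing each to the study of the zero set of a function that satisfies a second order linear elliptic PDE. The two defining conditions of $\mathcal{S}(g)$ are precisely what is needed to control those zero sets.

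First I would handle $\Sigma_1\cap\partial M$. Since $\partial\Sigma_1$ is already a smooth embedded $(n-1)$-dimensional submanifold of $\partial M$, it suffices to treat the interior touching points $p\in(\Sigma_1\setminus\partial\Sigma_1)\cap\partial M$. At such a $p$ the hypersurface $\Sigma_1$ is tangent to $\partial M$; comparing the mean curvatures computed with respect to a common unit normal gives $h(p)=H_{\partial M}(p)$ or $h(p)=-H_{\partial M}(p)$. Thus the interior touching set is contained in $\{x\in\partial M:H_{\partial M}(x)=h(x)\text{ or }H_{\partial M}(x)=-h(x)\}$, which by the second defining property of $h\in\mathcal{S}(g)$ lies in an $(n-1)$-dimensional submanifold of $\partial M$.

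Next, at a touching point $p\in\Sigma_1\cap\Sigma_2$, I would choose local coordinates in which $T_p\Sigma_1=T_p\Sigma_2$ is horizontal and write $\Sigma_i$ as the graph $z=u_i(x)$ with $u_i(0)=|\nabla u_i(0)|=0$. Subtracting the two graphical mean curvature equations produces a linear elliptic PDE $Lw=f$ for $w=u_1-u_2$, with $f\equiv 0$ when the unit normals satisfy $\nu_1(p)=\nu_2(p)$ and $f=\pm 2h|_{\Sigma_2}+O(|w|)$ when $\nu_1(p)=-\nu_2(p)$. In the same-normal case $w$ cannot vanish on any open subset (else $\Sigma_1=\Sigma_2$ by strong unique continuation), and the classical structure theorem for nodal sets of nontrivial solutions of linear elliptic equations (Hardt-Simon, Caffarelli-Friedman) puts $\{w=0\}$ in a countable union of smoothly embedded connected $(n-1)$-dimensional submanifolds. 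In the opposite-normal case the maximum principle fixes a sign on $h|_{\Sigma_2}$ at touching points. Where $h(p)\ne 0$, the Hessian $D^2 w(p)$ is nonzero (its trace is $\pm 2h(p)\ne 0$), so a Morse-type analysis confines the zero set near $p$ to at most $n-1$ dimensions; where $h(p)=0$, the finite-order vanishing of the mean curvature of $\{h=0\}$ built into $h\in\mathcal{S}(g)$ rules out $\Sigma_2$ coinciding with $\{h=0\}$ on any open set and forces $w$ to vanish only to finite order at $p$, so a \L{}ojasiewicz/Taylor-expansion argument again keeps the zero set inside a countable union of smooth $(n-1)$-dimensional submanifolds.

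The main obstacle is the opposite-normal subcase of the interior analysis: the PDE for $w$ is inhomogeneous, so the classical nodal set theory does not apply off the shelf, and the resolution hinges on combining the maximum principle with the finite-order vanishing built into $\mathcal{S}(g)$ to extract enough jet information on $w$ at its critical zero points. This closely parallels the interior analysis of Zhou-Zhu~\cite{ZZ18}*{Proposition~3.8}; the new free-boundary contribution is isolated entirely in the $\partial M$ part above.
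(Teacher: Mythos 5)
Your treatment of $\Sigma_1\cap\Sigma_2\cap\mathrm{Int}(M)$ is essentially a sketch of the Zhou--Zhu argument that the paper simply cites (\cite{ZZ18}*{Theorem 3.11}), and that part is acceptable. The genuine gap is in your treatment of $\Sigma_1\cap\partial M$, which is the actually new content of this proposition. You claim that at an interior touching point $p\in(\Sigma_1\setminus\partial\Sigma_1)\cap\partial M$, tangency forces the equality $h(p)=H_{\partial M}(p)$ or $h(p)=-H_{\partial M}(p)$, so that the whole interior touching set sits inside $\{H_{\partial M}=\pm h\}$. This is false: since $\Sigma_1$ lies on one side of $\partial M$, a tangential touching only yields a one-sided comparison of the second fundamental forms, hence an \emph{inequality} between $h(p)$ and $\pm H_{\partial M}(p)$, never an equality. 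A concrete counterexample is the unit ball with $\partial M$ the unit sphere ($H_{\partial M}\equiv n$) and $\Sigma_1$ a sphere of radius $1/2$ internally tangent to it, which is an $h$-hypersurface for any $h\in\mc S(g)$ equal to $2n$ near the touching point; there $h(p)\neq\pm H_{\partial M}(p)$ yet the touching set is nonempty. So the set of touching points with $h\neq\pm H_{\partial M}$, which your argument declares empty, can occur and is precisely the case that requires an argument.

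The missing step (and the one the paper supplies) is the following: near a touching point $p$ with $H_{\partial M}(p)\pm h(p)\neq 0$, write $\Sigma_1$ locally as a graph $u_1$ over $\partial M$; then $u_1$ satisfies an inhomogeneous linear elliptic equation of the form $Lu_1=H_{\partial M}(x)\pm h(x,u_1(x))$ whose right-hand side is nonvanishing on a small neighborhood. At a point of the touching set one has $u_1=\nabla u_1=0$, so the nonvanishing of $Lu_1$ forces the Hessian of $u_1$ to have rank at least one, and the implicit function theorem (cf. \cite{ZZ17}*{Lemma 2.8}) confines $\{u_1=\nabla u_1=0\}$ to an $(n-1)$-dimensional submanifold near $p$. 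Combining this with the set $\{x\in\partial M: H_{\partial M}(x)=\pm h(x)\}$, which the second defining property of $\mc S(g)$ places in an $(n-1)$-dimensional submanifold (this part of your argument is fine), and with $\partial\Sigma_1$ itself, gives the claimed covering of $\Sigma_1\cap\partial M$. Without the rank-one Hessian/implicit function theorem step your proof does not cover the generic touching points and therefore does not establish the proposition.
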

	\begin{proof}
		Recall that the argument in \cite{ZZ18}*{Theorem 3.11} implies that $\Sigma_1\cap\Sigma_2\cap \mathrm{Int}(M)$ is contained in a countable union of connected, smoothly embedded, $(n-1)$-dimensional submanifolds. Note that $\Sigma_1\cap\Sigma_2\subset(\Sigma_1\cap \partial M)\cup (\Sigma_1\cap\Sigma_2\cap \mathrm{Int}(M))$. Thus it suffices to prove that $\Sigma_1\cap \partial M$ is contained in the submanifolds as described.
		
		To do this, we first take $p\in\mathrm{Int}\Sigma_1\cap\partial M$ so that $h(p)\pm H(p)\neq 0$. Then there exists a neighborhood $U(p)\subset M$ of $p$ so that $\Sigma_1\cap U(p)$ can be written as a graph over $\partial M\cap U(p)$. Denote by $u_1$ the graph function. By shrinking the neighborhood, we can also assume that
		\begin{equation}\label{eq:H-h nozero}
			H_{\partial M}(x)+ h(x,u_1(x))\neq 0 \  \text{ and } \  H_{\partial M}(x)-h(x,u_1(x))\neq 0
		\end{equation}
		for all $x\in \partial M\cap U(p)$. Note that such a function satisfies an inhomogeneous linear elliptic PDE of the form
		\[ Lu_1=H_{\partial M}(x)+h(x,u_1(x)) \  \text{ or } \  Lu_1=H_{\partial M}(x)-h(x,u_1(x)).\]
		Together with \eqref{eq:H-h nozero}, then the Hessian of $u_1$ at $p$ has rank at least 1. The implicit function theorem then implies that, on a possibly smaller neighborhood of $p$, the touching set $\{u_1 = \nabla u_1 = 0\}$ is contained in an $(n- 1)$-dimensional submanifold; see \cite{ZZ17}*{Lemma 2.8} for more details. Therefore, we conclude that 
		\begin{equation}\label{eq:touching A}
			\{x\in\partial M\cap \mathrm{Int}\Sigma_1:H_{\partial M}\neq h(x) \text{ and } H_{\partial M}\neq -h(x)\}
		\end{equation}
		is contained in the submanifolds as described in the proposition. Recall that 
		\begin{equation}\label{eq:touching B}
			\{x\in\partial M:H_{\partial M}(x)=h(x)\text{ or } H_{\partial M}(x)=-h(x)\}
		\end{equation}
		is contained in an $(n-1)$-dimensional submanifold of $\partial M$. Then Proposition \ref{prop:estimate of touching set} follows from the fact that $\Sigma_1\cap \partial M$ is contained in the union of \eqref{eq:touching A}, \eqref{eq:touching B} and $\partial \Sigma_1$.
	\end{proof}

	\subsection{Compactness of free boundary $h$-hypersurfaces}
	Recall that an almost embedded hypersurface $\Sigma$ is called a {\em free boundary $h$-hypersurface} if $H_\Sigma=h|_\Sigma$ and $\Sigma$ meets $\partial M$ orthogonally along $\partial\Sigma$. Given $h\in \mc S(g)$, denote by $\mc P^h$ the collection of free boundary $h$-hypersurfaces such that $\llbracket\Sigma\rrbracket=\partial \Omega$ for some open set $\Omega\subset M$.
	
	\begin{comment}
Given a compact Riemannian manifold with boundary $(M^{n+1},\partial M,g)$ and $h\in\mc S(g)$, denote by $\mc P^h$ the collection of smooth, compact, almost embedded hypersurfaces $\phi:(\Sigma,\partial \Sigma)\hookrightarrow (M,\partial M)$ with free boundary such that $\llbracket\phi(\Sigma)\rrbracket=\partial \Omega$ for some open set $\Omega\subset M$, and the mean curvature of $\Sigma$ with respect to the outer normal of $\Omega$ is prescribed by $h$, i.e.,
\[H_\Sigma=h|_\Sigma.\] 
\end{comment}
	
	Note that when $h\in\mc S(g)$, the min-max free boundary $h$-hypersurfaces produced in Theorem \ref{thm:regularity of h minmax} satisfy the above requirements. Indeed, such $\llbracket\Sigma\rrbracket=\partial\Omega$ is a critical point of the weighted $\A^h$ functional:
	\[\A^h(\Omega)=\Area(\Sigma)-\int_{\Omega} h \,d\mc H^{n+1}.\]
	The second variation formula for $\A^h$ along normal vector field $X=\varphi \nu$ is given by
	\begin{align}
		\label{eq:2nd variation of Ah}&\delta^2\A^h|_\Omega(X,X)=\mathrm{II}_{\Sigma}(\varphi,\varphi)\\
		=&\int_\Sigma(|\nabla\varphi|^2-(\Ric(\nu,\nu)+|A|^2+\partial_\nu h)\varphi^2)\,d\mu_\Sigma-\int_{\partial\Sigma}h^{\partial M}(\nu,\nu)\varphi^2\,d\mu_{\partial\Sigma}.\nonumber
	\end{align}
	In the above formula, $\nabla\varphi$ is the gradient of $\varphi$ on $\Sigma$; $\mathrm{Ric}$ is the Ricci curvature of $M$; $A$ and $h^{\partial M}$ are the second fundamental forms of $\Sigma$ and $\partial M$ with normal vector fields $\nu$ and $\nu_{\partial M}$, respectively. 
	
We remark that in \eqref{eq:2nd variation of Ah}, $\Pi_\Sigma(\cdot,\cdot)$ can also be defined for any immersed free boundary $h$-hypersurfaces and it is a quadratic form on the space of $C^\infty$-functions on $\Sigma$ (not $\phi(\Sigma)$).
		
		The {\em Jacobi field of $\Sigma$} is defined to be a smooth function $\varphi$ on $\Sigma$ (not $\phi(\Sigma)$) so that
	\begin{equation}
		\left\{\begin{aligned}
			&(\Delta+|A|^2+\Ric(\nu,\nu)+\partial_\nu h)\varphi=0 \ \ \  \text{on } \Sigma,\\
			&\frac{\partial\varphi}{\partial \bm\eta}=h^{\partial M}(\nu,\nu)\varphi \ \ \ \ \ \ \ \ \ \ \ \ \ \ \ \ \ \ \ \ \ \ \  \text{ on }  \partial \Sigma.
		\end{aligned}\right.
	\end{equation}

	The classical Morse index for $\Sigma$ is defined to be the number of negative eigenvalues of the above quadratic form. However, since $\Sigma$ may touch itself and the boundary of $M$, a weaker version of the index is needed. Such a concept was introduced by Zhou \cite[Definition 2.1, 2.3]{Zhou19} for closed $h$-hypersurfaces based on Marques-Neves \cite{MN16}*{Definition 4.1}.
	%Such a concept is used by Marques-Neves \cite{MN16}*{Definition 4.1}. 
	
	\begin{definition}\label{def:k-unstable}
		Given $\Sigma\in \mc P^h$ with $\Sigma=\partial \Omega$, $k\in\mb N$ and $\epsilon\geq 0$, we say that $\Sigma$ is {\em$k$-unstable in an $\epsilon$-neighborhood} if there exist $0<c_0<1$ and a smooth family $\{F_v\}_{v\in\oB^k}\subset \mathrm{Diff}(M)$ with $F_0=\Id, F_{-v}=F_v^{-1}$ for all $v\in\oB^k$ (the standard $k$-dimensional unit ball in $\mb R^k$) such that, for any $\Omega'\in\B^\F_{2\epsilon}(\Omega)$, the smooth function:
		\[\A_{\Omega'}^h:\oB^k\rightarrow[0,+\infty),\ \ \ \ \A^h_{\Omega'}(v)=\A^h(F_v(\Omega'))\]
		satisfies
		\begin{itemize}
			\item $\A^h_{\Omega'}$ has a unique maximum at $m(\Omega')\in B_{c_0/\sqrt {10}}^k(0)$;
			\item $-\frac{1}{c_0}\Id\leq D^2\A^h_{\Omega'}(u)\leq -c_0\Id$ for all $u\in\oB^k$.
		\end{itemize} 
	\end{definition}
	Since $\Sigma$ is a critical point of $\A^h$, necessarily $m(\Omega)=0$.
	
%{\color{blue}	We remark that by replacing $\mc A^h$ with area functional, one can define a free boundary minimal hypersurface to be $k$-unstable in an $\epsilon$-neighborhood.}
When $h\equiv 0$, this reduces to the $k$-unstable notion for free boundary minimal hypersurfaces defined in \cite[Definition 5.5]{GLWZ19}.
	
	\begin{definition}\label{def:Morse index}
		Assume that $\Sigma\in\mc P^h$ or $\Sigma$ is a free boundary minimal hypersurface. Given $k\in\mb N$, we say that {\em the weak Morse index of $\Sigma$ is bounded (from above) by $k$}, denoted as 
		\[\mathrm{index}_w{(\Sigma)}\leq k,\]
		if $\Sigma$ is not $j$-unstable in 0-neighborhood for any $j\geq k+1$. $\Sigma$ is said to be {\em weakly stable} if $\mathrm{index}_w(\Sigma)=0$.
	\end{definition}
	
	\begin{remark}\label{rmk:weak index}
		We make several remarks:
		\begin{itemize}
			\item If $\Sigma\in \mc P^h$ is $k$-unstable in a 0-neighborhood, then it is $k$-unstable in an $\epsilon$-neighborhood for some $\epsilon>0$;
			\item All the concepts can be localized to an open subset $U\subset M$ by using $\mathrm{Diff}(U)$ in place of $\mathrm{Diff}(M)$;
			\item If $\Sigma\in \mc P^h$ is properly embedded, then $\Sigma$ is $k$-unstable if and only if its classical Morse index is $\geq k$.
		\end{itemize}
	\end{remark}
	
	We also have the following curvature estimates.
	\begin{theorem}[Curvature estimates for weakly stable free boundary $h$-hypersurfaces]\label{thm:curvature estimates}
		Let $3\leq (n+1)\leq 7$ and $V, U\subset M$ be two relatively open subset so that $\overline V\subset U$. Let $\Sigma\in \mc P^h$ be weakly stable in $U$ with $\Area(\Sigma)\leq C$, then there exists $C_1=C_1(n, M,U,V,\Vert h\Vert_{C^3}, C)$, such that 
		\[|A|(x)\leq C_1\text{\ for all } x\in \Sigma\cap V.\]
	\end{theorem}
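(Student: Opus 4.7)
My plan is to argue by contradiction via a blow-up analysis, following the standard strategy for curvature estimates in the stable minimal surface setting (Schoen--Simon--Yau, Schoen--Simon, and the free boundary adaptations of Guang--Li--Zhou), with modifications to accommodate the weak stability notion of Definition \ref{def:Morse index} and the almost embedded feature of $\Sigma\in\mc P^h$.

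Suppose the estimate fails. Then there exist sequences $\Sigma_j\in\mc P^h$, all weakly stable in $U$ with $\Area(\Sigma_j)\le C$, and points $x_j\in \Sigma_j\cap V$ with $|A_{\Sigma_j}|(x_j)\to \infty$. A point-picking argument (Ecker--Huisken type or the classical trick) produces new points $y_j\in \Sigma_j\cap U$ with $\lambda_j:=|A_{\Sigma_j}|(y_j)\to\infty$ and such that in a ball of radius $\lambda_j^{-1}$-times a slowly growing factor around $y_j$, the supremum of $|A|$ is comparable to $\lambda_j$. Rescaling $M$ at $y_j$ by $\lambda_j$, the prescribing function $h$ rescales to $\lambda_j^{-1}h(y_j+\cdot/\lambda_j)$, whose $C^3$-norm tends to $0$; the ambient metric converges in $C^\infty$ to the Euclidean metric on either $\mb R^{n+1}$ (if $\lambda_j\dist(y_j,\partial M)\to\infty$) or the closed half-space $\mb R^{n+1}_+$ (if $\lambda_j\dist(y_j,\partial M)$ stays bounded). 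In either case, the rescaled $h$-hypersurfaces $\widetilde\Sigma_j$ are almost embedded, have uniformly bounded area on compact sets (from monotonicity and the area bound), have $|A|(0)=1$, and their prescribed mean curvature tends to $0$ in $C^3$.

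The core technical step is to pass to a limit that is a nontrivial complete \emph{stable} minimal hypersurface. For this, I need to upgrade weak stability (Definition \ref{def:Morse index}) to the usable second variation inequality
\[
\int_{\widetilde\Sigma_j}\bigl(\Ric(\nu,\nu)+|A|^2+\partial_\nu h\bigr)\varphi^2\,d\mu - \int_{\partial\widetilde\Sigma_j} h^{\partial M}(\nu,\nu)\varphi^2\,d\mu \;\le\; \int_{\widetilde\Sigma_j}|\nabla\varphi|^2\,d\mu
\]
for every $\varphi\in C^\infty_c$ on the abstract domain of the immersion (not on its image) with support avoiding $\partial\widetilde\Sigma_j$ where the almost-embedded sheets touch. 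Using $h\in\mc S(g)$ and Proposition \ref{prop:estimate of touching set}, the touching set of $\widetilde\Sigma_j$ lies in a countable union of $(n-1)$-dimensional submanifolds, so by a standard capacity / logarithmic cutoff argument these sheets can be desingularized in the test function without cost, and the abstract stability inequality holds for all compactly supported $\varphi$. In the limit $j\to\infty$, the scaling kills $\partial_\nu h$ and $h^{\partial M}$, and we obtain a complete stable minimal hypersurface $\widetilde\Sigma_\infty$, either two-sided in $\mb R^{n+1}$ or a complete stable free boundary minimal hypersurface in $\mb R^{n+1}_+$, with Euclidean area growth and $|A_{\widetilde\Sigma_\infty}|(0)=1$.

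The main obstacle in the plan is precisely this conversion of the geometric weak-instability notion into the analytic stability inequality on an almost-embedded limit with $(n-1)$-dimensional touching set; once this is done, Schoen--Simon--Yau (for $n\le 5$) and Schoen--Simon (for $n=6$) in the interior case, together with their free boundary counterparts obtained by reflecting across the limit half-space boundary (as in the proof of Theorem \ref{thm:regularity for minimizer}), force $\widetilde\Sigma_\infty$ to be an affine hyperplane (or half-hyperplane meeting $\partial\mb R^{n+1}_+$ orthogonally). This contradicts $|A|(0)=1$ and completes the proof. The constant $C_1$ depends only on $n$, on $M$, on the separation between $V$ and $\partial U$, on $\|h\|_{C^3}$ (entering the scaling estimate and the error term $\partial_\nu h$), and on $C$ via monotonicity; this explains the stated dependence.
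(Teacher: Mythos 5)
Your blow-up skeleton (point picking, rescaling so that the prescribing function and the second fundamental form of $\partial M$ scale away, Schoen--Simon(--Yau) rigidity for the complete limit, reflection in the orthogonal free boundary case) is the same strategy as the proof the paper defers to, namely \cite{GWZ18}*{Theorem 3.2}. The genuine gap is in the step you yourself identify as the core one: converting weak stability into a second variation inequality on the abstract immersed surface. Your mechanism for this is incorrect. By Proposition \ref{prop:estimate of touching set} the touching set is contained in countably many $(n-1)$-dimensional submanifolds, i.e.\ it has codimension one \emph{inside} the $n$-dimensional hypersurface, and codimension-one sets have positive $W^{1,2}$-capacity; the logarithmic cutoff trick only removes sets of codimension at least two (such as the finitely many bad points in Theorem \ref{thm:compactness for FPMC}). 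So you cannot ``desingularize the test function without cost,'' and the claimed upgrade to the unconstrained stability inequality for all compactly supported $\varphi$ does not follow. Indeed, weak stability in the sense of Definition \ref{def:Morse index} only controls second variations along ambient vector fields: at a self-touching point the two sheets carry opposite outward normals, so any ambient field induces normal components $\varphi$ on one sheet and $-\varphi$ on the other, and deforming the sheets independently is not an admissible competitor. Passing from this constrained (ambient) nonnegativity to abstract stability of the immersion is exactly the difference between the weak and classical index (Remark \ref{rmk:weak index}), and it cannot be dismissed as a capacity issue.

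The argument can be repaired without that upgrade, which is essentially how the cited proof runs. Since the rescaled prescribing functions tend to $0$, the blow-up limit is minimal, so touching sheets coincide by the strong maximum principle and the limit is embedded; to verify its stability, test the \emph{whole stack} of converging sheets with a single ambient field extending $\varphi\,\nu_\infty$ in a tube around a compact piece of the limit. Each sheet contributes $\mathrm{II}(\pm\varphi+o(1),\pm\varphi+o(1))$, and since the form is quadratic the alternating signs are harmless, so ambient weak stability (which does force $\delta^2\mc A^h(X,X)\geq 0$ for every ambient $X$ supported in $U$, else one builds a $1$-unstable family from the flow of $X$) yields $m\,\mathrm{II}_{\mathrm{lim}}(\varphi,\varphi)\geq 0$ in the limit. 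Separately, your case analysis by $\lambda_j\dist(y_j,\partial M)$ misses the blow-ups at points where $\mathrm{Int}(\Sigma)$ touches $\partial M$: there the limit is a minimal hypersurface lying in a closed half-space and tangent to its boundary hyperplane, which must be handled by the maximum principle (forcing it to be the hyperplane) rather than by reflection, since no orthogonal free boundary is available at such points.
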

	The proof is the same as the free boundary minimal cases \cite{GWZ18}*{Theorem 3.2}. %which was proven by the last two authors with Q. Guang for free boundary minimal hypersurfaces.
	
	\bigskip
	Given $h\in\mc S(g),\Lambda>0$ and $I\in\mb N$, let 
	\begin{equation}\label{eq:def of Ph}
		\mc P^h(\Lambda, I):=\{\Sigma\in\mc P^h:\Area(\Sigma)\leq \Lambda,\mathrm{index}_w(\Sigma)\leq I\}.
	\end{equation}
	The main purpose is to prove this theorem:
	\begin{theorem}[Compactness for free boundary $h$-hypersurfaces]\label{thm:compactness for FPMC}
		Let $(M^{n+1},\partial M,g)$ be a compact Riemannian manifold with boundary of dimension $3\leq (n+1)\leq 7$. Assume that $\{h_k\}_{k\in\mb N}$ is a sequence of smooth functions in $\mc S(g)$ such that $\lim_{k\rightarrow\infty}h_k=h_\infty$ in smooth topology, where $h_\infty\in\mc S(g)$ or $h_\infty=0$. Let $\{\Sigma_k\}_{k\in\mb N}$ be a sequence of hypersurfaces such that $\Sigma_k\in\mc P^{h_k}(\Lambda,I)$ for some fixed $\Lambda>0$ and $I\in \mb N$. Then,
		\begin{enumerate}[label=(\roman*)]
			\item \label{compactness thm:smooth limit} up to a subsequence, there exists a smooth, compact, almost embedded free boundary $h_\infty$-hypersurface $\Sigma_{\infty}$ such that $\Sigma_k\rightarrow\Sigma_\infty$ (possibly with integer multiplicity) in the varifold sense, and hence also in the Hausdorff distance by monotonicity formula;
			\item \label{compactness thm:locally smoothly convergence} there exists a finite set of points $\mc Y\subset \Sigma_\infty$ with $\sharp(\mc Y)\leq I$, such that the convergence of $\Sigma_k\rightarrow\Sigma_\infty$ is locally smooth and graphical on $\Sigma_\infty\setminus \mc Y$;
			\item \label{compactness thm:generic multiplicity one convergence} if $h_\infty\in\mc S(g)$, then the multiplicity of $\Sigma_\infty$ is 1, and $\Sigma_\infty\in\mc P^{h_\infty}(\Lambda,I)$;
			\item \label{compactness thm:smooth and proper} assuming $\Sigma_k\neq \Sigma_\infty$ eventually and $h_k=h_\infty=h\in\mc S(g)$ for all $k$ and $\Sigma_k$ smoothly converges to $\Sigma_\infty$, then $\mc Y=\emptyset$, and $\Sigma_\infty$ has a non-trivial Jacobi field; 
			\item \label{compactness thm:index decreasing} if $h_k=h_\infty=h\in\mc S(g)$ and the convergence is not smooth, then $\mc Y$ is not empty and $\Sigma_\infty$ has strictly smaller weak Morse index than $\Sigma_k$ for all sufficiently large $k$;
			\item \label{compactness thm:limit index bound} if $h_\infty\equiv0$ and $\Sigma_\infty$ is properly embedded, then the classical Morse index of $\Sigma_\infty$ satisfies $\mathrm{index}(\Sigma_\infty)\leq I$ (without counting multiplicity)
		\end{enumerate}
	\end{theorem}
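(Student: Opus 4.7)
The plan is to adapt the compactness scheme of Sharp and Ambrozio--Carlotto--Sharp (see \cite{ACS17}) combined with the index-concentration analysis used in \cite{Zhou19} and the free boundary techniques of \cite{GWZ18}. Since each $\Sigma_k\in\mc P^{h_k}(\Lambda,I)$ has bounded area and $|h_k|$ is uniformly bounded, the associated varifolds $|\Sigma_k|$ have uniformly bounded mass and uniformly $c$-bounded first variation (in the free boundary sense) for some $c=c(\sup_k\|h_k\|_{C^0})$. Hence a subsequence converges in the $\mathbf F$-topology to an integral varifold $V_\infty$ with bounded first variation on the interior and free boundary on $\partial M$. This gives the rough varifold limit whose regularity will be upgraded in the following steps.

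The next step is to locate the bad set. I would run a Choi--Schoen style point-picking argument for the second fundamental form $|A_{\Sigma_k}|$, using the weak-stability curvature estimate of Theorem \ref{thm:curvature estimates} in the complement of the points of concentration. Specifically, define
\[
\mc Y_k=\{p\in\Sigma_k:|A_{\Sigma_k}|(p)\cdot r\geq 1 \text{ for all small } r\}
\]
and take successive subsequential Hausdorff limits of $\mc Y_k$; standard arguments show the limit $\mc Y\subset\overline{M}$ is finite with $\#\mc Y\le I$, because if $\#\mc Y\ge I+1$ one could produce, by rescaling around $I+1$ distinct concentration points, $I+1$ pairwise disjoint $(I+1)$-unstable localized deformations of $\Sigma_k$, contradicting $\mathrm{index}_w(\Sigma_k)\leq I$. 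Away from $\mc Y$, the curvature is locally bounded, so standard Schauder estimates for the prescribed mean curvature system (with the free boundary condition when touching $\partial M$) give locally smooth graphical subconvergence. The resulting limit on $M\setminus\mc Y$ is an almost embedded free boundary $h_\infty$-hypersurface $\Sigma_\infty^{\circ}$ which the sheets of $\Sigma_k$ converge to with some integer multiplicity $m_j$ on each connected component.

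To finish parts \ref{compactness thm:smooth limit}--\ref{compactness thm:locally smoothly convergence} I would show that $\mc Y$ consists of removable singularities, using the removable singularity theorem of Appendix \ref{app:sec:removable singularity} (both interior and boundary versions), together with the uniform area bound and the varifold structure of $V_\infty$ near $\mc Y$. This upgrades $\Sigma_\infty^{\circ}$ to a compact almost embedded free boundary $h_\infty$-hypersurface $\Sigma_\infty$ on all of $M$ with $\llbracket\Sigma_\infty\rrbracket=\partial\Omega_\infty$ for some $\Omega_\infty\in\mc C(M)$ coming from the limit Caccioppoli sets. For part \ref{compactness thm:generic multiplicity one convergence}, if some multiplicity $m_j\geq 2$, then on sheets $u_k^{(1)}<u_k^{(2)}$ approximating $\Sigma_\infty$ the normalized difference $(u_k^{(2)}-u_k^{(1)})/\|u_k^{(2)}-u_k^{(1)}\|$ subconverges to a nonnegative function $\varphi\not\equiv 0$ satisfying the Jacobi equation $L\varphi=0$ with the Robin boundary condition $\partial\varphi/\partial\bm\eta=h^{\partial M}(\nu,\nu)\varphi$; by the strong maximum principle, $\varphi>0$, and summing the $h_k$-equations for the two sheets yields $L\varphi=\pm 2h_\infty\varphi'$ on $\Sigma_\infty$ (with boundary adjustment), forcing $h_\infty$ to have a definite sign on each component of $\Sigma_\infty$. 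But $h_\infty\in\mc S(g)$ implies $\{h_\infty=0\}$ is a smooth hypersurface meeting $\Sigma_\infty$ with appropriate transversality (and \eqref{eq:touching B} controls the boundary touching), and this contradicts the sign constraint exactly as in \cite{Zhou19}. Hence $m_j=1$.

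For parts \ref{compactness thm:smooth and proper}--\ref{compactness thm:index decreasing}, assume $h_k=h_\infty=h\in\mc S(g)$. If the convergence is smooth and $\Sigma_k\neq\Sigma_\infty$ eventually, write $\Sigma_k$ as a normal graph $u_k\nu$ over $\Sigma_\infty$ (with free boundary condition on $\partial\Sigma_\infty$), and normalize $\tilde u_k=u_k/\|u_k\|_{C^1}$; the $h$-hypersurface equations for $\Sigma_k$ and $\Sigma_\infty$ linearize to $L\tilde u_k=O(\|u_k\|)$ and the Robin boundary condition, so $\tilde u_k$ subconverges to a nontrivial Jacobi field of $\Sigma_\infty$; smoothness of convergence forces $\mc Y=\emptyset$. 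Conversely, if the convergence fails to be smooth, then $\mc Y\neq\emptyset$, and around each $y\in\mc Y$ a blow-up analysis produces a nontrivial complete minimal hypersurface in $\mathbb R^{n+1}$ (or in a half space with free boundary condition) of finite index at least one; by a standard logarithmic cutoff argument, this bubble can be used to construct a compactly supported deformation that lowers $\mathrm{index}_w(\Sigma_k)$ by at least one compared to $\mathrm{index}_w(\Sigma_\infty)$, giving part \ref{compactness thm:index decreasing}. Finally, for part \ref{compactness thm:limit index bound} the limit $\Sigma_\infty$ is properly embedded, so by Remark \ref{rmk:weak index} the weak and classical Morse indices coincide, and the latter is lower semicontinuous under our smooth convergence on $\Sigma_\infty\setminus\mc Y$ together with the bubble contribution; combining yields $\mathrm{index}(\Sigma_\infty)\leq I$.

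The main obstacles I expect are two: first, handling concentration points $y\in\mc Y\cap\partial M$ where the blow-up analysis and removable singularity result must be carried out in a half-space with free boundary condition (this requires the boundary regularity from Theorem \ref{thm:regularity of h-minimizer} and the boundary version of removable singularities); second, in part \ref{compactness thm:generic multiplicity one convergence}, justifying that the sign restriction on $h_\infty$ derived from a positive Jacobi field $\varphi$ is genuinely incompatible with $h_\infty\in\mc S(g)$ in the free boundary setting, which requires using both bulk properties and the boundary touching control \eqref{eq:touching B} from Proposition \ref{prop:estimate of touching set}.
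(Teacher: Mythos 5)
Most of your outline tracks the paper's actual proof: curvature estimates for weakly stable pieces plus a concentration-point count bounded by the weak index, the removable singularity theorem of Appendix \ref{app:sec:removable singularity} at interior and boundary points, a Jacobi field from normalized sheet differences for \ref{compactness thm:smooth and proper} (the paper is more careful here, working with the immersion and a partition of unity over finitely many embedded pieces, since $\Sigma_\infty$ is only almost embedded and cannot be treated as a single global graph), and for \ref{compactness thm:index decreasing} an extra unstable direction at each point of $\mc Y$ combined, after cutting off near $\mc Y$, with the unstable directions of $\Sigma_\infty$. For \ref{compactness thm:index decreasing} the paper does not blow up at all: it observes that if $\Sigma_k$ were weakly stable in $B_r(p)$, the curvature estimate would force smooth convergence there, so non-smooth convergence directly yields a localized unstable family $\{F'_v\}$ supported in $B_r(p)$; your bubble argument is a workable variant but needs the extra step that a nonflat blow-up limit is unstable, and your sentence about ``lowering $\mathrm{index}_w(\Sigma_k)$'' has the inequality backwards (the point is $\mathrm{index}_w(\Sigma_k)\geq \mathrm{index}_w(\Sigma_\infty)+1$).

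The genuine gap is your argument for \ref{compactness thm:generic multiplicity one convergence}. Your claimed conclusion that a positive solution of ``$L\varphi=\pm 2h_\infty\varphi'$'' forces $h_\infty$ to have a definite sign on $\Sigma_\infty$, and that this contradicts $h_\infty\in\mc S(g)$ via transversality of $\{h_\infty=0\}$, does not work: nothing in the definition of $\mc S(g)$ prevents $h_\infty$ from being, say, strictly positive on a given hypersurface, and the condition \eqref{eq:touching B} concerns $\partial M$, not $\Sigma_\infty$. Moreover the equation itself is not the correct linearization: here $h_\infty$ is a fixed nonzero function (you are conflating this with the $\epsilon_j h$ rescaling used later in Theorem \ref{thm:multi one for relative sweepout}), so the inhomogeneous term $2h_\infty$ is not absorbed by normalizing the sheet difference and the limit equation you write is not available. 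The mechanism the paper (following \cite{Zhou19}*{Theorem 2.6(iii)}) uses is different and much more direct: since each $\Sigma_k=\partial\Omega_k$, the outer normal alternates between consecutive graphical sheets, so if the multiplicity were at least $2$ the limit would satisfy $H_{\Sigma_\infty}=h_\infty$ and $H_{\Sigma_\infty}=-h_\infty$ with respect to the same normal, hence $h_\infty\equiv 0$ and $H_{\Sigma_\infty}\equiv 0$ on $\Sigma_\infty$; then $\Sigma_\infty$ is an open subset of the zero set $\{h_\infty=0\}$ on which its mean curvature vanishes identically, contradicting the finite-order vanishing requirement in the definition of $\mc S(g)$. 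You should replace your sign/transversality step by this argument (it also gives $\Sigma_\infty\in\mc P^{h_\infty}(\Lambda,I)$ once multiplicity one and the index semicontinuity are in hand).
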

	
	\begin{proof}
		The proof follows essentially the same way as \cite{Sharp17}*{Theorem 2.3} and \cite{GWZ18}*{Theorem 4.1}; we will only provide necessary modifications.
		
		\bigskip	
		{\noindent\bf{Part 1:}}
		The same argument in \cite{Zhou19}*{Theorem 2.6}, by replacing \cite{Zhou19}*{Theorem 2.5} with Theorem \ref{thm:curvature estimates}, implies that $\Sigma_k$ converges locally smoothly and graphically to an almost embedded free boundary $h_\infty$-hypersurface $\Sigma_\infty$ (possibly with integer multiplicity) away from at most $I$ points, which we denote by $\mc Y$.	Now we prove that $\mc Y$ are all removable.
		
		\medskip
		{\noindent\em Case 1: Suppose $p$ is in the closure of $\Sigma$ and $p\notin \partial M$}.
		
		\medskip	
		Then the argument is the same as that in \cite{Zhou19}*{Theorem 2.7, Part 1}.
		
		\medskip
		{\noindent\em Case 2: Suppose that $p\in \partial M$.}
		
		\medskip
		Using the boundary removable singularity result Theorem \ref{thm:removable touching set}, we see that $p$ is also a removable singularity.
		\begin{comment}
		{\noindent\em Case 2: Suppose $p\in\partial M$ and $p$ lies in the closure of $\partial\Sigma$ (i.e., $p\in\partial M$, $(\partial\Sigma\setminus\{p\})\cap B_\rho(p)\neq \emptyset$
			for all $\rho > 0$).}
		
		\medskip
		Using the boundary removable singularity result, \cite{ACS17}*{Theorem 27}, we see that $p$ is also a removable singularity.
		
		\medskip
		{\noindent\em Case 3: Suppose $p\in\partial M$ and $p$ is not in the closure of $\partial \Sigma$ (i.e., $p\in\partial M$, $(\partial\Sigma\setminus\{p\})\cap B_{\rho_0}(p)=\emptyset$ for some $\rho_0> 0$).}
		
		\medskip
		Note that the same argument in \cite{GWZ18}*{Theorem 4.1, Case 3} implies that $\lambda_i^{-1}(\Sigma_\infty-p)$ converges smoothly and locally uniformly to $T_p(\partial M)$, where $\lambda_i$ is a sequence of positive numbers converging to 0. Note that the limit may not be multiplicity 1 anymore. Nevertheless, we conclude that $p$ is also removable by Theorem \ref{thm:removable touching set}. 
		\end{comment}
		
		Up to here, we have finished proving \ref{compactness thm:smooth limit} and \ref{compactness thm:locally smoothly convergence}. The argument in \cite{Zhou19}*{Theorem 2.6(iii)(v)} also works for \ref{compactness thm:generic multiplicity one convergence}\ref{compactness thm:limit index bound} here.

		\bigskip
		{\noindent\bf Part 2:} We now prove \ref{compactness thm:smooth and proper}. It suffices to produce a Jacobi field for the second variation $\delta^2\A^h$ along $\Sigma_\infty$. Recall that the Jacobi fields associated with $\delta^2\A^h$ along a free boundary $h$-hypersurface $\Sigma$ satisfy
		\begin{equation}\label{equation:Jacobi field}
			\left\{\begin{aligned}
				& L^h_\Sigma\varphi=0, \text{\ \ on $\Sigma$},\\
				&\frac{\partial \varphi}{\partial \bm\eta}=h^{\partial M}(\nu,\nu)\varphi, \text{\ \  along $\partial\Sigma,$}
			\end{aligned}\right.
		\end{equation}
		where $L^h_\Sigma=\Delta_\Sigma+\Ric(\nu,\nu)+|A|+\partial_\nu h$ and $\bm\eta$ is the outward co-normal of $\Sigma$. 
		
		Recall that $(M,\partial M,g)$ can be isometrically embedded into a closed Riemannian manifold $(\wti M^{n+1},\wti g)$. Let $\mathfrak {\wti X}(M,\Sigma)$ be the space of vector fields $X\in \mathfrak X(\wti M)$ so that $X(p)\in T_p(\partial M)$ for $p$ in a small neighborhood of $\partial \Sigma$ in $\partial M$.
		
		Denote by $\phi_\infty:(\Sigma_\infty,\partial \Sigma_\infty)\hookrightarrow(M,\partial M)$ the immersion. Now we fix a relative open set $V$ of $\Sigma_\infty$ so that $\phi_\infty|_V$ is an embedding.

		Now let $X\in\mathfrak {\wti X}(M, \Sigma_\infty)$ be an extension of the unit normal vector field of $\phi_\infty(V)$. Let $\Phi(x, t)$ be the one-parameter family of diffeomorphisms of $\wti M$ associated with $X$, so that $\frac{\partial \Phi}{\partial t}(x, t) = X(\Phi(x, t))$. Let $V_\delta$ be the $\delta$-thickening of $V$ with respect to $\Phi$ so that
		\[V_\delta:= \{\Phi(x, t) : x\in V \text{ and } |t| < \delta\}.\]	
		Then for sufficiently large $k$, there exists $\varphi_k(x)\in C^\infty(\Sigma_\infty)$ so that 
		\begin{gather*}
			\phi_k(\Sigma_k)\cap V_\delta=\{\Phi(x,\varphi_k(x)):x\in \phi_\infty(V)\},
		\end{gather*}
		where $\phi_k:(\Sigma_k,\partial\Sigma_k)\hookrightarrow(M,\partial M,g)$ is the immersion map. In the following, we often omit $\phi_k$ and $\phi_\infty$ for simplicity when there is no ambiguity.
		
		Denote by $\Phi^k(x,t)=\Phi(x,t\varphi_k(x))$ and 
		\begin{equation*}
			V_k(t)=\{\Phi^k(x, t):x\in V\}, \ \ \ \  R_k(t)=\{\Phi^k(x, t):x\in V\cap\partial \Sigma_\infty\}.
		\end{equation*}
		Then we have $V_k(0)=V$, $V_k(1)=\Sigma_k\cap V_\delta$.
		
		Now consider any vector field $Z\in\mathfrak {\wti X}(M,\Sigma_\infty)$ such that $|Z|+|\nabla Z|\leq 1$ and $Z|_{\wti M\setminus V_\delta}=0$. Let $(\Psi_t)_t$ be the associated one-parameter family of diffeomorphisms of $\wti M$. 
		From the fact that $\Sigma_k$ and $\Sigma_\infty$ are $h$-hypersurfaces, we have 
		\[\int_0^1\frac{d}{dt}\Big[\int_{V_k(t)}\dv Z-h\langle Z,\nu\rangle\,d\mu_{V_k(t)}\Big]\,dt=0.\]
		
		The computation in Appendix \ref{section:2nd variation} gives that 
		\begin{align*}
			0=&\int_0^1\frac{d}{dt}\Big[\int_{V_k(t)}\dv Z-h\langle Z,\nu\rangle\,d\mu_{V_k(t)}\Big]\,dt\\
			=&\int_0^1\Big[\int_{V_k(t)}\Big(\langle\nabla^\perp(X^\perp),\nabla^\perp(Z^{\perp})\rangle-\Ric(X^\perp,Z^\perp)-|A|^2\langle X^\perp,Z^\perp\rangle\\
			&-(\partial_\nu h)\langle X^\perp,Z^\perp\rangle\Big)\,d\mc H^n+ \int_{R_k(t)}\langle\nabla_{X^\perp}Z^\perp,\nu_{\partial M}\rangle\,d\mc H^{n-1}+\\
			&+\int_{V_k(t)}\wti \Xi_1(X,Z,\mf H)\, d\mc H^{n}+\int_{R_k(t)}\wti\Xi_2(X,Z,\mf H,\bm\eta,\nu_{\partial M})\, d\mc H^{n-1}\Big]\,dt.
		\end{align*}
		Here we used the assumption of $Z|_{\wti M\setminus V_\delta}=0$ in the last equality. By pulling everything back to $\Sigma_\infty$ and letting $Z|_{\Sigma_\infty}=\zeta\nu$, we obtain
		\begin{align*}
			0=&\int_0^1\Big[\int_{V}\Big(\langle\nabla\varphi_k,\nabla\zeta\rangle- \Ric(\nu,\nu)\varphi_k\zeta-|A^{\Sigma_\infty}|^2\varphi_k\zeta-(\partial_\nu h)\varphi_k\zeta\Big)\,d\mc H^n\\
			&-\int_{V\cap \partial \Sigma_\infty}h^{\partial M}(\nu,\nu)\varphi_k\zeta\,d\mc H^{n-1}+\int_{V}\wti W_k(t)(\varphi_k,\zeta)\, d\mc H^{n}+\\
			&+\int_{V\cap\partial\Sigma_\infty}\wti w_k(t)(\varphi_k,\zeta)\, d\mc H^{n-1}\Big]\,dt,
		\end{align*}
		where 
		\begin{gather*}
			\wti W_k(t)(\varphi_k,\zeta)\leq \epsilon_k(\Sigma_\infty,\wti M)|\varphi_k|, \ \ \ \wti w_k(t)(\varphi_k,\zeta)\leq \epsilon_k(\Sigma_\infty,\wti M)|\varphi_k|.
		\end{gather*}
		Here $\epsilon_k\rightarrow 0$ uniformly as $k\rightarrow\infty$ and we used that $|Z|+|\nabla Z|\leq 1$. Now letting $W_k=\int_0^1\wti W_k(t)dt$ and $w_k=\int_0^1 \wti w_k(t)dt$, by Fubini theorem, we have
		\begin{align}
			\label{eq:good in embedded piece}	0=&\int_{\Sigma_\infty}\Big(\langle\nabla\varphi_k,\nabla\zeta\rangle- \Ric(\nu,\nu)\varphi_k\zeta-|A^{\Sigma_\infty}|^2\varphi_k\zeta-(\partial_\nu h)\varphi_k\zeta\Big)\,d\mc H^n\\
			&-\int_{\partial \Sigma_\infty}h^{\partial M}(\nu,\nu)\varphi_k\zeta\,d\mc H^{n-1}+\int_{\Sigma_\infty} W_k(\varphi_k,\zeta)\, d\mc H^{n}+\int_{\partial\Sigma_\infty} w_k(\varphi_k,\zeta)\, d\mc H^{n-1}.\nonumber
		\end{align}
		Here we used the fact that $\zeta=0$ on $\Sigma_\infty\setminus V$.
		
		Now we take finitely many relatively open subsets $\{V_\alpha\}$ of $\Sigma_\infty$ so that $\phi_\infty|_{V_\alpha}$ is an embedding for each $j$ and 
		\[ \bigcup_\alpha V_\alpha=\Sigma_\infty.\] 
		Then we can also find finitely many non-negative cut-off functions $\zeta_\alpha\in C^\infty(\Sigma_\infty)$ so that $\zeta_\alpha=0$ on $\Sigma_\infty\setminus V_\alpha$ and $\sum \zeta_\alpha=1$. Then by \eqref{eq:good in embedded piece}, for each $\alpha$ and each $u\in C^\infty(\Sigma_\infty)$,
		\begin{align*}
			0=&\int_{\Sigma_\infty}\Big(\langle\nabla\varphi_k,\nabla(\zeta_\alpha u)\rangle- \Ric(\nu,\nu)\varphi_k\zeta_\alpha u-|A^{\Sigma_\infty}|^2\varphi_k\zeta_\alpha u-(\partial_\nu h)\varphi_k\zeta_\alpha u\Big)\,d\mc H^n\\
			&-\int_{\partial \Sigma_\infty}h^{\partial M}(\nu,\nu)\varphi_k\zeta_\alpha u\,d\mc H^{n-1}+\int_{\Sigma_\infty} W_k(\varphi_k,\zeta_\alpha u)\, d\mc H^{n}+\int_{\partial\Sigma_\infty} w_k(\varphi_k,\zeta_\alpha u)\, d\mc H^{n-1}.\nonumber
		\end{align*}
		Adding all of them together, we have 
		\begin{align*}
			0=&\int_{\Sigma_\infty}\Big(\langle\nabla\varphi_k,\nabla u\rangle- \Ric(\nu,\nu)\varphi_ku-|A^{\Sigma_\infty}|^2\varphi_ku-(\partial_\nu h)\varphi_ku\Big)\,d\mc H^n\\
			&-\int_{\partial \Sigma_\infty}h^{\partial M}(\nu,\nu)\varphi_ku\,d\mc H^{n-1}+\sum_{\alpha }\Big[\int_{\Sigma_\infty} W_k(\varphi_k,\zeta_\alpha u)\, d\mc H^{n}+\int_{\partial\Sigma_\infty} w_k(\varphi_k,\zeta_\alpha u)\, d\mc H^{n-1}\Big].\nonumber
		\end{align*}

		Let $\wti \varphi_k=\varphi_k/\Vert \varphi_k\Vert_{L^2(\Sigma_\infty)}$. Then the standard PDE theory implies that $\wti\varphi_k$ converges smoothly to a nontrivial $\varphi\in C^\infty(\Sigma_\infty)$ satisfying equation (\ref{equation:Jacobi field}), so we finish proving \ref{compactness thm:smooth and proper}.

		\bigskip
		{\noindent\bf Part 3:}
		In this part, we prove \ref{compactness thm:index decreasing}. The process is similar to \cite{GLWZ19}*{Proposition 5.2}. %For the sake of completeness, we provide more details here.
		
		Assume $\Sigma_\infty$ has weak index $I$. If $\Sigma_k$ does not smoothly converge to $\Sigma_\infty$, then there exists a finite set $\mc Y\subset\Sigma_\infty$ so that for any $r>0$, $B_r(p)\cap \Sigma_k$ does not smoothly converge to $B_r(p)\cap\Sigma_\infty$ and $\Sigma_k$ smoothly converges to $\Sigma_\infty$ outside $B_r(\mc Y):=\cup_{p\in \mc Y}B_r(p)$. We now prove that $\mathrm{index}_w(\Sigma_k)\geq I+1$ for large $k$.
		
		By the Definition \ref{def:Morse index} of weak index, there exists $0<c_0<1$ and smooth family $\{F_v\}_{v\in \oB^I}\subset \mathrm{Diff}(M)$ with $F_0=\Id, F_{-v}=F_v^{-1}$ for all $v\in \oB^I$ such that for any $\Omega'\in \B^\F_{2\epsilon}(\Omega_\infty)$ the smooth function:
		\[\A_{\Omega'}^h:\oB^I\rightarrow[0,+\infty),\ \ \ \ \A^h_{\Omega'}(v)=\A^h(F_v(\Omega'))\]
		satisfies
		\begin{itemize}
			\item $\A^h_{\Omega'}$ has a unique maximum at $m(\Omega')\in B_{c_0/\sqrt {10}}^I(0)$;
			\item $-\frac{1}{c_0}\Id\leq D^2\A^h_{\Omega'}(u)\leq -c_0\Id$ for all $u\in\oB^I$.
		\end{itemize} 
		Denote by $X_j(x)=\frac{d}{dt}|_{t=0}F_{te_j}(x)$. Then $X_j\in \mathfrak X(M,\partial M)$.

We can shrink $r$ so that $\{X_j|_{\Sigma_\infty\setminus B_r(\mc Y)}\}_{j=1}^I$ is still linearly independent. Let $\xi_r$ be a cut-off function satisfying $0\leq \xi_r\leq 1$ and $\xi|_{B_r(\mc Y)}=0$ and $\int_{\Sigma_\infty}|\nabla\xi_r|^2\rightarrow 0$ and $\xi_r\rightarrow 1$ as $r\rightarrow 0$. By Appendix \ref{section:cut-off}, we can shrink $r$ so that there exists $\epsilon>0$ satisfying
		\begin{equation}\label{eq:negative definite on Sigma infty outside single set}
			-(2/c_0)\sum_{j}a_j^2<\mathrm{II}_{\Sigma_\infty}(\sum_ja_j\xi_rX_j^\perp,\sum_ja_j\xi_rX_j^\perp)<-(c_0/2)\sum_ja_j^2,\ \ 1\leq j\leq I,
		\end{equation}
		for $\sum_ja_j^2\neq 0$. Recall that $\Sigma_k\setminus B_r(\mc Y)$ smoothly converges to $\Sigma_\infty\setminus B_r(\mc Y)$. Hence for sufficiently large $k$,
		\[-(2/c_0)\sum_{j}a_j^2<\mathrm{II}_{\Sigma_k}(\sum_ja_j\xi_rX_j^\perp,\sum_ja_j\xi_rX_j^\perp)<-(c_0/2)\sum_ja_j^2,\ \ 1\leq j\leq I.\]
		Here we used the fact that $\xi_r=0$ on $B_r(\mc Y)$.
		
		By assumptions, $B_r(p)\cap \Sigma_k$ does not smoothly converge to $B_r(p)\cap\Sigma_\infty$ for $p\in \mc Y$. Hence $\Sigma_k$ is not weak stable in $B_r(p)$. This implies that for each $k$, there exists $\{F'_v\}_{v\in[-1,1]}\subset \mathrm{Diff}(M)$ such that 
		\[ F'_v|_{M\setminus B_r(p)}=\Id,   \]
		and the smooth function $\mc A^h(\Omega')$ satisfying, for some $c_0'>0$,
		\begin{equation}\label{eq:new unstable vector}
			-\frac{1}{c_0'}\Id\leq D^2\A^h_{\Omega'}(u)\leq -c_0'\Id , 
		\end{equation}
		for all $u\in(-1,1)$ and $\Omega'\in \B^{\mf F}_{2\epsilon}(\Omega_k)$.

		For any $X\in \mathfrak X(M,\partial M)$, denote by $\Phi^X_t$ the flow of $X$. Now we define $\wti F:\oB^{I+1}\rightarrow \mathrm{Diff}(M)$ by 
		\[  \wti F_{v}=F_{v_0}'\circ \Phi_1^{\sum_{j=1}^Iv_j\xi_rX_j}.\]
		Here $v=(v_0,\cdots,v_I)$. Then for any $v\in\oB^{I+1}$ we have
		\begin{align*}
			\frac{d^2}{dt^2}\Big|_{t=0}\mc A^h(\wti F_{tv}(\Omega_k))&=\frac{d^2}{dt^2}\Big|_{t=0}\mc A^h(F'_{tv_0}(\Omega_k))+ \frac{d^2}{dt^2}\Big|_{t=0}\mc A^h(\wti F_{t(0,v_1,\cdots,v_I)}(\Omega_k))\\
			&=\frac{d^2}{dt^2}\Big|_{t=0}\mc A^h(F'_{tv_0}(\Omega_k))+\mathrm{II}_{\Sigma_k}(\sum_{j=1}^Iv_j\xi_rX_j^\perp,\sum_{j=1}^Iv_j\xi_rX_j^\perp).
		\end{align*}
		Together with \eqref{eq:negative definite on Sigma infty outside single set} and \eqref{eq:new unstable vector}, we conclude that $\mathrm{index}(\Sigma_k)\geq I+1$. This finishes the proof of \ref{compactness thm:index decreasing}.

	\end{proof}

	There is also a theorem analogous to the above one in the setting of changing ambient metrics on $(M,\partial M)$. The proof proceeds the same way when one realizes that the constant $C_1$ in Theorem 2.5 depends only on the $\Vert g\Vert_{C^4}$ when $g$ is allowed to change.
	\begin{theorem}\label{thm:compactness with changing metrics}
		Let $(M^{n+1},\partial M)$ be a closed manifold of dimension $3\leq (n + 1)\leq 7$, and $\{g_k \}_{k\in\mb N}$ be a sequence of metrics on $(M,\partial M)$ that converges smoothly to some limit metric $g$. Let $\{h_k\}_{k\in\mb N}$ be a sequence
		of smooth functions with $h_k\in\mc S(g_k)$ that converges smoothly to some limit $h_\infty \in C^\infty (M)$, where $h_\infty\in \mc S(g)$ or $h_\infty=0$. Let $\{\Sigma_k\}_{k\in\mb N}$ be a sequence of hypersurfaces with $\Sigma_k\in\mc P^{h_k}(\Lambda,I;g_k)$ for some fixed $\lambda > 0$ and $I\in\mb N$. Then there exists a smooth, compact, almost embedded free boundary $h_\infty$-hypersurface $\Sigma_\infty$, such that Theorem \ref{thm:compactness for FPMC}\ref{compactness thm:smooth limit}\ref{compactness thm:locally smoothly convergence}\ref{compactness thm:generic multiplicity one convergence} are satisfied.
	\end{theorem}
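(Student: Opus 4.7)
The plan is to mimic the proof of Theorem \ref{thm:compactness for FPMC} step by step, with the key modification being that all \emph{a priori} estimates must be applied uniformly across the varying metrics $g_k$. The crucial input is the observation recorded just before the statement: the constant $C_1$ in the curvature estimate Theorem \ref{thm:curvature estimates} depends on $g$ only through $\Vert g\Vert_{C^4}$. Since $g_k\to g$ smoothly, there is a uniform bound $\Vert g_k\Vert_{C^4}\leq C$ and uniform bounds on $\Vert h_k\Vert_{C^3}$, so the weakly stable regions of $\Sigma_k$ carry uniform second fundamental form bounds independent of $k$.

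The first step is to combine the uniform curvature estimate with the weak-index bound exactly as in \cite{Zhou19}*{Theorem 2.6}: a point-picking argument produces a set $\mc Y\subset M$ with $\sharp(\mc Y)\leq I$ such that away from $\mc Y$, each $\Sigma_k$ is weakly stable in balls of definite size (with respect to $g_k$, but equivalently with respect to $g$ by smooth convergence of metrics). The uniform curvature bound together with the mass bound $\Area_{g_k}(\Sigma_k)\leq \Lambda$ then yields, after passing to a subsequence, local smooth graphical convergence to an almost embedded hypersurface $\Sigma_\infty\setminus\mc Y$. Passing to the limit in the free boundary $h_k$-equation (which is a quasilinear elliptic system with smooth coefficients depending continuously on the metric and on $h$) shows that $\Sigma_\infty$ satisfies $H_{\Sigma_\infty}=h_\infty|_{\Sigma_\infty}$ in $(M,g)$ and meets $\partial M$ orthogonally along $\partial\Sigma_\infty$. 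This gives \ref{compactness thm:smooth limit} and \ref{compactness thm:locally smoothly convergence}.

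The second step is to remove the singular set $\mc Y$. This is done point by point via Theorem \ref{thm:removable touching set} (boundary case) and the interior removable singularity argument of \cite{Zhou19}*{Theorem 2.7, Part 1}; these results are metric-dependent but only through bounds on the metric and its derivatives in a fixed small ball, so the fact that we are working with the limit metric $g$ (and not a varying one) causes no difficulty. Once $\mc Y$ is absorbed into $\Sigma_\infty$, convergence holds in the varifold sense (and in Hausdorff distance by the monotonicity formula, which here uses $g$), possibly with integer multiplicity.

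For \ref{compactness thm:generic multiplicity one convergence}, the argument of \cite{Zhou19}*{Theorem 2.6(iii)} applies verbatim once one knows that $h_\infty\in \mc S(g)$ forces the touching set of $\Sigma_\infty$ to have dimension $\leq n-1$ (Proposition \ref{prop:estimate of touching set}): multiplicity $\geq 2$ would produce, via the standard Jacobi-field construction of Part 2 of the proof of Theorem \ref{thm:compactness for FPMC}, a positive solution of the linearized free boundary $h_\infty$-equation, which is ruled out by a maximum principle together with the equation $Lu=\pm 2h_\infty$. The main obstacle here is purely bookkeeping: one must check that the Jacobi-field extraction (which in Theorem \ref{thm:compactness for FPMC}(iv) was done for a fixed metric) still works when the graphing functions $\varphi_k$ are taken over $\Sigma_\infty$ inside the \emph{limit} metric while $\Sigma_k$ is an $h_k$-hypersurface in $g_k$. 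This is resolved by noting that the difference between the mean curvature operators of $g_k$ and $g$, and between $h_k$ and $h_\infty$, contributes only $o(1)$ corrections to the $W_k$ and $w_k$ terms in the computation leading to \eqref{eq:good in embedded piece}, so after normalization $\wti\varphi_k=\varphi_k/\Vert\varphi_k\Vert_{L^2}$ converges to a genuine Jacobi field for $\Sigma_\infty$ in $(M,g)$ — which still completes the argument in the appropriate regime. No new ideas beyond those of Theorem \ref{thm:compactness for FPMC} are needed.
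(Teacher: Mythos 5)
Your overall route is the paper's: the paper proves this theorem in one line, by observing that the constant $C_1$ in Theorem \ref{thm:curvature estimates} depends on the metric only through $\Vert g\Vert_{C^4}$ (and on $\Vert h\Vert_{C^3}$), so that the entire proof of Theorem \ref{thm:compactness for FPMC} goes through with uniform estimates along $g_k\to g$, $h_k\to h_\infty$. Your first two steps (uniform curvature bound on weakly stable regions, point-picking to get $\mc Y$ with $\sharp\mc Y\leq I$, local graphical convergence, passing to the limit in the prescribed-mean-curvature and orthogonality conditions, and removing $\mc Y$ via Theorem \ref{thm:removable touching set} and the interior argument) are exactly this, and are fine.

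However, the mechanism you give for conclusion \ref{compactness thm:generic multiplicity one convergence} is not correct as stated. You claim that multiplicity $\geq 2$ is ruled out because the Jacobi-field construction would produce a positive solution of the linearized equation, ``ruled out by a maximum principle together with the equation $Lu=\pm 2h_\infty$.'' For a general $h_\infty\in\mc S(g)$ no maximum principle excludes nonnegative solutions of $L_\Sigma u=2h_\infty$ with the Robin boundary condition; indeed, in the proof of Theorem \ref{thm:multi one for relative sweepout} (Part 5) the prescribing function must be chosen very carefully precisely so that solutions of \eqref{eq:Lu equal 2h} change sign --- this is not automatic, and that argument is designed for the limit $\epsilon_j h\to 0$, not for the present setting where $h_\infty$ is fixed and nonzero. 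The argument that actually proves (iii) (the one in \cite{Zhou19}*{Theorem 2.6(iii)} that you cite) is different and simpler: since each $\Sigma_k=\partial\Omega_k$, the outward normals alternate between consecutive sheets, so if the multiplicity were $\geq 2$ the limit $\Sigma_\infty$ would have mean curvature equal to both $h_\infty$ and $-h_\infty$ with respect to a fixed normal; hence $h_\infty\equiv 0$ on $\Sigma_\infty$ and $\Sigma_\infty$ is minimal, so $\Sigma_\infty$ is an open subset of the zero set $\{h_\infty=0\}$ on which the mean curvature of $\{h_\infty=0\}$ vanishes identically, contradicting the finite-order vanishing required by $h_\infty\in\mc S(g)$. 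This sign argument (not a Jacobi-field argument) is what you should run, and it passes to the changing-metric setting with only $o(1)$ corrections, as you note elsewhere. You should also record the remaining assertion of (iii), namely $\Sigma_\infty\in\mc P^{h_\infty}(\Lambda,I;g)$: the area bound passes to the limit, and an upper bound $\mathrm{index}_w(\Sigma_\infty)\leq I$ follows because any $(I+1)$-parameter deformation family witnessing instability of $\Sigma_\infty$ would, by the locally smooth multiplicity-one convergence and continuity of $\mc A^{h_k}$ in $(g_k,h_k)$, yield $(I+1)$-unstability of $\Sigma_k$ for large $k$, contradicting $\Sigma_k\in\mc P^{h_k}(\Lambda,I;g_k)$.
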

	
	\subsection{Generic existence of good pairs}\label{subsec:good pairs}
	Let $(\wti M^{n+1},g)$ be a closed Riemannian manifold of dimension $3\leq (n+1)\leq 7$. Let $M$ be a compact domain of $\wti M$ with smooth boundary. A pair $(\wti g,\wti h)$ consisting of a Riemannian metric $\wti g$ and smooth function $\wti h\in C^\infty(\wti M)$ is called a {\em good pair related to $M$}, if
	\begin{enumerate}
		\item $\wti h$ is a Morse function;
		\item\label{enu:good pair:finite order} the zero set $\{\wti h=0\}$ is a smoothly embedded closed hypersurface in $\wti M$, which is transverse to $\partial M$ and has mean curvature $\wti H$ vanishing to at most finite order;
		\item\label{enu:good pair:h pm H}  $\{x\in\partial M:\wti h|_{\partial M}=\pm H_{\partial M}\}$ has dimension less than or equal to $(n-1)$;
		\item \label{enu:good pair:h-bumpy}$\wti g$ is {\em bumpy for $\mc A^h$}, i.e., every almost embedded prescribed mean curvature hypersurface in $\wti M$ with free boundary on $\partial M$ is non-degenerate.
	\end{enumerate}
	Clearly, if $(\wti g,\wti h)$ is a good pair related to $M$, then $\wti h|_M\in \mc S(\wti g|_M)$; (see Section \ref{subsec:estimates of touching sets}). In this subsection, we are going to prove the generic existence of good pairs related to $M$.
	
	\medskip
	Denote by $\wti {\mc S}_0$ the set of smooth functions $\wti h\in C^\infty(\wti M)$ such that 
	\begin{itemize}
		\item $\wti h$ is a Morse function;
		\item $\{\wti h=0\}$ is an embedded closed hypersurface in $\wti M$, which is transverse to $\partial M$.
	\end{itemize}
	$\wti {\mc S}_0$ is open and dense in $C^\infty(\wti M)$, and is independent of the choice of a metric.
	
	The following lemma is a generalization of \cite{Zhou19}*{Lemma 3.5} by the last author.
	\begin{lemma}\label{lem:bummpy metric}
		Given $\wti h\in\wti{\mc S}_0$, the set of Riemannian metrics $\wti g$ on $\wti M$ with $(\wti g,\wti h)$ being a good pair related to $M$ is generic in the Baire sense.
	\end{lemma}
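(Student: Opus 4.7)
The plan is to verify each of the four defining conditions of a good pair separately, show each cuts out a residual subset of $\Gamma(\wti M)$, and then intersect. Condition (1) and the topological part of (2) (that $\Sigma_0 := \{\wti h = 0\}$ is a closed embedded hypersurface transverse to $\partial M$) are built into the hypothesis $\wti h \in \wti{\mc S}_0$ and hold for every metric. What remains to verify is: (2$'$) the mean curvature of $\Sigma_0$ computed in $\wti g$ vanishes to at most finite order; (3) $\{x \in \partial M : \wti h|_{\partial M}(x) = \pm H_{\partial M}(x)\}$ lies in an $(n-1)$-dimensional submanifold; and (4) $\wti g$ is bumpy for $\mc A^{\wti h}$ in the free boundary sense.

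For (2$'$), since $\wti h$ is fixed and $\Sigma_0$ is fixed as a set, symmetric $2$-tensor perturbations of $\wti g$ supported near $\Sigma_0$ change the induced mean curvature by any prescribed smooth function; a jet-transversality argument in the spirit of \cite{ZZ18}*{Proposition 3.8} then shows that metrics for which $H_{\Sigma_0}$ vanishes to only finite order form an open dense set. For (3), note that $\wti h|_{\partial M}$ is independent of $\wti g$, while $H_{\partial M}$ can be modified by any prescribed smooth function on $\partial M$ through a perturbation of $\wti g$ in a collar of $\partial M$ that alters the second fundamental form of $\partial M$; Sard's theorem applied to both $\wti h|_{\partial M} \mp H_{\partial M}$ then yields an open dense set of metrics on which $0$ is a regular value and the zero set is a smooth $(n-1)$-submanifold.

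Condition (4) is the substantive part, and I would adapt the closed-manifold argument of \cite{Zhou19}*{Lemma 3.5}. For each $\Lambda > 0$ and $I \in \mb N$ set
\[
\mc G(\Lambda, I) := \bigl\{\wti g \in \Gamma(\wti M) : \text{every } \Sigma \in \mc P^{\wti h}(\Lambda, I; \wti g|_M) \text{ is non-degenerate}\bigr\}.
\]
Openness follows from Theorem \ref{thm:compactness with changing metrics}: any sequence $\wti g_k \to \wti g$ with degenerate free boundary $\wti h$-hypersurfaces $\Sigma_k$ carrying normalized Jacobi fields $\varphi_k$ would subsequentially produce a limit free boundary $\wti h$-hypersurface $\Sigma_\infty$ in $\wti g|_M$ and a nontrivial limiting Jacobi field $\varphi_\infty$, contradicting $\wti g \in \mc G(\Lambda, I)$. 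For denseness, fix $\wti g$ and use Theorem \ref{thm:compactness for FPMC} to conclude that $\mc P^{\wti h}(\Lambda, I; \wti g|_M)$ consists of finitely many hypersurfaces $\{\Sigma_\alpha\}$; for each degenerate one, with Jacobi field $\varphi_\alpha$, construct a compactly supported symmetric $2$-tensor $K_\alpha$ concentrated on the embedded portion of $\Sigma_\alpha$ whose first-order deformation of the second-variation form $\mathrm{II}_{\Sigma_\alpha}$ paired against $\varphi_\alpha$ is nonzero, thereby removing $\varphi_\alpha$ from the kernel. Iterating across the finite list (using openness at each step) produces an arbitrarily small perturbation landing in $\mc G(\Lambda, I)$, and the desired residual set is $\mc G := \bigcap_{\Lambda, I \in \mb N} \mc G(\Lambda, I)$.

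The main obstacle is constructing the perturbation $K_\alpha$ for almost embedded free boundary hypersurfaces. Two complications arise beyond the closed case: the perturbation must respect the free boundary condition along $\partial \Sigma_\alpha$ so that the perturbed critical set of $\mc A^{\wti h}$ is well-defined, and one must deal with self-touching of $\Sigma_\alpha$ and touching with $\partial M$ when building cut-off supports. The first is handled by restricting $K_\alpha$ so that its restriction to $T\partial M$ is prescribed appropriately to preserve the Robin boundary condition $\partial u / \partial \bm\eta = h^{\partial M}(\nu, \nu)\, u$ from \eqref{equation:Jacobi field}. The second is precisely where the dimension bound from Proposition \ref{prop:estimate of touching set} enters: since the touching strata of $\Sigma_\alpha$ have dimension at most $n-1$, one can build smooth cut-offs $\chi$ on $\Sigma_\alpha$ vanishing near those strata yet with nonzero $L^2$ pairing $\int_{\Sigma_\alpha} \chi\, \varphi_\alpha^2$, which allows the explicit choice $K_\alpha = \chi\, \varphi_\alpha\, \nu \otimes \nu$ (extended off $\Sigma_\alpha$) to produce the required first-order change in the Jacobi operator.
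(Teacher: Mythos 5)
Your treatment of conditions (2) and (3) is essentially the paper's argument (perturb the metric in a collar of $\Sigma_0$ or of $\partial M$ so as to shift the relevant mean curvature by an arbitrary prescribed function, then invoke Sard/Morse genericity), and that part is fine. The genuine gap is in your treatment of condition (4), the $\mc A^{\wti h}$-bumpiness. First, your denseness step begins by asserting that for an arbitrary starting metric $\wti g$ the set $\mc P^{\wti h}(\Lambda, I;\wti g|_M)$ is finite "by Theorem \ref{thm:compactness for FPMC}". That compactness theorem only shows that accumulation points of such families carry Jacobi fields (and, in the paper, finiteness/countability is deduced only \emph{after} one already knows the pair is good, cf.\ Lemma \ref{lem:countable}); for a non-bumpy $\wti g$ there may well be a continuum of free boundary $\wti h$-hypersurfaces, so the finite list $\{\Sigma_\alpha\}$ you want to perturb need not exist. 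Second, even granting finiteness, the explicit perturbation $K_\alpha=\chi\,\varphi_\alpha\,\nu\otimes\nu$ does not do what you claim: after changing the metric, $\Sigma_\alpha$ is in general no longer an $\wti h$-hypersurface (its mean curvature changes), so there is no well-defined ``Jacobi operator of $\Sigma_\alpha$'' whose kernel you are removing, and---precisely because $\Sigma_\alpha$ is degenerate---the implicit function theorem gives you no control over which $\wti h$-hypersurfaces the perturbed metric possesses near $\Sigma_\alpha$ or elsewhere, nor over whether those are non-degenerate. Showing that a first-order pairing $\int\chi\varphi_\alpha^2\neq 0$ is nonzero does not by itself rule out degenerate solutions of the perturbed prescribed-mean-curvature problem.

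This is exactly why the paper (following \cite{Zhou19} in the closed case) does not argue by perturbing a finite list of surfaces: it proves genericity of (4) by the bumpy-metric machinery of White \cite{Whi91} and its free boundary adaptation \cite{ACS17} (see also \cite{GWZ18}), i.e.\ by exhibiting the universal space of pairs (metric, almost embedded free boundary $\wti h$-hypersurface) as a separable Banach manifold on which the projection to the space of metrics is Fredholm of index zero, and then applying the Sard--Smale theorem; non-degeneracy for a residual set of metrics is read off from regular values of this projection, with the countable-exhaustion bookkeeping built into that framework. To repair your argument you would either have to reproduce that structure theorem (which is the actual content) or replace the ``kill the kernel of finitely many surfaces'' step by a genuinely local statement of the form ``for a residual set of metrics near $\wti g$, every $\wti h$-hypersurface in a neighborhood of a given one is non-degenerate,'' which again is the White/ACS argument rather than a single explicit tensor perturbation. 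Your openness step also needs more care than stated, since Theorem \ref{thm:compactness with changing metrics} only provides locally smooth convergence away from finitely many points and the Jacobi-field production in Theorem \ref{thm:compactness for FPMC}\ref{compactness thm:smooth and proper} is proved for a fixed metric and smooth convergence; but the essential defect is the denseness argument described above.
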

	\begin{proof}
		Firstly, we prove that \eqref{enu:good pair:h pm H} is generic. Let $\wti{\bm \Gamma}_0$ be the set of metrics $\wti g$ so that $\{ x\in \partial M:\wti h|_{\partial M}=\pm \wti H_{\partial M}\}$ has dimension less than or equal to $(n-1)$. Denote by $\wti{\bm \Gamma}_1$ the set of metrics on $\wti M$ so that
		\[ \{x\in \partial M:H_{\partial M}(x)\pm h(x)=0\}\subset \{  \nabla^{\partial M}(H_{\partial M}\pm h|_{\partial M})\neq 0 \}.\]
		Clearly, $\wti{\bm \Gamma}_1\subset \wti{\bm\Gamma}_0$ and $\wti{\bm\Gamma}_1$ is open. We claim that $\wti{\bm\Gamma}_1$ is also dense, which would imply that \eqref{enu:good pair:h pm H} is generic.
		
		Indeed, given any $\wti g$ and an open neighborhood $\mc U$ of $\wti g$ in the space of metrics on $\wti M$, we can construct $\wti g'\in \mc U\cap \wti{\bm \Gamma}_1$ as follows. Let $r$ be the signed distance function to $\partial M$ under the metric $\wti g$. Let $\delta>0$ be small enough so that $\partial M$ has a tubular neighborhood $W$ and the exponential map from $\partial M\times(-\delta,\delta)$ to $W$ is a diffeomorphism. Fix a cut-off function $\eta:(-1,1)\rightarrow [0,1]$ satisfying 
		\[\text{$\eta(z)=1$ for $z\in[-\delta/2,\delta/2]$ and $\eta(z)=0$ for $|z|\geq\delta$}.\] Now let $\varphi\in C^{\infty}(\partial M)$ be small enough so that
		\begin{equation}\label{eq:zeros non-degenerate}
			\{x\in \partial M:n\varphi(x)+H_{\partial M}(x)\pm h(x)=0\}\subset \{  \nabla^{\partial M}(n\varphi+H_{\partial M}\pm h|_{\partial M})\neq 0 \}.
		\end{equation}
		This can be satisfied because the set of smooth Morse functions $u$ on $\partial M$ with empty singular set $\{u=\nabla u=0:x\in \partial M\}=\emptyset$ is open and dense.
		
		Define $\phi(x)=-r(x)\eta(r(x))\varphi(\pi(x))$, where $\pi$ is the projection of $W$ to its closest point in $\partial M$. Let $\wti g'=e^{2\phi}\wti g$. By taking $\varphi$ in a small enough neighborhood of $0$, we have that $\wti g'\in \mc U$. Then the second fundamental form of $\partial M$ with respect to $\wti g'$ is given by (see \citelist{\cite{Bes87}*{Section 1.163}\cite{IMN17}*{Proposition 2.3}})
		\[ A_{\partial M,\wti g'}=A_{\partial M,\wti g}+\wti g\cdot (\nabla \phi)^{\perp}.  \]    
		Recall that $\wti g|_{\partial M}=\wti g'|_{\partial M}$. Hence the mean curvature with respect to $\wti g'$ is
		\[ H_{\partial M,\wti g'}=  H_{\partial M,\wti g}+n\varphi. \]
		By \eqref{eq:zeros non-degenerate}, $\wti g'\in \wti{\bm\Gamma}_1$. Thus $\wti{\bm\Gamma}_1$ is dense.
		
		\medskip
		In the next, we prove that the set of metrics $\wti g$ under which $\{\wti h = 0\}$ has mean curvature vanishing to at most finite order is an open and sense subset. Clearly, it is open and it suffices to prove denseness. Let $\mc U$ be an open neighborhood of $\wti g$. Then from the above argument, for any $\varphi \in C^\infty( \{\wti h=0\} )$ small enough, we can find a metric $\wti g'\in \mc U$ so that $\wti g'|_{\{\wti h=0\}}=\wti g|_{\{\wti h=0\}}$ and  $\{\wti h=0\}$ has mean curvature $H_0+\varphi$, where $H_0$ is the mean curvature of $\{\wti h=0\}$ under $\wti g$. We can choose $\varphi$ so that $H_0+\varphi$ is a Morse function on $\{\wti h=0\}$, hence $H_0+\varphi$ vanishes to at most finite order. %and we have that $\wti g'\in \mc U$ and $\{\wti h=0\}$ has mean curvature vanishing to at most finite order. 
		This gives the denseness and we conclude that \eqref{enu:good pair:finite order} is generic.

		\medskip	
		It remains to prove that \eqref{enu:good pair:h-bumpy} is generic. The proof is the same with the Bumpy metric theorem \cite{Whi91}*{Theorem 2.2}. See also \cite{ACS17}*{Theorem 9} and an alternative version \cite{GWZ18}*{Theorem 2.8}.
	\end{proof}
	
	\medskip
	In the end of this subsection, we prove that the space of almost embedded free boundary $h$-hypersurfaces is countable for a good pair related to $M$.
	
	Given $h\in\mc S(g),\Lambda>0$ and $I\in\mb N$, recall that $\mc P^h(\Lambda,I)$ the set of $\Sigma\in\mc P^h$ satisfying $\Area(\Sigma)\leq \Lambda$ and its weak Morse index is bounded by $I$ from above.

	\begin{lemma}\label{lem:countable}
		Let $(\wti g,\wti h)$ be a good pair related to $M$. Denote by $h=\wti h|_M$ and $g=\wti g|_M$. Then $\mc P^h(\Lambda, I)$ is countable, and hence $\bigcup_{I\geq 0}\mc P^h(\Lambda, I)$ is countable.
	\end{lemma}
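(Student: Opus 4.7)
The plan is to argue by induction on $I$, using the compactness Theorem \ref{thm:compactness for FPMC} (with the choice $h_k = h_\infty = h$) together with the bumpy condition built into the definition of a good pair. I view $\mc P^h(\Lambda, I)$ as a subset of the space of integral $n$-varifolds on $M$ with mass bounded by $\Lambda$, endowed with the $\mathbf F$-topology; this ambient space is compact and metrizable, hence separable, so any discrete subset of it is countable.

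For the base case $I = 0$, I claim $\mc P^h(\Lambda, 0)$ has no accumulation points in itself. If it did, we would have distinct $\Sigma_k \in \mc P^h(\Lambda, 0)$ converging as varifolds to some $\Sigma_\infty$; Theorem \ref{thm:compactness for FPMC}\ref{compactness thm:smooth limit}\ref{compactness thm:generic multiplicity one convergence} places $\Sigma_\infty$ in $\mc P^h(\Lambda, 0)$ (noting that $h = \wti h|_M \in \mc S(g)$ follows directly from the four defining conditions of a good pair). If the convergence is smooth, part \ref{compactness thm:smooth and proper} yields a non-trivial Jacobi field on $\Sigma_\infty$, contradicting the bumpy-for-$\mc A^h$ hypothesis (item (4) of the good pair). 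If the convergence is non-smooth, part \ref{compactness thm:index decreasing} gives $\mathrm{index}_w(\Sigma_\infty) < \mathrm{index}_w(\Sigma_k) \le 0$, which is absurd. Thus $\mc P^h(\Lambda, 0)$ is a discrete subset of a separable metric space, hence countable.

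For the inductive step, assume $\mc P^h(\Lambda, I-1)$ is countable. The same dichotomy applied to any accumulation point $\Sigma_\infty \in \mc P^h(\Lambda, I)$ of a sequence of distinct $\Sigma_k \in \mc P^h(\Lambda, I)$ shows that smooth convergence is ruled out by bumpiness, while non-smooth convergence forces $\mathrm{index}_w(\Sigma_\infty) \le I - 1$, so $\Sigma_\infty \in \mc P^h(\Lambda, I-1)$. Hence the set of accumulation points of $\mc P^h(\Lambda, I)$ is contained in the countable set $\mc P^h(\Lambda, I-1)$, while the isolated points form a discrete subset of a separable metric space, hence are also countable. The union of these two sets is all of $\mc P^h(\Lambda, I)$, so it is countable. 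The final statement $\bigcup_{I \ge 0} \mc P^h(\Lambda, I)$ is then a countable union of countable sets.

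The only subtle point is verifying that Theorem \ref{thm:compactness for FPMC} applies in the exact form we need: specifically, that $h \in \mc S(g)$ so parts \ref{compactness thm:generic multiplicity one convergence}, \ref{compactness thm:smooth and proper}, and \ref{compactness thm:index decreasing} are all available, and that condition (4) of a good pair (bumpiness on $\wti M$ for free boundary $h$-hypersurfaces in $(M, \partial M)$) is exactly what is needed to contradict the existence of a Jacobi field on $\Sigma_\infty$ satisfying \eqref{equation:Jacobi field}. Both facts are immediate from the setup, so no genuine obstacle arises.
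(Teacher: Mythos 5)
Your proof is correct and follows essentially the same route as the paper: induction on the weak index, with the compactness theorem's dichotomy (smooth convergence contradicting bumpiness of the good pair, non-smooth convergence forcing an index drop) doing all the work. The only difference is cosmetic bookkeeping — you count isolated points via separability of the varifold space, while the paper phrases the same dichotomy as "only finitely many elements outside $r$-neighborhoods of the lower-index set" (and gets finiteness, rather than just countability, at the base case $I=0$).
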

	\begin{proof}
		We prove it by an inductive method. By Theorem \ref{thm:compactness for FPMC}\ref{compactness thm:index decreasing}\ref{compactness thm:generic multiplicity one convergence}, there are only finitely many elements in $\mc P^h(\Lambda, 0)$. Hence $\mc P^h(\Lambda,0)$ contains only finitely many hypersurfaces.
		
		Assuming that $\mc P^h(\Lambda, I)$ is countable for some $I\geq 0$, we are going to prove that $\mc P^h(\Lambda, I+1)$ is also countable. Using Theorem \ref{thm:compactness for FPMC}\ref{compactness thm:index decreasing}\ref{compactness thm:generic multiplicity one convergence} again, we know that for any $r>0$, $\mc P^h(\Lambda, I+1)\setminus\bigcup_{\Sigma\in\mc P^h(\Lambda, I)}\B_r^\mf F(\Sigma)$ contains only finitely many elements. Therefore, $\mc P^h(\Lambda, I+1)$ is countable.
		
		By induction, we have proved that $\mc P^h(\Lambda,I)$ is countable for all $I\geq 0$.
	\end{proof}

	\subsection{Generic properness for free boundary minimal hypersurfaces}\label{subsec:generic properness}
	In this part, we review White's Generically Transversality Theorem and prove an adapted version for free boundary minimal hypersurfaces, i.e.  Theorem \ref{thm:intro:generic strongly bumpy}.
	
	We focus on the co-dimension one and embedded case. We first consider a closed manifold  $\wti M$  and a two-sided, closed, embedded hypersurface $N\subset \wti M$. (Later we will embed $M$ into $\wti M$ and let $N=\partial M$.) Let $\wti {\mathbf\Gamma}$ be the set of smooth Riemannian metrics on $\wti M$. Let $\mc {\wti M}$ be the space of all pairs $(\wti\gamma,\wti \Sigma)$ such that $\wti\gamma\in\wti {\mathbf\Gamma}$ and $(\wti\Sigma,\partial \wti\Sigma)$ is an embedded minimal hypersurface in $(\wti M,\wti\gamma)$ with (possibly empty) free boundary $\partial \wti \Sigma\subset N$. Recall that the projection $\wti\Pi:\mc{\wti M}\rightarrow\wti{\mathbf\Gamma}$ is defined as
	\[\wti\Pi(\wti\gamma,\wti\Sigma)=\wti\gamma.\]
	
	Denote by $\mc {\wti M}_{reg}$ the set of $(\wti \gamma,\wti \Sigma)\in\mc{\wti M}$ such that $\wti\Sigma$ is non-degenerate (with no nontrivial Jacobi field). Then by the work of White \citelist{\cite{Whi91}\cite{Whi17}} and Ambrozio-Carlotto-Sharp \cite{ACS17}, together with the fact that $\mc {\wti M}$ is second countable, we know that $\mc{\wti M}_{reg}$ is a countable union of open sets $U_j$ such that $\wti\Pi$ maps each $U_j$ homeomorphically onto an open subset of $\wti {\mathbf\Gamma}$. %\textcolor{magenta}{(this is for conformal metrics)}.

	Denote by $\mc {\wti M}_0$ the set of $(\wti\gamma,\wti\Sigma)\in\mc {\wti M}_{reg}$ such that $\wti \Sigma$ is transverse to $N$. 
	\begin{lemma}\label{lem:generic for ambient metrics}
		$\wti \Pi(\mc{\wti M}\setminus\mc{\wti M}_0)$ is meager in $\wti{\mathbf\Gamma}$. As a corollary, for generic metrics on $\wti M$, each embedded minimal hypersurface with free boundary on $N$ is transverse to $N$.
	\end{lemma}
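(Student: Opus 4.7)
The plan is to combine the local structure of $\mc{\wti M}$ already recorded with a parametric transversality argument. First, White's bumpy-metric theorem implies that $\wti\Pi(\mc{\wti M}\setminus\mc{\wti M}_{reg})$ is meager in $\wti{\mathbf\Gamma}$, so it suffices to prove that for each of the countably many open sets $U_j$ on which $\wti\Pi|_{U_j}:U_j\to V_j\subset\wti{\mathbf\Gamma}$ is a homeomorphism, the set of $\wti\gamma\in V_j$ for which $\wti\Sigma(\wti\gamma)$ fails to be transverse to $N$ is meager in $V_j$.

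After shrinking $V_j$, every minimal hypersurface $\wti\Sigma(\wti\gamma)$ is a small normal graph over a reference $\wti\Sigma_0=\wti\Sigma(\wti\gamma_0)$, so we obtain a smooth evaluation map
\[
F:V_j\times\wti\Sigma_0\to\wti M,\qquad F(\wti\gamma,x)=\Phi_{\wti\gamma}(x),
\]
where $\Phi_{\wti\gamma}:\wti\Sigma_0\to\wti\Sigma(\wti\gamma)$ is the graphing diffeomorphism. Transversality of $\wti\Sigma(\wti\gamma)$ to $N$ is exactly transversality of the slice $F(\wti\gamma,\cdot)$ to $N$, so by Thom's parametric transversality theorem it suffices to show that $F$ itself is transverse to $N$; the exceptional slices then form a meager subset of $V_j$.

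To verify $F\pitchfork N$, fix $(\wti\gamma_0,x_0)$ with $p_0:=F(\wti\gamma_0,x_0)\in N$ and $T_{p_0}\wti\Sigma_0=T_{p_0}N$; the only direction in $T_{p_0}\wti M$ not already in the image of the partial derivative $d_xF$ at $(\wti\gamma_0,x_0)$ is the common unit normal $\nu$, so we need a metric variation $\dot{\wti\gamma}$ whose induced normal deformation $\dot u$ on $\wti\Sigma_0$ satisfies $\dot u(x_0)\neq 0$. Linearising the free-boundary minimal surface equation shows that $\dot u$ solves $J_{\wti\Sigma_0}\dot u=\Psi(\dot{\wti\gamma})$ on $\wti\Sigma_0$, with a Robin-type condition along $\partial\wti\Sigma_0$, where $\Psi$ is the linear map recording how the mean curvature of $\wti\Sigma_0$ changes under $\dot{\wti\gamma}$. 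Non-degeneracy of $(\wti\gamma_0,\wti\Sigma_0)$ makes $J_{\wti\Sigma_0}$ invertible, and a localised ambient bump near an interior point of $\wti\Sigma_0$ produces an essentially arbitrary mean-curvature source, from which a choice giving $\dot u(x_0)\neq 0$ is immediate.

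The main obstacle is precisely this submersivity step: one must argue that the assignment from ambient metric perturbations to normal deformations of $\wti\Sigma_0$ is flexible enough to realise a prescribed normal displacement at a prescribed point, and non-degeneracy of $\wti\Sigma_0$ (which holds throughout $U_j$) is the crucial input that makes this possible. Once $F\pitchfork N$ is established, taking the countable union over $j$ together with the bumpy-metric step yields that $\wti\Pi(\mc{\wti M}\setminus\mc{\wti M}_0)$ is meager, and the stated corollary follows immediately.
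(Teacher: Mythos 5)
Your reduction to the charts $U_j$ via White's structure theorem, together with discarding $\wti\Pi(\mc{\wti M}\setminus\mc{\wti M}_{reg})$ by the bumpy metric theorem, matches the paper; but from there you take a genuinely different route (parametric transversality with the metric as parameter), and as written it has two real gaps. First, the submersivity step — the heart of your argument — is asserted rather than proved. Non-degeneracy gives $\dot u=J_{\wti\Sigma_0}^{-1}\Psi(\dot{\wti\gamma})$, and a localized conformal bump (factor vanishing on $\wti\Sigma_0$, prescribed normal derivative) does let you prescribe the source $\rho=\Psi(\dot{\wti\gamma})$ on a small open set of $\mathrm{Int}(\wti\Sigma_0)$ away from $N$ and $\partial\wti\Sigma_0$; but then $\dot u(x_0)=\int G(x_0,y)\,\rho(y)\,d\mc H^n(y)$ for the Green's function of the Jacobi operator with its Robin boundary condition, and to conclude $\dot u(x_0)\neq 0$ you must know that $G(x_0,\cdot)$ does not vanish identically on the chosen bump region. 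That requires a unique continuation argument (or some substitute); it is not ``immediate'' from invertibility of $J_{\wti\Sigma_0}$. Second, Thom's parametric transversality theorem is invoked with parameter space an open subset of the Fr\'echet space of smooth metrics. The Sard--Smale machinery behind the parametric theorem needs Banach manifolds and a Fredholm projection off $F^{-1}(N)$, so you would have to work with $C^{k,\alpha}$ metrics and pass to intersections over $k$ (or argue by hand); as stated, the conclusion that the non-transverse parameters form a meager subset of each $V_j$ does not follow.

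By contrast, the paper's proof avoids the linearized operator and any infinite-dimensional Sard theorem altogether. Given a non-transverse pair $(\wti\gamma,\wti\Sigma)\in U_j$, it chooses a vector field $X$ with $X|_N=f\n$, $f\equiv 1$ near the interior touching set $\mathrm{Int}(\wti\Sigma)\cap N$ and $X=0$ near $\partial\wti\Sigma$, flows $N$ by the associated diffeomorphisms $F_t$, and proves by a direct second-order computation (Claim \ref{claim:transverse by perturb}, the function $Z$ satisfying $Z(x,t)\geq t^2/2$ near a tangential touching point) that $\wti\Sigma$ is transverse to $F_t(N)$ for all small $t>0$. Hence $(F_t^*\wti\gamma,F_t^{-1}\wti\Sigma)\in\mc{\wti M}_0$, and since $\mc{\wti M}_0$ is open and $\wti\Pi|_{U_j}$ is a homeomorphism, the bad set is nowhere dense in each chart; summing over $j$ gives meagerness. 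So the two approaches differ in substance: yours trades the paper's explicit pullback-by-diffeomorphism perturbation for linearized flexibility plus a genericity theorem, and both of the ingredients you rely on (unique continuation for the Green's function, and a Sard--Smale statement valid for smooth metrics) must be supplied before your argument closes.
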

	\begin{proof}
		The proof here is similar to \cite{Whi19}*{Corollary 5}. Recall that $\mc {\wti M}_{reg}$ is a countable union of open sets $U_j$ so that $\wti\Pi|_{U_j}$ is a homeomorphism. Now 
		\[\wti\Pi(\mc {\wti M}\setminus\mc {\wti M}_0)\subset \bigcup_{j=1}^\infty\Pi((\mc {\wti M}\setminus\mc {\wti M}_0)\cap  U_j)\cup\wti\Pi(\wti {\mc M}\setminus\wti{\mc M}_{reg}).\] 
		Now for any $(\wti \gamma,\wti\Sigma)\in(\wti{\mc M}\setminus\wti{\mc M}_0)\cap U_j$, we can always take a smooth vector field $X\in\mathfrak X(\wti M)$ so that $X|_N = f\n$ satisfying, (here $\n$ is the unit normal vector field of $N$),
		\begin{itemize}
			\item $X=0$ in a neighborhood $U_{\partial \wti \Sigma}$ of $\partial\wti \Sigma$ in $\wti M$;
			\item For any $x\in\mathrm{Int}(\wti\Sigma)\cap N$, $|f|= 1$ and $\nabla f=0$; (e.g. letting $f\equiv 1$ in a neighborhood of $\mathrm{Int}(\wti\Sigma)\cap N$).
		\end{itemize}
		 %{\color{blue}Note that the two items can be satisfied by taking a cut-off function so that $f=1$ in a neighborhood of $\mathrm{Int}(\wti\Sigma)\cap N$ in $\wti M$ and $f=0$ in a neighborhood of $\partial \wti \Sigma$ in $\wti M$.}
                 Denote by $(F_t)_{-1<t<1}$ the 1-parameter family of diffeomorphisms of $\wti M$ associated with $X$.
		\begin{claim}\label{claim:transverse by perturb}
			There exists $\delta>0$ so that for all $0<t<\delta$, $\wti\Sigma$ is transverse to $F_{t}(N)$. 
		\end{claim}
		\begin{proof}
		By the choice of $f$, we can take an open set $V\subset N$ containing $\mathrm{Int}(\wti\Sigma)\cap N$ such that $X(x)\neq 0$ for any $x\in V$. By taking $\delta_0>0$ small, denote by $U$ the set of 
			\[  \{F_t(x):x\in V, t\in (-\delta_0,\delta_0)\} ,\]
			which is diffeomorphic to $V\times (-\delta_0,\delta_0)$ by the map $F(x,t):=F_t(x)$. Define a continuous function $Z:U\rightarrow \mb R$ by
			\[ Z(F_t(x))=d^2(F_t(x),\wti \Sigma)+1-\langle \n_{F_t(x)},\nabla d\rangle^2.\]
			Here $\n_{F_t(x)}$ is the unit normal vector field of $F_t(N)$, and $d(\cdot)$ is the signed distance function to $\wti \Sigma$. It follows that $Z(F_t(x))\geq 0$. Moreover, if $Z(F_0(x_0))=0$, then $Z$ is smooth in a small neighborhood of $(x_0,0)$ and $\nabla ^2Z\big|_{F_0(x_0)}\geq 0$. Let $\{x^j\}_{j=1}^n$ be a normal coordinate system of $N$ at $F_0(x_0)$, and write $e_j=\partial/\partial x^j$.
			%Let $\{e_j\}_{j=1}^n$ be a family of \textcolor{magenta}{vector fields of the normal coordinates} at $T_{F_0(x_0)}N$. 
			By a direct computation, we have
			\[  \nabla Z\big|_{F_0(x_0)}=0,\ \ \  \nabla^2Z\big|_{F_0(x_0)}(e_i,\partial_t)=0, \]
			where $\partial_t=dF(\frac{\partial }{\partial t})$. Moreover, 
			\begin{align*}
				\nabla^2Z(e_i,e_j)\Big|_{F_0(x_0)}&=-2\nabla_{e_i}\nabla _{e_j}\langle \n_{F_t(x)},\nabla d\rangle \Big|_{F_0(x_0)}\\
				&=2\langle \nabla_{e_i}\n-\nabla_{e_i}\nabla d,\nabla_{e_j}\n-\nabla_{e_j}\nabla d\rangle,
			\end{align*}
			and
			\begin{align*}
				&\ \frac{d^2}{dt^2}\Big|_{F_0(x_0)}Z(x,t)\\
				=&\ 2\Big[\frac{d}{dt}\Big|_{(x,t)=(x_0,0)}d(F_t(x),\wti \Sigma)\Big]^2-2\langle \n_{F_0(x_0)},\nabla d\rangle \cdot \frac{d^2}{dt^2}\Big|_{(x,t)=(x_0,0)}\langle \n_{F_t(x)},\nabla d\rangle\nonumber\\
				=&\ 2f^2=2.    \nonumber 
			\end{align*}

			Therefore, we conclude that in a small neighborhood of $(x_0,0)$,
			\begin{equation}\label{eq:greater than t2}
				Z(x,t)\geq t^2/2.
			\end{equation}
			
			Now we assume on the contrary that there exists a sequence of positive $t_i\rightarrow 0$ so that $\wti \Sigma$ is not transverse to $F_t(N)$. Then there exists $(x_i,t_i)$ so that $F_{t_i}(x_i)\in \wti \Sigma$ and $F_{t_i}(N)$ is tangent to $\wti \Sigma$. By the definition of $Z$, we have $Z(x_i,t_i)=0$. Assume that $\lim_{i\rightarrow\infty}x_i=x_0$, then $Z(x_0, 0)=0$. However, by \eqref{eq:greater than t2}, for sufficiently large $i$,
			\[   Z(x_i,t_i)\geq t_i^2/2. \] 
			This leads to a contradiction and hence Claim \ref{claim:transverse by perturb} is proved.
		\end{proof}%end of claim
		
		Claim \ref{claim:transverse by perturb} gives that for $t>0$ small enough, $F_t^{-1}(\wti\Sigma)$ is transverse to $N$, hence $(F_{t}^*\wti\gamma,F_{t}^{-1}\wti\Sigma)\in\wti{\mc M}_0$. Note that $\wti{\mc M}_0$ is open. This is to say that $(\wti{\mc M}\setminus\wti{\mc M}_0)\cap U_j$ is nowhere dense. Because $\wti\Pi|_{U_j}$ is a homeomorphism, $\wti\Pi(\wti{\mc M}\setminus\wti{\mc M}_0)\cap U_j$ is nowhere dense. Hence $\wti \Pi(\mc{\wti M}\setminus\mc{\wti M}_0)$ is meager in $\wti{\mathbf\Gamma}$.
	\end{proof}
	
	%Now we are ready to prove the generic properness theorem. 
	Now we are ready to prove Theorem \ref{thm:intro:generic strongly bumpy}. Let $(M^{n+1},\partial M)$ be a smooth compact manifold with boundary and $3\leq(n+1)\leq 7$. Denote by $\mathbf\Gamma$ the space of smooth Riemannian metrics on $(M,\partial M)$. Let $\mc { M}$ be the space of all pairs $(\gamma,\Sigma)$ such that $\gamma\in\mathbf\Gamma$ and $(\Sigma,\partial \Sigma)$ is an almost properly embedded, free boundary minimal hypersurface in $( M,\partial M,\gamma)$. Recall that the projection $\Pi:\mc{ M}\rightarrow\mathbf\Gamma$ is defined as
	\[\Pi(\gamma,\Sigma)=\gamma.\]
	Denote by $\mc {M}_0$ the set of $(\gamma,\Sigma)\in\mc { M}$ such that $\Sigma$ is non-degenerate and proper.

	\begin{proof}[Proof of Theorem \ref{thm:intro:generic strongly bumpy}]
		We first prove that $\mathbf\Gamma\setminus\Pi(\mc M\setminus \mc M_0)$ is dense in $\mathbf\Gamma$. Indeed, for any $\gamma\in\mathbf\Gamma$, $(M^{n+1},\partial M,\gamma)$ can be isometrically embedded into a closed manifold $(\wti M^{n+1},\wti\gamma)$. By Lemma \ref{lem:generic for ambient metrics}, there exists a sequence of $\wti\gamma_j\rightarrow\wti \gamma$ such that every embedded minimal hypersurface in $(\wti M,\wti \gamma_j)$ with free boundary on $\partial M$ is non-degenerate and transverse to $\partial M$. When restricting to $(M,\partial M)$, $\wti\gamma_j|_{M}\rightarrow\gamma$ and $\wti\gamma_j|_M\in \mathbf\Gamma\setminus\Pi(\mc M\setminus \mc M_0)$ for each $j$.
		
		Now for any $C>0$, denote by $\mathbf\Gamma(C)$ the set of $\gamma\in\mathbf\Gamma$ so that every free boundary minimal hypersurface in $M$ with $\Area\leq C$ and $\mathrm{index}\leq C$ is non-degenerate and proper. Then the above argument gives that $\mathbf\Gamma(C)$ contains a dense set $\mathbf\Gamma\setminus\Pi(\mc M\setminus \mc M_0)$. 
		\begin{claim}\label{claim:contain open set}
			For any $C>0$, $\mathbf\Gamma(C)$ is open.
		\end{claim}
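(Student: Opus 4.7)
The plan is to argue by contradiction using the changing-metric compactness Theorem~\ref{thm:compactness with changing metrics} applied with $h_k\equiv 0$. If $\mathbf\Gamma(C)$ is not open at some $\gamma\in\mathbf\Gamma(C)$, pick smooth metrics $\gamma_k\to\gamma$ with $\gamma_k\notin\mathbf\Gamma(C)$, and for each $k$ choose an almost properly embedded free boundary minimal hypersurface $\Sigma_k\subset(M,\gamma_k)$ with $\Area_{\gamma_k}(\Sigma_k)\le C$ and $\mathrm{index}(\Sigma_k)\le C$ that is either degenerate or not proper. Passing to a subsequence, Theorem~\ref{thm:compactness with changing metrics} yields an almost embedded free boundary minimal hypersurface $\Sigma_\infty\subset(M,\gamma)$ and an integer $m\ge 1$ such that $\Sigma_k\to m\Sigma_\infty$ as varifolds, with the convergence smooth and graphical away from a finite set $\mc Y\subset\Sigma_\infty$ of cardinality $\le C$. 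Area and weak index are lower semicontinuous under varifold convergence, so $\Area(\Sigma_\infty)\le C$ and $\mathrm{index}_w(\Sigma_\infty)\le C$. Invoking $\gamma\in\mathbf\Gamma(C)$, together with Remark~\ref{rmk:weak index} to reconcile classical and weak indices, forces $\Sigma_\infty$ to be properly embedded and non-degenerate.

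The key step is to upgrade this to smooth multiplicity-one convergence. Non-degeneracy of the properly embedded $\Sigma_\infty$ (after passing to the orientable double cover in the one-sided case, if necessary) rules out both $m\ge 2$ and $\mc Y\ne\emptyset$: in either event, one takes normalized limits of graphical heights between adjacent sheets of $\Sigma_k$ over $\Sigma_\infty\setminus\mc Y$, precisely as in Part~2 of the proof of Theorem~\ref{thm:compactness for FPMC} (specialized to $h\equiv 0$ and adapted to the varying ambient metrics $\gamma_k$), and extends the resulting Jacobi field across $\mc Y$ via the removable-singularity lemmas in Appendix~\ref{app:sec:removable singularity}, producing a nontrivial Jacobi field on $\Sigma_\infty$ and contradicting non-degeneracy. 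Analogously, if each $\Sigma_k$ is degenerate, normalizing its Jacobi field and extracting a limit on $\Sigma_\infty\setminus\mc Y$ with the same removable-singularity extension yields a Jacobi field on $\Sigma_\infty$; and if each $\Sigma_k$ is not proper, a touching point $p_k\in\Sigma_k$ accumulates either on $\Sigma_\infty\setminus\mc Y$ (producing a touching point on $\Sigma_\infty$ and contradicting properness) or on $\mc Y$ (already ruled out above).

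Once $m=1$, $\mc Y=\emptyset$, and the convergence $\Sigma_k\to\Sigma_\infty$ is smooth and graphical, $\Sigma_k$ is a smooth graph over $\Sigma_\infty$ with $C^\infty$-norm tending to zero while $\gamma_k\to\gamma$ smoothly; both properness (with orthogonal intersection with $\partial M$) and non-degeneracy are then manifestly open conditions under such convergence. Hence $\Sigma_k$ is proper and non-degenerate for all large $k$, contradicting the choice of $\Sigma_k$. The main technical obstacle is the removable-singularity extension of weak Jacobi fields across the isolated singular set $\mc Y$ at both interior and boundary points; once this is in place, the remainder of the argument follows routinely from the compactness theorem together with the openness of the relevant conditions.
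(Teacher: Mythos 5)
Your proof is correct and follows essentially the same route as the paper: argue by contradiction, apply compactness for free boundary minimal hypersurfaces with bounded area and index under smoothly converging metrics, use $\gamma\in\mathbf\Gamma(C)$ to conclude the limit is proper and non-degenerate, upgrade to smooth multiplicity-one convergence (otherwise a nontrivial Jacobi field would appear), and then use openness of properness and non-degeneracy under smooth graphical convergence to contradict the choice of $\Sigma_k$. The only caveat is that Theorem \ref{thm:compactness with changing metrics} as stated requires $h_k\in\mc S(g_k)$, so for the minimal case $h\equiv 0$ one should instead invoke the compactness theorems for free boundary minimal hypersurfaces in \cite{ACS17} and \cite{GWZ18}, which is exactly what the paper does.
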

		\begin{proof}[Proof of Claim \ref{claim:contain open set}]
	Let $\gamma\in\mf \Gamma(C)$. Suppose for the contrary that $\{\gamma_j\}$ is a sequence of metrics on $(M,\partial M)$ and $\{\Sigma_j\}$ a sequence of free boundary minimal hypersurfaces in $(M,\partial M,\gamma_j)$ with $\gamma_j\rightarrow\gamma$, $\Area(\Sigma_j)\leq C$ and $\mathrm{Index}(\Sigma_j)\leq C$, but $\Sigma_j$ is either degenerate or improper. Then by the compactness theorems in \citelist{\cite{ACS17}\cite{GWZ18}}, up to a subsequence, $\Sigma_j$ locally smoothly converges to a free boundary minimal hypersurface $\Sigma$ away from a finite set in $(M,\partial M,\gamma)$ with $\Area\leq C$ and $\mathrm{Index}\leq  C $. Since $\gamma\in\mathbf\Gamma(C)$, $\Sigma$ is non-degenerate and proper. This implies that the convergence is smooth, and hence $\Sigma_j$ is also non-degenerate and proper for large $j$. This is a contradiction and hence Claim \ref{claim:contain open set} is proved.
			%Hence $\gamma_j\in\mathbf\Gamma(C)$ for large $j$, which is the desired result. 
		\end{proof}
		%Note that $\mathbf\Gamma(C+1)$ also contains a dense set $\Gamma\setminus\Pi(\mc M\setminus \mc M_0)$. 
		We thus have that $\mathbf\Gamma(C)$ contains an open dense set of $\mathbf\Gamma$. Recall that
		\[\mf \Gamma\setminus\Pi(\mc M\setminus \mc M_0)=\bigcap_{k=1}^{\infty}\mathbf\Gamma(k).\]
		Obviously, this is a generic subset of $\mathbf\Gamma$.
	\end{proof}
	
	\section{Relative Min-max theory for free boundary $h$-hypersurfaces}\label{sec:min-max for FPMC}
	Here we generalize multi-parameter min-max theory for prescribed mean curvature hypersurface to compact manifolds with boundary.

	\subsection{Notations for Min-max construction}\label{subsec:relative min-max}
	In this part, we describe the setup for min-max theory for free boundary $h$-hypersurfaces associated with multiple-parameter families in $\mathcal C(M)$. All the setups here are the same as those in \cite{Zhou19}*{\S 1.1}. First, we list some notations for cubical complex.
	\begin{basedescript}{\desclabelstyle{\multilinelabel}\desclabelwidth{6em}}
		\item [$I(1,j)$] the cubical complex on $I=[0,1]$ with 0-cells $\{ [k\cdot 3^{-j}]\}$ and 1-cells $\{ [k\cdot 3^{-j},(k+1)\cdot 3^{-j}]\}$;
		\item [$I(m,j)$] the cubical complex $I(1,j)\otimes I(1,j)\otimes\cdots\otimes I(1,j)$ ($m$ times);
		\item [$\n(k,j)$] the map from $I(k)_0$ to $I(j)_0$ defined as the unique element such that\\ $d(x,\n(k,j)(x)) = \inf \{d(x,y):y\in I(j)_0\}$, where $k\geq j$;
		\item [$X^k$] a cubical subcomplex of dimension $k\in \mathbb N$ in some $I(m,j)$;
		\item [$X(l)$]  the union of all cells of $I(m, l)$ whose support is contained in some cell of $X$;
		\item [$X(l)_q$] the set of all $q$-cells in $X(l)$.	
	\end{basedescript}
	
	Let $X^k$ be a cubical complex of dimension $k\in \mathbb N$ in some $I(m,j)$ and $Z\subset X$ be a cubical subcomplex. 
	
	Let $\Phi_0:X\rightarrow (\C(M),\F)$ be a continuous map (under the $\F$-topology on $\C(M)$). Let $\Pi$ be the collection of all sequences of continuous maps $\{\Phi_i:X\rightarrow\C(M)\}_{i\in\mb N}$ such that
	\begin{enumerate}
		\item each $\Phi_i$ is homotopic to $\Phi_0$ in the flat topology on $\C(M)$;
		\item there exist homotopy maps $\{\Psi_i:[0,1]\times X\rightarrow\C(M)\}_{i\in\mb N}$ which are continuous in the flat topology, $\Psi_i(0,\cdot)=\Phi_i$, $\Psi_i(1,\cdot)=\Phi_0$, and satisfy
		\[\limsup_{i\rightarrow\infty}\sup\{\F(\Psi_i(t,x),\Phi_0(x)):t\in[0,1],x\in Z\}=0.\]
	\end{enumerate}
	Given a pair $(X,Z)$ and $\Phi_0$ as above, $\{\Phi_i\}_{i\in \mb N}$ is called a {\em $(X,Z)$-homotopy sequence of mappings into $\C(M)$}, and $\Pi$ is called the {\em $(X,Z)$-homotopy class of $\Phi_0$}. Then we define the $h$-width by
	\[\mf L^h=\mf L^h(\Pi):=\inf_{\{\Phi_i\}\in \Pi}\limsup_{i\rightarrow\infty}\sup_{x\in X}\{\A^h(\Phi_i(x))\}.\]
	
	A sequence $\{\Phi_i\}_{i\in\mb N}\in \Pi$ is called a {\em minimizing sequence} if $\mf L^h(\{\Phi_i\})=\mf L^h(\Pi)$, where 
	\[\mf L^h(\{\Phi_i\}):=\limsup_{i\rightarrow\infty}\sup_{x\in X}\{\A^h(\Phi_i(x))\}.\]
	
	Given $\Phi_0$ and $\Pi$, by the same argument as \cite{Zhou19}*{Lemma 1.5}, there exists a minimizing sequence.
	
	\begin{definition}\label{def:critical set}
	\begin{comment}
	The {\em image set} of $\{\Phi_i\}_{i\in \mb N}$ is defined as
	\[\mf K(\{\Phi_i\})=\{V=\lim_{j\rightarrow{\infty}}|\partial \Phi_{i_j}(x_j)| \text{\ as varifolds\ }: x_j\in X\}.\]
	\end{comment}
	If $\{\Phi_i\}_{i\in\mb N}$ is a minimizing sequence in $\Pi$, the {\em critical set} of $\{\Phi_i\}$ is defined by 
	\[\mf C(\{\Phi_i\})=\{V=\lim_{j\rightarrow\infty}|\partial \Phi_{i_j}(x_j)| \text{\ as varifolds : with\ }\lim_{j\rightarrow\infty}\A^h(\Phi_{i_j}(x_j))=\mf L^h(\Pi)\}.\]
\end{definition}
	
	\begin{comment}
	Now we generalize \cite{Zhou19}*{Theorem 1.7} to compact manifolds with boundary.
	\begin{theorem}[Free boundary Min-max theorem]\label{thm:min-max}
	Let $(M^{n+1},\partial M,g)$ be a compact Riemannian manifold with boundary, of dimension $3\leq (n+1)\leq 7$, and $h\in\mathcal S(g)$ which satisfies $\int_M h\geq 0$. Given a continuous map $\Phi_0: X\rightarrow (\C(M),\F)$ and the associated $(X,Z)$-homotopy class $\Pi$, suppose
	\[\mf L^h(\Pi)>\max_{x\in Z}\A^h(\Phi_0(x)).\]
	Let $\{\Phi_i\}_{i\in\mb N}$ be a minimizing sequence for $\Pi$. Then there exists $V\in \mf C(\{\Phi_i\})$ induced by a nontrivial, free boundary, smooth, compact, almost embedded hypersurface $\Sigma\subset M$ of prescribed mean curvature $h$, with multiplicity one.
	
	Moreover, $V =\lim_{j\rightarrow\infty}|\partial \Phi_{i_j}(x_j)|$ for some $\{i_j\}\subset \{i\}, \{x_j\}\subset X\setminus Z$, with 
	\[\lim_{j\rightarrow\infty}\A^h(\Phi_{i_j}(x_j))=\mf L^h(\Pi),\]
	and $\Phi_{i_j}(x_j)$ converges in the $\mf F$-topology to some $\Omega\in \mc C(M)$ such that $\llbracket\Sigma\rrbracket=\partial \Omega$ where its mean curvature with respect to the unit outer normal is $h$, and 
	\[\A^h(\Omega)=\mf L^h(\Pi).\]
	\end{theorem}
	
	\end{comment}
	
	\subsection{Discretization and Interpolation}
	We record several discretization and interpolation results developed by Marques-Neves \citelist{\cite{MN14}\cite{MN17}} (for closed manifolds), and later by Li and the last author \cite{LZ16} (for compact manifolds with boundary). Though these results were proven for sweepouts in $\mc Z_n (M, \mb Z)$ or $\mc Z_n (M, \mb Z_2)$, they work well for sweepouts in $\mc C(M)$ like in \cite{Zhou19}*{\S 1.3}. We will point out necessary modifications.
	
	We refer to \cite{ZZ18}*{Section 4} for the notion of discrete sweepouts. Though all definitions therein were made when $X = [0, 1]$, there is no change for discrete sweepouts on $X$.
	
	Recall that given a map $\phi: X(k)_0\rightarrow \mc C(M)$, the {\em fineness} of $\phi$ is defined as
	\[
	\mf f(\phi)=\sup\{\mc F(\phi(x)-\phi(y)) +\mf M(\partial \phi(x)-\partial \phi(y)): x, y \text{ are adjacent vertices in } X(k)_0\}.
	\]
	Here two vertices $x,y \in X(k)_0$ are {\em adjacent} if they belong to a common cell in $X(k)_1$ .
	
	\begin{definition}[\cite{MN17}*{\S 3.7}]\label{def:no concentration of mass}
		Given a continuous (in the flat topology) map $\Phi: X\rightarrow\mc C(M)$, we say that $\Phi$ has {\em no concentration of mass} if
		\[
		\lim_{r\rightarrow 0}\sup\{\Vert\partial \Phi(x)\Vert(B_r (p)): p\in M, x\in X\} = 0.
		\]
	\end{definition}
	
	The purpose of the next theorem is to construct discrete maps out of a continuous map in the flat topology.
	\begin{theorem}[Discretization, \cite{Zhou19}*{Theorem 1.11}]\label{thm:discretization}
		Let $\Phi: X\rightarrow\mc C(M)$ be a continuous map in the flat topology that has no concentration of mass, and $\sup_{x\in X}\mf M(\partial \Phi (x)) < +\infty$. Assume that $\Phi|_Z$ is continuous under the $\mf F$-topology. Then there exist a sequence of maps
		\[\phi_i: X(k_i)_0\rightarrow\mc C(M),\]
		and a sequence of homotopy maps:
		\[\psi_i: I(k_i)_0\times X(k_i)_0\rightarrow\mc C(M),\]
		with $k_i<k_{i+1}, \psi_i (0,\cdot) = \phi_{i-1}\circ\mf n(k_i, k_{i-1} )$,
		$\psi_i (1,\cdot) = \phi_i$, and a sequence of numbers $\{\delta_i\}_{i\in\mb N}\rightarrow 0$ such that
		\begin{enumerate}[label=(\roman*)]
			\item \label{item:dis:fineness} the fineness $\mf f(\psi_i)<\delta_i$;
			\item \label{item:dis:flat close} \[\sup\{\mc F(\psi_i(t,x)-\Phi (x)): t\in I(k_i)_0, x\in X(k_i)_0\}\leq\delta_i;\]
			\item \label{item:dis:M bound} for some sequence $l_i\rightarrow\infty$, with $l_i<k_i$,
			\[\mf M(\partial\psi_i(t, x))\leq\sup\{\mf M(\partial\Phi(y)) : x, y \in\alpha, \text{ for some } \alpha\in X(l_i)\} + \delta_i;\]
			and this directly implies that
			\[\sup\{\mf M(\partial\phi_i(x)): x\in X(k_0)_0 \} \leq\sup\{\mf M(\partial\Phi(x)): x\in X\} + \delta_i.\]
		
		As $\Phi|_Z$ is continuous in the $\mf F$-topology, we have from \ref{item:dis:M bound} that for all $t\in I(k_i)_0$ and $x\in Z(k_i)_0$,
		\[\mf M(\partial\psi_i(t, x))\leq\mf M(\partial \Phi(x)) + \eta_i\]
		with $\eta_i\rightarrow 0$ as $i\rightarrow\infty$. Applying \cite{LZ16}*{Lemma 3.13}, we get by \ref{item:dis:flat close} that
		
			\item 
			\[
			\sup\{\mf F(\psi_i (t, x), \Phi(x)) : t \in I(k_i)_0 , x\in Z(k_i)_0\}\rightarrow0, \text{ as } i \rightarrow\infty.\]
			
		Now given $h\in C^\infty (M)$, denoting $c = \sup_M |h|$, then we have from \ref{item:dis:flat close}\ref{item:dis:M bound} that
		
			\item \label{item:discretizaiton:bound of Ah} \[\mc A^h (\phi_i(x))\leq\sup\{\mc A^h(\Phi(y)):\alpha\in X(l_i), x, y \in\alpha\}+(1+c)\delta_i;\]
			and hence
			\[\sup\{\mc A^h(\phi_i(x)):x\in X(k_i)_0\}\leq\sup\{\mc A^h(\Phi(x)):x\in X \}+(1+c)\delta_i.\]
		\end{enumerate} 
	\end{theorem}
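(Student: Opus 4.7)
The plan is to adapt the discretization procedure of Marques-Neves \cite{MN17} (lifted to $\mc C(M)$ in \cite{Zhou19}*{Theorem 1.11}) to the setting of a compact manifold with boundary. Since the boundary operator $\partial: \mc C(M) \to \mc Z_n(M,\partial M; \mb Z_2)$ is continuous in the flat topology and does not increase mass, one can largely transport the discretization for relative cycles developed in \cite{LZ16}*{\S 3} to the level of Caccioppoli sets, as was already done in the closed case in \cite{Zhou19}. The only new ingredient needed relative to \cite{Zhou19} is that the auxiliary isoperimetric-interpolation lemmas work equally well in manifolds with boundary, which is essentially \cite{LZ16}*{Lemma 3.8, Proposition 3.11}.

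First I would use the no-concentration-of-mass hypothesis together with the flat continuity of $\Phi$ on $X$ and the $\mf F$-continuity on $Z$ to produce a uniform modulus: for any $\epsilon>0$ there exists $r_0>0$ so that $\Vert\partial\Phi(x)\Vert(B_r(p))<\epsilon$ for all $x\in X$, $p\in M$, $r<r_0$, and additionally the map $x\mapsto\mf M(\partial\Phi(x))$ is uniformly bounded by $\sup_X\mf M(\partial\Phi)<+\infty$. A Lebesgue number argument then provides, for each $i$, a scale $l_i$ so that $x,y$ lying in a common cell of $X(l_i)$ have $\mc F(\Phi(x)-\Phi(y))$ arbitrarily small; choosing $l_i\to\infty$ slowly then yields the averaging cell structure needed in \ref{item:dis:M bound}.

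Next, for a much finer scale $k_i\gg l_i$, at each vertex $v\in X(k_i)_0$ I would set $\phi_i(v)$ equal to a small Caccioppoli-perturbation of $\Phi(v)$ with mass $\mf M(\partial\phi_i(v))\leq\mf M(\partial\Phi(v))+\delta_i/2$, and then use the interpolation construction of \cite{MN17}*{\S 3} (applied cell-by-cell) to connect $\phi_i\circ\mf n(k_i,k_{i-1})$ and $\phi_i$ by a discrete homotopy $\psi_i$ of fineness $<\delta_i$. The interpolation proceeds along 1-skeleta, then 2-skeleta, etc., at each stage converting an $\mc F$-close pair of sets into a short discrete path by filling in an isoperimetric region whose perimeter is controlled by the local mass of $\partial\Phi$ on the corresponding cell of $X(l_i)$; this produces \ref{item:dis:fineness}, \ref{item:dis:flat close} and \ref{item:dis:M bound} simultaneously.

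The subtle point, and the main obstacle, is the upgrade of flat-closeness to $\mf F$-closeness on $Z$ in \ref{item:dis:flat close}$'$ (the displayed consequence before (v)). This is where the $\mf F$-continuity of $\Phi|_Z$ and the no-mass-concentration must be used together: one invokes \cite{LZ16}*{Lemma 3.13}, which states that flat-closeness combined with a mass bound $\mf M(\partial\psi_i(t,x))\leq\mf M(\partial\Phi(x))+\eta_i$ (already provided by \ref{item:dis:M bound} restricted to $Z$, since on $Z$ the averaging over $l_i$-cells is harmless by $\mf F$-continuity) together imply $\mf F$-convergence. Statement \ref{item:discretizaiton:bound of Ah} is then purely formal: the elementary inequality $|\mc A^h(\Omega_1)-\mc A^h(\Omega_2)|\leq\mf M(\partial\Omega_1-\partial\Omega_2)+c\,\mc F(\Omega_1-\Omega_2)$, where $c=\sup_M|h|$, combined with \ref{item:dis:flat close} and \ref{item:dis:M bound}, yields both inequalities in \ref{item:discretizaiton:bound of Ah}.
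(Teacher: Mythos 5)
Your proposal follows essentially the same route as the paper: the paper's proof simply cites the closed-manifold discretization of \cite{Zhou17}*{Theorem 5.1}, \cite{Zhou19}*{Theorem 1.11} (itself built on the Almgren--Pitts/Marques--Neves interpolation scheme) and notes that the argument carries over to the free boundary setting via the isoperimetric lemmas of \cite{LZ16}*{\S 3.2}, which is exactly the adaptation you outline, including the use of \cite{LZ16}*{Lemma 3.13} on $Z$ and the elementary $\mc A^h$ estimate for the last item. Your sketch is correct at this level of detail (modulo the typo that $\psi_i$ should join $\phi_{i-1}\circ\mf n(k_i,k_{i-1})$ to $\phi_i$), so no further comparison is needed.
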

	\begin{proof}
		The last named author \citelist{\cite{Zhou17}*{Theorem 5.1}\cite{Zhou19}*{Theorem 1.11}} proved this for closed manifolds. The argument works well here by using the isoperimetric lemmas given in \cite{LZ16}*{\S 3.2}.

	\begin{comment}	\cite{LZ16}*{Theorem 4.12} and \cite{LMN16}*{Theorem 2.10} proved this result when $\mc C(M)$ is replaced by $\mc Z_n(M,\partial M)$ and $\mc Z_n(M,\partial M;\mb Z_2)$.
	\end{comment} 
	\end{proof}

	Before stating the next result, we first recall the notion of homotopic equivalence between discrete sweepouts. Let $Y$ be a cubical subcomplex of $I(m, k)$. Given two discrete maps $\phi_i : Y(l_i)_0\rightarrow\mc C(M)$, we say that {\em $\phi_1$ is homotopic to $\phi_2$ with fineness less than $\eta$}, if there exist $l\in\mb N$, $l> l_1, l_2$ and a map
	\[\psi: I(1, k + l)_0\times Y(l)_0\rightarrow\mc C(M)\]
	with fineness $\mf f(\psi) < \eta$ and such that
	\[\psi([i-1], y) =\phi_i (\mf n(k+l, k + l_i)(y)), i = 1, 2, y \in Y(l)_0.\]

	The purpose of the next theorem is to construct a continuous map in the $\mf F$-topology from a discrete map with small fineness,  which is called an {\em Almgren extension}. Moreover, the Almgren extensions from two homotopic maps are also homotopic to each other.
	\begin{theorem}[Interpolation, \cite{Zhou19}*{Theorem 1.12 and Proposition 1.14}]\label{thm:Almgren extension}
		There exist some positive constants $C_0= C_0(M, m)$ and $\delta_0 = \delta_0(M, m)$ so that if $Y$ is a cubical subcomplex of $I(m, k)$ and
		\[\phi: Y_0\rightarrow\mc C(M)\]
		has $\mf f(\phi) < \delta_0$, then there exists a map
		\[\Phi:Y\rightarrow\mc C(M)\]
		continuous in the $\mf F$-topology and satisfying
		\begin{enumerate}[label=(\roman*)]
			\item  $\Phi(x) = \phi(x)$ for all $x \in Y_0$;
			\item  if $\alpha$ is some $j$-cell in $Y$, then $\Phi$ restricted to $\alpha$ depends only on the values of $\phi$ restricted on the vertices of $\alpha$;
			\item  \[\sup\{\mf F(\Phi(x),\Phi(y)) : x, y \text{ lie in a common cell of } Y \}\leq C_0\mf f (\phi).\]
		\end{enumerate}
		Moreover, if $\phi_i:Y(l_i)_0\rightarrow \mc C(M)$ ($i=1,2$) is homotopic to each other with fineness  $\eta< \delta_0 (M, m)$, 
		\[\Phi_1 , \Phi_2 : Y\rightarrow\mc C(M )\]
		of $\phi_1, \phi_2$, respectively, are homotopic to each other in the $\mf F$-topology.
	\end{theorem}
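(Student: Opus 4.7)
The plan is to follow Almgren's inductive skeleton-by-skeleton extension argument, adapted to the space $\mc C(M)$ of Caccioppoli sets equipped with the $\mf F$-metric rather than to integral cycles. We build $\Phi$ on successive skeleta $Y^{(j)}$ of $Y$ for $j=0,1,\dots,\dim Y$, setting $\Phi|_{Y_0}=\phi$. Assuming $\Phi$ has been constructed continuously in $\mf F$ on $Y^{(j-1)}$ with oscillation in each $(j-1)$-cell bounded by $C_0 \mf f(\phi)$, we fill in the interior of each $j$-cell $\alpha$ using only the values of $\phi$ on the vertices of $\alpha$, keeping the resulting oscillation comparable to $\mf f(\phi)$. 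Property (ii) is built in by construction since the filling of $\alpha$ depends only on vertex data, and (iii) follows by summing the (bounded number of) edge oscillations across any cell.

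The key analytic input is an isoperimetric interpolation lemma for Caccioppoli sets, which is the $\mc C(M)$-analog of the Almgren--Pitts isoperimetric choice used in Theorem \ref{thm:discretization}. It asserts that whenever $\Omega_1,\Omega_2\in\mc C(M)$ satisfy $\mc F(\Omega_1-\Omega_2)+\M(\partial\Omega_1-\partial\Omega_2)<\delta_0(M)$, one can produce a continuous path $t\mapsto\Omega(t)\in\mc C(M)$ in the $\mf F$-topology connecting $\Omega_1$ to $\Omega_2$ with
\[
\sup_t \mf F(\Omega(t),\Omega_1)\leq C_0\bigl(\mc F(\Omega_1-\Omega_2)+\M(\partial\Omega_1-\partial\Omega_2)\bigr),
\]
obtained by a coarea-type slicing of the small ``filling set'' $Q$ between $\Omega_1$ and $\Omega_2$; the existence and mass bounds for $Q$ are supplied by the boundary isoperimetric lemmas of Li--Zhou \cite{LZ16}*{\S 3.2}. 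Applying this lemma along every $1$-cell of $Y$ gives the extension to $Y^{(1)}$ with the desired quantitative control. Higher-dimensional cells are then treated inductively: inside a $j$-cell $\alpha$ whose boundary is already mapped, one performs the Pitts--Marques--Neves cone/subdivision construction based at the barycenter of $\alpha$, applying the $1$-dimensional interpolation along ``radial'' segments.

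For the homotopy statement, view the discrete homotopy $\psi:I(1,k+l)_0\times Y(l)_0\to\mc C(M)$ as a single discrete map on the cubical subcomplex $Y':=I(1,k+l)\times Y\subset I(m+1,k+l)$, of fineness $<\eta$. Provided $\eta<\delta_0(M,m+1)$, the first part of the theorem applied to $Y'$ produces a continuous $\mf F$-map $\Psi:Y'\to\mc C(M)$ extending $\psi$. By property (ii) applied cell-by-cell, the restrictions of $\Psi$ to $\{0\}\times Y$ and $\{1\}\times Y$ agree with the Almgren extensions $\Phi_1,\Phi_2$ of $\phi_1,\phi_2$ respectively, after the trivial reparametrization coming from $\mf n(k+l,k+l_i)$. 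Thus $\Psi$ is the desired $\mf F$-continuous homotopy between $\Phi_1$ and $\Phi_2$.

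The main obstacle is the single-cell filling lemma itself: the $\mf F$-metric on $\mc C(M)$ records not only the flat distance of the underlying currents but also the varifold distance of the boundaries $|\partial \Omega_i|$, so the interpolation must simultaneously control mass without mass concentration. Moreover, on a manifold with boundary the interpolating Caccioppoli set may touch $\partial M$, which is precisely why one must invoke the boundary versions of the Li--Zhou isoperimetric lemmas. Once this quantitative filling is established, the remainder of the proof is a combinatorial bookkeeping of cell-by-cell oscillations identical to the classical Almgren procedure carried out in \cite{Zhou19}.
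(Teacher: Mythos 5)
Your route is the same one the paper actually relies on: Theorem \ref{thm:Almgren extension} is quoted from \cite{Zhou19}*{Theorem 1.12, Proposition 1.14}, whose proof is precisely the cell-by-cell Almgren extension you sketch, with the one-dimensional filling step supplied in the manifold-with-boundary setting by the isoperimetric lemmas of \cite{LZ16}*{\S 3.2} --- the same substitution the paper makes explicitly in its proof of Theorem \ref{thm:discretization}. Your key observation is also the right one: for Caccioppoli sets the ``isoperimetric choice'' is simply $\Omega_2-\Omega_1$, and slicing this small-mass region gives a path whose boundaries change by controlled mass, hence $\mf F$-continuity and the $C_0\,\mf f(\phi)$ oscillation bound. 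One caution on the inductive step: to re-apply the one-cell interpolation inside a $j$-cell you need the already-constructed values on $\partial\alpha$ to be close to a vertex value in the \emph{mass norm} of the boundary difference, not merely in $\mf F$; the standard construction keeps this under control by iterating the interpolation coordinate-by-coordinate through the product structure of the cube, and your radial-cone description should be understood (and justified) in that way.

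There is, however, a genuine gap in your homotopy argument. The restrictions of $\Psi$ to $\{0\}\times Y$ and $\{1\}\times Y$ are the Almgren extensions of $\phi_i\circ\mathbf n(k+l,k+l_i)$ built on the refined grid $Y(l)$, and these do \emph{not} coincide with $\Phi_1,\Phi_2$ ``after a trivial reparametrization'': the extension depends on the cell structure on which it is performed, so refining the grid changes the map pointwise, and property (ii) only tells you that each such extension is determined by its own vertex data. What is true --- and what \cite{Zhou19}*{Proposition 1.14}, following \cite{MN14}, proves as a separate step --- is that the Almgren extension of $\phi_i\circ\mathbf n(k+l,k+l_i)$ is $\mf F$-homotopic to $\Phi_i$; this refinement-invariance again uses the interpolation machinery, exploiting that on every coarse cell both maps stay within $C_0\eta$ of a common vertex value. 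Without inserting that lemma, your identification of the two ends of $\Psi$ with $\Phi_1$ and $\Phi_2$ fails as stated; with it, your product-complex argument does yield the claimed homotopy.
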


	Now for a continuous map $\Phi$ in Theorem \ref{thm:discretization}, there exists a sequence of discretized maps $\{\phi_i\}$. Applying Theorem \ref{thm:Almgren extension} to each $\phi_i$, we obtain $\Phi_i$ continuous in the $\mf F$-topology. Then the next proposition says that $\Phi_i$ is homotopic to $\Phi$.
	\begin{proposition}[\cite{Zhou19}*{Proposition 1.15}]
		Let $\{\phi_i\}_{i\in\mb N}$ and $\{\psi_i\}_{i\in\mb N}$ be given by Theorem \ref{thm:discretization} applied to some $\Phi$ therein. Assume that $\Phi$ is continuous in the $\mf F$-topology on $X$. Then the Almgren extension $\Phi_i$ is homotopic to $\Phi$ in the $\mf F$-topology for sufficiently large $i$.
		
		In particular, for $i$ large enough, there exist homotopy maps $\Psi_i : [0,1]\times X\rightarrow\mc C(M)$ continuous in the $\mf F$-topology, $\Psi_i (0, \cdot) =\Phi_i$, $\Psi_i(1,\cdot) =\Phi$, and
		\[
		\limsup_{i\rightarrow\infty}\sup_{t\in[0,1],x\in X}\mf F(\Psi_i(t, x),\Phi(x)) \rightarrow 0.\]
		Therefore, for given $h\in C^\infty (M)$, we have
		\[\limsup_{i\rightarrow\infty}\sup_{x\in X} \mc A^h(\Phi_i(x))\leq \sup_{x\in X}\mc A^h(\Phi(x)).\]
	\end{proposition}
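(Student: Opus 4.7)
\medskip

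The plan is to adapt the strategy of \cite{Zhou19}*{Proposition 1.15} to the relative setting on compact manifolds with boundary, using the free boundary isoperimetric lemmas of \cite{LZ16}*{\S 3.2} in place of their closed counterparts. The two assertions (homotopy and uniform $\mf F$-closeness) drive the $\mc A^h$ upper bound directly via $|\mc A^h(\Omega_1)-\mc A^h(\Omega_2)|\le \mf M(\partial\Omega_1-\partial\Omega_2)+\sup|h|\cdot \mc F(\Omega_1-\Omega_2)$.

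First I would establish uniform $\mf F$-closeness of $\Phi_i$ to $\Phi$. Combining Theorem \ref{thm:discretization}\ref{item:dis:flat close} (closeness in $\mc F$ at vertices) with Theorem \ref{thm:discretization}\ref{item:dis:M bound} and the assumed $\mf F$-continuity of $\Phi$, one gets $\mf F(\phi_i(x),\Phi(x))\to 0$ uniformly in $x\in X(k_i)_0$ (by an argument of \cite{LZ16}*{Lemma 3.13}, which promotes $\mc F$-closeness under a mass bound to $\mf F$-closeness). Then Theorem \ref{thm:Almgren extension}(iii) says that on each cell $\alpha$ of $X(k_i)$, $\Phi_i$ varies by at most $C_0\mf f(\phi_i)$ in $\mf F$, while the $\mf F$-continuity of $\Phi$ (with the subdivision scale $3^{-k_i}\to 0$) gives a matching modulus of continuity for $\Phi$. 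Hence $\sup_{x\in X}\mf F(\Phi_i(x),\Phi(x))\to 0$.

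Next I would construct the homotopy $\Psi_i:[0,1]\times X\to\mc C(M)$ continuous in $\mf F$. Fix a sufficiently fine subdivision $X(l)$ with $l\ge k_i$. On each cell $\alpha\in X(l)$, both $\Phi_i|_\alpha$ and $\Phi|_\alpha$ take values in a small $\mf F$-neighborhood (by the previous step), and the restrictions agree up to small fineness with discrete maps that are related by the homotopy data $\psi_i$ produced by Theorem \ref{thm:discretization}. I would therefore apply the Almgren extension (Theorem \ref{thm:Almgren extension}) to the enlarged discrete homotopy on $I(1,l)_0\times X(l)_0$ linking $\phi_i$ (suitably refined) to a sufficiently fine discretization of $\Phi$, and then glue in a second interpolation, cell by cell, that connects the refined discretization of $\Phi$ to $\Phi$ itself through Cappccioppoli sets whose boundary mass is controlled by $\mf M(\partial \Phi)$ plus $o(1)$. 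The cell-by-cell gluing uses the free boundary version of the isoperimetric choice of connecting paths (the relative version of the constructions in \cite{MN14, MN17, LZ16}), which is precisely the tool that makes the extension continuous in $\mf F$ and keeps $\sup \mf F(\Psi_i(t,x),\Phi(x))\to 0$.

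The main obstacle is the second step: assembling a genuinely $\mf F$-continuous homotopy out of discrete/continuous pieces while preserving the uniform $\mf F$-closeness to $\Phi$. In the interior this is handled exactly as in \cite{Zhou19}, but near $\partial M$ one must verify that the filling chains connecting nearby Caccioppoli sets can be chosen to respect the free boundary constraint and still satisfy the isoperimetric-type mass bounds; this is supplied by \cite{LZ16}*{\S 3.2}. Once the homotopy is in hand, the final $\mc A^h$-bound follows from $|\mc A^h(\Psi_i(t,x))-\mc A^h(\Phi(x))|\le \mf M(\partial\Psi_i(t,x)-\partial\Phi(x))+c\,\mc F(\Psi_i(t,x)-\Phi(x))$ together with Theorem \ref{thm:discretization}\ref{item:discretizaiton:bound of Ah}, yielding $\limsup_i\sup_{x\in X}\mc A^h(\Phi_i(x))\le \sup_{x\in X}\mc A^h(\Phi(x))$ as claimed.
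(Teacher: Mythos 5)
Your first step is sound and matches how the paper itself obtains item (iv) of Theorem \ref{thm:discretization}: the mass bound \ref{item:dis:M bound} plus $\mf F$-continuity of $\Phi$ gives $\mf M(\partial\phi_i(x))\le \mf M(\partial\Phi(x))+o(1)$, lower semicontinuity of mass under flat convergence gives the reverse inequality, and \cite{LZ16}*{Lemma 3.13} upgrades \ref{item:dis:flat close} to $\sup_x\mf F(\phi_i(x),\Phi(x))\to 0$; combined with Theorem \ref{thm:Almgren extension}(iii) this yields $\sup_{x\in X}\mf F(\Phi_i(x),\Phi(x))\to 0$. (Note the paper gives no proof of this proposition at all; it quotes \cite{Zhou19}*{Proposition 1.15}, so the comparison is with that argument.) The genuine gap is in your second step. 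The ``second interpolation, cell by cell, that connects the refined discretization of $\Phi$ to $\Phi$ itself'' is exactly the content of the proposition being proved, now at a finer stage: no finite, cell-by-cell gluing built from discrete data can terminate \emph{exactly} at $\Phi$, and the isoperimetric lemmas only give you paths between nearby Caccioppoli sets with flat/mass control, not a mechanism to hit the prescribed continuous endpoint. As written, the construction is circular. The missing idea is the infinite concatenation: for each $j\ge i$ the discrete homotopy $\psi_{j+1}$ between $\phi_j$ and $\phi_{j+1}$ has fineness $\delta_{j+1}\to 0$, so the ``Moreover'' clause of Theorem \ref{thm:Almgren extension} produces $\mf F$-continuous homotopies $H_j$ between $\Phi_j$ and $\Phi_{j+1}$ whose values stay within $C_0\,\mf f(\psi_{j+1})$ of the vertex values, hence uniformly $\mf F$-close to $\Phi$; one then concatenates $H_i,H_{i+1},\dots$ on $[0,1)$ (say $H_j$ on $[1-2^{-(j-i)},1-2^{-(j-i)-1}]$), sets $\Psi_i(1,\cdot)=\Phi$, and continuity at $t=1$ is supplied precisely by the uniform convergence $\sup_x\mf F(\Phi_j(x),\Phi(x))\to 0$ from your first step. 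Without this (or an equivalent limiting device) the homotopy to $\Phi$ is not constructed.

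A smaller but real flaw: your closing estimate invokes $|\mc A^h(\Omega_1)-\mc A^h(\Omega_2)|\le \mf M(\partial\Omega_1-\partial\Omega_2)+c\,\mc F(\Omega_1-\Omega_2)$, but the quantity $\mf M(\partial\Psi_i(t,x)-\partial\Phi(x))$ is not controlled by anything in the construction (uniform $\mf F$-closeness does not bound the mass of the difference of the boundaries). What you should use is that the $\mf F$-metric on $\mc C(M)$ already controls both $|\mf M(\partial\Omega_1)-\mf M(\partial\Omega_2)|$ (test the varifold distance against $f\equiv 1$) and the volume of the symmetric difference, so $|\mc A^h(\Omega_1)-\mc A^h(\Omega_2)|\le (1+c)\,\mf F(\Omega_1,\Omega_2)$; alternatively, the stated $\mc A^h$ bound for $\Phi_i$ follows from Theorem \ref{thm:discretization}\ref{item:discretizaiton:bound of Ah} at the vertices together with Theorem \ref{thm:Almgren extension}(iii) on each cell. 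With those two repairs the outline follows the intended argument.
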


	\subsection{Regularity of $h$-almost minimizing varifolds with free boundary}
	One key ingredient in the Almgren-Pitts theory to prove regularity of min-max varifold is to introduce the ``$h$-almost minimizing with free boundary'' concept. 
	\begin{definition}[$h$-almost minimizing varifolds with free boundary]
		Let $\bm \nu$ be the $\mc F$ or $\mf M$-norm, or the $\mf F$-metric. For any given $\epsilon,\delta> 0$ and a relative open subset $U\subset M$, we define $\ms A^h(U;\epsilon,\delta;\bm\nu)$ to be the set of all $\Omega\in\mc C(M)$ such that if $\Omega= \Omega_0, \Omega_1, \Omega_2,...,\Omega_m \in\mc C(M)$ is a sequence with:
		\begin{enumerate}[label=(\roman*)]
			\item $\spt(\Omega_i-\Omega)\subset U$;
			\item  $\bm\nu(\partial\Omega_{i+1},\partial \Omega_i)\leq\delta$;
			\item $\mc A^h(\Omega_i)\leq \mc A^h(\Omega)+\delta$, for $i=1,...m$, then 
			\[\mc A^h(\Omega_m)\geq \mc A^h(\Omega)-\epsilon.\]  
		\end{enumerate}
		We say that a varifold $V\in\mc V_n(M)$ is {\em $h$-almost minimizing in $U$ with free boundary} if there exist sequences $\epsilon_i\rightarrow 0$, $\delta_i\rightarrow 0$, and $\Omega_i\in\ms A^h(U;\epsilon_i,\delta_i;\mc F)$ such that $\mf F(|\partial \Omega_i|,V)\leq \epsilon_i.$
	\end{definition}

	For each $p\in\partial M$, as defined in \cite{LZ16}*{Definition A.4}, the {\em Fermi half-ball and half-sphere of radius $r$ centered at $p$} are
	\[ \wti {\mc B}^+_r(p)=\{ q\in M: \wti r_p(q)<r \},\ \ \ \ \ \wti {\mc S}^+_r(p)=\{q\in M:\wti r_p(q)=r\} , \]
	where $\wti r_p(\cdot)$ is the {\em Fermi distance function to $p$}; (see \cite{LZ16}*{Definition A.1}). For $p\in M\setminus \partial M$, $\wti{\mc B}^+_r(p)$ and $\wti{\mc S}^+_r(p)$ are also used to denote the geodesic ball and sphere of radius $r$ at $p$.
	
	\begin{definition}\label{def:almost minimizing}
		A varifold $V\in\mc V_n (M)$ is said to be {\em $h$-almost minimizing in small annuli with free boundary} if for each $p\in M$, there exists $r_{am}(p) > 0$ such that $V$ is $h$-almost minimizing with free boundary in $\mc A_{s,r}(p)$ for all $0<s<r\leq r_{am}(p)$, where $\mc A_{s,r}(p)=\wti{\mc B}^+_r(p)\setminus \wti{\mc B}^+_s(p)$.
	\end{definition}

	Before stating the regularity of $h$-almost minimizing varifolds with free boundary, we provide the regularity of $h$-replacements, which follows from Theorem \ref{thm:regularity of h-minimizer}.
	\begin{proposition}[Replacements \cite{ZZ18}*{Proposition 6.8}]\label{prop:replacement}
		Let $V\in\mc V_n(M)$ be $h$-almost minimizing with free boundary in a relative open set $U\subset M$ and $K\subset U$ be a compact subset. Then there exists $V^*\in\mc V_n(M)$, called an $h$-replacement of $V$ in $K$ such that, with $c=\sup |h|$,
		\begin{enumerate}[label=(\roman*)]
			\item \label{item:prop:only change in K} $V\llcorner(M\setminus K)=V^*\llcorner(M\setminus K)$;
			\item \label{item:prop:area change bounded} $-c\cdot\mathrm{Vol}(K)\leq|V|(M)-|V^*|(M)\leq c\cdot\mathrm{Vol}(K)$;
			\item \label{item:prop:almost minimizing} $V^*$ is $h$-almost minimizing in $U$ with free boundary ;
			\item \label{item:prop:locally minimizing} $V^*=\lim_{i\rightarrow\infty}|\partial\Omega^*_i|$ as varifolds for some $\Omega^*_i\in\mc C(M) $ such that 
			\[
			\Omega^*_i\in\ms A(U;\epsilon_i,\delta_i;\mc F) \text{ with $\epsilon_i,\delta_i\rightarrow 0$};
			\]
			furthermore $\Omega^*_i$ locally minimizes $\mc A^h$ in $\mathrm{int}(K)$ (relative to $\partial M$);
			\item\label{item:prop:bounded first variation} if $V$ has $c$-bounded first variation in $M$, then so does $V^*$.
		\end{enumerate}
	\end{proposition}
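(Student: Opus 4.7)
The plan is to follow the classical Pitts--type replacement construction adapted to the $\mc A^h$ functional and to the free boundary setting, using the regularity Theorem \ref{thm:regularity of h-minimizer} for $\mc A^h$-minimizers as the replacement for the interior regularity step. First, unwinding the definition of $h$-almost minimizing with free boundary, I would fix sequences $\epsilon_i,\delta_i\to 0$ and $\Omega_i\in\ms A^h(U;\epsilon_i,\delta_i;\mc F)$ with $\mf F(|\partial\Omega_i|,V)\le\epsilon_i$. For each $i$, consider the class $\mc H_i$ of all $\Lambda\in\mc C(M)$ that can be reached from $\Omega_i$ by an admissible $(\epsilon_i,\delta_i;\mc F)$-chain with perturbations supported in $K$, and set
\[
m_i:=\inf\{\mc A^h(\Lambda):\Lambda\in\mc H_i\}.
\]
Choose $\Lambda_{i,k}\in\mc H_i$ with $\mc A^h(\Lambda_{i,k})\to m_i$; by the usual compactness for Caccioppoli sets with mass bounds, pass to an $\mf F$-limit $\Omega_i^*\in\mc C(M)$. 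A diagonal extraction ensures $\Omega_i^*$ itself lies in $\ms A^h(U;\epsilon_i',\delta_i';\mc F)$ with $\epsilon_i',\delta_i'\to 0$, giving item \ref{item:prop:almost minimizing} and the first half of \ref{item:prop:locally minimizing}.

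Next, item \ref{item:prop:only change in K} is immediate since every element of $\mc H_i$ agrees with $\Omega_i$ outside $K$, and this is preserved under $\mf F$-limits, so after passing to a subsequence $V^*:=\lim_i|\partial\Omega_i^*|$ satisfies $V^*\llcorner(M\setminus K)=V\llcorner(M\setminus K)$. Item \ref{item:prop:area change bounded} follows by comparing $\mc A^h(\Omega_i^*)$ with $\mc A^h(\Omega_i)$: on one hand the admissibility condition gives $\mc A^h(\Omega_i^*)\le \mc A^h(\Omega_i)+\delta_i$, and $\Omega_i$ being $(\epsilon_i,\delta_i;\mc F)$-almost minimizing yields $\mc A^h(\Omega_i^*)\ge \mc A^h(\Omega_i)-\epsilon_i$; combining these with the bound $|\int_{\Omega_i^*\triangle\Omega_i}h|\le c\cdot\mathrm{Vol}(K)$ and passing to the limit gives the two-sided mass comparison.

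For the local minimizing property in item \ref{item:prop:locally minimizing}, I would show that for any $p\in\mathrm{int}(K)$ and any sufficiently small geodesic/Fermi ball $B$ around $p$ with $\overline B\subset\mathrm{int}(K)\cup (\partial M\cap\mathrm{int}(K))$, the set $\Omega_i^*$ minimizes $\mc A^h$ among competitors $\Lambda$ with $\mathrm{spt}\|\Lambda-\Omega_i^*\|\subset B$: if not, a single $\mc F$-small deformation (produced via the standard isoperimetric/discretization lemma from \cite{LZ16}*{\S3.2} to stay within the $\delta_i$-fineness budget) would contradict the minimality of $\Omega_i^*$ in $\mc H_i$ for $i$ large. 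This is the main technical step, and it is where the free boundary enters, since the isoperimetric deformations must be allowed to slide along $\partial M$; here the argument in \cite{LZ16}*{\S3.2} adapted via Fermi coordinates is exactly what lets one treat ``relative to $\partial M$'' in the same way as in the closed case. Combined with Theorem \ref{thm:regularity of h-minimizer}, this also yields the partial regularity of $\mathrm{spt}\|V^*\|$ in $\mathrm{int}(K)$.

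Finally, for item \ref{item:prop:bounded first variation}, assume $V$ has $c$-bounded first variation in $M$. Outside $K$ nothing changes, so it suffices to control $\delta V^*(X)$ for $X\in\mk X(M,\partial M)$ supported near $K$. Since $\Omega_i^*$ locally minimizes $\mc A^h$ in $\mathrm{int}(K)$, its boundary has mean curvature bounded by $c=\sup|h|$ on its regular part there (with free boundary on $\partial M$), which gives the pointwise estimate $|\delta|\partial\Omega_i^*|(X)|\le c\int|X|\,d\mu_{|\partial\Omega_i^*|}$ on that part; outside $K$ the bounded first variation of $V$ is inherited through the $\mf F$-convergence. Passing to the limit in $i$ yields the required $c$-bounded first variation for $V^*$. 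I expect the principal obstacle to be the verification that the minimizers $\Omega_i^*$ remain admissible, i.e.\ stay inside the $(\epsilon_i,\delta_i;\mc F)$-chain class after passing to the $\mf F$-limit, which is the place where the free boundary isoperimetric lemmas of \cite{LZ16} must be used carefully to bridge between $\mf F$-closeness and insertion into an admissible chain.
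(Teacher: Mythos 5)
Your overall route is the same as the paper's: the paper proves this proposition by quoting the constrained minimization scheme of \cite{ZZ18}*{Lemma 6.7, Proposition 6.8} (equivalently the Pitts-type replacement construction), with Theorem \ref{thm:regularity of h-minimizer} supplying the free boundary regularity of the constrained minimizers $\Omega_i^*$, and then taking $V^*=\lim_i|\partial\Omega_i^*|$. Your construction of the class $\mc H_i$, the verification of \ref{item:prop:only change in K}--\ref{item:prop:locally minimizing} (including the single flat-small step to a better competitor in a small ball, which is legitimate here precisely because the chain metric is $\mc F$ rather than $\mf M$), and the use of Theorem \ref{thm:regularity of h-minimizer} all match that argument.

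There is, however, a genuine gap in your proof of \ref{item:prop:bounded first variation}. You propose to bound $\delta V^*(X)$ by ``passing to the limit in $i$'' of first variation estimates for $|\partial\Omega_i^*|$, using smoothness in $\mathrm{int}(K)$ and inheriting the bound for $V$ outside $K$. But the approximators $\partial\Omega_i^*$ carry no first variation control in $M\setminus K$ at all: there they equal $\partial\Omega_i$, which is merely $\mc F$-close to $V$, so there is nothing to pass to the limit. Worse, even arguing directly on the limit, knowing $V^*\llcorner(M\setminus K)=V\llcorner(M\setminus K)$ and that $\spt\|V^*\|\cap\mathrm{int}(K)$ is a smooth free boundary $h$-hypersurface does not prevent $\delta V^*$ from having a singular part concentrated on $\partial K$ (think of a replaced sheet in $\mathrm{int}(K)$ that fails to match the old sheets across the seam; the two edge contributions along $\partial K$ need not cancel). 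The standard and correct route is through \ref{item:prop:almost minimizing}: an $h$-almost minimizing varifold with free boundary in the open set $U$ has $c$-bounded first variation in $U$ (otherwise a vector field in $\mk X(U,U\cap\partial M)$ with $\delta V^*(X)<-c\int|X|\,d\mu_{V^*}$ generates a flow whose small time steps form an admissible chain strictly decreasing $\mc A^h$, contradicting almost minimality). Then, given $X\in\mk X(M,\partial M)$, write $X=\varphi X+(1-\varphi)X$ with a cutoff $\varphi$ supported in $U$ and $\varphi\equiv 1$ near $K$: the first term is handled by the bound on $U$, the second by $|\delta V^*((1-\varphi)X)|=|\delta V((1-\varphi)X)|\le c\int(1-\varphi)|X|\,d\mu_{V^*}$, and summing gives the claim. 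With this replacement for your last paragraph, the proposal is in line with the paper's (cited) proof.
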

	\begin{proof}
		The proof here is same as \cite{ZZ18}*{Proposition 6.8}. We only sketch the steps and point out the difference here.
		
		By Definition \ref{def:almost minimizing}, there exist sequences $\epsilon_i\rightarrow 0$, $\delta_i\rightarrow 0$ and $\Omega_i\in\ms A^h(U;\epsilon_i,\delta_i;\mc F)$ such that $\mf F(|\partial \Omega_i|,V)\leq \epsilon_i$. Then for each $\Omega_i$, denote by $\Omega_i^*$ the current by solving a constrained minimization problem; see \cite{ZZ18}*{Lemma 6.7}. Then by Theorem \ref{thm:regularity of h-minimizer}, $\partial \Omega^*_i$ is a properly embedded $h$-hypersurface with free boundary. One can check that $V^*:=\lim_{i\rightarrow\infty}|\partial\Omega_i^*|$ (as varifolds) is the desired replacement; (see \cite{ZZ17}*{Proposition 5.8} for details).
	\end{proof}

	Making use of Theorem \ref{prop:replacement} and the monotonicity formula for varifolds with bounded first variation, we can classify the tangent varifolds.
	\begin{lemma}[\cite{LZ16}*{Proposition 5.10}]\label{lem:classification of tangent cones}
		Let $2\leq n\leq 6$. Suppose that $V\in\mc V_n(M)$ has $c$-bounded first
		variation in $M$ and is $h$-almost minimizing in small annuli with free boundary. For any tangent varifold $C\in \mathrm{VarTan}(V,p)$ with $p\in \spt|V|\cap \partial M$, we have either
		\begin{enumerate}[label=(\roman*)]
			\item \label{item:tangent cone:entire plane} $C=\Theta^n(|V|,p)|T_p(\partial M)|$ where $\Theta^n(|V|,p)\in \mb N$ or
			\item \label{item:tangent cone:half plane} $C=2\Theta^n(|V|,p)|S\cap T_pM|$ for some $n$-plane $S\in \mf G(L,n)$ such that $S\subset T_p\wti M$ and $S\perp T_p(\partial M)$ and $2\Theta^n(|V|,p)\in\mb N$.
		\end{enumerate}
		Moreover, for $|V|$-a.e. $p\in\spt |V|\cap \partial M$, the tangent varifold of $V$ at $p$ is unique, and the set of $p\in\partial M$ in which case (ii) occurs as its unique tangent cones has $|V|$-measure 0; hence $V$ is rectifiable.
	\end{lemma}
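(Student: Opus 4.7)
My plan is to adapt the Li-Zhou free boundary almost-minimizing argument \cite{LZ16}, observing that the prescribing term $h$ scales away under blow-up, so at the level of tangent cones the proof reduces to the $h\equiv 0$ case. The proof has four ingredients: existence of tangent cones, a double-replacement plus boundary unique continuation argument producing a smooth cone on which $C$ is modelled, classification of smooth minimal cones meeting the boundary orthogonally, and a dimension-reduction step for the measure assertions.

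First, I would establish existence of tangent cones at $p$ via the monotonicity formula in Fermi coordinates applied to the $c$-bounded first variation of $V$ (see \cite{LZ16}). Any subsequential weak-$*$ limit $C$ of rescalings $(\eta_{p,r_i})_\# V$ with $r_i\to 0$ is a stationary $n$-cone supported in the half-space $T_pM\cong\R^{n+1}_+$ with free boundary on $T_p(\partial M)\cong\R^n$: the mean curvature contribution scales like $r_i\cdot h\to 0$, while $c$-bounded first variation becomes exact stationarity after rescaling.

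Next, I would promote $C$ to a smooth cone using two iterated $h$-replacements. Fix $0<s<r<r_{am}(p)$ and apply Proposition \ref{prop:replacement} in a compact set $K\subset\mc A_{s,r}(p)$ to obtain an $h$-replacement $V^*$ whose support inside $\mathrm{int}(K)$ is a smooth embedded free boundary $h$-hypersurface $\Sigma^*$ (by Theorem \ref{thm:regularity of h-minimizer}). Iterating with a second replacement $V^{**}$ of $V^*$ in a slightly overlapping annulus, and gluing via boundary unique continuation for the prescribed mean curvature equation, extends $\Sigma^*$ to a smooth $h$-hypersurface on a full punctured Fermi half-ball at $p$. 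Rescaling this smooth hypersurface and passing to the limit yields a smooth embedded minimal cone in $\R^{n+1}_+$ meeting $\R^n$ orthogonally; by construction it agrees with $C$ on the model annulus, and hence, by conicality, on all of $\R^{n+1}_+$. Smooth embedded minimal cones of this type are either $\R^n$ itself (giving \ref{item:tangent cone:entire plane}, with integer multiplicity by the constancy theorem) or a single half-hyperplane $S\cap\R^{n+1}_+$ with $S\perp\R^n$ (giving \ref{item:tangent cone:half plane}); embeddedness together with the almost-minimizing property excludes unions of transverse half-hyperplanes meeting at the vertex. The factor of $2$ in case \ref{item:tangent cone:half plane} appears because $|S\cap T_pM|$ carries density $1/2$ at the origin as a varifold in $T_p\wti M$, so integrality of the full density forces $2\Theta^n(|V|,p)\in\mb N$.

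Finally, uniqueness of tangent cones $|V|$-a.e.\ on $\partial M$ follows from the rigidity of the monotonicity identity at Lebesgue points of the density function $\Theta^n(|V|,\cdot)$. The subset of $\spt|V|\cap\partial M$ on which case \ref{item:tangent cone:half plane} occurs consists of points whose unique tangent is a half-plane transverse to $\partial M$; a standard Federer dimension-reduction within the $n$-dimensional manifold $\partial M$ gives that this set has $\mc H^{n-1}$-measure zero, hence $|V|$-measure zero, and rectifiability of $V$ then follows from case \ref{item:tangent cone:entire plane} via Allard's rectifiability criterion. \emph{The main obstacle} I anticipate is the gluing step in the previous paragraph: it requires a unique continuation statement for smooth $h$-hypersurfaces with free boundary along $\partial M$, which I would obtain by reflecting across $\partial M$ (admissible since $\Sigma^*$ meets $\partial M$ orthogonally) and applying interior unique continuation to the reflected hypersurface, paralleling the boundary reflection used in the proof of Theorem \ref{thm:regularity for minimizer}.
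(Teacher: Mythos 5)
Your overall strategy (monotonicity for existence of tangent cones, replacements made regular by Theorem \ref{thm:regularity of h-minimizer}, reflection/classification of stable cones, then a measure-theoretic argument for the ``moreover'' part) is in the same spirit as the paper, which simply runs the proof of \cite{LZ16}*{Proposition 5.10} with Proposition \ref{prop:replacement} in place of the minimal replacement. But there is a genuine gap at the central step. You assert that the rescaled, glued replacement ``by construction agrees with $C$ on the model annulus, and hence, by conicality, on all of $\R^{n+1}_+$.'' This is exactly backwards: by Proposition \ref{prop:replacement}\ref{item:prop:only change in K}, the replacement $V^*$ coincides with $V$ only \emph{outside} the compact set $K\subset \mc A_{s,t}(p)$ where the replacement is performed; inside that annulus $V^*$ may a priori be completely different from $V$, and the tangent cone $C$ is a limit of rescalings of $V$, not of $V^*$. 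The whole content of the Li--Zhou argument is to transfer density/volume-growth information from $V$ to the (regular) replacements so that the smooth stable cone obtained by blowing up the replacements can be identified with $C$; in the minimal case this uses the exact mass identity $\|V\|(M)=\|V^*\|(M)$, and in the present $h$-setting that identity fails. The paper's proof explicitly flags this as the \emph{only} modification needed: one replaces the mass equality by Proposition \ref{prop:replacement}\ref{item:prop:area change bounded}, noting that the defect $c\cdot\Vol(K)=O(r^{n+1})$ is of lower order than the $r^n$ area scale, so the density ratios of $V$ and $V^*$ still agree in the blow-up limit. Your proposal never addresses this mass-defect issue, which is precisely the new ingredient relative to $h\equiv 0$.

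A secondary concern is logical order. Your route of extending the replacement to a smooth $h$-hypersurface on a punctured Fermi half-ball via double replacement plus boundary unique continuation is the machinery of Theorem \ref{thm:regularity of h minmax} (Steps 1--3), and even granting it, you would still need to identify $V$ itself (not the constructed extension) near $p$, or at least its tangent cones, with the smooth object; in the paper that identification is Step 6 of the regularity theorem and relies on structure results (including this very lemma, used in Step 4) plus interior regularity, so importing it here risks circularity and in any case is far heavier than what is needed. The intended argument stays at the level of a single replacement per blow-up scale, a curvature estimate/reflection classification of the stable free boundary cone, and the density comparison described above.
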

	\begin{proof}
		The first step is to prove that $C$ is a stationary rectifiable cone in $T_pM$ with free boundary. Such a result follows from the monotonicity formula, which also holds true for varifolds with bounded first variation (see \cite{GLZ16}) together with the argument in \cite{LZ16}*{Lemma 5.8}.
		
		Then applying Proposition \ref{prop:replacement}, the proof of \cite{LZ16}*{Proposition 5.10} gives the desired classification. The only necessary modification is to replace the condition $\|V\|(M)=\|V^*\|(M)$ by Proposition \ref{prop:replacement} \ref{item:prop:area change bounded} so that we can still get the volume growth for the blow up limit.  %\textcolor{magenta}{(The use of Prop 3.8(\ref{item:prop:area change bounded}) was not mentioned in \cite{LZ16}.)}
	\end{proof}

	Now we are ready to prove the main regularity theorem for varifolds which is $h$-almost minimizing with free boundary and has $c$-bounded first variation.
	\begin{theorem}[Main regularity]\label{thm:regularity of h minmax}
		Let $2\leq n\leq 6$, and $(M^{n+1},\partial M , g)$ be an $(n +1)$-dimensional smooth, compact Riemannian manifold with boundary. Further let $h\in\mc S(g)$ and set $c=\sup |h|$. Suppose $V\in\mc V_n(M)$ is a varifold which
		\begin{enumerate}
			\item has $c$-bounded first variation in $M$, and
			\item is $h$-almost minimizing in small annuli with free boundary,
		\end{enumerate}
		then $V$ is induced by $\Sigma$, where $\Sigma$ is a compact, almost embedded $h$-hypersurface with free boundary (possibly disconnected).
	\end{theorem}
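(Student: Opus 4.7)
The plan is to follow the Almgren--Pitts regularity scheme adapted to the free boundary, prescribed-mean-curvature setting, using the $h$-replacement construction of Proposition \ref{prop:replacement} together with the tangent-cone classification of Lemma \ref{lem:classification of tangent cones}. The argument splits naturally into interior regularity at points $p\in\spt|V|\cap\mathrm{Int}(M)$ and boundary regularity at points $p\in\spt|V|\cap\partial M$, and the real content lies in the latter.

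For the interior case, at each $p\in\spt|V|\cap\mathrm{Int}(M)$ I would choose Fermi annuli $\mc A_{s,r}(p)$ contained entirely in $\mathrm{Int}(M)$ and produce $h$-replacements $V^*$ using Proposition \ref{prop:replacement}. By \ref{item:prop:locally minimizing} and Theorem \ref{thm:regularity of h-minimizer}, these replacements are smooth properly embedded $h$-hypersurfaces inside $K$. This is exactly the closed-manifold situation dealt with by Zhou--Zhu \cite{ZZ18}, so their interior gluing argument (using two successive replacements on overlapping annuli, matching via unique continuation for the mean-curvature PDE) applies verbatim and shows $\spt|V|\cap\mathrm{Int}(M)$ is a smooth almost embedded $h$-hypersurface.

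The heart of the proof is boundary regularity at $p\in\spt|V|\cap\partial M$. Lemma \ref{lem:classification of tangent cones} gives, for $|V|$-a.e.\ such $p$, a unique tangent cone of one of two types: $\Theta^n(|V|,p)|T_p(\partial M)|$, or $2\Theta^n(|V|,p)|S\cap T_pM|$ for an $n$-plane $S\perp T_p(\partial M)$. For $r<r_{am}(p)$ sufficiently small I would construct two $h$-replacements $V^*$, $V^{**}$ on overlapping Fermi annuli $\mc A_{s_1,r_1}(p)$ and $\mc A_{s_2,r_2}(p)$ with $s_2<s_1<r_2<r_1$. Proposition \ref{prop:replacement}\ref{item:prop:locally minimizing} and Theorem \ref{thm:regularity of h-minimizer} ensure that on each annulus the replacement is represented by a smooth properly embedded $h$-hypersurface meeting $\partial M$ orthogonally, and Proposition \ref{prop:replacement}\ref{item:prop:bounded first variation} preserves $c$-bounded first variation so the tangent-cone analysis continues to apply to $V^*$ and $V^{**}$.

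The main obstacle will be the gluing step at the boundary, which requires genuinely new input beyond the closed-manifold argument in \cite{ZZ18}. My plan is to first perform the interior gluing of Zhou--Zhu on $\mathrm{Int}(M)\cap\mc A_{s_1,r_2}(p)$, so that $V^*$ and $V^{**}$ agree there as a single smooth $h$-hypersurface $\Sigma_0$. Then, because both $V^*$ and $V^{**}$ extend smoothly up to $\partial M$ as free boundary $h$-hypersurfaces with the same orthogonal boundary values, unique continuation for the second-order elliptic PDE $H_{\Sigma}=h$ with the oblique Neumann-type boundary condition forces the two extensions to agree in a neighborhood of $\partial M\cap\mc A_{s_1,r_2}(p)$ as well. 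This identifies $V$ with a smooth free boundary $h$-hypersurface in a neighborhood of $p$. Finally, since $h\in\mc S(g)$, Proposition \ref{prop:estimate of touching set} controls both self-touching and boundary-touching sets by an $(n-1)$-dimensional set, which avoids the much more delicate touching-set analysis needed in the minimal (free boundary) case of \cite{LZ16} and immediately yields the almost embedded conclusion. Patching over finitely many such neighborhoods gives a globally almost embedded free boundary $h$-hypersurface $\Sigma$ inducing $V$.
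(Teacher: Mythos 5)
Your overall strategy (successive $h$-replacements on overlapping annuli, interior gluing following Zhou--Zhu, unique continuation up to $\partial M$, and touching-set control from $h\in\mc S(g)$ via Proposition \ref{prop:estimate of touching set}) is the same route the paper takes, but as written the proposal stops well short of the conclusion. The central gap is that both of your annuli $\mc A_{s_1,r_1}(p)$ and $\mc A_{s_2,r_2}(p)$ have positive inner radius, so even after gluing you only obtain regularity of a replacement on a punctured neighborhood of $p$; the point $p$ itself is exactly where regularity is at stake. The paper's proof must let the inner radius tend to $0$ (using unique continuation to see the construction is independent of the intermediate radius), pass to a limit $\wti V$ on the punctured half-ball, and then prove that the isolated singularity at $p$ is removable for a weakly stable almost embedded free boundary $h$-hypersurface (Theorem \ref{thm:removable touching set}, which itself rests on the tangent-cone classification of Lemma \ref{lem:classification of tangent cones}). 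Citing the tangent-cone lemma does not substitute for this removable-singularity step.

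A second genuine omission: even granting a smooth glued hypersurface $\wti V$ near $p$, you never show that the \emph{original} varifold $V$ coincides with it. The replacements agree with $V$ only outside the compact sets where they were performed, so one must (i) rule out that $\spt V$ near $p$ is simply a piece of $\partial M$ (the paper's Step 5, via the Constancy Theorem and the $(n-1)$-dimensional bound on boundary touching), and (ii) find a sphere $\partial\wti{\mc B}^+_{s'}(p)$ meeting $\spt V$ at an interior, non-touching point $z$, where $V$ agrees with the second replacement by construction, and then propagate $V=\wti V$ by unique continuation (Step 6). Finally, a smaller but real technical point in your gluing step: before invoking the interior gluing you need the intermediate radius to be chosen (via Sard's theorem) so that the cross-sectional Fermi sphere is transverse to the first replacement and to the countably many $(n-1)$-dimensional submanifolds containing its touching set; otherwise the cross-section could a priori lie in $\partial M$ or in the touching set, and there would be no interior point at which the Zhou--Zhu matching argument applies. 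With these three items supplied, your outline matches the paper's proof.
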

	
	\begin{proof}
		We only need to prove the regularity of $V$ near an arbitrary point $p\in\spt |V|\cap \partial M$. Fix a $p\in\spt|V|\cap\partial M$, then there exists $0< r_0 < r_{am}(p)$ such that for any $r < r_0$, the mean curvature $H$ of $\partial \wti{\mc B}^+_r (p)\cap M$ in $M$ is greater than $c$. Here $r_{am}(p)$ is as in Definition \ref{def:almost minimizing}. In particular, if $r < r_0$ and $W\in\mc V_n (M)$ has $c$-bounded first variation in $M$ and $W\neq 0$ in $\mc{\wti B}_r^+(p)$, then 
		\begin{equation}\label{eq:main regularity:from outside}
			\emptyset\neq \spt W \cap \mc{\wti S}_r^+(p) = \mathrm{Clos}[\spt |W|\setminus\mathrm{Clos}(\mc{\wti B}_r^+(p))]\cap \mc{\wti S}_r^+(p).
		\end{equation}
		Here $\mathrm{Clos}(\cdot)$ stands for the closure of some set.

		We will show that $V\llcorner \wti{\mc B}^+_{r_0} (p)$ is an almost embedded free boundary $h$-hypersurface with density equal to $2$ along its self-touching set.
		
		The argument consists of six steps:
		
		\medskip	
		{\noindent\bf Step 1}: {\em Constructing successive $h$-replacements $V^*$ and $V^{**}$ on two overlapping concentric annuli.}
		
		\medskip	
		{\noindent\bf Step 2}: {\em Gluing the $h$-replacements smoothly (as immersed hypersurfaces) on the overlap.}
		
		\medskip
		{\noindent\bf Step 3}: {\em Extending the $h$-replacements to the point $p$ to get an `$h$-replacement' $\wti V$ on the punctured ball.}
		
		\medskip	
		{\noindent\bf Step 4}: {\em Showing that the singularity of $\wti V$ at $p$ is removable, so that $\wti V$ is regular.}
		
		\medskip
		{\noindent\bf Step 5}: {\em Showing that $\spt V\cap \wti{\mc B}^+_\alpha(p)$ is not contained in $\partial M$ for all $\alpha>0$.}

		\medskip	
		{\noindent\bf Step 6}: {\em Proving that $V$ coincides with the almost embedded hypersurface $\wti V$  on a small neighborhood of $p$.}
		
		\medskip	
		We now proceed to the proof.
		
		\bigskip
		{\noindent\bf Step 1}. Fix any $0 < s < t < r$. Since $V$ is $h$-almost minimizing on small annuli with free boundary, we can apply \cite{ZZ18}*{Lemma 6.7} by replacing \cite{ZZ18}*{Theorem 2.2} with Theorem \ref{thm:regularity of h-minimizer} to obtain a first replacement $V^*$ of $V$ on $K = \mathrm{Clos}(\mc A_{s,t}(p))$. By Theorem \ref{thm:regularity of h-minimizer} and Proposition \ref{prop:replacement} \ref{item:prop:locally minimizing}, if
		\[\Sigma_1:= \spt |V^*|\llcorner \mc A_{s,t}(p),\]
		then $(\Sigma_1,\partial\Sigma_1 )\subset (M, \partial M )$ is an almost embedded stable free boundary $h$-hypersurface with some unit normal $\nu_1$; when the multiplicity is $1$, $\Sigma_1$ is locally a boundary so we can choose $\nu_1$ to be the outer normal.
		
		Note that all the touching set $\mc S(\Sigma_1)$ is contained in a countable union of $(n-1)$-dimensional connected submanifolds $\bigcup S_1^{(k)}$. Since a countable union of sets of measure zero still has measure zero, by Sard’s theorem we can choose $s_2 \in(s, t)$ such that $\wti{\mc S}_{s_2}^+(p)$ intersects $\Sigma_1$ and all the $S^{(k)}_1$ transversally (even at $\partial\Sigma_1$). Then given any $s_1\in(0,s)$, following the argument in \cite{ZZ17}*{Theorem 6.1, Step 1}, we can construct $V^{**}$, which is a replacement of $V^*$ on $\mathrm{Clos}(\mc A_{s_1,s_2})$. Denote by
		\[\Sigma_2:=\spt|V^{**}|\llcorner\mc A_{s_1,s_2}(p).\]
		
		\bigskip
		{\noindent\bf Step 2}.  We now show that $\Sigma_1$ and $\Sigma_2$ glue smoothly (as immersed hypersurfaces) across $\wti{\mc S}_{s_2}^+(p)$. Indeed, define the intersection set
		\begin{equation}
			\Gamma=\mathrm{Clos}(\Sigma_1)\cap \wti{\mc S}_{s_2}^+(p).
		\end{equation}
		Then by transversality, $\Gamma$ is an almost embedded hypersurface in $\wti{\mc S}_{s_2}^+(p)$. Particularly, $\Gamma$ is not contained in $\partial M$ since $\wti{\mc S}_{s_2}^+$ intersect $\partial\Sigma_1$ and $\mc S(\Sigma_1)$ transversally. For $x\in\Gamma\setminus \partial M$, following from the interior argument \cite{ZZ18}*{Theorem 7.1, Step 3}, $\Sigma_1$ coincides with $\Sigma_2$ (with matching normal) in a neighborhood of $x$. Using the unique continuation of $h$-hypersurfaces, we conclude that
		\[\Sigma_1\llcorner\mc A_{s,s_2}(p)=\Sigma_2\llcorner\mc A_{s,s_2}(p).\]
		Then we finish the proof of Step 2. %\textcolor{blue}{(what if $\Gamma\subset\partial M$?)}
		% $\Gamma\subset\partial M$ is impossible since $\wti{\mc S}^+_{s_1}(p)$ intersect the touching set transversally.	
		
		\bigskip
		{\noindent\bf Step 3}. Then we extend the replacements, via the unique continuation from Step 2, all the way to $p$. In fact, we use $V_{s_1}^{**}$ to denote the second replacement that we constructed in Step 1 with inner radius $s_1$. Step 2 shows that this construction does not depend on $s_2$. Then we define $\wti V$ to be the limit of $V_{s_1}^{**}$ as $s_1 \to 0$.
		
		See \cite{ZZ17}*{Theorem 6.1, Step 3} for details. 
		
		\bigskip
		{\noindent\bf Step 4}. We now determine the regularity of $\wti V$ at $p$.
		
		Firstly, observing that $\wti V$ is still $h$-almost minimizing in small annuli with free boundary and that $\wti V$ is the varifold limit of a sequence $V_{s_1}^{**}$, which all have $c$-bounded first variation, we know that $\wti V$ also has $c$-bounded first variation. This implies that the classification of tangent cones in Lemma \ref{lem:classification of tangent cones} also holds true. Secondly, $\wti V$, when restricted to any small annulus $\mc A_{\alpha,\beta}(p)$ ($0<\alpha<\beta<s$), already coincides with a smooth, almost embedded, weakly stable $h$-boundary $\Sigma$ with free boundary. Using these two ingredients, by Theorem \ref{thm:removable touching set}, $\Sigma$ extends smoothly across $p$ as an almost embedded hypersurface in $\wti{\mc B}^+_s(p)$. Thus we complete Step 4.

		\begin{comment}
		we can use a blowup argument to show the following:
		\begin{claim}\label{claim:unique tangent cone}
		One and only one of the following cases could happen:
		\[\mathrm{VarTan}(\wti V, p) = \{\Theta^n(\Vert\wti V\Vert, p)|T_p(\partial M)|\}; \text{ or }\]
		\[
		\mathrm{VarTan}(\wti V,p)\subset
		\left\{2\Theta^n(\Vert V\Vert ,p)|S\cap T_pM|\left|\begin{array}{cc}
		S\in \mf G(L,n), S\subset T_pM,\\
		S\perp T_p(\partial M)
		\end{array}\right.\right\}
		\]
		\end{claim}
		\begin{proof}[Proof of Claim \ref{claim:unique tangent cone}]
		See \cite{LZ16}*{Proof of Theorem 5.2, Claim 5} for details.
		\end{proof}
		Then \cite{LZ16}*{Theorem 5.2, Step 4} gives that $p$ is a removable singularity of $\wti V$.
		\end{comment}
		
		\bigskip
		{\noindent\bf Step 5}. We argue by contradiction and assume that $\spt V\cap \wti{\mc B}^+_{\alpha_0}(p)\subset \partial M$ for some $\alpha_0<s$. Here we also use the chosen constants $s$, $t$, $\alpha$, $s_2$ in the previous steps. We first recall that by the Constancy Theorem \cite{Si}*{Theorem 41.1}, 
		\[ \spt V\cap \wti{\mc B}^+_{\alpha_0}(p)= \partial M\cap \wti {\mc B}^+_{\alpha_0}(p).   \] 
		Now we take the first replacement $V^*$ of $V$ on $K=\mathrm{Clos}(\mc A_{s,t}(p))$. In the next paragraph, we are going to prove that for any $\alpha<\alpha_0$, $ \partial \wti{\mc B}^+_{\alpha}(p)\cap\partial M\subset\spt \wti V $. As a result, for any $\alpha<\alpha_0$, $\wti{\mc B}^+_{\alpha}(p)\cap \partial M\subset \spt \wti V$, which leads to a contradiction to Proposition \ref{prop:estimate of touching set}.

		To conclude this step, we consider the second replacement $V^{**}_{\alpha}$ of $V^*$ on $\mathrm{Clos}(\mc A_{\alpha,s_2}(p))$. By the assumption $\wti{\mc B}^+_\alpha(p)\cap \partial M\subset \spt V$ and $V\llcorner \wti {\mc B}^+_\alpha(p)= {V^{**}_{\alpha}}\llcorner \wti{\mc B}^+_\alpha(p)$, we have $\partial \wti{\mc B}^+_\alpha(p)\cap \partial M\subset\spt V^{**}_{\alpha}$. On the other hand, $V^{**}_{\alpha}\llcorner \mc A_{\alpha,s_2}(p)=\wti V\llcorner \mc A_{\alpha,s_2}(p)$. Together with the classification of tangent cones in Lemma \ref{lem:classification of tangent cones}, we conclude that $\partial \wti {\mc B}^+_\alpha(p)\cap \partial M\subset \spt\wti V$. This concludes Step 5.  
		
		\bigskip
		{\noindent\bf Step 6}. It remains to show that $V$ coincides with $\wti V$ in $\wti {\mc B}^+_{s}(p)$. Recall that by \cite{ZZ18}*{Theorem 7.1}, $V\llcorner\mathrm{Int}(M)$ is an almost embedded $h$-hypersurface. Denote by $\mc S(V)$ the self-touching set. Then by Proposition \ref{prop:estimate of touching set}, $\mc S(V)$ is contained in a countable union of smoothly embedded $(n-1)$-dimensional submanifolds. Hence we can take $s'<s$ so that $\spt V\cap \mathrm{Int} M\cap \partial \wti{\mc B}^+_{s'}(p)$ is not contained in $\mc S(V)$. Recall that $V^{**}_{s'}$ is the second replacement of $V^*$ in $\mc A_{s',s_2}$. Take 
		\[z\in \spt V\cap \mathrm{Int} M\cap \partial \wti{\mc B}^+_{s'}(p)\setminus \mc S(V).\]
		Such a set is non-empty by Step 5. Then $V$ coincides with $V^{**}_{s'}$ in a small neighborhood of $z$ by the construction of the second replacement. Then the unique continuation principle gives that $V=\wti V$ in $\wti {\mc B}^+_{s}(p)$. This completes the proof of Theorem \ref{thm:regularity of h minmax}.
	\end{proof}

	\subsection{Relative Min-max theory for free boundary $h$-hypersurfaces}

	The existence of almost minimizing varifolds follows from a combinatorial argument of Pitts \cite{Pi}*{page 165-page 174} inspired by early work of Almgren \cite{Alm65}. Pitts's argument works well in the construction of min-max $h$-hypersurfaces; see \cite{ZZ18}*{Theorem 6.4}. Marques-Neves has generalized Pitts's combinatorial argument to a more general form in \cite{MN17}*{\S 2.12}, and we can adapt their result to the free boundary $h$-hypersurface setting with no change.

	\begin{comment}
	
	We now describe the adaption.
	
	Consider a sequence of cubical subcomplexes $Y_i$ of $I(m, k_i)$ with $k_i\rightarrow\infty$, and a sequence $S =\{\varphi_i\}_{i\in\mb N}$ of maps
	\[\varphi_i : (Y_i)_0\rightarrow\mc C(M)\]
	with fineness $\mf f(\varphi_i) = \delta_i$ converging to zero. Define
	\begin{gather*}
	\mf L^h(S) = \limsup_{i\rightarrow\infty}\sup\{\mc A^h(\varphi_i(y)): y\in(Y_i)_0\};\\
	\mf K(S) = \{V = \lim_{j\rightarrow\infty} |\partial\varphi_{i_j}(y_j)| \text{ as varifolds }: y_j\in (Y_{i_j})_0 \};\\
	\mf C(S)=\{V = \lim_{j\rightarrow\infty}|\partial \varphi_{i_j} (y_j)| \text{ as varifolds: with } \lim_{j\rightarrow\infty}\mc A^h(\varphi_{i_j}(y_j))=\mf L^h (S)\}.
	\end{gather*}
	
	\begin{theorem}[\cite{Zhou19}*{Theorem 1.16}]
	If no element $V\in\mf C(S)$ is $h$-almost minimizing in small annuli with free boundary with respect to $S$, then there exists a sequence $\wti S=\{\wti\varphi_i\}$ of maps 
	\[\wti\varphi_i: Y_i (l_i)_0\rightarrow\mc C(M),\]
	for some $l_i\in\mb N$, such that:
	\begin{itemize}
	\item $\wti\varphi_i$ is homotopic to $\varphi_i$ with fineness converging to zero as $i\rightarrow\infty$;
	\item $\mf L^h (\wti S)<\mf L^h (S)$.
	\end{itemize}
	\end{theorem}
	\end{comment}

	Recall that a minimizing sequence $\{\Phi_i\}_{i\in\mb N}\in\Pi$ such that every element of $\mf C(\{\Phi_i\})$ (see Definition \ref{def:critical set}) has $c$-bounded variation or belongs to $|\partial \Phi_0|(Z)$ is called a {\em pull-tight}.

	The purpose of this part is to establish min-max theory for free boundary $h$-hypersurfaces. Recall that the Morse index of an almost embedded free boundary $h$-hypersurface $\Sigma$ is given in Definition \ref{def:Morse index}.
	\begin{theorem}\label{thm:index bound for all g}
		Let $(M^{n+1},\partial M,g)$ be a compact Riemannian manifold of dimension $3\leq (n+1)\leq 7$, and $h\in \mc S(g)$ which satisfies $\int_Mh\geq 0$. Given a $k$-dimensional cubical complex $X$ and a subcomplex $Z\subset X$, let $\Phi_0:X\rightarrow\C(M)$ be a map continuous in the $\mf F$-topology, and $\Pi$ be the associated $(X,Z)$-homotopy class of $\Phi_0$. Suppose 
		\begin{equation}\label{eq:relative constraint}
			\mf L^h(\Pi)>\max\big\{\max_{x\in Z}\A^h(\Phi_0(x)),0\big\}.
		\end{equation}
		Then there exists a nontrivial, smooth, compact, almost embedded hypersurface with free boundary $(\Sigma^n,\partial\Sigma)\subset (M,\partial M)$ , such that
		\begin{itemize}
			\item $\llbracket\Sigma\rrbracket=\partial \Omega$ for some $\Omega\in\C(M)$, where the mean curvature of $\Sigma$ with respect to the unit outer normal of $\Omega$ is $h$, i.e.
			\[H|_\Sigma = h|_\Sigma ;\]
			\item $\A^h(\Omega) =\mf L^h(\Pi)$;
			\item  $\mathrm{index}_w(\Sigma) \leq  k$.
		\end{itemize}
	\end{theorem}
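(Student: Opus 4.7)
The plan is to combine the discretization/interpolation package, the Almgren--Pitts combinatorial argument, the regularity theory (Theorem \ref{thm:regularity of h minmax}), and a Marques--Neves-style deformation argument for the index bound, all adapted to the free boundary setting.

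First, I would apply Theorem \ref{thm:discretization} to $\Phi_0$ (restricted to $X$, with $Z$ as the fixed boundary) to obtain a sequence of discrete maps $\phi_i : X(k_i)_0 \to \mc C(M)$ with fineness $\delta_i \to 0$ and with $\sup_{x} \mc A^h(\phi_i(x)) \to \mf L^h(\Pi)$. The condition $\int_M h \geq 0$ ensures that $\mc A^h$ is bounded below in a manner compatible with pulling tight, and by the standard Almgren--Pitts pull-tight procedure, as in \cite{Zhou19}*{\S 1.4}, I may assume every element of the critical set $\mf C(\{\Phi_i\})$ of the Almgren extensions (Theorem \ref{thm:Almgren extension}) either has $c$-bounded first variation ($c=\sup|h|$) or lives in $|\partial \Phi_0|(Z)$. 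The strict inequality \eqref{eq:relative constraint} then guarantees that at least one $V \in \mf C(\{\Phi_i\})$ has $c$-bounded first variation, with $\|V\|(M) = \mf L^h(\Pi) + \int_\Omega h$ for a corresponding Caccioppoli set $\Omega$.

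Second, I would invoke the Pitts combinatorial argument in its $\mc A^h$-version (the analogue of \cite{MN17}*{\S 2.12} for the $\mc A^h$-functional on $\mc C(M)$) to produce $V \in \mf C(\{\Phi_i\})$ which is \emph{$h$-almost minimizing in small annuli with free boundary} in the sense of Definition \ref{def:almost minimizing}. Here the free boundary annuli $\mc A_{s,r}(p)$ are defined using Fermi half-balls, and the combinatorial deformation goes through verbatim since it only uses metric properties of $\mc C(M)$. Then Theorem \ref{thm:regularity of h minmax} applies and produces a compact, almost embedded, free boundary $h$-hypersurface $\Sigma \subset M$ with $V = |\Sigma|$ and $\llbracket\Sigma\rrbracket = \partial \Omega$; the mean curvature is $h$ with respect to the outer normal of $\Omega$, and $\mc A^h(\Omega) = \mf L^h(\Pi)$.

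The main obstacle is the index estimate $\mathrm{index}_w(\Sigma) \leq k$. I would argue by contradiction: suppose some $\Sigma$ produced above has $\mathrm{index}_w(\Sigma) \geq k+1$. Using Definition \ref{def:k-unstable}, there is a smooth $(k+1)$-parameter family $\{F_v\}_{v \in \oB^{k+1}} \subset \mathrm{Diff}(M)$ along which $\mc A^h$ restricted to an $\mf F$-neighborhood of $\Omega$ is strictly concave with unique interior maxima. Following the deformation scheme of Marques--Neves \cite{MN16} as adapted in \cite{Zhou19}*{\S 4} and \cite{GLWZ19} for free boundaries, one precomposes the minimizing sequence $\Phi_i$ with a continuous map $X \to \oB^{k+1}$ (using that $\dim X = k < k+1$, so a generic such map misses $0$) to push every point of $X$ off the critical manifold, thereby strictly decreasing the width. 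This contradicts the definition of $\mf L^h(\Pi)$, yielding $\mathrm{index}_w(\Sigma) \leq k$.

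The technical subtlety here, absent in the closed case, is that the deformation $F_v$ must preserve $\partial M$, which is why we work with $\mathrm{Diff}(M)$ (vector fields tangent to $\partial M$) in Definition \ref{def:k-unstable}; the lower-semicontinuity of the index under $\mf F$-convergence of $h$-boundaries, together with the compactness Theorem \ref{thm:compactness for FPMC}\ref{compactness thm:index decreasing}, is what makes the deformation argument close up. The remaining verifications (continuity of the deformed sweepout in the $\mf F$-topology, fixing $Z$ using \eqref{eq:relative constraint}, and compatibility of the deformation with the pull-tight) proceed as in \cite{Zhou19}*{\S 4} with only notational changes for the boundary condition.
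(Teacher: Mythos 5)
Your outline of the first three stages (discretization and interpolation, pull-tight with the $\int_M h\geq 0$ hypothesis, the Pitts-style combinatorial deformation producing an $h$-almost minimizing varifold, and regularity via Theorem \ref{thm:regularity of h minmax}) matches the paper's Steps A through C. The gaps are in the index bound and in the scope of the argument.

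For the index bound, your one-shot contradiction ``suppose some $\Sigma$ produced above has $\mathrm{index}_w(\Sigma)\geq k+1$'' is not enough: the min-max construction does not hand you a single $\Sigma$, but a whole critical set $\mf C(\{\Phi_i\})$, and a priori it could consist entirely of high-index $h$-hypersurfaces. A single deformation around one such $\Sigma$ does not help because the deformed critical set could simply land on a different high-index hypersurface of the same $\mc A^h$-value. The paper's Step D resolves this by (a) enumerating the set $\mc W^{k+1}$ of high-index boundaries as a countable set $\{\Omega_1,\Omega_2,\dots\}$ -- this countability is Lemma \ref{lem:countable} and is the central ingredient you do not invoke -- and (b) running an \emph{iterative} deformation followed by a diagonal extraction, so that the resulting minimizing sequence avoids every $\overline{\mf B}_{\epsilon_q}^{\mf F}(\Omega_{j_q})$ simultaneously while keeping $\mf L^h$ and the constraint on $Z$. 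Your appeal to ``$\dim X = k<k+1$ so a generic map misses $0$'' is a correct heuristic for the local Deformation Theorem but does not substitute for this global bookkeeping.

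Moreover, Lemma \ref{lem:countable} requires a \emph{good pair} $(\wti g,\wti h)$ on a closed manifold $\wti M\supset M$, not merely $h\in\mc S(g)$. The paper therefore runs Steps A--D only under the good pair hypothesis and adds a Step E: perturb $\wti g$ via Lemma \ref{lem:bummpy metric} to obtain good pairs $(\wti g_j,\wti h)$, apply Steps A--D to each, then pass to the limit using Theorem \ref{thm:compactness with changing metrics} and the multiplicity-one conclusion of Theorem \ref{thm:compactness for FPMC}\ref{compactness thm:generic multiplicity one convergence}. Your proposal omits this approximation entirely, so as written it silently assumes countability of the critical set for an arbitrary $h\in\mc S(g)$, which is not established in the paper.
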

	
	\begin{proof}[Proof of Theorem \ref{thm:index bound for all g}]
		The proof can be divided into five steps. In the first four steps, we always assume that $(M,\partial M,g)$ is isometrically embedded into a closed manifold $(\wti M^{n+1},\wti g)$ and $(\wti g,\wti h)$ is a good pair (e.g. Section \ref{subsec:good pairs}) related to $M$ so that $\wti h|_M=h$. 
		
		\medskip
		{\noindent\bf Step A:} We construct a pulled-tight minimizing sequence $\{\Phi_i\}_{i\in \mb N}\in\Pi$ so that every element of $\mf C(\{\Phi_i\})$ either has $c$-bounded first variation, or belongs to $|\partial \Phi_0|(Z)$.
		
		\medskip
		{\noindent\bf Step B:} There exists $V\in\mf C(\{\Phi_i\})$ so that $V$ is $h$-almost minimizing in small annuli with free boundary.
		
		\medskip
		{\noindent\bf Step C:} $V$ has $c$-bounded first variation, and hence $V$ is supported on an almost embedded free boundary $h$-hypersurface $\Sigma$ satisfying $\llbracket\Sigma\rrbracket=\partial \Omega$ and $\mc A^h(\Omega)=\mf L^h(\Pi)$.
		
		\medskip
		{\noindent\bf Step D:} The $\{\Phi_i\}$ and $V$ in Step B can be chosen so that the support of $V$ has weak Morse index less than or equal to $k$. %\textcolor{magenta}{(for good pairs.)}
		
		\medskip
		{\noindent\bf Step E:} We provide the proof for general $h\in\mc S(g)$.
		
		\begin{proof}[Proof of Step A]
			Let $c=\sup_M|h|$ and $L^c=2\mf L^h+c\Vol (M)$. Set
			\[A^c_{\infty}=\{V\in \mc V _n(M):\Vert V\Vert (M)\leq L^c, V \text{ has $c$-bounded first variation}\}\cup|\partial \Phi_0|(Z) , \]
			where $|\partial \Phi_0|(Z)=\{|\partial \Phi_0(x)|:x\in Z\}$. Then we can follow \cite{LZ16}*{Proposition 4.17} to construct a continuous map:
			\[H:[0,1]\times (\C(M),\F)\cap\{\M(\partial \Omega)\leq L^c\}\rightarrow(\C(M),\F)\cap\{\M(\partial \Omega)\leq L^c\}\]
			such that:
			\begin{enumerate}[label=(\roman*)]
				\item  $H(0,\Omega)=\Omega$ for all $\Omega\in \C(M)$;
				\item \label{pull tight:fix Z} $H(t,\Omega)=\Omega$ if $|\partial \Omega|\in A^c_\infty$;
				\item  if $|\partial \Omega|\notin A^c_\infty$,
				\[\A^h(H(1,\Omega))-\A^h(\Omega)\leq -L(\F(|\partial\Omega|,A^c_\infty))<0,\]
				where $L:[0,+\infty)\rightarrow[0,+\infty)$ is a continuous function with $L(0)=0$, $L(t)>0$ when $t>0$;
				\item  for every $\epsilon >0$, there exists $\delta>0$ such that 
				\[x\in Z, \F(\Omega,\Phi_0(x))<\delta\Rightarrow\F(H(t,\Omega),\Phi_0(x))<\epsilon, \text{ for all } t\in [0,1];\]
				this is a direct consequence of \ref{pull tight:fix Z} since $|\partial \Phi_0|(Z)\subset A^c_\infty$.
			\end{enumerate}
			
			Given a minimizing sequence $\{\Phi_i^*\}_{i\in\mb N}\in \Pi$, we define $\Phi_i(x)=H(1,\Phi^*_i(x))$ for every $x\in X$. Then $\{\Phi_i\}_{i\in\mb N}$ is also a minimizing sequence in $\Pi$. Moreover, $\mf C(\{\Phi_i\})\subset\mf C(\{\Phi_i^*\})$ and every element of $\mf C(\{\Phi_i\})$ either has $c$-bounded first variation, or belongs to $|\partial \Phi_0|(Z)$. We refer to \cite{Zhou19}*{Lemma 1.8} for the details of verification. This finishes proving Step A.
		\end{proof}

		\begin{proof}[Proof of Step B]
			The proof here is parallel to \cite{Zhou19}*{Theorem 1.7} and we just sketch the idea for completeness.
			
			Let $\{\Phi_i\}_{i\in \mb N}\in \Pi$ be a pulled-tight minimizing sequence. For each $\Phi_i$, Theorem \ref{thm:discretization} gives a sequence of maps:
			\[\phi_i^j:X(k_i^j)_0\rightarrow \mc C(M)\]
			with $k_i^j<k_i^{j+1}$ and a sequence of positive $\delta_i^j\rightarrow 0$ (as $j\rightarrow \infty$), satisfying Theorem \ref{thm:discretization}. Then for each $i$, take sufficiently large $j(i)$ and let $\varphi_i=\phi_i^{j(i)}$. Denote by $S=\{\varphi_i\}$. Then we have $\mf L^h(S)=\mf L^h(\{\Phi_i\})$ and $\mf C(S)=\mf C(\{\Phi_i\})$, where
			\begin{gather*}
				\mf L^h(S) = \limsup_{i\rightarrow\infty}\sup\{\mc A^h(\varphi_i(y)): y\in(X_i^{j(i)})_0\};\\
				\mf C(S)=\{V = \lim_{j\rightarrow\infty}|\partial \varphi_{i_j} (y_j)| \text{ as varifolds: with } \lim_{j\rightarrow\infty}\mc A^h(\varphi_{i_j}(y_j))=\mf L^h (S)\}.
			\end{gather*}  
			We now prove that there exists $V\in \mf C(S)$ so that it is $h$-almost minimizing in small annuli with free boundary. For a further reason, we need a stronger result:
			\begin{claim}\label{claim:almost minimizng with free boundary}
				There exist a varifold $V$ satisfying the following: for any $p\in M$ and any small enough annulus $\mc A_{r_1,r_2}$ centered at $p$ with radii $0<r_1<r_2$, there exist two sequences of positive real numbers $\epsilon_j\rightarrow 0, \delta_j\rightarrow 0$, a subsequence $\{i_j\}\subset\{i\}$ and $y_j\in\mathrm{dmn} \varphi_{i_j}$ (the domain of $\varphi_{i_j}$) so that 
				\begin{itemize}
					\item $\lim_{j\rightarrow\infty}\mc A^h(\varphi_{i_j}(y_j))=\mf L^h (S)$;
					\item $\varphi_{i_j}(y_j)\in \ms A^h(\mc A_{r_1,r_2};\epsilon_j,\delta_j;\mf M)$; and
					\item $\lim_{j\rightarrow\infty}|\partial \varphi_{i_j} (y_j)|=V$.
				\end{itemize}
			\end{claim}
			If Claim \ref{claim:almost minimizng with free boundary} were not true, then using the argument in \cite{Zhou19}*{Theorem 1.16}, we can find a sequence $\wti S=\{\wti\varphi_i\}$ so that $\wti\varphi_i$ is homotopic to $\varphi_i$ with fineness converging to zero as $i\rightarrow \infty$ and $\mf L^h(\wti S)<\mf L^h(S)$. The key point here is that $\mf C(S)$ is compact in the sense of varifolds.
			
			Then by Theorem \ref{thm:Almgren extension}, the Almgren extensions of $\varphi_i,\wti \varphi_i$:
			\[  \Phi_i^{j(i)},\wti \Phi_i:X\rightarrow \mc C(M),\]
			respectively, are homotopic to each other in the $\mf F$-topology for large $i$ and 
			\[  \limsup_{i\rightarrow\infty}\sup\{\mc A^h(\wti\Phi_i(x)):x\in X\}\leq \mf L^h(\wti S)<\mf L^h(S).\]
			This leads to a contradiction and we have finished the proof of Claim \ref{claim:almost minimizng with free boundary}. Obviously, such a varifold $V$ is almost minimizing with free boundary in small annuli by Definition \ref{def:almost minimizing}. Therefore, Step B is also completed.
		\end{proof}%end of Step B
		
		\begin{proof}[Proof of Step C]
			We first prove that $V$ has $c$-bounded first variation. Indeed, from Step A, $V$ either has $c$-bounded first variation or belongs to $|\partial \Phi_0|(Z)$. Recall that being $h$-almost minimizing in small annuli with free boundary always implies $V$ has $c$-bounded first variation away from finitely many points. Then by a cut-off trick, we only need to prove that $\|V\|$ has at most $r^{n-\frac{1}{2}}$-volume growth near these bad points . This essentially follows from \cite{HL75}*{Theorem 4.1} and we provide more details here.
			
			Let $p$ be a bad point. Then we can take $\epsilon>0$ small enough so that $\wti{\mc B}^+_\epsilon(p)\setminus\{p\}$ has no bad point. For any $0<t<\epsilon<u<1$, let $\eta_u,\phi_t:\mb R\rightarrow \mb R$ be two cut-off functions so that 
			\begin{gather*}
				\eta_u'\leq 0; \  \ \eta_u(x)=1 \text{ for } x\leq u; \  \ \eta_u(x)=0 \text{ for } x\geq 1;\\
				\phi_t'\geq 0;\ \  \phi_t(x)=0 \text{ for } x\leq t/2;\ \  \phi_t(x)=1 \text{ for } x\geq t.
			\end{gather*} 
			We only need to consider $p\in \partial M$. Denote by $\wti r$ the \text{Fermi distance function} to $p$ in \cite{LZ16}*{Appendix A}. Then  there exist $C$ and $\epsilon$ so that for $x\in M$ with $\wti r(x)\leq \epsilon$,
			\begin{equation}\label{eq:almost distance}
				\big|{|\nabla \wti r (x)|-1}\big | \leq C\epsilon\ \  \text { and } \ \ |\nabla^2 \wti r^2/2(x)-g|\leq C\epsilon. 
			\end{equation}
			Being $c$-bounded first variation in $\{0<\wti r<\epsilon\}$ gives that for any $\rho<\epsilon$, 
			\begin{align}
				\label{eq:from c bounded 1st variation} \int \dv_S\big(\eta_u(\wti r/\rho)\phi_t(\wti r)\nabla \wti r^2/2 \big)\,dV(x,S)
				& \leq c\int \eta_u(\wti r/\rho)\phi_t(\wti r) |\nabla \wti r^2/2| \, dV(x,S)\\
				& \leq (1+C\epsilon)c \int \eta_u(\wti r/\rho) \cdot \wti r \, dV .\nonumber
			\end{align}
			By direct computation of the left hand side,
			\begin{align}\label{eq:from direct computation}
				&\ \int \dv_S\big(\eta_u(\wti r/\rho)\phi_t(\wti r)\nabla \wti r^2/2\big)\,dV(x,S)\\
				\geq &\ \int \eta_u'(\wti r/\rho)\cdot \rho^{-1}\cdot\phi_t(\wti r) \wti r \langle p_S(\nabla \wti r), \nabla \wti r\rangle  +\eta_u(\wti r/\rho)\phi_t(\wti r)\dv_S (\nabla\wti r^2/2) \, dV(x,S)\nonumber\\
				\geq &\ \int \eta_u'(\wti r/\rho)\cdot \rho^{-1}\cdot\phi_t(\wti r) \wti r\cdot (1+C\epsilon) +\eta_u(\wti r/\rho)\phi_t(\wti r)\cdot (1-C\epsilon)n\, dV.\nonumber
			\end{align}
			Here $p_S(\cdot)$ is the projection to the hyperplane $S$ and\eqref{eq:almost distance} is used in the last inequality. Note that $V$ either has $c$-bounded first variation or belongs to $|\partial \Phi_0|(Z)$. Hence $\|V\|(\{p\})=0$. Combining \eqref{eq:from c bounded 1st variation} with \eqref{eq:from direct computation}, letting $t\rightarrow 0$, %\textcolor{magenta}{(need to rule out $\|V\|(\{p\})\neq 0$!)}, 
			and by shrinking $\epsilon$ if necessary, we have 
			\[(n-1/2)I(\rho)\leq -\int \eta_u'(\wti r/\rho) \wti r\rho^{-1}=\rho\frac{d}{d\rho}\int \eta_u(\wti r/\rho)\, dV=\rho I'(\rho),\]
			where $I(\rho)=\int \eta_u(\wti r/\rho)\, dV$. This implies the monotonicity of $I(\rho)\cdot \rho^{-n+1/2}$. Hence for $\rho<\epsilon$, we have
			\[ I(\rho)\leq I(\epsilon)\epsilon^{-n+1/2}\rho^{n-1/2} .\]
			Then we conclude that 
			\[ \|V\|(\{x\in M:\dist_M(x,p)<\rho/2\})\leq I(\rho) \leq C\rho^{n-1/2}.  \]
			This proves that $V$ has $c$-bounded first variation. Then by Theorem \ref{thm:regularity of h minmax}, $V$ is supported on an almost embedded $h$-hypersurface $\Sigma$ with free boundary.
			
			\medskip
			We now prove that $\Sigma$ is a free boundary $h$-hypersurface satisfying $\llbracket\Sigma\rrbracket=\partial \Omega$ and $\mc A^h(\Omega)=\mf L^h(\Pi)$ for some $\Omega\in \mc C(M)$. Recall that Claim \ref{claim:almost minimizng with free boundary} gives that $V=\lim_{j\rightarrow\infty}|\partial \varphi_{i_j} (y_j)|$. Denote by $\Omega_j=\varphi_{i_j} (y_j)$. Then it suffices to prove that $\partial \Omega_j$ subsequently converges to $\Sigma$ in the $\mc F$ metric. Let $\Omega$ be a limit of $\Omega_j$ in the $\mc F$ topology. Then $\spt(\partial \Omega)\subset \Sigma$. Now for any $p\in \mathrm{Int}(\Sigma)\setminus \mathrm{Touch}(\Sigma)$ and $r_p>0$ small enough, the Constancy Theorem \cite[Theorem 26.27]{Si} implies that $\partial \Omega\llcorner B_{r_p}(p)=\llbracket \Sigma\rrbracket\llcorner B_{r_p}(p)$ or $0$. Here $\mathrm{Touch}(\Sigma)$ is the touching set inside $\Sigma$.
			
			In the next, we prove that $\partial \Omega\llcorner B_{r_p}(p)=\llbracket\Sigma\rrbracket \llcorner B_{r_p}(p)$ for $p\in\mathrm{Int}(\Sigma)\setminus \mathrm{Touch}(\Sigma)$.
			Recall that first $h$-replacement $V^*$ in $\mc A_{r_1,r_2}(p)$ coincides with $V$ in the Step 5 of Theorem \ref{thm:regularity of h minmax}. On the other hand, the argument in \cite{ZZ18}*{Proposition 7.3} gives that $V^*\llcorner \mc A_{r_1,r_2}(p)=|\partial \Omega|\llcorner \mc A_{r_1,r_2}(p)$. Hence $\partial \Omega\llcorner B_{r_p}(p)=\llbracket\Sigma\rrbracket \llcorner B_{r_p}(p)$.
			
			Recall that by Proposition \ref{prop:estimate of touching set}, the self-touching set and $\Sigma\cap \partial M$ are contained in countable $(n-1)$-dimensional submanifolds. Therefore, we conclude that $V=|\partial \Omega|$. Hence Step C is finished. 
		\end{proof}	%end of proof of step C	
		
		\begin{proof}[Proof of Step D]
			Recall that by Lemme \ref{lem:countable}, the set of almost embedded free boundary $h$-hypersurface is countable. Then the proof here is parallel to \cite{Zhou19}*{Theorem 3.6}, which is in fact a generalization of \cite{MN16}.	
			
			Denote by $\mc U$ the set of all $(\Sigma,\Omega)\in\mc P^h\times\C(M)$ so that $\llbracket\Sigma\rrbracket=\partial \Omega$. Set
			\begin{gather*}
				\mc W=\{\Omega\in \mc C(M):(\Sigma,\Omega)\in\mc U, \A^h(\Omega)=\mf L^h \},\\
				\mc W(r)=\{\Omega\in \mc W: (\Sigma,\Omega)\in\mc U, \mf F([\Sigma]
				,\mf C(\{\Phi_i\}_{i\in \mb N}))\geq r\},\\
				\mc W^{k+1}=\{\Omega\in \mc W:(\Sigma,\Omega)\in\mc U, \mathrm{index}_w(\Sigma)\geq k+1\}.
			\end{gather*}
			
			It suffices to show that for every $r>0$, $\mc W\setminus(\mc W^{k+1}\cup\mc W(r))$ is not empty.
			
			By the same proof with \cite{Zhou19}*{Lemma 3.7}, there exist $i_0\in\mb N$ and $\bar\epsilon_0>0$ such that $\mf F(\Phi_i(X),\mc W(r))>\bar\epsilon_0$ for all $i>i_0$.
			
			As $(\wti g,\wti h)$ is a good pair related to $M$, $\mc W^{k+1}$ is countable by Lemma \ref{lem:countable}, and we can assume that
			\[\mc W^{k+1}\setminus\B_{\bar{\epsilon}_0}^\F(\mc W(r))=\{\Omega_1,\Omega_2,\cdots\},\]
			and $\partial \Omega_j=\llbracket\Sigma_j\rrbracket$ for $ j\geq 1$. Then by taking $\epsilon_1>0$ small enough, we can make sure $\B_{\epsilon_1}^\mf F(\Omega_1)\cap\B_{\bar{\epsilon}_0}^\F(\mc W(r))=\emptyset$. Using the Deformation Theorem (\citelist{\cite{Zhou19}*{Theorem 3.4}\cite{GLWZ19}*{Theorem 5.8}}), by shrinking $\epsilon_1>0$ if necessary, we can find $i_1\in \mb{N}$, and $\{\Phi_i^1\}_{i\in \mb{N}}$ so that 
			\begin{itemize}
				\item $\Phi_i^1$ is homotopic to $\Phi_i$ in the $\mf{F}$-topology for all $i\in \mb{N}$;
				\item $\mf{L}^h(\{\Phi_i^1\}_i)\leq \mf L^h(\Pi)$;
				\item $\mf{F}(\Phi_i^1(X), \overline{\mf{B}}_{\epsilon_1}^{\mf{F}}(\Omega_1)\cup \overline{\mf{B}}_{\bar\epsilon_0}^{\mf{F}}(\mc{W}(r)))>0$ for all $i\geq i_1$;
				\item  no $\Omega_j$ belongs to $\partial \overline{\mf{B}}_{\epsilon_1}^{\mf{F}}(\Omega_1)$; (this can be easily satisfied since $\{\Omega_1,\Omega_2,\cdots\}$ is a countable set.)
			\end{itemize}
			
			Inductively, there are two possibilities. The first case is that we can find for all $l\in \mb{N}$, there exist a sequence $\{\Phi_i^l\}_{i\in \mb{N}}$, $\epsilon_l$, $i_l\in \mb{N}$, and $\Omega_{j_l}\in \mc{W}^{k+1}\setminus  \overline{\mf{B}}_{\epsilon_0}^{\mf{F}}(\mc{W}(r))$ for some $j_l\in \mb{N}$ so that 
			\begin{itemize}
				\item $\Phi_i^l$ is homotopic to $\Phi_i$ in the $\mf{F}$-topology for all $i\in \mb{N}$;
				\item $\mf{L}^h(\{\Phi_i^l\}_i)\leq\mf L^h(\Pi)$;
				\item $\mf{F}(\Phi_i^l(X), \cup_{q=1}^l \overline{\mf{B}}_{\epsilon_q}^{\mf{F}}(\Omega_{j_q})\cup  \overline{\mf{B}}_{\bar\epsilon_0}^{\mf{F}}(\mc{W}(r)))>0$ for all $i\geq i_l$;  
				\item $\{\Omega_1,\cdots,\Omega_l\}\subset \cup_{q=1}^l \overline{\mf{B}}_{\epsilon_q}^{\mf{F}}(\Omega_{j_q}) $;
				\item  no $\Omega_j$ belongs to $\partial \overline{\mf{B}}_{\epsilon_1}^{\mf{F}}(\Omega_1)\cup \cdots\cup  \partial \overline{\mf{B}}_{\epsilon_l}^{\mf{F}}(\Omega_{j_l})$. 
			\end{itemize}
			The second case is that the process stops in finitely many steps. This means that we can find some $m\in \mb{N}$, a sequence $\{\Phi_i^m\}_{i\in \mb{N}}$, $\epsilon_1,\cdots,\epsilon_m>0$, $i_m\in \mb{N}$, and $\Omega_{j_1},\cdots,\Omega_{j_m}\in \mc{W}^{k+1}\setminus  \overline{\mf{B}}_{\bar\epsilon_0}^{\mf{F}}(\mc{W}(r))$ so that 
			\begin{itemize}
				\item $\Phi_i^m$ is homotopic to $\Phi_i$ in the $\mf{F}$-topology for all $i\in \mb{N}$;
				\item $\mf{L}^h(\{\Phi_i^m\}_i)\leq \mf L^h(\Pi)$;
				\item $\mf{F}(\Phi_i^m(X), \cup_{q=1}^m \overline{\mf{B}}_{\epsilon_q}^{\mf{F}}(\Omega_{j_q})\cup  \overline{\mf{B}}_{\bar\epsilon_0}^{\mf{F}}(\mc{W}(r)))>0$ for all $i\geq i_m$;  
				\item $\{\Omega_j:j\geq 1\}\subset \cup_{q=1}^m \overline{\mf{B}}_{\epsilon_q}^{\mf{F}}(\Omega_{j_q}) $.
			\end{itemize}
			
			For the first case, we can choose a diagonal sequence $\{\Phi_{p_l}^l\}_{l\in \mb{N}}$ and set $\Psi_l=\Phi_{p_l}^l$, where $\{p_l\}_{l\in \mb{N}}$ is an increasing sequence such that $p_l\geq i_l$  and \[\sup_{x\in X}\mc A^h(\Phi_{p_l}^l (x))\leq \mf L^h(\Pi)+\frac{1}{l}.\]
			For the second case, we simply choose the last sequence $\{\Phi_l^m\}_l$  and set $p_l=l$ and $\Psi_l=\Phi_l^m$. Now it is easy to see that for both cases, the new sequence $\{\Psi_l\}_{l\in \mb{N}}$ satisfies the following conditions:
			\begin{enumerate}
				\item $\Psi_l$ is homotopic to $\Phi_{p_l}$ in the $\mf{F}$-topology for all $l\in \mb{N}$;
				\item $\mf{L}^h(\{\Psi_l\}_l)\leq\mf L^h(\Pi)$;
				\item given any subsequence $\{l_j\}\subset\{l\}$, $x_j\in X$, if $\lim_{j\rightarrow\infty}\mc A^h(\Psi_{l_j}(x_j))=\mf L^h(\Pi)$, then $\{\Psi_{l_j}(x_j)\}_{j\in\mb N}$ does not converge in the $\mf F$-topology to any element in  $\mc{W}^{k+1}\cup \mc{W}(r)$.
			\end{enumerate}
			Then by Steps B and C, there exists $V\in \mf C(\{\Psi_l\})$ so that its support has weak Morse index less than or equal to $k$. This finishes the proof of Step D.
		\end{proof}	%end of Step D
		
		\begin{proof}[Proof of Step E]
			Assume that $(M^{n+1},\partial M,g)$ is isometrically embedded into $(\wti M^{n+1},\wti g)$. Recall that $\wti {\mc S}_0$ is the set of smooth Morse functions so that the zero set is a properly embedded closed hypersurface in $\wti M$, and is transverse to $\partial M$. 
			
			Given $h\in \mc S(g)$, we can take an extension $\wti h$ of $h$ so that $\wti h|_M=h$ and $\wti h\in \wti{\mc S}_0$. By Lemma \ref{lem:bummpy metric}, there exists a sequence of smooth metrics $\wti g_j$ on $\wti M$ so that $\wti g_j|_M\rightarrow g$ smoothly and $(\wti g_j,\wti h)$ is a good pair related $M$ for each $j$. Then by Steps A-D, for each $j$, there exists an almost embedded free boundary $h$-hypersurface $\wti \Sigma_j$ with $\partial{\wti \Omega_j}=\llbracket\wti \Sigma_j\rrbracket$ for some $\wti \Omega_j\in\C(M)$ and $\wti \Sigma_j$ has weak Morse index less than or equal to $k$ (with respect to $\wti g_j|_M$). Recall that $h\in\mc S(g)$. Let $\Sigma_\infty$ (with $\partial\Omega_\infty=\llbracket\Sigma_\infty\rrbracket$) be the limit of $\{\wti \Sigma_j\}_{j\in \mb N}$ given in Theorem \ref{thm:compactness with changing metrics}, then the multiplicity one (see Theorem \ref{thm:compactness for FPMC} \ref{compactness thm:generic multiplicity one convergence}) and locally smoothly convergence away from a finite set imply that $\A^h(\Omega_\infty) = \mf L^h(\Pi, g)$ and $\mathrm{index}_w(\Sigma_\infty)\leq k$.
		\end{proof}%end of Step E	
		
		By putting all above together, Theorem \ref{thm:index bound for all g} is finished.
	\end{proof}

	\section{Multiplicity one for free boundary minimal hypersurfaces}\label{sec:multi 1 for fbmh}
	\subsection{Multiplicity one for relative sweepouts}\label{subsec:multi one for relative sweepout}
	In this subsection, we approximate the area functional by the weighted functionals $\A^{\epsilon h}$ for some prescribing function when $\epsilon\rightarrow 0$. By the index estimates and the multiplicity one result for $\A^{\epsilon h}$, we prove that the limit free boundary minimal hypersurfaces also have multiplicity one.
	
	Recall that a Riemannian metric $g$ is said to be {\em bumpy} if every finite cover of a smooth immersed free boundary minimal hypersurface is non-degenerate. A bumpy metric $g$ is said to be {\em strongly bumpy} if the touching set of every immersed free boundary minimal hypersurface is empty. By Theorem \ref{thm:intro:generic strongly bumpy}, the set of strongly bumpy metrics is generic in the Baire sense.
	
	\begin{theorem}\label{thm:multi one for relative sweepout}
		Let $(M^{n+1},\partial M,g)$ be a compact Riemannian manifold with boundary of dimension $3 \leq (n + 1)\leq 7$. Let X be a $k$-dimensional cubical subcomplex of $I(m,j)$ and $Z\subset X$ be a subcomplex, and $\Phi_0:X\rightarrow\C(M)$ be a map continuous in the $\mf F$-topology. Let $\Pi$ be the associated $(X,Z)$-homotopy class of $\Phi_0$. Assume that
		\[ \mf L(\Pi) > \max\big(\max_{x\in Z}\mf M(\partial\Phi_0(x)),0\big),\]
		where we let $h\equiv 0$ in Section \ref{subsec:relative min-max}.
		
		If $g$ is a strongly bumpy metric, then there exists a disjoint collection of smooth, connected, compact, two-sided, properly embedded, free boundary minimal hypersurfaces $\Sigma=\cup_{i=1}^N\Sigma_i$ so that
		\[\mf L(\Pi)=\sum_{i=1}^N\Area(\Sigma_i), \text{ and } \mathrm{index}(\Sigma)=\sum_{i=1}^N\mathrm{index}(\Sigma_i)\leq k.\]
		In particular, each component of $\Sigma$ is two-sided and has exactly multiplicity one.
	\end{theorem}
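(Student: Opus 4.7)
The plan is to approximate the area functional by the weighted functionals $\mc A^{\epsilon h}$ for a generic $h \in \mc S(g)$ with $\int_M h > 0$ and small $\epsilon > 0$, apply the relative min-max Theorem \ref{thm:index bound for all g} along a sequence $\epsilon_j \downarrow 0$, pass to the limit via Theorem \ref{thm:compactness for FPMC} (with $h_\infty = 0$), and rule out higher multiplicity by a Jacobi field argument in the spirit of \cite{Zhou19}.

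First I would verify that the hypothesis persists under perturbation. Since $|\mc A^{\epsilon h}(\Omega) - \M(\partial \Omega)| \leq \epsilon \Vert h\Vert_{L^\infty}\Vol(M)$ uniformly on $\mc C(M)$, for all sufficiently small $\epsilon$ we have
\[
\mf L^{\epsilon h}(\Pi) \;\geq\; \mf L(\Pi) - \epsilon \Vert h\Vert_{L^\infty}\Vol(M) \;>\; \max\bigl\{\max_{x\in Z}\mc A^{\epsilon h}(\Phi_0(x)),\,0\bigr\}.
\]
Theorem \ref{thm:index bound for all g} then produces almost embedded free boundary $\epsilon_j h$-hypersurfaces $\Si_j = \partial \Omega_j$ realising $\mf L^{\epsilon_j h}(\Pi)$ with $\mathrm{index}_w(\Si_j)\leq k$, and Theorem \ref{thm:compactness for FPMC} extracts a varifold limit $m\cdot |\Si_\infty|$, where $\Si_\infty = \partial\Omega_\infty$ is an almost embedded free boundary minimal hypersurface with $m\cdot\Area(\Si_\infty) = \mf L(\Pi)$. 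Strong bumpiness of $g$ (Theorem \ref{thm:intro:generic strongly bumpy}) forces $\Si_\infty$ to be properly embedded and non-degenerate; any one-sided component would double-cover to a two-sided properly embedded free boundary minimal hypersurface carrying a non-trivial even Jacobi field, contradicting non-degeneracy. Hence $\Si_\infty$ is two-sided, and part (vi) of the compactness theorem gives $\Index(\Si_\infty)\leq k$ without multiplicity.

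The crux, and the main obstacle, is to prove $m = 1$. Suppose for contradiction $m\geq 2$. Using the smooth graphical convergence off a finite bad set $\mc Y$ (Theorem \ref{thm:compactness for FPMC}(ii)), pick a global unit normal $\nu$ on the two-sided $\Si_\infty$ and write the outermost top and bottom sheets of $\Si_j$ as ordered graphs $u_j^+ > u_j^-$ over $\Si_\infty\setminus\mc Y$; the free-boundary orthogonality of $\Si_j$ yields Robin-type boundary data on $\partial \Si_\infty$. A first-order expansion of $H_{\Si_j} = \epsilon_j h$ applied separately to the top and bottom sheets, then subtracted, divided by $\Vert u_j^+ - u_j^-\Vert_{L^2}$ and passed to the limit as in \cite{Zhou19}*{\S 4}, produces a nontrivial smooth $u \geq 0$ on $\Si_\infty$ satisfying
\begin{equation*}
L u = \pm 2\,h|_{\Si_\infty}\text{ on }\Si_\infty,\qquad \tfrac{\partial u}{\partial \bm{\eta}} = h^{\partial M}(\nu,\nu)\,u\text{ on }\partial \Si_\infty,
\end{equation*}
where the sign is determined by the side of $\Si_\infty$ on which the $\Omega_j$ lie; the strong maximum principle and the Hopf boundary point lemma then upgrade $u\geq 0$ to $u > 0$ everywhere. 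The bad set $\mc Y$ is removable because $u$ is bounded and $L$ is a uniformly elliptic operator with smooth coefficients.

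To finish I would choose $h$ to forbid such a $u$. By Lemmas \ref{lem:Sg contains open dense}, \ref{lem:bummpy metric}, \ref{lem:countable} and Theorem \ref{thm:compactness with changing metrics}, the set of possible limit hypersurfaces $\Si_\infty$ arising from $\mc A^{\epsilon h}$-min-max along $\Pi$, as $\epsilon\downarrow 0$ and $h$ ranges over a fixed open neighbourhood in $\mc S(g)$, is countable. For each such $\Si_\infty$ and each sign, pairing the equation for $u$ with the constant function $1$ and integrating by parts (using strong bumpiness of $\Si_\infty$ to solve the formal adjoint problem) exhibits the existence of a positive $u$ as a closed codimension-at-least-one linear constraint on $h \in C^\infty(M)$ of the schematic form
\[
\pm 2\int_{\Si_\infty} h\,d\mc H^n \;=\; \int_{\Si_\infty}\bigl(\Ric(\nu,\nu)+|A|^2\bigr)\,u\,d\mc H^n + \bigl(\text{boundary correction depending only on }g\bigr).
\]
A Baire category argument inside the open dense set $\mc S(g)\cap\{\int_M h > 0\}$ yields an $h$ violating every such constraint simultaneously, giving the sought contradiction. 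Therefore $m = 1$. Decomposing $\Si_\infty = \bigsqcup_{i=1}^N \Si_i$ into connected components and invoking additivity of area and Morse index over disjoint components delivers the stated equalities $\mf L(\Pi) = \sum_i \Area(\Si_i)$ and $\Index(\Si)\leq k$.
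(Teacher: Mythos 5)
Your overall skeleton (approximate by $\mc A^{\epsilon_j h}$, run the relative min-max of Theorem \ref{thm:index bound for all g}, pass to the limit with Theorem \ref{thm:compactness for FPMC}, and derive from multiplicity $m\geq 2$ a nonnegative solution of $L_\Si u=\pm 2h$ with the Robin condition $\partial u/\partial\bm\eta=h^{\partial M}(\nu,\nu)u$) is the same as the paper's. But the step where you eliminate such a $u$ by choice of $h$ does not work. The identity you propose as a ``closed codimension-at-least-one linear constraint,'' obtained by pairing $L_\Si u=2h$ with the constant function $1$ and integrating by parts, is vacuous: since $\Si_\infty$ is non-degenerate, $u=L_\Si^{-1}(2h|_{\Si_\infty})$ is a linear function of $h$, and the identity $2\int_{\Si_\infty}h=\int_{\Si_\infty}(\Ric(\nu,\nu)+|A|^2)u+\int_{\partial\Si_\infty}h^{\partial M}(\nu,\nu)u$ holds automatically for \emph{every} $h$ (it is just self-adjointness applied to the actual solution), so it cuts out nothing. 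Worse, the genuinely bad set $\{h:\ L_\Si^{-1}(2h)\geq 0\ \text{on}\ \Si_\infty\}$ is a convex cone with nonempty interior: take any $u_0>0$ and set $h_0$ to be an extension of $L_\Si u_0/2$; every $h$ near $h_0$ still has a positive solution. Hence no Baire-category argument over countably many such ``constraints'' can produce the desired $h$. The paper's resolution is different in kind: since $g$ is (strongly) bumpy, there are only \emph{finitely} many free boundary minimal hypersurfaces with $\Area\leq\mf L(\Pi)$ and index $\leq k$ (by \cite{GWZ18}, \cite{Wang19}); one then fixes, \emph{before} running the min-max scheme, a single $h$ in the generic set $\bigcap_j\{f:\epsilon_j f\in\mc S(g)\}$ chosen close to an extension of $L u_0/2$, where $u_0$ is a prescribed sign-changing function supported in disjoint interior neighborhoods $U_j^{\pm}$ of each of these finitely many candidates; the unique solution of \eqref{eq:Lu equal 2h} is then close to $u_0/2$ and must change sign, contradicting $u\geq 0$. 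This same choice of $h$ is also what rules out one-sided components (via the double cover), so your claim that strong bumpiness by itself excludes one-sided limits is unjustified: a one-sided non-degenerate free boundary minimal hypersurface is perfectly compatible with a bumpy metric.

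Two further points in your limit construction are asserted rather than proved. First, the removability of the bad set $\mc Y$ is not automatic from ``$u$ is bounded and $L$ is elliptic'': the boundedness of $u$ near the points of $\mc Y$ is precisely the nontrivial claim, and the paper establishes it with White's local foliation argument, using the free boundary $h$-foliation of Proposition \ref{prop:h foliation with boundary} at points of $\mc Y\cap\partial M$. Second, in the inhomogeneous case $L_\Si u=2h$ you cannot upgrade $u\geq 0$ to $u>0$ by the strong maximum principle, since $h$ has no sign; fortunately this is not needed, as the contradiction only requires $u\geq 0$. Finally, normalizing by $\Vert u_j^+-u_j^-\Vert_{L^2}$ blurs the necessary dichotomy: depending on whether $\varphi_j/\epsilon_j$ blows up or stays bounded (and on the parity of $m$, which governs whether the outermost sheets carry mean curvature of equal or opposite signs), the limit equation is either the homogeneous Jacobi equation, contradicting bumpiness directly, or $L_\Si u=2h$; this case analysis should be made explicit as in Parts 2--4 of the paper's proof.
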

	
	\begin{proof}
		Fix a sequence of positive numbers $\epsilon_k\rightarrow 0$ as $k\rightarrow\infty$. Recall that $\mc S(g)$ is open and dense in $C^\infty(M)$. Thus
		\[ \bigcap_{j=1}^{\infty} \{f:\epsilon_jf\in\mc S(g)\} \]
		is generic in the Baire sense. Pick an $h$ in this generic set with $\int_Mh\geq 0$ (to be fixed at the end, in Part 5), and $\epsilon>0$ small enough so that 
		\[\mf L(\Pi)-\max_{x\in Z}\mf M (\partial \Phi_0(x))>2\epsilon\sup_M|h|\cdot \Vol(M).\]
		Note that $\epsilon_j\cdot h\in\mc S(g)$ for each $j$.% we corrected the above formula.
		
		Then we can follow the argument in \cite{Zhou19}*{Theorem 4.1}, by replacing \cite{Zhou19}*{3.1} with Theorem \ref{thm:index bound for all g}, to produce a non-trivial, smooth, compact, almost embedded, free boundary $\epsilon_j\cdot h$-hypersurface $\Sigma_{\epsilon_j}=\partial\Omega_{\epsilon_j}$; moreover, $\A^h(\Omega_{\epsilon_j})=\mf L^{\epsilon_j h}=\mf L^{\epsilon_j h}(\Pi)$ and $\mathrm{index}_w(\Sigma_{\epsilon_j})\leq k$. We also have $\mf L^{\epsilon_j h}\rightarrow\mf L(\Pi)$ as $j\rightarrow \infty$. 
		
		Now applying Theorem \ref{thm:compactness for FPMC}, by the strong bumpiness of $g$, there exists a subsequence (still denoted by $\{\epsilon_j\}\rightarrow 0$) such that $\Sigma_j=\Sigma_{\epsilon_j}$ converges to a smooth, compact, embedded, free boundary minimal hypersurface $\Sigma$ (with integer multiplicity). We denote by $\mc Y$ the set of points where the convergence fails to be smooth. By Theorem \ref{thm:compactness for FPMC}, 
		\[\mf M(\Sigma)=\mf L(\Pi), \text{ and } \mathrm{index}(\Sigma)\leq k.\]

		We now prove that every component of $\Sigma_\infty$ is two-sided and has multiplicity one. Without loss of generality, we may assume that $\Sigma_\infty$ has only one connected component with multiplicity $m\in\mathbb N$. We will prove by contradiction.
		
		\medskip
		{\noindent\bf Part 1:}  We assume that $\Sigma_\infty$ is 2-sided; otherwise, we would consider the double cover of $\Sigma_\infty$ just like \cite{Zhou19}*{Proof of Theorem 4.1, Part 8}. Let $\nu$ be the global unit normal of $\Sigma$ and $X\in \wti {\mathfrak X}(M,\Sigma)$ be an extension of $\nu$. Suppose that $\phi_t$ is a one-parameter family of diffeomorphisms generated by $X$. For any domain $U\subset \Sigma$ and  small $\delta>0$, $\phi_t$ produces a neighborhood $U_\delta$ of $U$ with thickness $\delta$, i.e., $U_\delta=\{\phi_t(x)\,|\, x\in U, |t|\leq \delta\}$. We will use $(t, x)$ as coordinates on $U_\delta$. If $U$ is in the interior of $\Sigma$, then  for $\delta$ small $U_\delta$ is the same as $U\times [-\delta,\delta]$ in the geodesic normal coordinates of $\Sigma$. Now fix a domain $U\subset \subset \Sigma\setminus \mc{Y}$, by the convergence $\Sigma_j\to m\Sigma$, we know that for $j$ sufficiently large, $\Sigma_j\cap U_\delta$ can be decomposed to $m$ graphs over $U$ which can be ordered by height
		\[u_j^1\leq u_j^2\leq \cdots\leq u_j^m, \text{ and $u_j^i\rightarrow 0$, in smooth topology as $j\rightarrow\infty$.} .\]
		
		Since $\Sigma_j$ is the boundary of some set $\Omega_j$, we know that the unit outer normal $\nu_j$ of $\Omega_j$ will alternate orientations along these graphs; see \cite{Zhou19}*{Proof of Theorem 4.1, Part 3}.
		
		\medskip
		{\noindent\bf Part 2:} We first deal with an easier case: $m$ is an odd number. Hence $m\geq3$. Let $f$ be any smooth function on $\Sigma$ and $Z\in\mathfrak {\wti X}(M,\Sigma)$ be an extension of $f\nu$. Construct a family of hypersurfaces 
		\[\Sigma_{j,t}:=\{\phi((1-t)u_j^1+tu_j^m,x)|x\in U\},\]
		where $\phi(t,x):=\phi_t(x)$. Then $\Sigma_{k,0}$ and $\Sigma_{k,1}$ are the bottom and top sheets of $\Sigma_j$. Since $\Sigma_j$ is a free boundary $\epsilon_jh$-hypersurface, we have
		\[\int_0^1\Big[\frac{d}{dt}\int_{\Sigma_{j,t}}(\dv Z-\epsilon_jh\langle Z,\nu\rangle )\,d\mc H^n\Big]\,dt=0.\]
		Then the computation in Appendix \ref{section:2nd variation} gives that
		\begin{align*}
			0=&\int_0^1\Big[\int_{\Sigma_{j,t}}\Big(\langle\nabla^\perp(X^\perp),\nabla^\perp(Z^{\perp})\rangle-\Ric(X^\perp,Z^\perp)-|A|^2\langle X^\perp,Z^\perp\rangle\\
			&-\epsilon_j(\partial_\nu h)\langle X^\perp,Z^\perp\rangle\Big)\,d\mc H^n+ \int_{\partial \Sigma_{j,t}}\langle\nabla_{X^\perp}Z^\perp,\nu_{\partial M}\rangle\,d\mc H^{n-1}+\\
			&+\int_{\Sigma_{j,t}}\wti \Xi_1(X,Z,\mf H)\, d\mc H^{n}+\int_{\partial\Sigma_{j,t}}\wti\Xi_2(X,Z,\mf H,\bm\eta,\nu_{\partial M})\, d\mc H^{n-1}\Big]\,dt,
		\end{align*}
		where $X|_\Sigma=(u_j^m-u_j^1)\nu$, $\bm\eta$ is the co-normal of $\Sigma_{j,t}$, and 
		\[|\wti\Xi_1(X,Z,\mf H)|+|\wti\Xi_2(X,Z,\mf H,\bm\eta,\nu_{\partial M})|
		\leq  C\big(|X|(|\mf H+\epsilon_jh\nu|+| \bm\eta-\nu_{\partial M} |+|Z^\top|)+|X^\top|\big).\]
		Here $\mf H$ is the mean curvature vector. Denote by $\varphi_j=u_j^m-u_j^1$. By pulling everything back to $U$, we obtain 
		\begin{align*}
			0=&\int_0^1\Big[\int_{U}\Big(\langle\nabla \varphi_j,\nabla f\rangle- \Ric(\nu,\nu)\varphi_jf-|A^{\Sigma}|^2\varphi_jf-\epsilon_j(\partial_\nu h)\varphi_jf\Big)\,d\mc H^n\\
			&-\int_{\partial \Sigma\cap U}h^{\partial M}(\nu,\nu)\varphi_jf\,d\mc H^{n-1}+\int_{U}\wti W_j(t)(\varphi_j,f)\, d\mc H^{n}+\int_{\partial\Sigma\cap U}\wti w_j(t)(\varphi_j,f)\, d\mc H^{n-1}\Big]\,dt,
		\end{align*}
		where 
		\begin{equation}\label{eq:wti W and w}
			\wti W_j(t)(\varphi_j,f)\leq \wti \epsilon_j(f,\Sigma,M)|\varphi_j|,\ \wti w_j(t)(\varphi_j,f)\leq \wti \epsilon_j(f,\Sigma,M)|\varphi_j|.
		\end{equation}
		Here $\wti \epsilon_j\rightarrow 0$ uniformly as $j\rightarrow\infty$. Now letting $W_j=\int_0^1\wti W_j(t)dt$ and $w_j=\int_0^1 \wti w_j(t)dt$, by Fubini theorem, we have
		\begin{align*}
			0=&\int_{U}\Big(\langle\nabla\varphi_j,\nabla f\rangle- \Ric(\nu,\nu)\varphi_jf-|A^{\Sigma}|^2\varphi_jf-\epsilon_j(\partial_\nu h)\varphi_jf\Big)\,d\mc H^n\\
			&-\int_{\partial \Sigma\cap U}h^{\partial M}(\nu,\nu)\varphi_jf\,d\mc H^{n-1}+\int_{U} W_j(\varphi_j,f)\, d\mc H^{n}+\int_{\partial\Sigma\cap U} w_j(\varphi_j,f)\, d\mc H^{n-1}.
		\end{align*}
		Take $q\in U$. Let $\wti \varphi_j=\varphi_j/\varphi_j(q)$. Then the standard PDE theory implies that $\wti\varphi_j$ converges smoothly to a positive $\varphi\in C^\infty(U)$ satisfying 
		\begin{equation}
			\left\{\begin{aligned}
				& L_\Sigma\varphi=0, \text{\ \ on $U$},\\
				&\frac{\partial \varphi}{\partial \bm\eta}=h^{\partial M}(\nu,\nu)\varphi, \text{\ \  along $\partial\Sigma\cap U$.}
			\end{aligned}\right.
		\end{equation}
		Note that above argument works for any $U\subset\subset\Sigma\setminus\mc Y$. Taking an exhaustion of $\Sigma\setminus \mc Y$, we can extend $\varphi$ to $\Sigma\setminus \mc Y$ and such that 
		\begin{equation}
			\left\{\begin{aligned}
				& L_\Sigma\varphi=0, \text{\ \ on $\Sigma\setminus \mc Y$},\\
				&\frac{\partial \varphi}{\partial \bm\eta}=h^{\partial M}(\nu,\nu)\varphi, \text{\ \  along $\partial\Sigma\setminus \mc Y$.}
			\end{aligned}\right.
		\end{equation}
		
		\medskip
		{\noindent\bf Part 3:} Next we use White's local foliation argument to prove that $\varphi$ extends smoothly across $\mc Y$, and this will contradict the bumpy assumption of $g$. 
		
		By the work for interior singularity in \cite{Zhou19}*{Proof of Theorem 4.1, Part 5}, it suffices to show the uniform boundedness for $p\in\mc Y\cap \partial M$. Since $\Sigma$ has empty touching set, then $p\in \partial \Sigma$. Using the $h$-hypersurface with free boundary (see Proposition \ref{prop:h foliation with boundary}), we can also prove that $\varphi$ is bounded. Then the classical PDE gives that $\varphi$ is smooth across $\mc Y$. Thus, we conclude that there is a positive Jacobi field on $\Sigma$, which is a contradiction to the fact that $g$ is a strongly bumpy metric.
		
		\medskip
		{\noindent\bf Part 4:} We now take care the case when $m$ is even. Hence $m\geq 2$. Then without loss of generality we may assume that 
		\[
		H|_{\mathrm{Graph}(u_j^m)}(x)=-\epsilon_jh(x,u_j^m(x)), \text{ and } H|_{\mathrm{Graph}(u_j^1)} (x) = \epsilon_j h(x, u_j^1 (x)), \text{ for } x \in  U .\]
		Then by the argument in Part 2, we have
		\begin{align*}
			0=&\int_{U}\Big(\langle\nabla\varphi_j,\nabla f\rangle- \Ric(\nu,\nu)\varphi_jf-|A^{\Sigma}|^2\varphi_jf-\epsilon_j(\partial_\nu h)\varphi_jf +2\epsilon_jhf \Big)\,d\mc H^n\\
			&-\int_{\partial \Sigma\cap U}h^{\partial M}(\nu,\nu)\varphi_jf\,d\mc H^{n-1}+\int_{U} W_j(\varphi_j,f)\, d\mc H^{n}+\int_{\partial\Sigma\cap U} w_j(\varphi_j,f)\, d\mc H^{n-1}.
		\end{align*}
		Here $\varphi_j=u^m_j-u^1_j$; $ W$ and $ w$ are defined as the integral of $\wti W$ and $\wti w$ in \eqref{eq:wti W and w}. Fix a point $q\in \Sigma\setminus \mc Y$.
		
		\medskip
		{\noindent\bf Case 1:} $\limsup_{k\rightarrow\infty} \varphi_j(q)/\epsilon_j=+\infty$. Then the renormalizations $\wti \varphi_j=\varphi_j/\varphi_j(q)$ converges locally smoothly to a nontrivial function $\varphi\geq 0$ on $\Sigma\setminus \mc Y$, and
		by same reasoning as Part 2, we have
		\begin{equation*}
			\left\{\begin{aligned}
				& L_\Sigma\varphi=0, \text{\ \ on $\Sigma\setminus \mc Y$},\\
				&\frac{\partial \varphi}{\partial \bm\eta}=h^{\partial M}(\nu,\nu)\varphi, \text{\ \  along $\partial\Sigma\setminus \mc Y$.}
			\end{aligned}\right.
		\end{equation*}
		
		\medskip
		{\noindent\bf Case 2:} $\limsup_{k\rightarrow\infty} \varphi_j(q)/\epsilon_j< +\infty$. Consider renormalizations $\wti \varphi_j=\varphi_j/\epsilon_j$. Then again by the same reasoning, $\wti\varphi_j$ converges locally smoothly to a nonnegative $\varphi\geq 0$ on $\Sigma \setminus\mc Y$, and such that 
		\begin{equation}\label{eq:Lu equal 2h}
			\left\{\begin{aligned}
				& L_\Sigma\varphi=2h|_\Sigma , \text{\ \ on $\Sigma\setminus \mc Y$},\\
				&\frac{\partial \varphi}{\partial \bm\eta}=h^{\partial M}(\nu,\nu)\varphi, \text{\ \  along $\partial\Sigma\setminus \mc Y$.}
			\end{aligned}\right.
		\end{equation}
		Then by the argument in \cite{Zhou19}*{Proof of Theorem 4.1, Part 7} together with using the foliations by Proposition \ref{prop:h foliation with boundary}, we can also prove that $\varphi$ is smooth across $\mc Y$ in both cases. 
		
		\medskip
		{\noindent\bf Part 5:} Following the step in \cite{Zhou19}*{Proof of Theorem 4.1, Part 9}, one can also show that for a nicely chosen $h\in\mc S(g)$, the (unique) solutions to \eqref{eq:Lu equal 2h} must change sign. Thus there is no 1-sided component, and the multiplicity for 2-sided component must be one. 
		
		We only sketch the proof for two-sided case here. Note that every almost properly embedded free boundary minimal hypersurface has empty touching set since $g$ is strongly bumpy. Recall that by \citelist{\cite{GWZ18}\cite{Wang19}}, there are only finitely many free boundary minimal hypersurfaces with $\Area\leq \mf L(\Pi)$ and $\mathrm{index}\leq k$, denoted by $\{\Sigma_1,\Sigma_2,\cdots,\Sigma_N\}$. Then as in \cite{Zhou19}, we can take disjoint neighborhood $U_j^\pm\subset \Sigma_j$ so that $U_j^\pm\cap \partial M=\emptyset$. Since all the small neighborhoods are disjoint, we can take a smooth function $u$ defined on $\bigcup U_j^\pm$ whose supports are compact and 
		\begin{itemize}
			\item $u|_{U_j^+}$ is nonnegative and is positive at some point;
			\item $u|_{U_j^-}$ is nonpositive and is negative at some point.
		\end{itemize} 
		Then take $h_0\in C^\infty(M)$ be an extension of $Lu/2$ such that $h$ equals to $0$ in a neighborhood of $\partial M$. Recall that 
		\[ \bigcap_{j=1}^\infty\{  f:\epsilon_jf\in\mc S(g) \} \]
		is generic. Take $h$ in this generic set which is close to $h_0$ as we wanted. Then the solution of \eqref{eq:Lu equal 2h} would be close to $u/2$ on each $\Sigma_j$, therefore it must change sign. Thus, such an $h$ is a desired function. 
	\end{proof}
	
	\begin{remark}
		We remark here the theorem is stated only for stongly bumpy metrics. However, we also believe the multiplicity one holds true for metrics which are only bumpy. Such a result may need a highly nontrivial argument for constructing Jacobi fields; see \cite{Wang19}.
	\end{remark}
	
	\subsection{Application to volume spectrum}
	In this part, we will apply the result in Section \ref{subsec:multi one for relative sweepout} to study volume spectrum introduced by Gromov \cite{Gro88}, Guth \cite{Guth09}, and Marques-Neves \cite{MN17}. In particular, we will prove that for a strongly bumpy metric, the volume spectrum can be realized by the area of min-max minimal hypersurfaces with free boundary produced by Theorem \ref{thm:multi one for relative sweepout}.
	
	We first recall the definition of volume spectrum following \cite{MN17}*{Section 4}. Let $(M^{n+1},\partial M, g)$ be a compact Riemannian manifold with boundary. Let $X$ be a cubical subcomplex of $I^m = [0,1]$ for some $m\in\mb N$. Given $k\in\mb N$, a continuous map $\Phi: X\rightarrow\mc Z_n (M,\partial M; \mb Z_2)$ is called a \emph{$k$-sweepout} if
	\[0\neq \Phi^*(\bar{\lambda}^k) \in H^k(X, \mb Z_2),\]
	where $\bar{\lambda}\in H^1(\mc Z_n(M,\partial M;\mb Z_2))=\mb Z_2$ is the generator. A sweepout $\Phi$ is said to be \emph{admissible} if it has no concentration of mass (see Definition \ref{def:no concentration of mass}). Denote by $\mc P_k$ the set of all admissible $k$-sweepouts. Then
	\begin{definition}
		The \emph{$k$-width} of $(M,\partial M, g)$ is
		\[\omega_k (M, g) = \inf_{\Phi\in\mc P_k} \sup\{\mf M(\Phi(x)) : x\in \mathrm{dmn}(\Phi)\},\]
		where $\mathrm{dmn}(\Phi)$ is the domain of $\Phi$.
	\end{definition}
	
	The $k$-width satisfies a Weyl's asymptotic law. This asymptotic behaviour was first conjectured by Gromov in \cite{Gro88} and studied by Guth in \cite{Guth09}. Finally, Liokumovich-Marques-Neves proved the following Weyl law for $k$-width.
	
	\begin{theorem}[\cite{LMN16}*{\S` 1.1}]
		There exists a constant $a(n)>0$ such that, for every compact Riemannian manifold $(M^{n+1},g)$ with (possibly empty) boundary, we have
		\begin{equation}
			\lim_{p\to\infty}\omega_p(M)p^{-\frac{1}{n+1}}=a(n)\mathrm{vol}(M)^{\frac{n}{n+1}}.
		\end{equation}
	\end{theorem}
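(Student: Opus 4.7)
The plan is to follow the Liokumovich--Marques--Neves strategy, which separates the statement into a matching upper bound and lower bound; the constant $a(n)$ is then defined tautologically as the common limiting value in the Euclidean model case. First I would reduce the problem to cubes in $\mathbb{R}^{n+1}$: for the unit cube $Q_0 = [0,1]^{n+1}$ (with boundary) one wants to show that $\omega_p(Q_0) p^{-1/(n+1)}$ converges as $p \to \infty$, and the limit defines $a(n)$. Existence of this limit is obtained by an almost-superadditivity argument: partitioning a large cube into $N^{n+1}$ subcubes of equal size and combining optimal $p$-sweepouts on each subcube via the $k$-fold cup product structure of the generator $\bar{\lambda}\in H^1(\mathcal{Z}_n(Q;\mathbb{Z}_2))$, one produces a $(p_1 + \cdots + p_{N^{n+1}})$-sweepout of the large cube whose mass is the sum of the subcube masses. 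Scaling and Fekete-type lemma arguments then force the existence of the limit, giving $a(n) \in (0,\infty)$.

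Next I would prove the upper bound $\limsup_{p\to\infty}\omega_p(M) p^{-1/(n+1)} \leq a(n)\operatorname{vol}(M)^{n/(n+1)}$. Given $\epsilon > 0$, cover $M$ by finitely many charts which are $(1+\epsilon)$-bi-Lipschitz to Euclidean domains, and further subdivide into small near-Euclidean geometric cubes $\{Q_i\}_{i=1}^{N}$ of comparable volume, so that $\sum_i \operatorname{vol}(Q_i) = \operatorname{vol}(M)$. On each $Q_i$, take an almost-optimal $p_i$-sweepout with $p_i$ chosen so that the maximal mass is close to $a(n)\operatorname{vol}(Q_i)^{n/(n+1)}$. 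Gluing these sweepouts via cup products, as in the local-to-global construction from Guth and LMN, produces a $p$-sweepout of $M$ with $p = \sum p_i$ and mass bounded by $(1+O(\epsilon))\sum_i a(n)\operatorname{vol}(Q_i)^{n/(n+1)}$. Optimizing the subcube sizes (all equal volumes give the tightest bound by concavity of $t\mapsto t^{n/(n+1)}$) and sending $\epsilon\to 0$ gives the upper bound. Near $\partial M$, one either uses half-cubes and the definition of relative cycles, or discards a thin collar whose volume tends to zero.

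For the lower bound $\liminf_{p\to\infty}\omega_p(M) p^{-1/(n+1)} \geq a(n)\operatorname{vol}(M)^{n/(n+1)}$, the approach is again to decompose $M$ into small near-Euclidean pieces $\{Q_i\}$, but now the key tool is a Lusternik--Schnirelmann style lower bound: any $p$-sweepout of $M$, when restricted and cut off over each $Q_i$, must use a certain cohomological degree, and one has the inequality
\[
\omega_p(M) \geq \sup \Bigl\{ \sum_{i=1}^N \omega_{p_i}(Q_i) : p_1+\cdots+p_N \leq p - C(N) \Bigr\},
\]
where $C(N)$ is a correction term from the overlap regions. This is proved by a pullback argument: if $\Phi$ is a $p$-sweepout then its restriction to the level set where mass is concentrated in $Q_i$ carries a top power of $\bar{\lambda}_i$, and the total degree is additive. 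Plugging in the almost-optimal $p_i$ proportional to $\operatorname{vol}(Q_i)\cdot p/\operatorname{vol}(M)$ and applying the already-established cube asymptotics yields the lower bound.

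The main obstacle is the lower bound, specifically the cohomological localization argument: one must show that after restricting a sweepout of $M$ to each small subcube, the relevant power of $\bar\lambda$ survives, with precise control of how many ``units'' of the parameter $p$ are consumed in the overlap regions between subcubes. This is where the coefficient $a(n)$ is pinned down exactly, and it requires the delicate cup-product manipulations together with the no-concentration-of-mass property (Definition \ref{def:no concentration of mass}) to ensure sweepouts can be cut off across the boundaries of the $Q_i$ without creating spurious mass. The treatment of $\partial M$ adds an additional bookkeeping layer: one uses the fact that relative cycles modulo $\partial M$ behave well under subdivisions that are transverse to $\partial M$, so pieces touching the boundary contribute only lower-order corrections.
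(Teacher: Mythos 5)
This statement is not proved in the paper at all: it is quoted verbatim from Liokumovich--Marques--Neves \cite{LMN16} as background, so there is no internal proof to compare against. Your outline is a faithful reconstruction of the LMN strategy (cube asymptotics via a Fekete-type subdivision argument defining $a(n)$, gluing of sweepouts by cup products for the upper bound, a Lusternik--Schnirelmann localization for the lower bound, bi-Lipschitz approximation by near-Euclidean pieces, and collar/relative-cycle bookkeeping at $\partial M$), and at the level of a sketch it is correct. One small correction: in the lower-bound lemma of \cite{LMN16} the subdomains $Q_i$ must be pairwise \emph{disjoint} (one takes slightly shrunk cubes capturing all but an $\epsilon$ of the volume), and the parameter loss is exactly $N-1$, one degree per domain coming from the cohomological cutting argument, rather than a correction ``from the overlap regions''; with overlapping pieces the restriction argument does not go through.
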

	
	Assume from now on that %the dimension satisfies 
	$3\leq (n+1)\leq 7$. It was later observed by Marques-Neves in \cite{MN16} (see also \cite{GLWZ19}*{Section 4}) that one can restrict to a subclass of $\mc P_k$ in the definition of $\omega_k (M,g)$. In particular, let $\wti {\mc P}_k$ denote those elements $\Phi\in\mc P_k$ which is continuous under the $\mf F$-topology, and whose domain $X = \mathrm{dmn}(\Phi)$ has dimension $k$ (and is identical to its $k$-skeleton). Then
	\[
	\omega_k (M, g) = \inf_{\Phi\in\wti{\mc P}_k} \sup\{\mf M(\Phi(x)) : x \in\mathrm{dmn}(\Phi)\}.
	\]
	
	Following the idea of Marques-Neves \cite{MN16}, the last two authors together with Q.Guang and M. Li also proved in \cite{GLWZ19} that for each $k\in\mb N$, there exists a disjoint collection of smooth, connected, almost properly embedded, free boundary minimal hypersurfaces $\{\Sigma^k_i : i = 1,\cdots, l_k\}$ with integer multiplicities $\{m^k_i : i =
	1,\cdots, l_k \}\subset \mb N$, such that 
	\[
	\omega_k(M, g) = \sum_{i=1}^{l_k}m^k_i\cdot \Area(\Sigma^k_i),
	\ \ \sum_{i=1}^{l_k}\mathrm{index}(\Sigma^k_i)\leq  k.\]

	Before stating the main theorem, we recall an observation by \cite{MN17}*{Corollary 3.4}. Denote $S^1$ by the unit circle.
	\begin{lemma}[\citelist{\cite{MN17}*{Corollary 3.4}\cite{LMN16}*{Proposition 2.12}}]\label{lem:homotopy trivial}
		Let $\tau\in \mc Z_n(M,\partial M;\mb Z_2)$ so that its support is a properly embedded free boundary minimal hypersurface in $(M,\partial M,g)$. There exists $\bar\epsilon$ sufficiently small, depending on $\tau$ and $M$ so that every map $\Phi: S^1\rightarrow\mc Z_n (M,\partial M;\mb Z_2)$ with
		\[\Phi(S^1)\subset B_{\bar{\epsilon}}^\mc F (\tau) = \{\tau_1 \in\mc Z_n(M,\partial M;\mb Z_2) : \mc F(\tau_1, \tau ) < \bar\epsilon\}.\]
		is homotopically trivial.
	\end{lemma}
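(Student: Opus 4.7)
The plan is to exploit the $\mb Z_2$-covering structure $\partial: \mc C(M)\to \mc Z_n(M,\partial M;\mb Z_2)$, $\Omega\mapsto \partial\Omega$, whose non-trivial deck transformation is $\Omega\mapsto M\setminus\Omega$. Since $\mc Z_n(M,\partial M;\mb Z_2)$ is weakly homotopy equivalent to $\mb{RP}^\infty$ by \cite{Alm62}, its universal double cover is $\mc C(M)$, which is weakly contractible; consequently a loop $\Phi: S^1\to\mc Z_n(M,\partial M;\mb Z_2)$ is null-homotopic if and only if it admits a continuous lift $\wti\Phi:S^1\to\mc C(M)$. The task therefore reduces to exhibiting a continuous section of $\partial$ over $B^{\mc F}_{\bar\epsilon}(\tau)$ for $\bar\epsilon$ sufficiently small.

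To construct the section, I would first fix any $\Omega_0\in\mc C(M)$ with $\partial\Omega_0=\tau$ (surjectivity of $\partial$ follows from connectedness of $\mc Z_n(M,\partial M;\mb Z_2)$). Then I would invoke Almgren's relative isoperimetric lemma for $(M,\partial M)$, which provides constants $\nu_M,C_M>0$, depending only on $(M,\partial M,g)$, such that every $T\in\mc Z_n(M,\partial M;\mb Z_2)$ with $\mc F(T)<\nu_M$ admits a filling $Q\in\mathbf I_{n+1}(M;\mb Z_2)$ satisfying $\partial Q=T$ as relative cycles and $\mathbf M(Q)\leq C_M\mc F(T)$. Applying this to the relative cycles $\Phi(x)-\tau$ (each of $\mc F$-norm at most $\bar\epsilon$) produces fillings $Q(x)$ of uniformly small mass; the candidate lift is then $\wti\Phi(x):=\Omega_0+Q(x)\in\mc C(M)$, where addition is the symmetric difference in the $\mb Z_2$-chain group. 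One checks $\partial\wti\Phi(x)=\tau+(\Phi(x)-\tau)=\Phi(x)$ and $\mathbf F(\wti\Phi(x),\Omega_0)\leq C_M\bar\epsilon$. Choosing $\bar\epsilon$ with $4C_M\bar\epsilon<\Vol(M)$ ensures that $\wti\Phi(x)$ is the \emph{unique} Cappccioppoli lift of $\Phi(x)$ lying within $\mathbf F$-distance $\Vol(M)/3$ of $\Omega_0$, since the alternative lift $M\setminus\wti\Phi(x)$ is at distance at least $\Vol(M)-C_M\bar\epsilon$ from $\Omega_0$. This uniqueness makes the selection independent of the choice of filling $Q(x)$ and, by the resulting local trivialization of the double cover on $B^\mathbf F_{\Vol(M)/3}(\Omega_0)$, continuous in the $\mc F$-topology.

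With a continuous lift $\wti\Phi:S^1\to\mc C(M)$ in hand, weak contractibility of $\mc C(M)$ furnishes a null-homotopy $\wti H:[0,1]\times S^1\to\mc C(M)$ of $\wti\Phi$; composing with $\partial$ yields the desired null-homotopy of $\Phi$ in $\mc Z_n(M,\partial M;\mb Z_2)$. The principal technical obstacle I anticipate lies in the exact form of the relative isoperimetric inequality — one must verify that the filling $Q$ can be taken in the interior of $M$ with mass controlled by the \emph{relative} flat norm of $T$ on $\mc Z_n(M,\partial M;\mb Z_2)$ rather than the absolute flat norm on $\mc Z_n(M;\mb Z_2)$. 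In the compact-manifold-with-boundary setting this can be handled by the reflection/doubling procedure already used in Theorem~\ref{thm:regularity for minimizer}, or by directly appealing to \cite{LMN16}*{Proposition~2.12}, which supplies precisely the estimate required. Note that properness of the support of $\tau$ enters only implicitly through the fact that $\tau$ is a well-defined element of $\mc Z_n(M,\partial M;\mb Z_2)$; the quantitative dependence of $\bar\epsilon$ on $\tau$ in our argument is actually absorbed into its dependence on $M$.
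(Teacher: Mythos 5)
Your argument is correct and follows the standard covering-space proof behind the two sources the paper cites; the paper itself offers no independent argument for this lemma, only the citation. A loop in $\mc Z_n(M,\partial M;\mb Z_2)$ is null-homotopic iff it lifts to the universal $\mb Z_2$-cover $\mc C(M)$, and you produce the lift by filling $\Phi(x)-\tau$ isoperimetrically and pinning down the Caccioppoli-set representative uniquely near a fixed $\Omega_0$; both the uniqueness-of-nearby-lift argument and the continuity consequence are sound. The one wrinkle to fix is the closing fallback appeal to \cite{LMN16}*{Proposition 2.12} for ``the estimate required'': that proposition \emph{is} the lemma you are proving, so invoking it at that point is circular. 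The correct source for the needed relative $\mb Z_2$-isoperimetric/filling estimate is the interpolation machinery in \cite{LZ16}*{\S 3.2} (which this paper already invokes in the proof of Theorem \ref{thm:discretization}) or the corresponding preliminary lemmas of \cite{LMN16}*{\S 2}, not Proposition 2.12 itself. Your final remark that $\bar\epsilon$ in your construction depends only on $M$ and not on $\tau$ is accurate and in fact slightly sharpens the lemma as stated.
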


	We will also use the following Lemma proved in \cite{GLWZ19}. Such a result follows from the Morse index upper bound estimates for the free boundary min-max theory in \cite{GLWZ19}.
	\begin{lemma}[\cite{GLWZ19}, Theorem 2.1]\label{lem:index bound for fbmh}
		Suppose $g$ is bumpy, then for each $k\in\mb N$, there exist a $k$-dimensional cubical complex $X$ and a map $\Phi_{0,k} : X \rightarrow\mc Z_n (M,\partial M,\mf F;\mb Z_2 )$ continuous in the $\mf F$-topology with $\Phi_{0,k}\in\wti{\mc P}_k$, such that
		\[
		\mf L(\Pi_k ) = \omega_k (M, g),\]
		where $\Pi_k = \Pi(\Phi_{0,k})$ is the class of all maps $\Phi : X \rightarrow\mc Z_n (M,\partial M,\mf F;\mb Z_2)$ continuous in the $\mf F$-topology that are homotopic to $\Phi_{0,k}$ in flat topology.
		
		Moreover, there exists a pulled-tight (see \cite{GLWZ19}*{Theorem 5.8}) minimizing sequence $\{\Phi_i\}_{i\in\mb N}$ of $\Pi_k$ such that if $\Sigma\in\mf C(\{\Phi_i\}_{i\in\mb N})$ has support a compact, smooth, almost properly embedded, free boundary minimal hypersurface, then
		\[\|\Sigma\|(M) = \omega_ k(M,g), \ \text{ and } \ \mathrm{index}_w(\text{support of $\Sigma$})\leq k.\]
	\end{lemma}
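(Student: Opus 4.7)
The plan is to construct $\Phi_{0,k}$ via discretization and interpolation from an area-minimizing sequence of admissible $k$-sweepouts, and then invoke the pull-tight plus combinatorial deformation machinery established for free boundary min-max in \cite{GLWZ19} to obtain the index bound.

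First I would choose a minimizing sequence $\{\Psi_i\}\subset\wti{\mc P}_k$ with $\sup_{x\in\mathrm{dmn}(\Psi_i)}\mf M(\Psi_i(x))\to \omega_k(M,g)$. Each $\Psi_i$ is $\mf F$-continuous on a $k$-dimensional cubical complex $X_i$ and has no concentration of mass. Applying Theorem \ref{thm:discretization} (adapted to cycles via the lift $\mc C(M)\to\mc Z_n(M,\partial M;\mb Z_2)$, $\Omega\mapsto\partial\Omega$) produces discrete maps $\phi_i:X_i(k_i)_0\to\mc Z_n(M,\partial M;\mb Z_2)$ with fineness $\delta_i\to 0$, $\mf F$-close to $\Psi_i$, and with mass bounded by $\sup\mf M(\Psi_i)+o(1)$. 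Applying Theorem \ref{thm:Almgren extension} yields $\mf F$-continuous Almgren extensions $\wti\Psi_i:X_i\to\mc Z_n(M,\partial M;\mb Z_2)$ homotopic to $\Psi_i$ in the flat topology, with the same leading-order mass bound. Because flat homotopy preserves the induced class on $H^\ast$, each $\wti\Psi_i$ remains a $k$-sweepout, hence $\wti\Psi_i\in\wti{\mc P}_k$.

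Next I would fix $i$ large and set $\Phi_{0,k}=\wti\Psi_i$, $X=X_i$, and define $\Pi_k$ as in the statement. The inequality $\mf L(\Pi_k)\leq \omega_k$ follows from the construction since $\sup\mf M(\Phi_{0,k}(x))\leq \omega_k+\epsilon$ for $i$ large; the reverse inequality $\mf L(\Pi_k)\geq \omega_k$ follows because every $\Phi\in\Pi_k$ is itself an admissible $k$-sweepout (flat homotopy preserves the cohomology class and no concentration of mass is inherited from the $\mf F$-continuity together with the uniform mass bound). Hence $\mf L(\Pi_k)=\omega_k(M,g)$.

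For the second part, I would invoke the pull-tight construction of \cite{GLWZ19}*{Theorem 5.8}: starting from any minimizing sequence for $\Pi_k$, one uses the gradient-flow-type deformation on $\mc Z_n(M,\partial M;\mb Z_2)$ that fixes stationary varifolds and decreases mass uniformly off the stationary set, to produce $\{\Phi_i\}\in\Pi_k$ with $\mf C(\{\Phi_i\})$ consisting of stationary integral varifolds with free boundary. The Morse index estimate then follows from the combinatorial deformation argument of Marques--Neves adapted in \cite{GLWZ19}: assuming for contradiction that every $\Sigma\in\mf C(\{\Phi_i\})$ with smooth almost properly embedded support has weak index $\geq k+1$, one uses the bumpiness of $g$ (which ensures local countability of the space of free boundary minimal hypersurfaces with bounded area and index, together with the local deformation $\{F_v\}_{v\in\oB^{k+1}}$ from Definition \ref{def:k-unstable}) to perform a finite-scale deformation of $\Phi_i$ that strictly decreases the max of $\mf M\circ\Phi_i$, contradicting minimality. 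The hard part will be this last step: making the combinatorial gluing of local deformations compatible with the cubical structure of $X$ and with the $\mf F$-continuity on $Z$-like boundary strata, which is where the bumpy hypothesis and the free boundary compactness (Theorem \ref{thm:compactness for FPMC}) enter decisively. The conclusion $\|\Sigma\|(M)=\omega_k$ is automatic from $\Sigma\in\mf C(\{\Phi_i\})$.
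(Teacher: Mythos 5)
First, note that the paper does not prove this lemma at all: it is quoted verbatim from \cite{GLWZ19}*{Theorem 2.1}, so your proposal is a reconstruction of an external argument rather than a rival to a proof in this paper. Measured against what that argument actually requires, there is a genuine gap in your first part. You fix $i$ large, set $\Phi_{0,k}=\wti\Psi_i$, and claim $\mf L(\Pi_k)\leq\omega_k$ ``since $\sup_x\mf M(\Phi_{0,k}(x))\leq\omega_k+\epsilon$.'' This only gives $\mf L(\Pi_k)\leq\omega_k+\epsilon$ for that one class; once the class (in particular the domain $X_i$ and the pulled-back cohomology class) is fixed, $\epsilon$ is fixed, and you cannot let $\epsilon\to 0$ without changing the class. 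Since $\omega_k$ is an infimum over sweepouts with arbitrary domains, there is no a priori reason that any single $(X,\Pi_k)$ realizes it. The exact equality is precisely where bumpiness enters: one applies the free boundary min-max theorem with index upper bounds to each class $\Pi_{k,i}$, so that $\mf L(\Pi_{k,i})$ is a finite sum of areas of free boundary minimal hypersurfaces with area $\leq\omega_k+1$ and index $\leq k$; by compactness and bumpiness there are only finitely many such hypersurfaces, hence finitely many possible values, and since $\omega_k\leq\mf L(\Pi_{k,i})\leq\sup\mf M(\Psi_i)\to\omega_k$ one gets $\mf L(\Pi_{k,i})=\omega_k$ for $i$ large. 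This is exactly the finiteness trick the paper itself uses at the end of the proof of Theorem \ref{thm:multiplicity one for volume spectrum}; your sketch never uses the bumpy hypothesis for this step, and without it the equality does not follow.

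Two further points. Your parenthetical device of discretizing ``via the lift $\mc C(M)\to\mc Z_n(M,\partial M;\mb Z_2)$'' is not available here: a $k$-sweepout detects $\bar\lambda$ and is therefore homotopically nontrivial, so it does not lift to the double cover $\mc C(M)$ (this obstruction is the whole reason Theorem \ref{thm:multiplicity one for volume spectrum} restricts to the subsets $\wti Y_i$ before lifting); one should instead invoke the discretization and interpolation theorems stated directly for $\mc Z_n(M,\partial M;\mb Z_2)$ in \cite{LZ16} and \cite{GLWZ19}. Finally, your contradiction argument for the index bound (``assume every $\Sigma\in\mf C(\{\Phi_i\})$ with smooth support has weak index $\geq k+1$'') proves only that \emph{some} element of the critical set has index $\leq k$, whereas the lemma asserts the bound for \emph{every} element with smooth support; obtaining that stronger conclusion requires the iterative avoidance/deformation scheme, using the finiteness (or countability) of free boundary minimal hypersurfaces with bounded area and index under a bumpy metric, in the spirit of Step D of the proof of Theorem \ref{thm:index bound for all g} and the deformation theorems of \cite{GLWZ19}, together with the compactness in Theorem \ref{thm:compactness for FPMC}.
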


	Now we are going to state and prove our main theorem.
	\begin{theorem}[same with Theorem \ref{thm:intro:multi 1}]\label{thm:multiplicity one for volume spectrum}
		If $g$ is a strongly bumpy metric and $3\leq (n +1) \leq 7$, then for each $k\in\mb N$, there
		exists a disjoint collection of compact, smooth, connected, properly embedded, two-sided, free boundary minimal hypersurfaces $\{\Sigma^k_i : i = 1, \cdots , l_k \}$, such that
		\[
		\omega_k(M,g)=\sum_{i=1}^{l_k}\Area(\Sigma^k_i), \ \ \mathrm{index}(\Sigma^k_i)\leq k.\]
		That is to say, the min-max minimal hypersurfaces with free boundary are all two-sided and have multiplicity one for generic metrics.
	\end{theorem}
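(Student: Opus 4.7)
The plan is to reduce Theorem \ref{thm:multiplicity one for volume spectrum} to the relative multiplicity one result Theorem \ref{thm:multi one for relative sweepout}, by lifting the optimal $k$-sweepouts from $\mathcal{Z}_n(M,\partial M;\mathbb{Z}_2)$ up to the space of Caccioppoli sets $\mathcal{C}(M)$ and identifying the resulting relative min-max width with $\omega_k(M,g)$.

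First, I invoke Lemma \ref{lem:index bound for fbmh} to produce a $k$-dimensional cubical complex $X$ and an admissible $k$-sweepout $\Phi_{0,k}: X \to \mathcal{Z}_n(M,\partial M,\mathcal{F};\mathbb{Z}_2)$, continuous in the $\mathcal{F}$-topology, whose homotopy class $\Pi_k$ satisfies $\mathfrak{L}(\Pi_k) = \omega_k(M,g)$, together with a pulled-tight minimizing sequence in $\Pi_k$ whose critical varifolds (when smooth) have area $\omega_k(M,g)$ and weak Morse index bounded by $k$. Using the pull-tight procedure together with the compactness Theorem \ref{thm:compactness for FPMC} (to ensure finiteness of the relevant critical free boundary minimal hypersurfaces), I may further arrange a subcomplex $Z \subset X$ on which $\Phi_{0,k}|_Z$ is contained in an arbitrarily small $\mathcal{F}$-neighborhood of this finite critical set, in particular with $\mathbf{M}(\partial \Phi_{0,k}(x)) < \omega_k(M,g)$ for all $x \in Z$.

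Second, I lift $\Phi_{0,k}$ to $\mathcal{C}(M)$. The boundary map $\partial : \mathcal{C}(M) \to \mathcal{Z}_n(M,\partial M;\mathbb{Z}_2)$, sending $\Omega \mapsto [\partial \Omega]$, is a two-fold covering with deck involution $\Omega \mapsto M \setminus \Omega$. Applying Lemma \ref{lem:homotopy trivial} on each connected component of $Z$, every loop in $\Phi_{0,k}|_Z$ is null-homotopic in $\mathcal{Z}_n$, so the monodromy obstruction to lifting $\Phi_{0,k}|_Z$ vanishes on $Z$; hence $\Phi_{0,k}|_Z$ admits a continuous lift $\tilde{\Phi}_0|_Z : Z \to \mathcal{C}(M)$. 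Since $\mathcal{C}(M)$ is path-connected, this lift extends cell-by-cell to a map $\tilde{\Phi}_0 : X \to \mathcal{C}(M)$ continuous in the $\mathcal{F}$-topology, possibly after adjusting $\Phi_{0,k}$ within its flat homotopy class (modulo a deck-involution ambiguity handled by restricting the relative homotopy class). Consider the associated $(X,Z)$-homotopy class $\tilde{\Pi}$ of Section \ref{subsec:relative min-max}. Because $\partial$ sends $\tilde{\Pi}$ into $\Pi_k$ and preserves mass on reduced boundaries, one obtains $\mathfrak{L}(\tilde{\Pi}) \geq \mathfrak{L}(\Pi_k) = \omega_k(M,g)$, while the lift $\tilde{\Phi}_0$ itself gives the reverse inequality; thus $\mathfrak{L}(\tilde{\Pi}) = \omega_k(M,g)$, and the strict inequality $\mathfrak{L}(\tilde{\Pi}) > \max\{\max_{x\in Z}\mathcal{A}^0(\tilde{\Phi}_0(x)),0\}$ required by Theorem \ref{thm:multi one for relative sweepout} follows from the smallness of the $\mathcal{F}$-neighborhood along $Z$.

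Finally, since $g$ is strongly bumpy and $3 \leq n+1 \leq 7$, Theorem \ref{thm:multi one for relative sweepout} applied to $(X,Z,\tilde{\Phi}_0)$ with $h \equiv 0$ produces a disjoint collection of smooth, connected, two-sided, properly embedded, free boundary minimal hypersurfaces $\{\Sigma^k_i\}$ with
\[
\omega_k(M,g) \;=\; \mathfrak{L}(\tilde{\Pi}) \;=\; \sum_i \mathrm{Area}(\Sigma^k_i), \qquad \sum_i \mathrm{index}(\Sigma^k_i) \;\leq\; k,
\]
which is exactly the desired conclusion. The essential obstacle lies in the lifting step: one must simultaneously arrange that the monodromy of $\Phi_{0,k}^*\bar{\lambda}$ vanishes on the subcomplex $Z$ (so that a continuous lift to $\mathcal{C}(M)$ exists) and that the maximal mass along $Z$ stays strictly below $\omega_k(M,g)$ (so that the relative min-max hypothesis is satisfied). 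Both are achieved by combining the pull-tight procedure with Lemma \ref{lem:homotopy trivial} and the finiteness of critical free boundary minimal hypersurfaces ensured by strong bumpiness (Theorem \ref{thm:intro:generic strongly bumpy}) and the compactness theorem.
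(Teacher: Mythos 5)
Your proposal breaks down at the lifting step, and the failure is not a technicality but the central difficulty of the theorem. The map $\partial:\mc C(M)\rightarrow \mc Z_n(M,\partial M;\mb Z_2)$ is a double cover whose monodromy is detected exactly by the generator $\bar\lambda\in H^1(\mc Z_n(M,\partial M;\mb Z_2);\mb Z_2)$. Since $\Phi_{0,k}$ is a $k$-sweepout, $\Phi_{0,k}^*(\bar\lambda^k)\neq 0$, hence $\Phi_{0,k}^*\bar\lambda\neq 0$ in $H^1(X;\mb Z_2)$; this class is a homotopy invariant, so $\Phi_{0,k}$ admits \emph{no} continuous lift to $\mc C(M)$ on all of $X$, no matter how you adjust it within its flat homotopy class or bookkeep the deck involution $\Omega\mapsto M\setminus\Omega$. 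Concretely, over any loop in $X$ on which $\Phi_{0,k}^*\bar\lambda$ evaluates nontrivially, the lifted path of Caccioppoli sets returns to the complementary set, so your ``cell-by-cell'' extension already fails on the $1$-skeleton; path-connectedness of $\mc C(M)$ is irrelevant. Consequently the inequality $\mf L(\wti\Pi)\geq \mf L(\Pi_k)$ that you derive by pushing a globally defined lift down through $\partial$ has no starting point. A secondary gap: you ask for a subcomplex $Z$ lying in a small $\mc F$-neighborhood of the critical set \emph{and} with $\mf M(\partial\Phi_{0,k}(x))<\omega_k$ on $Z$; closeness to cycles of mass $\omega_k$ does not give this strict mass drop, and in any case it is not the condition the argument needs.

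The paper's proof is structured precisely to avoid a global lift. It splits the (discretized) domain into $Y_i$, where $|\Phi_i(x)|$ is $\mf F$-close to the \emph{finite} set $\mc S$ of admissible min-max limits (finiteness from strong bumpiness plus compactness), and $Z_i$, where it is far from $\mc S$. Lemma \ref{lem:homotopy trivial} kills the monodromy only over the thickened region $\wti Y_i$, which is where the lift $\wti\Phi_i$ is constructed. The two ingredients your proposal is missing are then: (a) Claim \ref{claim:less Mass on Zi}, i.e. the almost-minimizing dichotomy and deformation theorem applied to $\{\Phi_i|_{Z_i}\}$, which forces $\mf L(\{\Phi_i|_{Z_i}\})<\omega_k$ (this, not proximity to the critical set, yields the strict boundary inequality on $\wti{\mf B}_i\subset Z_i'$ required by Theorem \ref{thm:multi one for relative sweepout}); and (b) Lemma \ref{lem:prevent LPi}, which establishes $\mf L(\wti\Pi_i)\geq\mf L(\Pi)$ not by mass-preservation under $\partial$ but by contradiction: a relative competitor on $\wti Y_i$ with maximal mass below $\omega_k$ would splice with $\Phi_i$ on $X\setminus\wti Y_i$ (already below $\omega_k$ there) to produce a $k$-sweepout of mass strictly less than $\omega_k$, contradicting the definition of the width. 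Without these two steps the reduction to Theorem \ref{thm:multi one for relative sweepout} does not go through.
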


	\begin{proof}[Proof of Theorem \ref{thm:multiplicity one for volume spectrum}]
		Since $g$ is bumpy, then there are only finitely many compact, almost properly embedded, free boundary minimal hypersurfaces with $\Area \leq\Lambda$ and $\mathrm{index}\leq I$ for given $\Lambda > 0, I\in\mb N$ by \citelist{\cite{GWZ18}\cite{Wang19}}; see \cite{ACS17} for strongly bumpy metrics. 
		
		\medskip
		Now we fix $k\in\mb N$ and omit the sub-index $k$ in the following. Take $\Pi = [\Phi_0 : X \rightarrow\mc Z_n (M,\partial M, \mf F;\mb Z_2 )]$ with $\mf L(\Pi)=\omega_k$.

		We proceed the proof by the following three steps.
		
		\medskip
		{\noindent\bf Step I:} {\em In this step, we show how to find another minimizing sequence, still denoted by $\{\Phi_i\}_{i\in\mb N}$, such that for $i$ sufficiently large, either $|\Phi_i(x)|$ is close to a regular min-max free boundary minimal hypersurface, or the mass $\mf M(\Phi_i(x))$ is strictly less than $\omega_k(M,g)$.} 
		
		\medskip
		We recall the following observation by \cite{MN17}*{Claim 6.2}. Let $\mc S$ be the set of all stationary integral varifolds with $\Area\leq\omega_k$ whose support is a compact, smooth, almost properly embedded, free boundary minimal hypersurface with $\mathrm{index}(\text{support})\leq k$. Consider the set $\mc T$ of all $\tau\in\mc Z_n (M,\partial M;\mb Z_2)$ with $\mf M(\tau)\leq \omega_k$
		and either $\tau = 0$ or the support of $\tau$ is a compact, smooth, properly embedded, free boundary minimal hypersurface with $\mathrm{index}\leq k$. By the bumpy assumption, both sets $\mc S$ and $\mc T$ are finite. Moreover,
		\begin{claim}[\cite{MN17}*{Claim 6.2}]\label{lem:mfF implies mcF}
			For every $\bar{\epsilon} > 0$, there exists $\epsilon > 0$ such that $\tau\in \mc Z_n(M,\partial M;\mb Z_2 )$ with $\mf F(|\tau|, \mc S) \leq  3\epsilon \Rightarrow \mc F(\tau, \mc T ) < \bar{\epsilon}$.
		\end{claim}
		\begin{proof}[Proof of Claim \ref{lem:mfF implies mcF}]
		Here since $g$ is strongly bumpy, then each element in $\mc S$ is supported on a properly embedded hypersurface. This implies that Constancy Theorem \cite[\S 26.27]{Si} can be applied. Then the proof is the same with \cite{MN17}*{Claim 6.2}.
		\end{proof}
			
		\begin{comment}	\textcolor{magenta}{(Do we need to write this proof?)}
			We argue by contradiction and assume on the contrary that we can find a sequence of $\{\tau_k\}\subset\mc Z_n(M,\partial M;\mb Z_2)$ so that $\mf F(|\tau_k|,\mc S)<1/k$ and $\mc F(\tau_k,\mc T)>\epsilon$. Note that $\mc S$ is finite. Then there exists $V\in \mc S$ so that $\mf F(|\tau_k|,V)\rightarrow 0$. Moreover, by the compactness of $\mc Z_n(M,\partial M;\mb Z_2)$ (\citelist{\cite{LZ16}*{Lemma 3.10}\cite{LMN16}*{Theorem 2.3}}), up to a subsequence, $\tau_k$ converges to $\tau\in\mc Z_n(M,\partial M;\mb Z_2)$ in the $\mc F$ topology. Applying the lower semi-continuity of mass, we also have
			$\spt\|\tau\|\subset\spt \|V\|$.
			
			Denote by $\Sigma$ the support of $\|V\|$, which is a properly embedded free boundary minimal hypersurface by assumptions. \textcolor{magenta}{(bumpy is assumed in the beginning, and strongly bumpy is used here; clarify!)} Now take the canonical representative $T\in Z_n(M,\partial M;\mb Z_2)$ of $\tau$. So far, we have that $\spt\|T\|\subset\Sigma$ and $\spt\|\partial T\|\subset \partial M\cap \Sigma=\partial \Sigma$. By the Constancy Theorem \cite[\S 26.27]{Si}, $\tau\in\mc T$. This is a contradiction.
	        \end{comment}

		Let $\{\Phi_i\}_{i\in\mb N}$ be chosen as in Lemma \ref{lem:index bound for fbmh}. We choose $\bar{\epsilon}$ as Lemma \ref{lem:homotopy trivial} so that every map $\Phi:S^1\rightarrow \mc Z_n(M,\partial M;\mb Z_2)$ with $\Phi(S^1)\subset B_{\bar\epsilon}^{\mc F}(T)$ is homotopically trivial. According to such $\bar\epsilon$, we then choose $\epsilon>0$ by Claim \ref{lem:mfF implies mcF}. Take a sequence $\{k_i\}_{i\in\mb N}\rightarrow\infty$ so that 
		\[
		\sup\{\mf F(\Phi_i(x), \Phi_i(y)) :\alpha \in X(k_i), x, y\in\alpha\}\leq\epsilon/2.
		\]
		Consider $Z_i$ to be the cubical subcomplex of $X(k_i)$ consisting of all cells $\alpha\in X(k_i)$ such that
		\[\mf F(|\Phi_i(x)|,\mc S)\geq \epsilon, \text{ for every vertex $x$ in $\alpha$}.\]
		Hence $\mf F(|\Phi_i (x)|,\mc S) \geq \epsilon/2$ for all $x\in Z_i$. Consider this sub-coordinating sequence $\{\Phi_i|_{Z_i}\}_{i\in\mb N}$. $\mf L(\{\Phi_i|_{Z_i}\})$ and $\mf C(\{\Phi_i|_{Z_i}\})$ are defined in the same way as in Section \ref{subsec:relative min-max} with $\mc A^h$ replaced by $\mf M$.

		Let $Y_i = \overline{X\setminus Z_i}$. It then follows that
		\[
		\mf F(|\Phi_i(x)|, \mc S)<\frac{3}{2}\epsilon, \text{ for all $x\in Y_i$}.\]
		We also denote $B_i = Y_i\cap Z_i$. In fact, $B_i$ is the topological boundary of $Y_i$ and $Z_i$. For a later purpose, we also consider the set
		\[
		\mf B_i = \text{ the union of all cells $\alpha\in Z_i$ such that $\alpha \cap B_i\neq \emptyset$}.\]
		$\mf B_i$ can be thought of the ``thickening'' of $B_i$ inside $Z_i$.
		
		Let $\lambda=(\Phi_i)^*{\bar{\lambda}}\in H^1(X,\mb Z_2)$. Let $Y_i'= Y_i\cup\mf B_i$ and $Z_i' = \overline{Z_i\setminus \mf B_i}$, and $i_1':Y_i'\rightarrow X$ and $i'_2 : Z_i'\rightarrow X$ be the inclusion maps. Then by Lemma \ref{lem:homotopy trivial}, we have
		\[(i'_1)^*({\lambda}) = 0\in H^1 (Y_i';\mb Z_2), \text{ and } (i'_2)^*({\lambda}^{k-1})\neq 0\in H^{k-1}(Z_i'; \mb Z_2).\]
		\begin{comment}
		Then by \cite{Zhou19}*{Proof of Theorem 5.2, Step 2}, we can always deform $\{\Phi_i\}$ so that 
		\[\mf L(\{\Phi_i|_{Z_i'}\}_{i\in\mb N})<\mf L(\Pi)=\omega_k.\]
		\end{comment}
		Now consider $\wti B_i=Y_i'\cap Z_i'$ and the set
		\[\wti{\mf B}_i = \text{ the union of all cells $\alpha\in Z_i'$ such that $\alpha \cap \wti B_i\neq \emptyset$}.\]
		Let $\wti Y_i=Y_i'\cup \wti{\mf B}_i$ and $\wti Z_i=\overline{Z_i'\setminus \wti{\mf B}_i}$ and $\wti i_1:\wti Y_i\rightarrow X$ and $\wti i_2 : \wti Z_i\rightarrow X$ be the inclusion maps. Then by Lemma \ref{lem:homotopy trivial}, we also have
		\[(\wti i_1)^*({\lambda}) = 0\in H^1 (\wti Y_i;\mb Z_2), \text{ and } (\wti i_2)^*({\lambda}^{k-1})\neq 0\in H^{k-1}(\wti Z_i; \mb Z_2).\]

		\begin{claim}\label{claim:less Mass on Zi}
			$\{\Phi_i\}$ can be deformed so that 
			\[  \mf L(\{\Phi_i|_{Z_i}\}_{i\in\mb N})<\mf L(\Pi)=\omega_k. \]	
		\end{claim}
		\begin{proof}[Proof of Claim \ref{claim:less Mass on Zi}]
			By the work of the last author and M. Li \cite{LZ16} (see also \cite{GLWZ19}*{Theorem 4.5} for the adaptions to $\mb Z_2$-coefficients), together with Lemma \ref{lem:index bound for fbmh}, we also have the following dichotomy:
			\begin{itemize}
				\item no element $V\in\mf C(\{\Phi_i|_{Z_i}\}_{i\in\mb N})$ is $\mb Z_2$-almost minimizing in small annuli with free boundary (see \cite{LZ16}*{Definition 4.19}),
				\item or
				\begin{equation}
					\mf L(\{\Phi_i|_{Z_i}\}_{i\in\mb N})<\mf L(\Pi)=\omega_k.
				\end{equation}
			\end{itemize}
			
			For the latter case, we are done. 
			
			We now assume the first case happens. The argument has been well-known and we only sketch the ideas here. For each $\Phi_i$, we can use the Discretization Theorem \cite{LZ16}*{Theorem 4.12} (or \cite{GLWZ19}*{Theorem 4.5}) to get a sequence of discrete maps $\phi_i^j:X(k_i^j)_0\rightarrow \mc Z_n(M,\partial M;\mb Z_2)$. Taking $j(i)$ sufficiently large and by assumptions, we can deform $\phi_i^{j(i)}$ and then use the Almgren extension \cite{LZ16}*{Theorem 4.14} to get a sequence of maps $\Psi_i:X\rightarrow \mc Z_n(M,\partial M;\mb Z_2)$ which is still a minimizing sequence of $\Pi$ and 
			\[ \mf L(\{\Psi_i|_{Z_i}\}_{i\in\mb N})<\mf L(\Pi)=\omega_k.\]
			This finishes the proof of Claim \ref{claim:less Mass on Zi}. We refer to \cite{Zhou19}*{Proof of Theorem 5.2, Step 2} for more details. 
		\end{proof}
		
		We remark that since $\wti{\mf B}_i\subset Z_i'$, we have
		\[\mf L(\{\Phi_i|_{\wti{\mf B}_i}\}_{i\in\mb N})<\omega_k.\]

		\medskip
		{\noindent\bf Step II:} {\em Now we want to produce sweepouts in $\mc C(M)$ by lifting to the double cover $\partial:\mc C(M )\rightarrow\mc Z_n (M,\partial M;Z_2)$ so as to produce sweepouts satisfying the assumption of Theorem \ref{thm:multi one for relative sweepout}.}
		
		\medskip
		Note that $\Phi_i\circ\wti i_1:\wti Y_i\rightarrow\mc Z_n(M,\partial M;\mb Z_2)$ is homotopically trivial. Then by the lifting criterion \cite{Hat}*{Proposition 1.33}, there exist lifting maps $\wti \Phi_i:\wti Y_i\rightarrow (\mc C(M),\mf F)$ so that $\partial \circ\wti \Phi_i=\Phi_i\circ \wti i_1$.
		\begin{lemma}\label{lem:prevent LPi}
			For $i$ large enough, if $\wti\Pi_i$ is the $(\wti Y_i,\wti{\mf B}_i)$-homotopy class associated with $\wti \Phi_i$, then we have
			\[\mf L(\wti \Pi_i)\geq \mf L(\Pi)>\max_{x\in\wti{\mf B}_i} \mf M(\partial\wti\Phi_i (x)).\]
		\end{lemma}
		\begin{proof}[Proof of Lemma \ref{lem:prevent LPi}]
			Fix $i$ large, so that
			\[\sup_{x\in\wti{\mf B}_i} \mf M(\Phi_i(x)) < \mf L(\Pi),\]
			and we will omit the sub-index $i$ in the following proof.
			
			If the conclusion were not true, then we can find a sequence of maps $\{\wti \Psi_j:\wti Y\rightarrow(\mc C(M),\mf F)\}\subset \wti\Pi$, such that
			\begin{equation}\label{eq:less than wk in Y}
				\limsup_{j\rightarrow\infty} \sup\{\mf M(\partial \wti \Psi_j(x)) :x\in\wti Y\}<\mf L(\Pi),
			\end{equation}
			and homotopy maps $\{\wti H_j:[0,1]\times\wti Y\rightarrow \mc C(M)\}$, which are continuous in the flat topology, $\wti H_j(0,\cdot)=\wti\Phi$, $\wti H_j(1,\cdot)=\wti \Psi_j$, and
			\begin{equation}\label{eq:less than wk in B}
				\limsup_{j\rightarrow\infty} \sup\{\mf F(\wti H_j (t,x), \wti \Phi(x)):t\in [0, 1], x\in\wti{\mf B}\}=0.
			\end{equation}
			
			We now construct new sequences of maps $\{\Psi_j:X\rightarrow \mc Z_n(M,\partial M;\mb Z_2)\}_{j\in\mb N}$ and $\{H_j:[0,1]\times X\rightarrow\mc Z_n(M,\partial M;\mb Z_2)\}$ defined as:
			\begin{itemize}
				\item $H_j(t,x)=\Phi(x)$ if $x\in X\setminus\wti Y$;
				\item $ H_j(t,x)=\partial \circ \wti H_j(t(1-s(x)),x)$ if $x\in\wti Y$, where $s(x)=\min\{1,\dist(x,Y')\}$;
				\item $\Psi_j= \wti H_j(1,\cdot)$.
			\end{itemize}
			Then $\Psi$ is continuous and homotopic to $\Phi$ in the flat topology. Moreover, \eqref{eq:less than wk in Y} and \eqref{eq:less than wk in B} give that 
			\[\limsup_{j\rightarrow\infty}\sup_{x\in \wti Y}\mf M(\Psi_j(x))<\mf L(\Pi).\]
			Recall that $\Psi(x)=\Phi(x)$ for $x\in X\setminus \wti Y$. Then by Claim \ref{claim:less Mass on Zi},
			\[	\mf L(\{\Psi_j|_{X\setminus \wti Y}\}_{i\in\mb N})=\mf L (\{\Phi|_{X\setminus \wti Y}\}_{i\in\mb N})<\mf L(\Pi)=\omega_k.\]
			Thus we conclude that 
			\begin{equation}\label{eq:less than wk in X}
				\limsup_{j\rightarrow\infty}\sup_{x\in X}\mf M(\Psi_j(x))<\mf L(\Pi)=\omega_k.
			\end{equation}
			Note that \eqref{eq:less than wk in B} also gives that $\{\Psi_j\}$ has no concentration of mass. This implies that $\Psi$ is also a $k$-sweepout, so \eqref{eq:less than wk in X} contradicts the definition of $\omega_k$.
		\end{proof}
		
		\medskip
		{\noindent\bf Step III:} {\em Now we are ready to prove Theorem \ref{thm:multiplicity one for volume spectrum}.}
		
		\medskip
		For $i$ large enough in Lemma \ref{lem:prevent LPi}, Theorem \ref{thm:multi one for relative sweepout} applied to $\wti \Pi_i$ gives a disjoint collection of compact, smooth, connected, properly embedded, 2-sided, free boundary minimal hypersurfaces $\Sigma_i =\cup^{N_i}_{j=1}\Sigma_{i,j}$, such that
		\[
		\mf L(\wti \Pi_i)=\sum_{j=1}^{N_i}\Area(\Sigma_{i,j}),\  \text{ and }\ \mathrm{index}(\Sigma_i)\leq k.\]
		Recall that $\mf L(\wti \Pi_i)\leq \mf L(\Phi_i)\rightarrow \mf L(\Pi)=\omega_k$ as $i\rightarrow \infty$.Together with Lemma \ref{lem:prevent LPi}, $\mf L(\wti \Pi_i)\rightarrow \omega_k$ as $i\rightarrow\infty$. Counting the fact that there are only finitely many compact, smooth, properly embedded, free boundary hypersurfaces with $\Area\leq \omega_k +1$ and $\mathrm{index}\leq k$, for $i$ sufficiently large we have
		\[\mf L(\wti\Pi_i) = \mf L( \wti \Pi_{i+1} ) = \cdots = \omega_k .\]
		Hence we finish the proof of Theorem \ref{thm:multiplicity one for volume spectrum}.
	\end{proof}
	
	\section{Existence of self-shrinkers with arbitrarily large entropy}\label{sec:min-max in Gaussian space}
	In this section, we consider the min-max theory in $\mb R^{n+1}$ with the Gaussian metric by virtue of Theorem \ref{thm:multiplicity one for volume spectrum}. We will prove that each $k$-width is realized by a connected, embedded self-shrinker with multiplicity one.
	
	Recall that an embedded hypersurface $\Sigma\subset\mb R^{n+1}$ is called a {\em self-shrinker} if and only if 
	\[  H=\frac{\langle x,\nu\rangle}{2},\]
	where $H$ is the mean curvature with respect to the unit normal vector field $\nu$. It is equivalent to say that $\Sigma$ is minimal under the Gaussian metric $(\mb R^{n+1},(4\pi)^{-1}e^{-\frac{|x|^2}{2n}}\delta_{ij})$. We refer to \cite{CM12_1} for more results concerning about self-shrinkers.
	
	Before we go to details, let us first sketch the idea of the proof. We will construct min-max free boundary minimal hypersurfaces in larger and larger balls in $\R^{n+1}$ with generic metrics near the Gaussian metric. Then by passing to limits we will obtain complete minimial hypersurfaces in $(\mb R^{n+1},(4\pi)^{-1}e^{-\frac{|x|^2}{2n}}\delta_{ij})$.  The limits are non-trivial because the areas of the sequence of the free boundary minimal hypersurfaces outside a large ball are uniformly small (see Claim \ref{claim:no mass loss}). The Weyl law ensures that the limits have arbitrarily large area. Finally an index estimate (Theorem \ref{thm:shrinker finite index}) allows us to say something about the indices of higher dimensional self-shrinkers constructing by multiplying a low dimensional self-shrinker with linear spaces.

	\subsection{Min-max theory in Gaussian metric spaces}
	Denote by $\mc G=(4\pi)^{-1}e^{-\frac{|x|^2}{2n}}\delta_{ij}$. We first give the definition of volume spectrum for $(\mb R^{n+1};\mc G)$.
	\begin{definition}
		Given a sequence of compact domain $K_1\subset\cdots\subset K_j\subset\cdots$ exhausting $\mb R^{n+1}$, then for any positive integer $k$, we define
		\[ \omega_k(\mb R^{n+1};\mc G)=\lim_{j\rightarrow\infty}\omega_k(K_j;\mc G).\]
	\end{definition}
	We remark that such a definition does not depend on the choice of the sequences.
	
	The following definition of index is well-known:
	\begin{definition}\label{def:index of self-shrinkers}
		Let $\Sigma$ be an embedded self-shrinker in $\mb R^{n+1}$. We say that $\Sigma$ has {\em $\mathrm{Index}\geq k$} if there exists a $k$-dimensional subspace $W$ of $C^\infty(\Sigma)$ such that each nonzero $f\in W$ has compact support and 
		\[  \int_\Sigma (|\nabla f|^2-|A|^2f^2-\frac{1}{2}f^2)e^{-\frac{|x|^2}{4}}d\mc H^n<0.\]
		
		$\Sigma$ has index $k$ if and only if $\Sigma$ has $\Index\geq k$ but does not have $\Index\geq (k+1)$.
	\end{definition}
	
	Now we are going to prove the main theorem in this section.
	
	\begin{proof}[Proof of Theorem \ref{thm:intro:min-max in gaussian space}]
		In the following, $B_R(0)$ always denotes the ball in $\mb R^{n+1}$ with radius $R$ under the Euclidean metric. Let $\{R_j\}$ be a sequence of positive numbers with $R_j\rightarrow\infty$. Denote by $\Omega_j=B_{R_j}(0)$. For each $j$, we now take a perturbed metric $g_j$ on $\Omega_j$ so that 
		\begin{itemize}
			\item for any compact domain $\Omega\subset \mb R^{n+1}$, $g_j|_{\Omega}\rightarrow \mc G$ smoothly;
			\item under the metric $g_j$, $B_R(0)$ has mean concave boundary for each $3<R\leq R_j$;
			\item $g_j|_{\Omega_j}$ is strongly bumpy;
			\item $\omega_k(\Omega_j;g_j)\rightarrow \omega_k(\mb R^{n+1};\mc G)$.
		\end{itemize} 	
		The last item can be satisfied because $\omega_k(\Omega;g)$ depends continuously on the metric $g$ by Irie-Marques-Neves \cite{IMN17}*{Lemma 2.1}.  Hence without loss of generality,
		\[ \omega_k(\Omega_j;g_j)<\Lambda_k:=\omega(\mb R^{n+1};\mc G)+1.\]

		Since $g_j$ is strongly bumpy, by Theorem \ref{thm:multiplicity one for volume spectrum}, there exists a free boundary minimal hypersurface 
		\[(\Sigma_j,\partial \Sigma_j)\subset (\Omega_j,\partial\Omega_j;g_j)\]
		so that $\Area(\Sigma_j;g_j)=\omega_k(\Omega_j;g_j)$. By the compactness of minimal surfaces with bounded area and index (see \cite{Sharp17}), $\Sigma_j$ subsequently, locally smoothly converges to a smooth minimal hypersurface $\Sigma$ in $(\mb R^{n+1};\mc G)$ with multiplicity $m\geq 1$ away from a finite set $\mc W$. Then using the fact of non-existence of stable minimal hypersurfaces in $(\mb R^{n+1};\mc G)$, $m$ can only be $1$. Thus $\Sigma_j$ locally smoothly converges to $\Sigma$. We refer to \cite{CM12} for more details about this kind of convergence. Then by the Frankel property for self-shrinkers \cite{CCMS}*{Corollary C.4}, $\Sigma$ is connected. To finish the proof, we prove that no mass is lost in the convergence.
		\begin{claim}\label{claim:no mass loss}
			There exist constants $C,L>0$ depending only on $n$ so that for all $R>L$ and sufficiently large $j$,
			\[ \Area (\Sigma_j\setminus B_R(0);g_j)<C\Lambda_kR^{\frac{1}{2}-n}e^{-\frac{R^2}{4}(1-\frac{1}{2n})}.\]	
		\end{claim}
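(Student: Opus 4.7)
The plan is to derive the exponential decay from a weighted first variation identity on $\Sigma_j$, viewing $\Sigma_j$ as an approximate self-shrinker on compact sets and using the mean concavity of the Euclidean spheres $\partial B_t$ in $g_j$ to control the tail. Since $g_j \to \mc G$ smoothly on any fixed compact set, the conformal transformation formula for mean curvature combined with the minimality of $\Sigma_j$ in $g_j$ yields the approximate shrinker equation $H_{\mathrm{Eucl}} = \langle x,\nu\rangle/2 + O(\Vert g_j - \mc G\Vert_{C^1(B_L)})$ on any ball $B_L$.

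Next I would test the Euclidean first variation on $\Sigma_j$ against the radial vector field $X = e^{-\alpha|x|^2} x$ for $\alpha > 0$. This field is tangent to every Euclidean sphere, in particular to the free boundary $\partial \Omega_j = \partial B_{R_j}$, so the boundary contribution vanishes. Inserting the approximate shrinker equation produces the weighted identity
\begin{equation*}
\int_{\Sigma_j} e^{-\alpha|x|^2}\bigl[n - 2\alpha|x|^2 - (\tfrac{1}{2} - 2\alpha)\langle x,\nu\rangle^2\bigr]\, d\mu_{\mathrm{Eucl}} = O(\epsilon_j),
\end{equation*}
with error $\epsilon_j \to 0$ as $j \to \infty$, where the exponential weight guarantees integrability uniformly in $R_j$. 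For $\alpha > 1/4$ the coefficient of $\langle x,\nu\rangle^2$ has the right sign to drop, producing the ODE inequality $F'(\alpha) + (n/(2\alpha))F(\alpha) \leq O(\epsilon_j)$ for the moment function $F(\alpha) := \int_{\Sigma_j} e^{-\alpha|x|^2}\, d\mu_{\mathrm{Eucl}}$. Integrating from $\alpha = 1/4$, where $F(1/4) \leq (4\pi)^{n/2}\Lambda_k$ by the area hypothesis $\Area(\Sigma_j; g_j) \leq \Lambda_k$, yields $F(\alpha) \leq C\Lambda_k \alpha^{-n/2}$ for $\alpha > 1/4$. To obtain a bound at $\alpha^* = 1/(8n) < 1/4$, where the naive ODE argument leaves an uncontrolled constant, one inserts a radial cutoff into $X$ and exploits the mean concavity of $\partial B_t$ in $g_j$ for all $t \in (3, R_j)$: the resulting barrier principle absorbs the tail contribution $\int_{\Sigma_j \setminus B_L} e^{-\alpha^*|x|^2}$, so the bound propagates down to $\alpha^*$.

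Finally I would decompose $e^{-|x|^2/4} = e^{-\alpha^*|x|^2}\cdot e^{-(1/4 - \alpha^*)|x|^2}$, noting that $1/4 - \alpha^* = (2n-1)/(8n)$. Restricted to $\{|x| > R\}$, the second factor is at most $e^{-R^2(2n-1)/(8n)}$, which together with the bound on $F(\alpha^*)$ gives the exponential decay. The polynomial prefactor $R^{1/2 - n}$ is then recovered by a sharpening step: integrating the radial derivative of $F$ via coarea and using the Gaussian-tail asymptotic $\int_R^\infty e^{-\alpha t^2} t^{\beta}\, dt \sim c R^{\beta - 1} e^{-\alpha R^2}$ with $\beta = 3/2 - n$.

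The main obstacle is controlling the moment $F(\alpha^*)$ for $\alpha^* < 1/4$ uniformly in $j$ and $R_j$. The naive ODE argument only produces upper bounds on $F$ for $\alpha > 1/4$; extending the estimate below $1/4$ requires genuine use of the mean-concavity hypothesis on $g_j$, and this is precisely where the new monotonicity formula for minimal surfaces in almost-Gaussian spaces mentioned in the introduction enters in an essential way.
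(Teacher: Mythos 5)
Your proposal takes a genuinely different route from the paper's, but it has a concrete gap at the step you yourself flag as the main obstacle, and I do not think the route you sketch actually closes it.

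The paper does not pass through the Euclidean shrinker equation or moment functions at all. Instead it works directly with the Gaussian metric via the distance-to-infinity function $r(s)=\frac{1}{\sqrt{4\pi}}\int_s^\infty e^{-t^2/(4n)}\,dt$, proves a pointwise Hessian lower bound $\nabla^{(j)}\nabla^{(j)}(r^2/2) > (1-\tfrac{1}{4n})\, g_j$ on $\{|x|>L\}$, and then runs the usual monotonicity argument for the minimal hypersurface $\Sigma_j$ in $(\Omega_j,g_j)$ with the radial test field $\eta_u(r/\rho)\,\nabla^{(j)}(r^2/2)$ (cut off near $\partial B_{R_j}$). This gives directly that $I_j(\rho)\,\rho^{\frac{1}{2}-n}$ is monotone increasing in $\rho=r(R)$, and both the exponential factor $e^{-\frac{R^2}{4}(1-\frac{1}{2n})}$ and the polynomial prefactor $R^{\frac{1}{2}-n}$ drop out in one stroke from the asymptotics $r(R)\sim \frac{2n}{\sqrt{4\pi}}\,R^{-1}e^{-R^2/(4n)}$. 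Note that the mean concavity of the spheres $\partial B_t$ plays no role in the proof of this claim; it is used elsewhere to confine the min-max hypersurfaces, not to run a barrier argument here.

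In your scheme the moment ODE $nF(\alpha)+2\alpha F'(\alpha)\leq(1/2-2\alpha)\int_{\Sigma_j}e^{-\alpha|x|^2}\langle x,\nu\rangle^2+O(\epsilon_j)$ only gives upper bounds on $F(\alpha)$ in the range $\alpha>1/4$, where $1/2-2\alpha<0$. Everything you actually need lives at $\alpha^*=1/(8n)<1/4$, where the sign flips and the naive Gr\"onwall argument goes the wrong way. You invoke ``a radial cutoff and the mean concavity of $\partial B_t$'' to push the bound below $1/4$, but this is precisely the hard step and you give no mechanism for it; it is not a barrier/comparison argument, and nothing about mean concavity of large spheres produces a lower bound on $\langle x,\nu\rangle^2$-moments of $\Sigma_j$ that would let you reverse the sign. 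There is a second issue: your error term $O(\epsilon_j)$ quantifies $\|g_j-\mc G\|_{C^1}$, which is only controlled on \emph{fixed} compact sets, whereas the first-variation identity is integrated over all of $\Sigma_j\subset B_{R_j}$; for $\alpha<1/4$ the exponential weight no longer dominates the Gaussian weight near $|x|\sim R_j$, so you cannot simply absorb the error. Finally, your ``sharpening step'' for the $R^{1/2-n}$ prefactor—citing the asymptotic $\int_R^\infty e^{-\alpha t^2}t^{\beta}\,dt\sim cR^{\beta-1}e^{-\alpha R^2}$ with $\beta=3/2-n$—presupposes a slice bound $\mc H^{n-1}(\Sigma_j\cap\partial B_t)\lesssim t^{3/2-n}$ that is much stronger than what the moment bounds give and does not follow from them. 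In the paper's argument the polynomial prefactor is not obtained by a separate sharpening: it is already present in the monotonicity exponent $n-\tfrac12$, which in turn is the dimensional trace of the Hessian bound $1-\tfrac{1}{4n}$. That Hessian bound is the genuine geometric input, and your proposal does not reconstruct it.
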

		\begin{proof}[Proof of Claim \ref{claim:no mass loss}]
			For $s>0$, let $r(s)=\frac{1}{\sqrt {4\pi}}\int_s^{\infty}e^{-t^2/(4n)}dt$. Clearly,
			\begin{equation}\label{eq:growth of r}
				\lim_{s\rightarrow\infty }\sqrt {4\pi}\cdot s\cdot e^{s^2/(4n)}r(s)=2n. 
			\end{equation}
			Hence we can take $L>4n$ large enough so that for all $s>L$,
			\begin{equation}\label{eq:choice of L}
				2n+\frac{1}{10}> \sqrt{4\pi}\cdot s\cdot e^{s^2/8}r(s)>2n-\frac{1}{10}.
			\end{equation}
			For $x\in\mb R^3$, we define $r(x)=r(|x|)$. Then $r(x)$ is the distance to $\infty$ under $(\mb R^3;\mc G)$. Denote by $\nabla$ and $\nabla ^{(j)}$ the Levi-Civita connection associated with $\mc G$ and $g_j$, respectively. Then $|\nabla r|_\mc G=1$ and 
			\begin{equation}\label{eq:compute hessian}
				\mc G(\nabla_{e_1}\nabla r^2/2,e_2)=\mc G(e_1,\nabla r)\mc G(e_2,\nabla r)+r\mc G(\nabla_{e_1}\nabla r,e_2). 
			\end{equation}
			By direct computations, $\nabla_{\nabla r}\nabla r=0$, and for $e_1,e_2\in T_x(\partial B_{|x|}(0))$,
			\begin{equation*}  
				\mc G(\nabla_{e_1}\nabla r,e_2)=\sqrt {4\pi}\cdot e^{\frac{|x|^2}{4n}}\big(\frac{|x|}{2n}-\frac{1}{|x|}\big)\mc G(e_1,e_2),
			\end{equation*}	
			which implies that for $|x|>L>4n$ and $e\in T_x(\partial B_{|x|}(0))$, 
			\[ \mc G(\nabla_{e}\nabla r^2/2,e)\geq\sqrt{4\pi}\cdot  r\cdot e^{|x|^2/8}\cdot \frac{|x|}{2n}(1-\frac{2n}{|x|^2}) \cdot \mc G(e,e)>(1-\frac{1}{4n})\mc G(e,e). \]
			Here we used \eqref{eq:choice of L} in the last inequality.
			Together with \eqref{eq:compute hessian}, we have that for $|x|>L$ and any $e\in T_x\mb R^3$,
			\[  \mc G(\nabla_e\nabla r^2/2,e)>(1-\frac{1}{4n})\mc G(e,e). \]

			Then can assume that for any $e\in T_x\mb R^3$ and $|x|>L$ (by taking $g_j$ close to $\mc G$),
			\begin{equation}\label{eq:hessian bound}
				1+1/j>g_j(\nabla r,\nabla r)> 1-1/j \ \text{ and } \
				g_j(\nabla _{e}^{(j)}\nabla^{(j)}r^2/2,e)>(1-\frac{1}{4n})\cdot g_j(e,e).
			\end{equation}

			Given $0<t<u<1$, we define two sequences of cut-off functions $\phi_{t},\eta_{u}:\mb R\rightarrow\mb R$ so that 
			\begin{gather*}
				\eta_u'\leq 0; \  \ \eta_u(x)=1 \text{ for } x\leq u; \  \ \eta_u(x)=0 \text{ for } x\geq 1;\\
				\phi_t'\geq 0;\ \  \phi_t(x)=0 \text{ for } x\leq t/2;\ \  \phi_t(x)=1 \text{ for } x\geq t.
			\end{gather*} 
			Since $\Sigma_j$ is a minimal hypersurface in $(\Omega_j;g_j)$, then by the divergence theorem, for any $\rho\in( r(R_j),r(L))$,
			\begin{align*}
				0&=\int_{\Sigma_j}\dv^{(j)}_{\Sigma_j}\big[\phi_t(r(x)-r(R_j))\cdot \eta_u(r/\rho)\cdot \nabla^{(j)}r^2/2 \big]\,d\mu_{g_j}\\
				&\geq \int_{\Sigma_j}\phi_t\cdot \eta'_u(r/\rho)\cdot\frac{1}{\rho}\cdot g_j(\nabla^{(j)} r,\nabla^{(j)}r^2/2)+\phi_t\cdot \eta_u(r/\rho)\cdot \dv^{(j)}_{\Sigma_j}(\nabla^{(j)}r^2/2)\, d\mu_{g_j}.\nonumber\\
				&\geq \int_{\Sigma_j}\phi_t \cdot \eta_u'(r/\rho)\cdot \frac{1}{\rho}\cdot (1+1/j)r+\phi_t\cdot \eta_u(r/\rho)\cdot (n-\frac{1}{4})\, d\mu_{g_j}.\nonumber
			\end{align*}
			Here \eqref{eq:hessian bound} is used in the last inequality. Letting $t\rightarrow 0$, we have 
			\begin{align*}
				0\geq\int_{\Sigma_j}-\rho\frac{d}{d\rho}\eta_u(r/\rho)+ \eta_u(r/\rho)\cdot (n-\frac{1}{2})\, d\mu_{g_j}.
			\end{align*}
			Therefore, we conclude that for $\rho \in [r(R_j),r(L)]$ and sufficiently large $j$,
			\[ (n-\frac{1}{2})\cdot I_j(\rho)\leq\rho\frac{d}{d\rho}I_j(\rho), \]
			where $I_j(\rho)=\int_{\Sigma_j}\eta_u(r/\rho)$. Such an inequality implies that $I_j(\rho)\rho^{\frac{1}{2}-n}$ is monotone increasing. Thus, for $\rho\in [r(R_j),r(L)]$,
			\begin{equation}\label{eq:estiamte of I}
				I_j(\rho)\leq I_j(r(L))(r(L))^{\frac{1}{2}-n}\rho^{n-\frac{1}{2}} .
			\end{equation}
			Now letting $u\rightarrow 1$ in $\eta_u$, then 
			\[ I_j(r(R))\rightarrow \Area( \{x\in\Sigma_j:r(|x|)<r(R)\};g_j)=\Area(\Sigma_j\setminus B_{R}(0);g_j).\]
			Together with \eqref{eq:growth of r}, the inequality in \eqref{eq:estiamte of I} becomes 
			\[ \Area(\Sigma_j\setminus B_R;g_j)\leq \Lambda_k\cdot  (r(L))^{\frac{1}{2}-n} R^{\frac{1}{2}-n}e^{-\frac{R^2}{4}(1-\frac{1}{2n})} \]
			for all $R\in (L,R_j)$. The proof of Claim \ref{claim:no mass loss} is finished by taking $C=(r(L))^{\frac{1}{2}-n}$.
		\end{proof}%end of claim
		
		To proceed the proof of theorem, it follows from Claim \ref{claim:no mass loss} that 
		\[\Area(\Sigma;\mc G)=\lim_{j\rightarrow\infty}\Area(\Sigma_j;g_j)=\lim_{j\rightarrow\infty }\omega_k(\Omega_j;g_j)=\omega_k(\mb R^3;\mc G).\]
		We conclude that Theorem \ref{thm:intro:min-max in gaussian space} is finished.
	\end{proof}

	\subsection{Index estimates}
	In order to prove Corollary \ref{cor:intro:large entropy and finite index}, we provide some equivalent conditions for a self-shrinker to have finite index.
	
	Let $\Sigma$ be an embedded self-shrinker in $\mb R^{n+1}$. Throughout, $L_\Sigma=\Delta_\Sigma
	+|A|^2+\frac{1}{2}-\frac{x}{2}\cdot \nabla$ will be the Jacobi operator from the second variation formula. Such an operator is associated with a bi-linear form:
	\[  \mathfrak B(u,v)=\int_\Sigma(\langle\nabla  u,\nabla v\rangle -|A|^2uv-\frac{1}{2}uv )e^{-\frac{|x|^2}{4}}.\]
	Then bottom of the spectrum $\mu_1$ of $\Sigma$ is defined as
	\[   \mu_1:=\inf_{f}\frac{\mathfrak B(f,f)}{\int_\Sigma f^2e^{-\frac{|x|^2}{4}}}, \]
	where the infimum is taken over smooth functions $f$ with compact support. Since $\Sigma$ may be noncompact, we allow the possibility that $\mu_1 = -\infty$.
	
	In the following we will focus on the index of a self-shrinker of the form $\Sigma\times\R$, where $\Sigma$ is a lower dimensional self-shrinker. We will always use $x$ to denote a point on $\Sigma$ and use $y$ to denote a point in $\R$. We will also use $L_\Sigma$ and $L_{\Sigma\times\R}$ to denote the second variational operator on $\Sigma$ and $\Sigma\times\R$ respectively. Note that
	\begin{equation}
		L_{\Sigma\times\R} =L_{\Sigma} +\partial_y^2 -\frac{1}{2}y\partial_y.
	\end{equation}
	
	The main theorem in this section is the following index estimate, which would help us to obtain the finiteness of index in Corollary \ref{cor:intro:large entropy and finite index}.
	
	\begin{theorem}\label{thm:shrinker finite index}
		Supposing $\Sigma$ is a self-shrinker with finite index, then $\Sigma\times\R$ is a self-shrinker with finite index.
	\end{theorem}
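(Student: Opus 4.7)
The plan is to exploit separation of variables in the $\mathbb{R}$-factor. Because the second fundamental form, normal direction, and the $|x|$-coordinate all depend only on $x\in\Sigma$, the Jacobi operator splits as
\begin{equation*}
L_{\Sigma\times\mathbb{R}} \;=\; L_\Sigma \,+\, \bigl(\partial_y^2 - \tfrac{1}{2}y\,\partial_y\bigr),
\end{equation*}
and the second summand is the Ornstein--Uhlenbeck operator on $L^2(\mathbb{R}, e^{-y^2/4}\,dy)$. After the rescaling $y=\sqrt{2}\,\tilde{y}$ it becomes a multiple of the standard Hermite operator, so it has pure point spectrum $\{-k/2\}_{k\ge 0}$ and a Hermite-polynomial eigenbasis $\{H_k(y/\sqrt{2})\}_{k\ge 0}$ which is orthonormal in $L^2(\mathbb{R},e^{-y^2/4}dy)$.

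The first key step is a Hermite-expansion identity for $\mathfrak{B}$. Given $v\in C^\infty_c(\Sigma\times\mathbb{R})$, write $v(x,y)=\sum_{k\ge 0} v_k(x)\,H_k(y/\sqrt{2})$ with $v_k\in C^\infty_c(\Sigma)$. Using Parseval in $y$ together with the identity $\int_{\mathbb{R}}(H_k')^2\,e^{-y^2/4} = (k/2)\int_{\mathbb{R}}H_k^2\,e^{-y^2/4}$ (a one-line integration by parts against the Hermite equation), one obtains
\begin{equation*}
\mathfrak{B}_{\Sigma\times\mathbb{R}}(v,v)\;=\;\sum_{k\ge 0}\Bigl[\,\mathfrak{B}_\Sigma(v_k,v_k)\,+\,\tfrac{k}{2}\,\|v_k\|^2_{L^2(\Sigma, e^{-|x|^2/4})}\Bigr].
\end{equation*}
Thus counting negative modes on the product reduces to a family of spectral problems on $\Sigma$ shifted by increasing non-negative constants.

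The second step invokes the finite index hypothesis for $\Sigma$ to produce, via Friedrichs extension and min--max for lower-bounded quadratic forms, a subspace $E\subset L^2(\Sigma, e^{-|x|^2/4})$ of dimension $N:=\mathrm{index}(\Sigma)$ together with a finite constant $\mu_1<0$ such that $\mathfrak{B}_\Sigma(f,f)\ge \mu_1\|f\|^2$ for all admissible $f$ and $\mathfrak{B}_\Sigma(f,f)\ge 0$ whenever $f\in E^\perp\cap C^\infty_c(\Sigma)$. Making this spectral decomposition rigorous on the non-compact $\Sigma$ is the main delicate point, but it is standard given finite index. Setting $K:=\lceil 2|\mu_1|\rceil$, for any finite-dimensional subspace $V\subset C^\infty_c(\Sigma\times\mathbb{R})$ on which $\mathfrak{B}_{\Sigma\times\mathbb{R}}$ is negative definite I would consider the linear map
\begin{equation*}
\Phi:V\longrightarrow E\otimes\mathrm{span}\{H_0,\ldots,H_{K-1}\},\qquad \Phi(v)=\sum_{k<K}P_E(v_k)\otimes H_k(y/\sqrt{2}),
\end{equation*}
where $P_E$ is $L^2$-projection onto $E$. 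If $\Phi(v)=0$ then in the Hermite identity every summand is non-negative: for $k<K$ because $v_k\in E^\perp$, and for $k\ge K$ because $\mathfrak{B}_\Sigma(v_k,v_k)+\tfrac{k}{2}\|v_k\|^2\ge (\mu_1+k/2)\|v_k\|^2\ge 0$. Hence $\mathfrak{B}_{\Sigma\times\mathbb{R}}(v,v)\ge 0$, which forces $v=0$ by negative definiteness on $V$. Therefore $\Phi$ is injective, $\dim V\le NK$, and taking the supremum over such $V$ gives the explicit bound $\mathrm{index}(\Sigma\times\mathbb{R})\le N\lceil 2|\mu_1|\rceil <\infty$.
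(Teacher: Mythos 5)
Your treatment of the product factor is fine and is a genuine variant of the paper's argument: the paper exhausts $\Sigma\times\R$ by compact domains $\Omega\times[-T,T]$, uses that the Dirichlet eigenvalues of $L_{\Sigma\times\R}$ there are the sums $\mu_i(\Omega)+\nu_j^T$ (Proposition \ref{prop:eigen-product}), and that $\nu_j^T$ converges to the half-integer Ornstein--Uhlenbeck spectrum, getting the uniform bound $k(1-2\mu_1)$; your Hermite expansion plus the projection map $\Phi$ gives essentially the same count, $N\lceil 2|\mu_1|\rceil$, working directly with compactly supported test functions. Both routes, however, rest on the same two inputs: the finite index of $\Sigma$ and, crucially, the finiteness of $\mu_1(\Sigma)$.

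The genuine gap is in your second step. You assert the lower bound $\mathfrak{B}_\Sigma(f,f)\geq\mu_1\|f\|^2$ with $\mu_1>-\infty$, together with the existence of the $N$-dimensional space $E$, ``via Friedrichs extension and min--max for lower-bounded quadratic forms,'' but those tools \emph{presuppose} that the form is bounded below; they cannot produce that bound. On a noncompact shrinker $|A|^2$ need not be bounded, and the implication ``finite index $\Rightarrow$ $\mu_1>-\infty$'' is exactly the nontrivial analytic half of the theorem: it is Part I of the paper's proof. There one argues by contradiction: since the index is realized by a finite-dimensional space $W$ of compactly supported functions, all supports lie in some ball $B_R$; if $\mu_1=-\infty$, choose $f$ with $\mathfrak{B}(f,f)<-4\kappa\int_\Sigma f^2e^{-|x|^2/4}$, where $\kappa$ is determined by $\sup_{\Sigma\cap B_{2R}}|A|$, and multiply $f$ by a cutoff $\phi$ vanishing on $B_R$ and equal to $1$ outside $B_{2R}$. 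Using the shrinker identity $\mathrm{div}_\Sigma(x^\top)=n-2H^2$ to control $\Delta\phi$ and absorbing the error terms, one checks $\mathfrak{B}(\phi f,\phi f)<0$; since $\phi f$ has support disjoint from $W$, this yields a $(k+1)$-dimensional negative subspace, contradicting finite index. Once this lower bound is in hand, your space $E$ and the inequality $\mathfrak{B}_\Sigma\geq 0$ on $E^\perp\cap C_c^\infty(\Sigma)$ do follow by standard spectral theory (the Friedrichs extension exists and finite index forces the essential spectrum to be nonnegative), and the rest of your counting goes through. As written, though, the justification of the key step is circular, so you must supply an argument of the above cutoff type for $\mu_1>-\infty$.
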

	
	We need the following correspondence between the eigenfunctions and eigenvalues on $\Sigma$ and $\Sigma\times\R$ respectively. In the following the eigenvalues and eigenfunctions are under the Dirichlet boundary conditions.

	Let $\Omega\subset \Sigma$ be a bound domain with smooth boundary. Denote by $\mc W^{1,2}_0(\Omega)$ the closure of 
	\[  \{f\in C^{\infty}(\Sigma):\spt f\subset \mathrm{Int}\ \Omega\}\]
	in the topology of $\mc W^{1,2}(\Sigma)$. Then the self-adjoint operator $L_\Sigma$ has discrete Dirichlet eigenvalues
	\[ \mu_1(\Omega)<\mu_2(\Omega)\leq \mu_3(\Omega)\leq \cdots\rightarrow +\infty,\]
	and associated eigenfunctions $\{f_j\}\subset \mc W^{1,2}_0(\Omega)$. Moreover, $\{f_j\}$ forms a complete basis of the weighted $L^2(\Omega)$; see \cite{Str08}*{\S 11.3 Theorem 2}.
	
	\begin{proposition}\label{prop:eigen-product}
		Suppose $\Omega\subset\Sigma$ is a bounded compact subset with smooth boundary. Suppose $\mu_1(\Omega)<\mu_2(\Omega)\leq\cdots$ are eigenvalues of $L_\Sigma$ on $\Omega$, with associated eigenfunctions $\{\phi_i\}_{i=1}^\infty$; suppose $\nu_1^T<\nu_2^T\leq\cdots$ are eigenvalues of $\partial_y^2-\frac{1}{2}y\partial_y$ on $[-T,T]\subset\R$, with associated eigenfunctions $\{\psi_j\}_{j=1}^\infty$. Then the eigenvalues of $L_{\Sigma\times\R}$ on $\Omega\times[-T,T]$ are $\{\mu_i(\Omega)+\nu_j^T\}_{i,j=1}^{\infty}$, with associated eigenfunctions $\phi_i\psi_j$.
	\end{proposition}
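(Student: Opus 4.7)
The plan is to prove this by separation of variables. The key observation is that $L_{\Sigma\times\R}=L_\Sigma+(\partial_y^2-\tfrac{1}{2}y\partial_y)$ is a sum of two operators acting on disjoint variables, and that the weighted measure on $\Sigma\times\R$ factors as $e^{-|x|^2/4}\otimes e^{-y^2/4}$.

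First I would verify by direct computation that each product $\phi_i(x)\psi_j(y)$ is an eigenfunction of $L_{\Sigma\times\R}$ with eigenvalue $\mu_i(\Omega)+\nu_j^T$. Since $L_\Sigma$ acts only on $x$ and $\partial_y^2-\tfrac{1}{2}y\partial_y$ only on $y$, we have
\begin{equation*}
L_{\Sigma\times\R}(\phi_i\psi_j)=\psi_j\,L_\Sigma\phi_i+\phi_i(\partial_y^2-\tfrac{1}{2}y\partial_y)\psi_j=(\mu_i(\Omega)+\nu_j^T)\phi_i\psi_j
\end{equation*}
(adopting the sign convention of the proposition). Moreover $\phi_i\psi_j$ vanishes on the boundary $\partial(\Omega\times[-T,T])=(\partial\Omega\times[-T,T])\cup(\Omega\times\{\pm T\})$ because $\phi_i\in\mathcal{W}_0^{1,2}(\Omega)$ and $\psi_j$ satisfies Dirichlet conditions at $\pm T$. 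Thus each $\mu_i(\Omega)+\nu_j^T$ is a Dirichlet eigenvalue.

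Next, I would show that these exhaust the spectrum. Both operators $L_\Sigma$ on $\Omega$ (with weight $e^{-|x|^2/4}$) and $\partial_y^2-\tfrac{1}{2}y\partial_y$ on $[-T,T]$ (with weight $e^{-y^2/4}$) are self-adjoint with compact resolvent, so their normalized eigenfunctions $\{\phi_i\}$ and $\{\psi_j\}$ form complete orthonormal bases of the respective weighted $L^2$ spaces (this is the standard spectral theorem cited in \cite{Str08}). By Fubini, $\{\phi_i(x)\psi_j(y)\}_{i,j}$ is a complete orthonormal basis of $L^2(\Omega\times[-T,T];e^{-(|x|^2+y^2)/4}d\mathcal H^n d y)$, which is precisely the weighted $L^2$ space on which $L_{\Sigma\times\R}$ is self-adjoint. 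Since $L_{\Sigma\times\R}$ also has discrete spectrum accumulating only at $+\infty$, and we have produced a complete family of eigenfunctions, no other eigenvalue can exist: any additional eigenfunction would have to be orthogonal to every $\phi_i\psi_j$, hence zero.

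The only mild subtlety is ensuring the self-adjointness of the $y$-operator in the correct weighted space and verifying that the product basis is actually complete in $L^2$ of the product; both are standard consequences of the tensor product structure of Hilbert spaces, so I do not anticipate any genuine obstacle. With this proposition in hand, Theorem \ref{thm:shrinker finite index} will follow since finite index for $\Sigma$ means only finitely many $\mu_i$ are negative, and combined with the fact that $\nu_j^T$ are bounded below uniformly in $T$ (indeed $\partial_y^2-\tfrac{1}{2}y\partial_y$ on all of $\R$ has nonnegative spectrum, generated by Hermite-type eigenfunctions), only finitely many sums $\mu_i+\nu_j^T$ can be negative.
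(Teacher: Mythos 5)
Your proposal is correct and follows essentially the same route as the paper's proof: verify by direct computation that each $\phi_i\psi_j$ is an eigenfunction with eigenvalue $\mu_i(\Omega)+\nu_j^T$, then use completeness of $\{\phi_i\}$ and $\{\psi_j\}$ in the respective weighted $L^2$ spaces to conclude that the products form a complete basis, so no other eigenvalues can occur. The extra remarks on self-adjointness and orthogonality only spell out details the paper leaves implicit.
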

	
	\begin{proof}
		This theorem is just the classical result on the spectrum of the elliptic operators on product manifolds. Here we sketch the proof for completeness. Direct calculation shows that $\phi_i\psi_j$ are eigenfunctions of $L_{\Sigma\times\R}$, with eigenvalue $\mu_i(\Omega)+\nu_j^T$. 
		
		Recall that $\{\phi_i\}_{i=1}^\infty$ is a basis in the weighted $L^2(\Omega)$ sense. Similarly, $\{\psi_i\}_{i=1}^\infty$ is a basis in the weighted $L^2([-T,T])$ sense. Therefore, $\{\phi_i\psi_j\}_{i,j=1}^{\infty}$ is a basis in the weighted $L^2(\Omega\times[-T,T])$ sense. This shows that $\{\phi_i\psi_j\}_{i,j=1}^\infty$ are all possible eigenfunctions up to rescaling.
	\end{proof}
	
	\begin{proof}[Proof of Theorem \ref{thm:shrinker finite index}]
		The proof is divided into two parts.
		
		\medskip
		{\noindent\bf Part I:} {\em We first assume that $\Sigma$ has index $k<\infty$ and we are going to prove that $\mu_1(\Sigma)>-\infty$.} 
		
		By Definition \ref{def:index of self-shrinkers}, there exists a $k$-dimensional subspace $W\subset C^\infty(\Sigma)$ so that each non-zero $f\in W$ has compact support and 
		\[  \mathfrak B(f,f)<0.\]
		Without loss of generality, we may assume that $\spt f \subset B_R(0)\cap \Sigma$ for all $f\in W$. Denote by $\kappa_0=\sup_{\Sigma\cap B_{2R}(0)}|A(x)|+1$. 
		
		We pause to take a cut-off function $\eta: \mb R\rightarrow\mb [0,1]$ and a universal constant $C_0>1$ so that 
		\[ C_0> \eta'(t)\geq 0 \text{ and } |\eta''(t)|<C_0 \text{ for } t\in \mb R;\ \ \ \eta(t)=0 \text{ for } t\leq 1;\ \ \  \eta(t)=1 \text{ for } t\geq 2.\]
		
		Let $\kappa = 2nC_0\kappa^2_0$. We now assume on the contrary that $\mu_1(\Sigma)=-\infty$. Then there exist $f\in C^\infty(\Sigma)$ has compact support and satisfies
		\begin{equation}\label{eq:bad f}
			\int_{\Sigma} (|\nabla f|^2-|A|^2f^2-\frac{1}{2}f^2)e^{-\frac{|x|^2}{4}}<-4\kappa\int_\Sigma f^2e^{-\frac{|x|^2}{4}}.
		\end{equation}
		Define $\phi(x)=\eta(|x|/R)$. Then $\nabla \phi=\frac{1}{R}\cdot \eta'(|x|)\cdot \frac{x^\top}{|x|}$ and 
		\begin{align}\label{eq:compute of Delta phi}
			\Delta \phi &=\dv_\Sigma \big[\eta'(|x|)\cdot \frac{x^\top}{|x|}\cdot \frac{1}{R}\big]\\
			&=\eta''(|x|)\frac{|x^\top|^2}{|x|^2}\cdot\frac{1}{R^2}-\eta'(|x|/R)\frac{|x^\top|^2}{|x|^3}\cdot\frac{1}{R}+\frac{1}{R}(n-2H^2)\eta'(|x|)/|x|\nonumber\\
			&\geq -C_0-C_0/|x|-nC_0\kappa_0^2/|x|\geq -\kappa.\nonumber
		\end{align}
		Here $(\cdot)^\top$ is the projection to $T_x\Sigma$. The second equality used the fact that $\dv_\Sigma (x^\top)=n-2H^2$. The first inequality follows from $|H|^2\leq n|A|^2\leq n\kappa_0^2$ for $|x|\leq 2R$. For $|x|>2R$, $\Delta \phi=0$.
		
		Then by a direct computation, we have
		\begin{align}\label{eq:compute of nabla phi f}
			\int_\Sigma|\nabla (\phi f)|^2e^{-\frac{|x|^2}{4}}&=\int_\Sigma(|\nabla f|^2\phi^2+|\nabla \phi|^2f^2+\frac{1}{2}\langle \nabla \phi^2,\nabla f^2\rangle)e^{-\frac{|x|^2}{4}}\\
			&=\int_{\Sigma}(|\nabla f|^2+|\nabla \phi|^2f^2-\frac{1}{2}f^2\Delta \phi^2+\frac{1}{4}\langle x,\nabla \phi^2\rangle)e^{-\frac{|x|^2}{4}}\nonumber\\
			&\leq \int_\Sigma \Big[|\nabla f|^2+\kappa f^2+\frac{1}{2R}\cdot \phi\eta'(|x|)\cdot \frac{|x^\top|^2}{|x|}\Big]e^{-\frac{|x|^2}{4}}\nonumber\\
			&\leq \int_\Sigma (|\nabla f|^2+2\kappa f^2)e^{-\frac{|x|^2}{4}}.\nonumber
		\end{align}
		Here the first inequality is from the divergence theorem and $\eta'\leq C_0$. \eqref{eq:compute of Delta phi} is used in the second inequality. In the last inequality, we note that $\eta'=0$ for $|x|\geq 2R$.
		
		Now we prove that $\phi f$ has contribution for index. Indeed,
		\begin{align*}
			\mathfrak B(\phi f,\phi f)&=\int_{\Sigma} (|\nabla (\phi f)|^2-|A|^2f^2\phi^2-\frac{1}{2}f^2\phi^2)e^{-\frac{|x|^2}{4}}\\
			&\leq \int_\Sigma (|\nabla f|^2+2\kappa f^2-|A|^2f^2)e^{-\frac{|x|^2}{4}}+\int_{\Sigma\cap B_{2R}(0)} |A|^2f^2e^{-\frac{|x|^2}{4}}\\
			&\leq \int_\Sigma (-2\kappa+1) f^2e^{-\frac{|x|^2}{4}}+\int_{\Sigma\cap B_{2R}(0)} \kappa_0^2f^2e^{-\frac{|x|^2}{4}}\\
			&\leq -\kappa \int_\Sigma f^2\phi^2e^{-\frac{|x|^2}{4}}.
		\end{align*} 
		Here we used \eqref{eq:compute of nabla phi f} in the first inequality. The second one is from \eqref{eq:bad f} and the definition of $\kappa_0$.
		
		Note that $\phi f$ and $\psi\in W$ have disjoint support set. This implies that $\Sigma$ has $\Index\geq k+1$, which lead to a contradiction. This finishes Part I.
		
		\medskip
		{\noindent\bf Part II:} {\em We now prove $\Sigma\times\R$ has finite index.} 
		
		It suffices to prove that for any compact domain $\Omega\times[-T,T]\subset\Sigma\times\R$, the Dirichlet eigenproblem of $L_{\Sigma\times\R}$ has index bounded from above uniformly. Recall that $\mu_1$ is the first eigenvalue of $L_\Sigma$. Suppose $\mu_1(\Omega)<\mu_2(\Omega)\leq\cdots$ are eigenvalues of $L_\Sigma$ on $\Omega$, which correspond to the eigenfunctions $\{\phi_i\}_{i=1}^\infty$; suppose $\nu_1^T<\nu_2^T\leq\cdots$ are Dirichlet eigenvalues of $\partial_y^2-\frac{1}{2}y\partial_y$ ($=:L_0$) on $[-T,T]\subset\R$, which correspond to the eigenfunctions $\{\psi_j\}_{j=1}^\infty$. Recall that the $\nu_j^T$ converges to the eigenvalues of $L_0$ on $\mb R$ as $T\rightarrow \infty$; see \cite{CM2020}*{Lemma 6.1}.
		
		We first note that by definition, $\mu_1\leq \mu_1(\Omega)$ for any compact domain $\Omega$. Secondly, note that the eigenvalues of $L_0$ on $\mb R$ are given by non-negative half-integers with multiplicity one, i.e. $\nu^T_j\rightarrow \frac{j-1}{2}$ as $T\rightarrow \infty$. Thus we can take $T_0>0$ so that \begin{equation}\label{eq:nuj large}
			\text{$\nu_j^T>-\mu_1$ for all $j>1-2\mu_1$ and $T>T_0$.}
		\end{equation}

		To proceed the proof, we take $T>T_0$. By Proposition \ref{prop:eigen-product}, the Dirichlet eigenvalues of $L_{\Sigma\times\R}$ on $\Omega\times[-T,T]$ are $\{\mu_i(\Omega)+\nu_j^T\}_{i,j=1}^\infty$, which correspond to the eigenfunctions $\phi_i\psi_j$.
		
		Since $\Sigma$ has finite index $k$, the index of $\Omega$ is bounded from above by $k$ uniformly. From Part I, $\mu_1\neq -\infty$. This implies that, $\mu_i(\Omega)+\nu_j^T$ must be non-negative if $j>1-2\mu_1$ . So the index of $\Omega\times[-T,T]$ is bounded from above by an uniform constant $k(1-2\mu_1)$. This completes the proof.
	\end{proof}

	\appendix
	\section{Removing singularity for weakly stable free boundary $h$-hypersurfaces}\label{app:sec:removable singularity}
	\begin{theorem}[cf. \cite{ACS17}*{Theorem 27}]\label{thm:removable touching set}
		Let $(M^{n+1},\partial M,g)$ be a compact Riemannian manifold with boundary of dimension $3\leq (n + 1)\leq 7$. Given $h\in C^\infty(M)$ and $\Sigma\subset B_\epsilon (p)\setminus\{p\}$ an almost embedded free boundary $h$-hypersurface with $\partial\Sigma\cap \mathrm{Int}M\cap B_\epsilon(p)\setminus\{p\} = \emptyset$, assume that $\Sigma$ is weakly stable in $B_\epsilon(p)\setminus\{p\}$ as in Remark \ref{rmk:weak index}. If $[\Sigma]$ represents a varifold of bounded first variation in $B_\epsilon(p)$, then $\Sigma$ extends smoothly across $p$ as an almost embedded hypersurface in $B_\epsilon (p)$.
	\end{theorem}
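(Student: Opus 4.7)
The plan is to extend the removable singularity theorem of Ambrozio--Carlotto--Sharp \cite{ACS17}*{Theorem 27} for weakly stable free boundary minimal hypersurfaces to the $h$-hypersurface setting, exploiting that the prescribed mean curvature term becomes negligible under blow-up at $p$.

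First I would establish curvature decay. Consider the rescalings $\wti\Si_r := r^{-1}(\Si - p)$ in the rescaled metric $g_r(y) := g(p + ry)$. These are weakly stable free boundary $h_r$-hypersurfaces for $h_r(y) := r \cdot h(p + ry)$, with $\|h_r\|_{C^3} \to 0$ as $r \to 0$. The hypothesis that $[\Si]$ has bounded first variation on $B_\epsilon(p)$ gives, via the monotonicity formula, uniform area bounds for $\wti\Si_r$ on fixed annuli. Theorem~\ref{thm:curvature estimates} then produces a uniform bound on $|A|$ for $\wti\Si_r$ on any compact annulus away from the origin, which after rescaling reads $|A_\Si|(x)\, \dist(x,p) \le C$ for $x \in \Si$ near $p$.

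Next I would extract and classify tangent cones and promote flatness to regularity. By Allard compactness and the curvature decay, blow-up limits $\wti\Si_{r_i} \to C$ exist as integral varifolds; since $h_{r_i} \to 0$ and the convergence is smooth and locally graphical away from the vertex, $C$ is an almost embedded, weakly stable, minimal cone with free boundary, whose possible forms are enumerated in Lemma~\ref{lem:classification of tangent cones}. In the dimension range $n \le 6$, the classical stable minimal cone rigidity (Schoen--Simon in the interior, and its free-boundary version via reflection across $T_p \partial M$, as used in \cite{ACS17}) forces $C$ to be a multiplicity-one plane in the interior case, or a multiplicity-one half-plane meeting $T_p \partial M$ orthogonally in the boundary case. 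Once we have a flat multiplicity-one tangent cone, Allard's regularity theorem (or its boundary version adapted to the Fermi half-balls of \cite{LZ16}) shows $\Si \cup \{p\}$ is a $C^{1,\alpha}$ graph near $p$. The quasilinear PMC equation $H = h$ with the oblique orthogonality condition along $\partial M$ then bootstraps to smoothness by Schauder theory, extending $\Si$ smoothly across $p$ as an almost embedded free boundary $h$-hypersurface.

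The main obstacle is the boundary case $p \in \partial M$, where Lemma~\ref{lem:classification of tangent cones} also permits the tangential ``touching'' cone $\Theta \cdot |T_p \partial M|$ (with $\Si$ approaching $\partial M$ from inside) and the double half-plane $2 \cdot |S \cap T_p M|$. Following \cite{ACS17}, I would rule out these higher-multiplicity cones by a logarithmic cut-off stability argument applied to the rescalings $\wti\Si_r$, using that $\|h_r\|_{C^3}$ is uniformly small to reduce the cut-off $h$-stability inequality to the minimal one in the limit; the strong maximum principle for the linearized PMC equation with orthogonal boundary condition then shows a tangential contact is incompatible with the assumption $\partial \Si \cap \mathrm{Int}\,M \cap (B_\epsilon(p) \setminus \{p\}) = \emptyset$. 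After eliminating these cones only the transverse multiplicity-one half-plane remains and boundary Allard applies. Throughout, since the $h$-dependent terms in the monotonicity, stability, and Allard-type estimates are uniformly small on blow-up, no essentially new analytic input beyond the compactness and curvature estimates of Section~\ref{sec:pre} is required.
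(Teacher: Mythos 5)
There is a genuine gap, and it lies exactly where your argument departs from the structure of the statement. You claim that Schoen--Simon rigidity (after reflection) forces the tangent cone to be a \emph{multiplicity-one} plane or half-plane, and you then propose to ``rule out'' the tangential cone $\Theta\cdot|T_p(\partial M)|$ and the double half-plane by a logarithmic cut-off plus a strong maximum principle. Neither step is available. Weak stability together with the curvature estimates only forces the \emph{support} of the blow-up limit to be flat; the multiplicity $m=\Theta^n(\Sigma,p)$ can perfectly well be $\geq 2$, since $\Sigma$ is only almost embedded and its sheets may come together at $p$ or touch $\partial M$ there. This is precisely why the theorem concludes that $\Sigma$ extends as an \emph{almost embedded} hypersurface rather than an embedded one, so these cones must be handled, not excluded. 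The maximum-principle exclusion of tangential contact with $\partial M$ also fails: there is no sign or barrier hypothesis relating $h$ to $H_{\partial M}$, and interior touching of almost embedded $h$-hypersurfaces with $\partial M$ is a genuine phenomenon in this paper (the entire touching-set analysis of Section \ref{subsec:estimates of touching sets} exists because of it); the hypothesis $\partial\Sigma\cap \mathrm{Int}M\cap B_\epsilon(p)\setminus\{p\}=\emptyset$ only says $\Sigma$ has no boundary in the interior, not that it avoids $\partial M$. A minor additional point: Lemma \ref{lem:classification of tangent cones} is stated for varifolds that are $h$-almost minimizing in small annuli, which is not among the hypotheses here; the cone structure must instead be extracted from the locally smooth graphical convergence furnished by weak stability (Theorem \ref{thm:compactness for FPMC}) together with reflection across $T_p(\partial M)$.

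The paper's argument accepts the two cone types and treats them directly. Blowing up and using bounded first variation, one gets a stationary cone $C$ with free boundary; weak stability makes the convergence smooth away from the vertex, and reflection shows $C$ is a plane with integer multiplicity, hence either $C=m\cdot T_p(\partial M)$ or $C=2m\cdot P$ with $P\perp T_p(\partial M)$. In the tangential case the interior-type removable singularity argument of \cite{Zhou19}*{Theorem B.1} applies. In the half-plane case one uses the smooth graphical convergence to decompose $\Sigma$ on annuli $A_{\sigma/2,\sigma}(p)$ into $m$ ordered sheets, continues each sheet to the punctured ball, observes each sheet has density $1/2$ at $p$, and applies the Allard-type regularity for free boundary varifolds with bounded mean curvature \cite{GJ86}*{Theorem 4.13} to each sheet separately; elliptic regularity for the prescribed mean curvature equation with the orthogonality condition then gives smoothness, and the union of the extended sheets is the desired almost embedded extension. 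Your first two steps (rescaled curvature estimates and extraction of a flat tangent cone) are consistent with this, but the multiplicity-one reduction and the exclusion of the tangential cone would need to be removed and replaced by the sheet-by-sheet and interior-type arguments.
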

	\begin{proof}

		Given any sequence of positive $\lambda_i \rightarrow 0$, consider the blowups $\{\bm \mu_{p,\lambda_i} (\Sigma) \subset\bm \mu_{p,\lambda_i} (M)\}$, where $\bm {\mu}_{p,\lambda_i} (x) = \frac{x-p}{\lambda_i}$.  Since $\Sigma$ has bounded first variation, $\bm\mu_{p,\lambda_i} (\Sigma)$ converges (up to a subsequence) to a stationary integral rectifiable cone $C$ in a half Euclidean space $\mb R^{n+1}_+ = T_p M$ with free boundary on $\partial \mb R^{n+1}_+=T_p(\partial M)$. By weak stability and Theorem \ref{thm:compactness for FPMC}, the convergence is locally smooth and graphical away from the origin, so $C$ is an integer multiple of some embedded minimal hypercone with free boundary. Hence the reflection of $C$ across $\partial \mb R^{n+1}_+$ is a stable minimal cone in $\mb R^{n+1}\setminus \{0\}$, and hence a plane with integer multiplicities. Therefore, we conclude that $C=m\cdot T_p(\partial M)$ or $2m\cdot P$ for some hyperplane $P\subset \mb R^{n+1}_+$ with $P\perp \partial \mb R^{n+1}_+$. Here $m=\Theta^n(\Sigma,p)$. Note that $P$ may depend on the choice of $\{\lambda_i\}$.
		
		If $C=m\cdot T_p(\partial M)$, then the argument in \cite{Zhou19}*{Theorem B.1} implies the removability of $p$.
		
		If $C=2m\cdot P$ for some half-hyperplane $P$, then by the locally smooth and graphical convergence, there exists $\sigma_0> 0$ small enough, such that for any $0 < \sigma \leq \sigma_0$, $\Sigma$ has an $m$-sheeted, ordered, graphical decomposition in the annulus $A_{\sigma/2,\sigma}(p) =M\cap B_\sigma(p)\setminus B_{\sigma/2} (p)$:
		\[\Sigma \cap A_{\sigma/2,\sigma} (p) = \bigcup_{i=1}^m \Sigma_i(\sigma).\]
		Here each $\Sigma_i (\sigma)$ is a graph over $A_{\sigma/2,\sigma} (p)\cap P$.
		We can continue each $\Sigma_i (\sigma)$ all the way to $B_{\sigma_0}(p)\setminus \{p\}$, and denote the continuation by $\Sigma_i$. Then each $\Sigma_i$ is a free boundary $h$-hypersurface in $M\cap B_{\Sigma_0}(p)\setminus \{p\}$ and $\Theta^n(\Sigma_i,p)=1/2$.  By the Allard type regularity theorem for rectifiable varifolds with free boundary and bounded mean curvature \cite{GJ86}*{Theorem 4.13}, $\Sigma_i$ extends as a $C^{1,\alpha}$ free boundary hypersurface across $p$ for some $\alpha\in(0,1)$. Higher regularity of $\Sigma_i$ follows from the prescribing mean curvature equation and elliptic regularity.
	\end{proof}

	\section{The second variation of $\A^h$ for smooth hypersurfaces}\label{section:2nd variation}
	Our goal is to derive the second variation of $\A^h$ for arbitrary hypersurfaces with boundary in $(M^{n+1},\partial M,g)$. Note that we always assume $(M^{n+1},\partial M,g)$ is isometrically embedded in some closed $(n+1)$-dimensional Riemannian manifold $(\wti M,\wti g)\subset \mb R^L$. Let $(\Sigma^n,\partial\Sigma)\subset (M,\partial M)$ be an embedded hypersurface in $M$ with boundary on $\partial M$. We consider a two-parameter family of ambient variations $\Sigma(t,s)$ of $\Sigma = \Sigma(0, 0)$ defined by
	\[
	\Sigma(t,s)= F_{t,s}(\Sigma) = \Psi_s\circ\Phi_t(\Sigma),
	\]
	where $\Phi_t$ and $\Psi_s$ are the flows generated by compactly supported vector fields $X$ and $Z$ in $\mb R^L$, respectively. We assume both $X,Z\in \wti{\mathfrak X}(M,\Sigma)$. Therefore each $\Sigma(t, s)$ is an embedded hypersurface in $\wti M$ with boundary lying on $\partial M$.
	
	We have
	\[\frac{\partial F_{t,s}}{\partial t}(x)\Big|_{t=s=0}=X(x),\  \frac{\partial F_{t,s}}{\partial s}(x)\Big|_{t=s=0}=Z(x),\ \text{and }  \frac{\partial }{\partial t}\frac{\partial }{\partial s}F_{t,s}(x)\Big|_{t=s=0}=D_XZ(x). \]
	
	The computation in \cite{ACS17}*{Appendix A} gives that 
	\begin{align}\label{eq:2nd variaiton of area}
		&\frac{\partial}{\partial t}\frac{\partial }{\partial s}\Big|_{t=s=0}\mc H^n(\Sigma(t,s))= \frac{\partial }{\partial t}\Big|_{t=0}\int_{\Sigma(t,0)}\dv Z\,d\mc H^n\\
		=&\int_{\Sigma}\Big(\langle\nabla^\perp(X^\perp),\nabla^\perp(Z^{\perp})\rangle-\Ric(X^\perp,Z^\perp)-|A|^2\langle X^\perp,Z^\perp\rangle\Big)\,d\mc H^n+\int_{\partial \Sigma}\langle\nabla_{X^\perp}Z^\perp,\nu_{\partial M}\rangle\,d\mc H^{n-1}+\nonumber\\
		&+\int_\Sigma\Xi_1(X,Z,\mf H)\, d\mc H^{n}+\int_{\partial\Sigma}\Xi_2(X,Z,\mf H,\bm\eta,\nu_{\partial M})\, d\mc H^{n-1},\nonumber
	\end{align}
	where
	\begin{gather*}
		\Xi_1(X,Z,\mf H)=\langle X^\perp,\mf H\rangle\langle Z^\perp,\mf H\rangle-\langle\nabla_{X^\perp}(Z^{\perp}),\mf H\rangle-\langle[X^\perp,Z^\top],\mf H\rangle,
	\end{gather*}
	and 
	\begin{align*}
		\Xi_2(X,Z,\mf H,\bm\eta,\nu_{\partial M})=&\langle\nabla_{X^\perp}Z^\perp,\bm\eta-\nu_{\partial M}\rangle+\langle [X^\perp,Z^\top],\bm\eta\rangle+\mathrm{div}_{\Sigma}(Z^\top)\langle X^\top,\bm\eta\rangle\\
		&-\langle Z^\perp,\mf H\rangle\langle X^\top,\bm\eta\rangle-\langle X^\perp,\mf H\rangle\langle Z^\top,\bm\eta\rangle.
	\end{align*}
	Here $\mf H=-H\nu$ is the mean curvature vector; $\bm\eta$ is the unit co-normal vector field of $\Sigma$; $\nabla$ is the connection on $M$; $\perp$ and $\top$ are the normal and tangential parts on $\Sigma(t,s)$, respectively.
	
	Let $h$ be a smooth function on $\wti M$. Then a direct computation gives that 
	\begin{align}\label{eq:2nd variation of integral of h}
		&\frac{\partial}{\partial t}\Big|_{t=0}\int_{\Sigma(t,0)}h\langle Z,\nu\rangle\,d\mc H^{n+1}\\
		=&\int_\Sigma \langle Z,h\nu\rangle\mathrm{div} X+\nabla_X\langle Z,h\nu\rangle\, d\mc H^{n}\nonumber\\
		=&\int_\Sigma \Big(\langle Z,h\nu\rangle \dv X^\top- \langle Z,h\nu\rangle\langle\mf H,X^\perp\rangle +\nabla_{X^\top}\langle Z,h\nu\rangle+(\nabla_{X^\perp }h)\langle Z,\nu\rangle+h(\nabla_{X^\perp}\langle Z,\nu\rangle)\Big)\,d\mc H^n\nonumber\\
		=&\int_{\Sigma}\Big((\partial_\nu h)\langle X^\perp,Z^\perp\rangle-\langle X,\mf H\rangle\langle Z,h\nu\rangle+ \langle\nabla_{X^\perp}(Z^\perp),h\nu\rangle\Big)\,d\mc H^n+\int_{\partial\Sigma}\langle Z^\perp,h\nu\rangle\langle X^\top,\bm\eta\rangle\,d\mc H^{n-1}.\nonumber
	\end{align}
	
	By \eqref{eq:2nd variaiton of area} and \eqref{eq:2nd variation of integral of h}, we have
	\begin{align*}
		&\frac{\partial}{\partial t}\Big|_{t=0}\int_{\Sigma(t,0)}\dv Z-h\langle Z,\nu\rangle\, d\mc H^n\\
		=&\int_{\Sigma}\Big(\langle\nabla^\perp(X^\perp),\nabla^\perp(Z^{\perp})\rangle-\Ric(X^\perp,Z^\perp)-|A|^2\langle X^\perp,Z^\perp\rangle-(\partial_\nu h)\langle X^\perp,Z^\perp\rangle\Big)\,d\mc H^n\\
		+&\int_{\partial \Sigma}\langle\nabla_{X^\perp}Z^\perp,\nu_{\partial M}\rangle\,d\mc H^{n-1}+\int_\Sigma\wti \Xi_1(X,Z,\mf H)\, d\mc H^{n}+\int_{\partial\Sigma}\wti\Xi_2(X,Z,\mf H,\bm\eta,\nu_{\partial M})\, d\mc H^{n-1},
	\end{align*}
	where
	\begin{align*}
		\wti\Xi_1(X,Z,\mf H)&=\Xi_1(X,Z,\mf H)+\langle X^\perp,\mf H\rangle\langle Z^\perp,h\nu\rangle- \langle\nabla_{X^\perp}(Z^\perp),h\nu\rangle\\
		&=\langle X^\perp,\mf H\rangle\langle Z^\perp,\mf H+h\nu\rangle-\langle\nabla_{X^\perp}(Z^{\perp}),\mf H+h\nu\rangle-\langle[X^\perp,Z^\top],\mf H\rangle\\
		&=\langle X^\perp,\mf H\rangle\langle Z^\perp,\mf H+h\nu\rangle-\langle\nabla_{X^\perp}(Z^{\perp}),\mf H+h\nu\rangle-A(X^\top,Z^\top)\langle\nu,\mf H\rangle,
	\end{align*}
	and 
	\begin{align*}
		\wti \Xi_2(X,Z,\mf H,\bm\eta,\nu_{\partial M})&=\Xi_2(X,Z,\mf H,\bm\eta,\nu_{\partial M})-
		\langle Z^\perp,h\nu\rangle\langle X^\top,\bm\eta\rangle\\
		&=\langle\nabla_{X^\perp}Z^\perp,\bm\eta-\nu_{\partial M}\rangle+\langle [X^\perp,Z^\top],\bm\eta\rangle+\mathrm{div}_{\Sigma}(Z^\top)\langle X^\top,\bm\eta\rangle\\
		&\ \  \ \ -\langle Z^\perp,\mf H+h\nu\rangle\langle X^\top,\bm\eta\rangle-\langle X^\perp,\mf H\rangle\langle Z^\top,\bm\eta\rangle.
	\end{align*}
	
	In the equality of $\wti\Xi_1(X,Z,\mf H)$, we used the following identity:
	\begin{align*}
		\langle[X^\perp,Z^\top],\nu\rangle&=\langle\nabla_{X^\perp}(Z^\top)-\nabla_{Z^\top}(X^\perp),\nu\rangle\\
		&=\langle\nabla_{X}(Z^\top)-\nabla_{X^\top}(Z^\top)-\nabla_{Z^\top}(X^\perp),\nu\rangle\\
		&=-\langle Z^\top,\nabla_{X}\nu\rangle-\langle\nabla_{Z^\top}(X^\perp),\nu\rangle+A(X^\top,Z^\top)\\
		&=\langle Z^\top,\nabla\langle X,\nu\rangle\rangle-\langle\nabla_{Z^\top}(X^\perp),\nu\rangle+A(X^\top,Z^\top)\\
		&= A(X^\top,Z^\top).
	\end{align*}
	
	We remark that 
	\begin{align*}
		|\wti\Xi_1(X,Z,\mf H)|+|\wti\Xi_2(X,Z,\mf H,\bm\eta,\nu_{\partial M})|
		\leq C\big(|X|(|\mf H+h\nu|+| \bm\eta-\nu_{\partial M} |+|Z^\top|)+|X^\top|\big).
	\end{align*}

	\section{Cut-off trick}\label{section:cut-off}
	In this section, we provide a lemma which has been used in Part 3 of the proof of Theorem \ref{thm:compactness for FPMC}\ref{compactness thm:index decreasing}. Such a result has also been used in \cite{GLWZ19}.
	
	\begin{lemma}
		Let $(M^{n+1},\partial M,g)$ be a compact manifold with boundary of dimension $(n+1)\geq 3$. Let $(\Sigma,\partial \Sigma)\subset (M,\partial M)$ be an almost embedded free boundary $h$-hypersurface and $\varphi$ be a smooth function on $\Sigma$. Then for any $p\in\Sigma$, there exists a family of cut-off functions $(\xi_r)_{0<r<\epsilon}$ for some $\epsilon>0$ so that $\xi_r(p)=0$
		\[\mathrm{II}_\Sigma(\varphi,\varphi)=\lim_{r\rightarrow 0}\mathrm{II}_\Sigma(\xi_r\varphi,\xi_r\varphi).\]
	\end{lemma}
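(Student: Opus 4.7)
The idea is the classical one: a single point has vanishing $2$-capacity in a manifold of dimension $n \geq 2$, so we can construct a family of cut-off functions concentrated near $p$ with vanishing Dirichlet energy, and then apply dominated convergence to the remaining terms of $\mathrm{II}_\Sigma$. Because $\Sigma$ is only almost embedded and $p$ could be a touching point (with $\partial M$ or with itself), we work on the abstract domain manifold of the immersion, using the \emph{intrinsic} geodesic distance $d(x) := d_\Sigma(x, p)$, which is smooth on a punctured neighborhood of $p$. For $0 < r < \epsilon$ small, define the logarithmic cut-off
\[
\xi_r(x) = \begin{cases} 0 & d(x) \leq r^2, \\[0.3em] \dfrac{\log(d(x)/r^2)}{\log(1/r)} & r^2 \leq d(x) \leq r, \\[0.3em] 1 & d(x) \geq r. \end{cases}
\]
Then $0 \leq \xi_r \leq 1$, $\xi_r(p) = 0$, and $\xi_r \to 1$ pointwise on $\Sigma \setminus \{p\}$. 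Using the coarea formula and the bound $\mathcal{H}^{n-1}(\{d = t\}) \leq C t^{n-1}$ near $p$, one checks
\[
\int_\Sigma |\nabla \xi_r|^2 \, d\mu_\Sigma \; \leq \; \frac{C}{\log^2(1/r)} \int_{r^2}^{r} t^{n-3} \, dt \longrightarrow 0 \quad \text{as } r \to 0,
\]
which is $O(1/\log(1/r))$ when $n = 2$ and $O(r^{n-2}/\log^2(1/r))$ when $n \geq 3$.

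Next, expand
\[
|\nabla(\xi_r \varphi)|^2 = \xi_r^2 |\nabla \varphi|^2 + \varphi^2 |\nabla \xi_r|^2 + 2\xi_r \varphi \langle \nabla \xi_r, \nabla \varphi \rangle,
\]
and substitute into $\mathrm{II}_\Sigma(\xi_r \varphi, \xi_r \varphi)$. The contribution from the first summand together with the zeroth-order interior and boundary terms,
\[
\int_\Sigma \!\big( \xi_r^2 |\nabla \varphi|^2 - (\Ric(\nu,\nu) + |A|^2 + \partial_\nu h)\xi_r^2 \varphi^2\big) \, d\mu_\Sigma \;-\; \int_{\partial \Sigma} h^{\partial M}(\nu,\nu)\, \xi_r^2 \varphi^2 \, d\mu_{\partial \Sigma},
\]
converges to $\mathrm{II}_\Sigma(\varphi, \varphi)$ by dominated convergence, since $\xi_r^2 \to 1$ pointwise, $\xi_r^2 \leq 1$, and $\varphi$ is smooth on the compact hypersurface $\Sigma$. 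The second summand vanishes in the limit since $\int_\Sigma \varphi^2|\nabla\xi_r|^2 \leq \|\varphi\|_{L^\infty}^2 \int_\Sigma |\nabla \xi_r|^2 \to 0$. For the cross term, Cauchy-Schwarz gives
\[
\Big| \int_\Sigma 2\xi_r \varphi \, \langle \nabla \xi_r, \nabla \varphi \rangle \Big| \;\leq\; 2\|\varphi\|_{C^1(\Sigma)} \, |\Sigma|^{1/2} \Big(\int_\Sigma |\nabla \xi_r|^2\Big)^{1/2} \longrightarrow 0.
\]

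The main (mild) subtlety is ensuring the construction goes through at a touching point $p$, which is handled by performing everything on the abstract domain and using intrinsic distance; with that understood, the three limits combine to give $\lim_{r \to 0} \mathrm{II}_\Sigma(\xi_r \varphi, \xi_r \varphi) = \mathrm{II}_\Sigma(\varphi, \varphi)$, as required. Note that for the conclusion it does not matter whether $p$ lies on $\partial \Sigma$, since the boundary integral in $\mathrm{II}_\Sigma$ contains no derivative of $\xi_r$ and is therefore controlled entirely by dominated convergence.
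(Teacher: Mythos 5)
Your proposal is correct and follows essentially the same route as the paper: a logarithmic cut-off at $p$ (yours, $\log(d/r^2)/\log(1/r)$, is literally the same function $2-\log d/\log r$ as in the paper, only with intrinsic rather than ambient distance), the key fact $\int_\Sigma|\nabla\xi_r|^2\to 0$, and then the remaining terms of $\mathrm{II}_\Sigma$ handled by dominated convergence and Cauchy--Schwarz. You simply spell out the expansion and the cross-term estimate that the paper leaves implicit, so there is nothing to correct.
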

	\begin{proof}
		Set 
		\begin{equation}
			\xi_r(x)=\left\{\begin{aligned}
				& 0, &|x|<r^2\\
				& 2-\frac{\log |x|}{\log r}, &r^2\leq |x|\leq r\\
				& 1, &|x|>r
			\end{aligned}\right.,
		\end{equation}
		where $|x|=\dist_M(x,p)$. Then we have $\int_\Sigma|\nabla\xi_r|^2<C(n)/|\log r|\rightarrow 0$ and $\xi_r\rightarrow 1$ as $r\rightarrow 0$. Then it suffices to prove $\int_{\Sigma}|\nabla(\xi_r\varphi)|^2\rightarrow\int_{\Sigma}|\nabla\varphi|^2$ as $r\rightarrow 0$. This follows from $\int_\Sigma|\nabla\xi_r|^2\rightarrow 0$

	\end{proof}

	\section{Local $h$-foliation with free boundary}\label{app:sec:local foliation}
	The following proposition is a generalization of minimal foliation given by B. White \cite{Whi87}. This description has already been stated in \cite{Wang20}*{Proposition A.2}.
	\begin{proposition}\label{prop:h foliation with boundary}
		Let $(M^{n+1},\partial M,g)$ be a compact Riemannian manifold with boundary, and let
		$(\Sigma,\partial \Sigma)\subset (M,\partial M)$ be an embedded, free boundary minimal hypersurface. Given a point $p\in\partial \Sigma$, there exist $\epsilon>0$ and a neighborhood $U\subset M$ of $p$ such that if $h:U\rightarrow \mb R$ is a smooth function with $\|h\|_{C^{2,\alpha}}<\epsilon$ and 
		\[w:\Sigma\cap U\rightarrow \mb R \text{ satisfies } \|w\|_{C^{2,\alpha}}<\epsilon,\]
		then for any $t\in(-\epsilon,\epsilon)$, there exists a $C^{2,\alpha}$-function $v_t: U\cap \Sigma\rightarrow \mb R$, whose graph $G_t$ meets $\partial M$ orthogonally along $U\cap \partial \Sigma$ and satisfies:
		\[H_{G_t} = h|_{G_t} ,\]
		(where $H_{G_t}$ is evaluated with respect to the upward pointing normal of $G_t$), and
		\[v_t(x) = w(x) + t, \text{ if } x\in \partial (U\cap \Sigma)\cap \mathrm{Int} M.\]
		Furthermore, $v_t$ depends on $t,h,w$ in $C^1$ and the graphs $\{G_t : t\in[-\epsilon, \epsilon]\}$ forms a foliation.
	\end{proposition}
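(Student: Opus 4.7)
The plan is to apply the implicit function theorem in Hölder spaces to a nonlinear elliptic boundary value problem that encodes the three conditions (prescribed mean curvature $h$, orthogonal contact with $\partial M$, Dirichlet matching $w+t$) simultaneously, treating $(h, w, t)$ as parameters. First, I would choose Fermi-type coordinates near $p$: since $\Sigma$ meets $\partial M$ orthogonally at $p$, one can normalize so that $\Sigma \cap U$ sits in a coordinate hyperplane, $\partial M \cap U$ is the orthogonal coordinate hyperplane, and graphs over $\Sigma \cap U$ with small $C^{2,\alpha}$-norm are parametrized by scalar functions $v$. For such $v$, let $G_v$ denote its graph. Define a nonlinear map
\begin{equation*}
\mathcal{F}(v, h, w, t) = \Bigl( H_{G_v} - h|_{G_v},\ \ \langle \bm\eta_{G_v}, \nu_{\partial M}\rangle,\ \ v|_{\partial U \cap \mathrm{Int}(M)} - (w + t) \Bigr),
\end{equation*}
where the first component lies in $C^{0,\alpha}(\Sigma \cap U)$, the second is a $C^{1,\alpha}$ function on $U \cap \partial\Sigma$ measuring failure of orthogonality, and the third is $C^{2,\alpha}$ on the interior portion of the boundary. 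Obviously $\mathcal F(0,0,0,0)=0$ because $\Sigma$ is free boundary minimal.

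Next, I would compute $D_v\mathcal F|_{(0,0,0,0)}$. By the first variation formulas recalled in Section~\ref{sec:pre} and Appendix \ref{section:2nd variation}, this linearization is the mixed boundary operator
\begin{equation*}
D_v\mathcal{F}(0,0,0,0)[u] \;=\; \Bigl( L_\Sigma u,\ \ \tfrac{\partial u}{\partial \bm\eta} - h^{\partial M}(\nu,\nu)\, u,\ \ u|_{\partial U \cap \mathrm{Int}(M)} \Bigr),
\end{equation*}
with $L_\Sigma = \Delta_\Sigma + |A|^2 + \operatorname{Ric}(\nu,\nu)$. For $U$ a small enough neighborhood of $p$, the zeroth-order coefficients of this elliptic system become negligible relative to the first eigenvalue of the Laplacian on $\Sigma \cap U$ with Neumann data on $U \cap \partial\Sigma$ and Dirichlet data on $\partial U \cap \mathrm{Int}(M)$; a Rayleigh quotient/Poincaré argument shows this mixed problem has trivial kernel for $U$ sufficiently small, and by Fredholm theory (or direct Schauder theory) $D_v\mathcal{F}|_{(0,0,0,0)}$ is an isomorphism between the appropriate Hölder spaces. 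The implicit function theorem then produces a $C^1$ solution map $(t,h,w) \mapsto v_{t,h,w}$ solving $\mathcal{F}(v_{t,h,w}, h, w, t) = 0$ for $(t,h,w)$ in a neighborhood of the origin, which gives the required $v_t$.

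Finally, to obtain the foliation property, I would differentiate the relation $\mathcal{F}(v_t, 0, 0, t) = 0$ in $t$ at $t = 0$; the derivative $\psi := \partial v_t/\partial t|_{t=0}$ satisfies the linear mixed problem $L_\Sigma \psi = 0$ on $\Sigma \cap U$, $\partial\psi/\partial\bm\eta = h^{\partial M}(\nu,\nu)\psi$ on $U \cap \partial\Sigma$, and $\psi \equiv 1$ on $\partial U \cap \mathrm{Int}(M)$. For $U$ small enough, the Hopf boundary point lemma adapted to this mixed boundary problem (the positive zeroth-order term in $L_\Sigma$ is controlled on a small domain) yields $\psi > 0$ throughout $\Sigma \cap U$. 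By $C^1$-dependence, $\partial v_t/\partial t > 0$ on $\Sigma \cap U$ for all $(t,h,w)$ near zero, so distinct graphs $G_t$ are disjoint and sweep out a neighborhood of $\Sigma \cap U$.

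The main obstacle I anticipate is the simultaneous handling of mixed Neumann-type and Dirichlet boundary data at the corner $\overline{U \cap \partial\Sigma} \cap \overline{\partial U \cap \mathrm{Int}(M)}$. This forces careful compatibility conditions on the function spaces (weighted Hölder spaces, or spaces with prescribed vanishing at the corner) to guarantee that $\mathcal F$ lands in a Banach space on which its linearization is an isomorphism rather than merely Fredholm with a small cokernel. Once the correct functional framework is chosen, the positivity argument for the foliation follows from a standard maximum principle on a sufficiently small domain.
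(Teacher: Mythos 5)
Your proposal is correct and takes essentially the same route as the paper: the paper's proof simply invokes White's appendix and Ambrozio--Carlotto--Sharp, Section 3, with the modified map $\Psi(t,g,h,w,u)=\bigl(H_{g(t+w+u)}-h,\ g(N_g(t+w+u),\nu_g(t+w+u)),\ u|_{\Gamma_2}\bigr)$, which is exactly your $\mathcal F$ encoding the prescribed mean curvature equation, the orthogonality condition, and the Dirichlet matching, followed by the implicit function theorem for the (invertible on small domains) linearization and the maximum-principle positivity of $\partial_t v_t$ for the foliation. The mixed Robin/Dirichlet corner and function-space issue you flag is precisely what the cited White/ACS framework is set up to handle, so your sketch matches the paper's (very terse) argument.
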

	\begin{proof}
		The proof follows from \cite{Whi87}*{Appendix} together with the free boundary version \cite{ACS17}*{Section 3}. The only modification is that we need to use the following map to replace $\Phi$ in \cite{ACS17}*{Section 3}:
		\[\Psi: \mb R \times X \times Y\times Y \times Y \rightarrow Z_1 \times Z_2 \times Z_3.\]
		The map $\Psi$ is defined by
		\[\Psi(t,g,h,w,u) = (H_{g(t+ w + u)}-h, g(N_g(t + w + u), \nu_g (t + w + u)), u|_{\Gamma_2} );\]
		here all the notions are the same as \cite{ACS17}*{Section 3}.
	\end{proof}

	\bibliographystyle{amsalpha}
	\bibliography{minmax}
\end{document}